\documentclass[11pt, a4paper, reqno]{amsart}
\usepackage[foot]{amsaddr}
\makeatletter
\renewcommand{\email}[2][]{%
	\ifx\emails\@empty\relax\else{\g@addto@macro\emails{,\space}}\fi%
	\@ifnotempty{#1}{\g@addto@macro\emails{\textrm{(#1)}\space}}%
	\g@addto@macro\emails{#2}%
}
\makeatother

\usepackage{amsthm,amsmath, amsfonts, amssymb, amscd,enumerate,enumitem, amscd, xcolor, url, mathrsfs, esint, cancel}
\usepackage[colorlinks]{hyperref}
\usepackage[utf8]{inputenc}
\usepackage{mathtools}

\usepackage[top=3cm, bottom=3cm, left=2cm, right=2cm, 
]{geometry}

\newtheorem{thm}{Theorem}[section]
\newtheorem{cor}[thm]{Corollary}
\newtheorem{lem}[thm]{Lemma}
\newtheorem{prop}[thm]{Proposition}
\theoremstyle{definition}
\newtheorem{defn}[thm]{Definition}
\theoremstyle{remark}
\newtheorem{rem}[thm]{Remark}

\newcommand{\RR}{\mathbb R}
\newcommand{\NN}{\mathbb N}

\newcommand{\BMO}{\textup{BMO}}
\newcommand{\loc}{\textup{loc}}

\newcommand{\aloc}{A^{\rho,\loc}_{p}}

\DeclareMathOperator{\supp}{supp}

\newcounter{pagina}

\usepackage{marginnote}

\numberwithin{equation}{section}
\usepackage[defaultlines=4, all]{nowidow}
\setnoclub[2]
\allowdisplaybreaks[2]
\setlist[enumerate,1]{label=(\roman*)}

\begin{document}

\title[A $T1$ criterion for Schr\"odinger--Calder\'on--Zygmund operators with exponential...]{A $T1$ criterion for Schr\"odinger--Calder\'on--Zygmund operators with exponential decay}

\author[E. Dalmasso]{Estefan\'ia Dalmasso$^{1}$}
\address{$^1$Instituto de Matem\'atica Aplicada del Litoral, UNL, CONICET, FIQ.\newline Colectora Ruta Nac. Nº 168, Paraje El Pozo,\newline S3007ABA, Santa Fe, Argentina}
\email{edalmasso@santafe-conicet.gov.ar}

\author[G. R. Lezama]{Gabriela R. Lezama$^{2}$}
\address{$^2$Instituto de investigaciones en Energía no convencional, UNSa, CONICET.\newline Av. Bolivia 5150 - UNSa - Dpto de Física,\newline A4408FVY, Salta, Argentina}
\email{rlezama@santafe-conicet.gov.ar}

\author[M. Toschi]{Marisa Toschi$^{3}$}
\address{$^3$Instituto de Matem\'atica Aplicada del Litoral, UNL, CONICET, FHUC.\newline Colectora Ruta Nac. Nº 168, Paraje El Pozo,\newline S3007ABA, Santa Fe, Argentina}
\email{mtoschi@santafe-conicet.gov.ar}

	\date{\today}
	\subjclass[2020]{42B20, 42B35, 35J10} 
	
	\keywords{Schr\"odinger operators, singular and fractional integrals, BMO spaces, $T1$ criterion, weights}

	%%%----------------------------------------------------------------------
	\begin{abstract}
		We establish the boundedness of exponential Schr\"odinger–Calderón–Zygmund operators on weighted $\BMO_\rho^\alpha(w)$ spaces via a $T1$ criterion, where the weights belong to classes that capture the exponential decay of the operators, and $\rho$ is a critical radius function. Specifically, we prove that the boundedness of such an operator $T$ on $\BMO_\rho^\alpha(w)$ is equivalent to a natural oscillation condition on $T1$
		over sub-critical balls. The weight classes considered, introduced in connection with $\rho$, include and extend the classical $A_p^\rho$ weights, and are well-adapted to the exponential decay of the kernels. As applications, we derive weighted endpoint estimates for several operators associated to the generalized Schr\"odinger operator $\mathcal L_\mu=-\Delta+\mu$, including Riesz transforms, Laplace transform-type multipliers, maximal operators for the heat and Poisson semigroups, Littlewood–Paley functions and fractional integral operators. When $d\mu(x)=V(x)dx$, the results above extend the known 
		endpoint estimates to larger classes of weights.
	\end{abstract}
	%%%----------------------------------------------------------------------
	\maketitle

\section{Introduction and main result}

A classical result in harmonic analysis, due to David and Journé (\cite{DJ}), establishes that a
Calderón--Zygmund operator $T$ is bounded on $L^2(\mathbb{R}^d)$ if and only if $T1$
and $T^*1$ belong to $\BMO(\mathbb{R}^d)$. This $T1$ theorem has since been
extended in various directions, including endpoint estimates on $\BMO$ spaces
and their weighted counterparts. In the context of Schr\"odinger operators, where the
geometry is governed by a critical radius function $\rho$, analogues of these spaces
and operators have been developed, and $T1$-type criteria have been obtained (see, for instance, \cite{BHQ19, BLL, MSTZ14}).

The operators arising in the Schr\"odinger setting present an additional feature compared with the classical case, as their kernels may exhibit not only the standard
Calderón--Zygmund size and smoothness conditions, but also an extra decay depending on the ratio $|x-y|/\rho(x)$. This decay reflects the influence of the potential on the geometry and requires weight classes adapted to it.
In~\cite{BHQ19}, a $T1$ criterion was established for operators with polynomial decay, with
weights in the class $A^\rho_p$ introduced in~\cite{BHS11}, on weighted regularity spaces $\BMO_\rho^\alpha(w)$ (see definition below). 

The aim of this article is to give a $T1$ criterion for the boundedness on $\BMO_\rho^\alpha(w)$ for exponential Schr\"odinger--Calder\'on--Zygmund
operators, defined in~\cite{DLT25}, as well as for a fractional version of these operators
introduced here, where the weights are associated with a critical radius function $\rho$ belonging to the class introduced in~\cite{Bailey21}, and are larger than $A^\rho_p$.

Before stating our main theorem, we begin by introducing some notation and definitions; further details will be provided in the following sections.

By a \textbf{critical radius function} we will mean a function $\rho: \mathbb R^d\to [0, \infty)$ for which there exist constants $k_0, C_0 \geq 1$ such that 
\begin{equation}\label{eq: critical radius function}
	C_0^{-1}\rho(x)\left(1 + \frac{|x-y|}{\rho(x)}\right)^{-k_0}\leq \rho(y)\leq C_0\rho(x)\left(1+\frac{|x-y|}{\rho(x)}\right)^{\frac{k_0}{k_0+1}},
\end{equation}
for $x,y \in \RR^d$. This function behaves like a constant at a local scale, since it is easy to check that
\begin{equation}\label{eq: equiv critical radius}
	C_0^{-1}2^{-k_0}\rho(x)\leq \rho(y)\leq C_02^{\frac{k_0}{k_0+1}}\rho(x),
\end{equation}
when $|x-y|\leq \rho(x)$. On the other hand, at a global scale it can neither increase nor decrease faster than a polynomial depending on the normalized distance $|x-y|/\rho(x)$. 

In the context of Schr\"odinger operators with potential $V$, a specific function $\rho$
can be defined that satisfies \eqref{eq: critical radius function} and shows how $V$ influences the geometry around each point. Nevertheless, the abstract formulation above allows one to develop the theory independently of the particular potential.

A ball of the form $B(x, \rho(x))$, with $x \in \RR^d$, will be a \textbf{critical ball}. When the radius  $r\leq  \rho(x)$ we will say the ball $B(x,r)$ is \textbf{sub-critical}, and when $r>\rho(x)$ we will call it \textbf{super-critical}. We denote by $\mathcal{B}_\rho$ the family of all sub-critical balls, that is,
\begin{equation}\label{eq: B_rho}
	\mathcal{B}_\rho = \left\{B(x,r): x \in \RR^d, r\leq \rho(x)\right\}.
\end{equation}   

From \eqref{eq: equiv critical radius}, note that even though the radii can be comparable when $|x-y|\leq \rho(x)$, it does not necessarily follow that $|x-y|\leq \rho(y)$. In fact, if $y\in B(x,\rho(x))$ (a critical ball), then \linebreak $x\in B(y, C_0 2^{k_0}\rho(y))$, which is super-critical since $C_0,k_0\geq 1$.

Given a weight $w$, i.e., a nonnegative locally integrable function, and a number $0\leq \alpha<1$, we say that a measurable function $f$ belongs to $\BMO_\rho^\alpha(w)$ if $f\in L^1_\loc(\RR^d)$ and satisfies
\begin{equation}\label{eq: average osc BMO}
	\frac{1}{w(B)}\int_B| f-f_B| \leq C| B |^{\alpha/d},  \quad \text{for every ball } B=B(x,r)
\end{equation}  
and 
\begin{equation}\label{eq: average BMO}
	\frac{1}{w(B)}\int_B | f | \leq C | B |^{\alpha/d}, \quad \qquad \text{for every ball }B=B(x,r) \text{ with }r\geq \rho(x).
\end{equation}
Here, $f_B$ stands for the average of $f$ over the ball $B$. As usual, the norm $\|f\|_{\BMO_\rho^\alpha(w)}$ is defined as the smallest constants in \eqref{eq: average osc BMO} and \eqref{eq: average BMO}. When $w\equiv 1$, we will denote this space by $\BMO_\rho^\alpha(\mathbb R^d)$.

Indeed, since the validity of \eqref{eq: average BMO} for some ball $B$ implies \eqref{eq: average osc BMO} for the same ball, it is sufficient to ask for the first condition to hold only on balls $B=B(x,r)$ with $r< \rho(x)$. Moreover, by virtue of \cite[Proposition~3]{BHS08}, if $w$ is a doubling weight, then we might ask condition \eqref{eq: average BMO} to be true only on critical balls.

We now define the classes of singular and fractional operators we will consider, which are included in the classes studied in \cite{BCH13Extrapol, BHQ19, BHQ20}. The singular case ($\nu=0$) was already presented in our previous article \cite{DLT25}.

\begin{defn}\label{def: s-delta} 
	For $0\leq \nu<d$, $\frac{d}{d-\nu}<s< \infty$ and $0<\delta \leq 1$ we say that $T$ is an \textbf{exponential Schr\"odinger--Calder\'on--Zygmund operator of $(\nu, s, \delta)$ type with parameters $c$ and $m$} if 
	\begin{enumerate}
		\item \label{itm: weak type T}  $T$ is of weak type $\left(s', \frac{s'd}{d-s'\nu}\right)$; 
		\item $T $ has an associated kernel $K: \RR^d \times \RR^d \rightarrow \RR$ in the sense that 
		\[Tf(x)=\int_{\RR^d}K(x,y)f(y)dy, \qquad f \in L^{s'}(\RR^d) \text{ and  }x \notin \supp (f),\]
		where the kernel $K$ satisfies the following conditions: there exists a constant $C $  such that 
			\begin{equation}\label{eq: size Hormander fract}
				\left(\frac{1}{R^d}\int_{R<|x_0-y|\leq 2R}|K(x,y)|^s dy \right)^{\frac1s}
				\leq 
				\frac{C}{R^{d-\nu}}\exp\left(-c \left(1+\frac{R}{\rho(x)}\right)^{m}\right),
			\end{equation}
			whenever $|x-x_0|<R/2$, and 
			\begin{equation}\label{eq: smoothness Hormander fract}
				\left(\frac{1}{R^d}\int_{R<|x_0-y|\leq 2R}|K(x,y)-K(x_0,y)|^s dy \right)^{\frac1s}
				\leq  
				\frac{C}{R^{d-\nu}}\left(\frac{r}{R}\right)^\delta, 
			\end{equation}
			for every $|x-x_0|<r\leq \rho(x_0)$ and $r<R/2$.
	\end{enumerate}

\end{defn}

 The limiting case $s=\infty$ is defined by pointwise estimates as we will see below.
	
	\begin{defn}
		For $0\leq \nu<d$ and $0 <\delta\leq 1$ we say that $T$ is an  \textbf{exponential  Schr\"odinger--Calder\'on--Zygmund operator of $(\nu,\infty,\delta)$ type  with parameters $c$ and $m$} if
		\begin{enumerate}
			\item $T$ is of weak type $(1, \frac{d}{d-\nu})$; 
			\item $T$ has an associated kernel $K:\mathbb{R}^d \times \RR^d \rightarrow \RR$ in the sense that \[Tf(x)=\int_{\RR^d}K(x,y)f(y)dy, \qquad f \in L^{1}(\RR^d) \text{ and  }x \notin \supp (f),\]
				where the kernel $K$ satisfies the following conditions: there exists a constant $C $  such that 
				\begin{equation}\label{eq: size pointwise fract}
					|{K(x,y)}| \leq  \frac{C}{|{x-y}|^{d-\nu}} \exp \left(-c \left(1+\frac{|{x-y}|}{\rho(x)}\right)^{m}\right), 
				\end{equation}
			for every  $x\neq y$, and 
				\begin{equation} \label{eq: smoothness pointwise fract}
					|K(x,y)-K(x_0,y)|\leq  \frac{C}{|x-y|^{d-\nu}}\left(\frac{|x-x_0|}{|x-y|}\right)^\delta,
				\end{equation}
				for every $|x-y|>2|x-x_0|$.
		\end{enumerate}  
	\end{defn}

For any $1<s\leq \infty$, we may refer to the above operators as \textit{exponential SCZOs of $(\nu, s,\delta)$ type}, for short. For these types of operators, we will specify in Section~\ref{sec: prelim} the meaning of $Tf$ when $f\in \BMO_\rho^\alpha(w)$.

\begin{rem}\label{obs: Hormander to pointwise}
	Note that if $K$ satisfies conditions \eqref{eq: size pointwise fract} and \eqref{eq: smoothness pointwise fract}, then it also satisfies  \eqref{eq: size Hormander fract} and \eqref{eq: smoothness Hormander fract} for all $1<s<\infty$, with the same parameters  $c,m$ and $\delta$.
\end{rem} 

\begin{rem}\label{obs: expSCZO Hormander Lp} 
	Although Definition~\ref{def: s-delta} only requires the weak type~$(s',s')$ when $\nu=0$, it follows from \cite[Proposition~6 and Theorem~5]{BCH13Extrapol} that exponential SCZOs of $(0,s,\delta)$ type are actually bounded on $L^p(\mathbb{R}^d)$ for every $s' < p < \infty$.  
	Moreover, exponential SCZOs of $(0,s,\delta)$ type are bounded on $L^p(w)$ for every $s' < p < \infty$ and every weight $w \in A_{p/s'}^\rho$, the class of weights introduced in \cite{BHS11}. Consequently, exponential SCZOs of $(0, \infty,\delta)$ type are bounded on $L^p(\mathbb{R}^d)$ for every $1< p < \infty$ and on $L^p(w)$ for every $1 < p < \infty$ and every weight $w \in A_{p}^\rho$. 
\end{rem} 
\begin{rem}
	Clearly, exponential SCZOs of $(0, s,\delta)$ type are exponential SCZOs of $(s,\delta)$ type as considered in \cite[Definition~2.6]{DLT25} for $1<s<\infty$. However, we observe that in \cite[Definition~2.7]{DLT25} we gave a slightly different definition for exponential SCZOs of $(0, \infty,\delta)$ type. In spite of that, from Remarks~\ref{obs: Hormander to pointwise} and \ref{obs: expSCZO Hormander Lp}, if $T$ is an exponential SCZO of $(0, \infty,\delta)$ type, it satisfies \cite[Definition~2.7]{DLT25} and all the results already obtained in \cite{DLT25} still hold for this new class. 
\end{rem}

\begin{rem}\label{obs: smoothness for smaller delta}
Notice that if condition \eqref{eq: smoothness Hormander fract} holds for some $0<\delta\leq 1$, it is also valid for any $\delta'\in (0,\delta)$.
\end{rem}

In this article, we will be dealing with one of the classes of weights studied in \cite{Bailey21}, which captures the exponential decay of the operators under consideration. 

For $1<p<\infty$ and $c,m \geq 0$ we say a weight $w\in H^{\rho,m}_{p,c}$ if there exists a constant $C>0$ for which
\begin{equation*}%\label{eq: class Hp}
	\left(\int_B w\right)^{\frac{1}{p}}\left(\int_B w^{-\frac{1}{p-1}}\right)^{\frac{p-1}{p}} \leq C|B| \exp \left(c\left(1+ \frac{r}{\rho(x)}\right)^m\right),
\end{equation*}
for every ball $B=B(x,r)$ with $x \in \RR^d$ and $r>0$.

When $p=1$, for $c,m\geq 0$ we denote by $H^{\rho,m}_{1,c}$ the collection of weights $w$ such that
\begin{equation*}
	\int_B w \leq C|B|\exp\left(c\left(1+ \frac{r}{\rho(x)}\right)^m\right)\inf_B w,
\end{equation*}
for each ball $B=B(x,r)$, and some constant $C>0$ independent of $B$.

We will also consider a class of doubling weights, and reverse--H\"older classes adapted to this context (already considered in \cite{DLT25}); they will be denoted by $D^{\rho}_{\kappa,c}$ with $\kappa \geq1$ and $c>0$ and $RH_{\eta,c}^\rho $ with $c>0$ and $\eta > 1$, respectively. Their precise definitions will be provided in Section~\ref{sec: prelim}. 

For simplicity, we denote by $E_{s,c_1,c_2}^{\rho,m}$ to the class of weights
\[E_{s,c_1,c_2}^{\rho,m}=\bigcup_{\eta>s'}H_{s/\eta',c_1}^{\rho,m}\cap RH_{\eta,c_2}^{\rho,m},\]
where $1< s<\infty$, $c_1,c_2,m\geq 0$. And we will write
\[E_{\infty,c_1,c_2}^{\rho,m}=\bigcup_{s> 1}E_{s,c_1,c_2}^{\rho,m}.\]
 
We are now ready to state the central theorem of this work.

\begin{thm}\label{thm: T1 criterion}
	Let $T$ be an exponential SCZO of $(\nu, s,\delta)$ type with parameters $c$ and $m$, for some $0\leq \nu<d$, $\frac{d}{d-\nu}<s<\infty$  and $0<\delta \leq 1$. 
	 Let $0\leq\alpha<\delta$  and $1\leq\kappa < \frac{\delta-\alpha-\nu}{d}+1$.
	 Then, the following statements are equivalent.
	\begin{enumerate}
	\item \label{itm: T1 condition} There exists a constant $C$ such that, for every ball $B=B(x_0,r)$, with $x_0 \in \RR^d$ and $0<r\leq\frac{1}{2}\rho(x_0)$, the function $T1$ satisfies
		\begin{equation}\label{eq: T1 power condition}
		\frac{1}{|B|^{1+\nu/d}}\int_B |T1(y)-(T1)_B|dy\leq C \left(\frac{r}{\rho(x_0)}\right)^{\alpha+d(\kappa-1)},
		\end{equation}
		when $\alpha>0$ or $\kappa>1$, or
		\begin{equation}\label{eq: T1 log condition}
			\frac{1}{|B|^{1+\nu/d}}\int_B |T1(y)-(T1)_B|dy\leq C \log^{-1}\left(\frac{\rho(x_0)}{r}\right),
		\end{equation}
		 when $\alpha=0$ and $\kappa=1$.
		
	\item \label{itm: T1 weights}  $T$ is bounded from $\BMO_\rho^\alpha(w)$ into $\BMO_\rho^{\alpha+\nu}(w)$ for every weight $w\in E_{s,c_1,c_2}^{\rho,m}\cap D_{\kappa,c_3}^{\rho,m}$
	with $c_1, c_2, c_3\geq 0$ such that $(c_2+c_3)<c\left(1-\frac{d(\kappa-1)+\alpha+\nu}{\delta}\right)(4C_0)^{-m}$. The operator norm depends on $w$ only through the parameters of the classes $E_{s,c_1,c_2}^{\rho,m}$ and $D_{\kappa,c_3}^{\rho,m}$. 
	
	\item \label{itm: T1 power weights} $T$ is bounded from $\BMO_\rho^\alpha(w)$ into $\BMO_\rho^{\alpha+\nu}(w)$ for every weight of the form \linebreak ${w(x)=|x-x_0|^ {d(\kappa-1)}}$, $x_0 \in \RR^d$, with operator norm independent of $x_0$.
	\end{enumerate}
\end{thm}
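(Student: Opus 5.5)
I will prove the cycle (i)$\Rightarrow$(ii)$\Rightarrow$(iii)$\Rightarrow$(i).

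\emph{(i)$\Rightarrow$(ii).} Fix $f\in\BMO_\rho^\alpha(w)$ and a ball $B=B(x_0,r)$. I will check the two defining conditions \eqref{eq: average osc BMO} and \eqref{eq: average BMO} for $Tf$ with exponent $\alpha+\nu$.

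For super-critical or critical balls ($r\ge \rho(x_0)/2$, up to doubling), I split $f=f\chi_{2B}+f\chi_{(2B)^c}$. The local part is handled by the weak type of $T$ (or $L^p$ boundedness) together with H\"older's inequality. Here the $H^{\rho,m}_{s/\eta',c_1}$ and $RH^{\rho,m}_{\eta,c_2}$ conditions convert the unweighted $L^{\eta}$ bounds into $w(B)$ bounds, at the cost of factors $\exp(c_i(1+r/\rho)^m)$. The global part is decomposed into dyadic annuli $2^{j+1}B\setminus 2^jB$. On each annulus I apply the size condition \eqref{eq: size Hormander fract}, which yields a factor $\exp(-c(1+2^jr/\rho(x))^m)$. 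The weight and doubling classes cost at most $\exp((c_2+c_3)(1+2^jr/\rho)^m)$, using \eqref{eq: critical radius function} to compare $\rho(x)$ with $\rho(x_0)$; this is where the factor $(4C_0)^{-m}$ enters. The hypothesis $c_2+c_3<c(\cdots)$ then makes the series converge.

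For sub-critical balls ($r<\rho(x_0)/2$), I write $f=(f-f_B)\chi_{2B}+(f-f_B)\chi_{(2B)^c}+f_B$.
\begin{itemize}
\item The first two pieces follow the classical argument. For the annuli near $B$ I use the smoothness condition \eqref{eq: smoothness Hormander fract} with $r\le\rho$. The growth $|f_{2^jB}-f_B|\lesssim \sum_k w(2^kB)|2^kB|^{\alpha/d-1}$ is controlled by $D^{\rho,m}_{\kappa,c_3}$, giving $w(2^kB)\lesssim 2^{kd\kappa}w(B)\exp(c_3(\cdot)^m)$. Summing $2^{-j\delta}2^{j(d(\kappa-1)+\alpha+\nu)}$ converges because $\kappa<1+(\delta-\alpha-\nu)/d$.
\item For annuli beyond $\rho(x_0)$ I instead use the exponential decay. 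I interpolate between the two estimates, taking the smoothness bound to the power $\theta$ and the size bound to the power $1-\theta$, with $1-\theta=1-\tfrac{d(\kappa-1)+\alpha+\nu}{\delta}$. This explains the exact constant in (ii).
\item The third piece is $f_B\,T1$. Its oscillation on $B$ is bounded by $|f_B|\,|B|^{1+\nu/d}$ times the right-hand side of \eqref{eq: T1 power condition} or \eqref{eq: T1 log condition}.
\end{itemize}

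The key lemma, and the main obstacle, is the estimate $|f_B|\lesssim \|f\|\,\frac{w(B)}{|B|}|B|^{\alpha/d}(\rho(x_0)/r)^{\alpha+d(\kappa-1)}$, with a factor $\log(\rho/r)$ in the case $\alpha=0,\kappa=1$. To prove it, I chain the averages $f_{2^kB}$ up to the critical ball $2^NB$, with $2^N\approx\rho(x_0)/r$, where \eqref{eq: average BMO} controls $|f_{2^NB}|$. Each step contributes $w(2^kB)|2^kB|^{\alpha/d-1}$, and the doubling $D_\kappa$ bound gives a geometric sum dominated by its last term, or $N$ equal terms when $\alpha=0,\kappa=1$. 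Since the balls are sub-critical, the exponential factors stay bounded. Combined with (i), this yields $\frac{1}{w(B)}\int_B|Tf-(Tf)_B|\lesssim |B|^{(\alpha+\nu)/d}$.

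\emph{(ii)$\Rightarrow$(iii).} I will verify that $w(x)=|x-x_0|^{d(\kappa-1)}$ belongs to $E_{s,c_1,c_2}^{\rho,m}\cap D_{\kappa,c_3}^{\rho,m}$ with $c_1=c_2=c_3=0$, and with class constants independent of $x_0$. Power weights with nonnegative exponent are in $A_1$-type reverse H\"older classes for every $\eta$, lie in $A_p$ for all $p>1$, and are doubling with $w(2^kB)\lesssim 2^{kd\kappa}w(B)$ by homogeneity. Translation invariance gives independence from $x_0$.

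\emph{(iii)$\Rightarrow$(i).} Given $B=B(x_0,r)$ with $r\le\rho(x_0)/2$, I test $T$ on $f(x)=|x-x_0|^{d(\kappa-1)}\,\varphi$, taking the centre of the weight at $x_0$.
\begin{itemize}
\item For $\alpha>0$ or $\kappa>1$, I choose $f=\min\{|x-x_0|^{\alpha+d(\kappa-1)},\rho(x_0)^{\alpha+d(\kappa-1)}\}$ (suitably modified), which should lie in $\BMO_\rho^\alpha(w)$ with uniform norm.
\item For $\alpha=0,\kappa=1$, I choose $f=\log^+(\rho(x_0)/|x-x_0|)$-type functions, which should also lie in the space with uniform norm.
\end{itemize}
Writing $T1\cdot f_B=Tf-T(f-f_B)$, the boundedness hypothesis controls the oscillation of $Tf$, and the (i)$\Rightarrow$(ii) estimates for $f-f_B$ control the remaining term. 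Since $f_B\approx\rho(x_0)^{\alpha+d(\kappa-1)}$ (respectively $\log(\rho/r)$) while $w(B)|B|^{\alpha/d}/|B|\approx r^{\alpha+d(\kappa-1)}$, dividing by $|f_B|$ yields exactly \eqref{eq: T1 power condition} or \eqref{eq: T1 log condition}. Checking the uniform membership of these test functions is routine but delicate, and is the second point I expect to require care.
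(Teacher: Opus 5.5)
\textbf{(i)$\Rightarrow$(ii).} This is the paper's argument: the same three-piece decomposition, the same chaining bound for $|f_B|$ (Lemma~\ref{lem: bounds fB}), and exponential decay to absorb the weight growth. Using pure smoothness on annuli inside the critical ball and borderline interpolation outside it is an acceptable variant of the paper's uniform use of Lemma~\ref{lem: mixed smooth condition} with $d(\kappa-1)+\alpha+\nu<\delta'<\delta$.

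\textbf{The gap is in (iii)$\Rightarrow$(i) when $\gamma:=\alpha+d(\kappa-1)>0$.} Your test function $f=\min\{|x-x_0|^{\gamma},\rho(x_0)^{\gamma}\}$ coincides with $|x-x_0|^{\gamma}$ on $B=B(x_0,r)$, $r\le\rho(x_0)/2$. So $f_B\approx r^{\gamma}$, not $\rho(x_0)^{\gamma}$ as you assert. Then $|f_B|\int_B|T1-(T1)_B|\lesssim w(B)|B|^{(\alpha+\nu)/d}\approx r^{d\kappa+\alpha+\nu}$ only yields $|B|^{-1-\nu/d}\int_B|T1-(T1)_B|\lesssim 1$, with no factor $(r/\rho(x_0))^{\gamma}$. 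Worse, this $f$ need not lie in $\BMO_\rho^\alpha(w)$ at all. It equals the constant $\rho(x_0)^{\gamma}$ off the critical ball, so \eqref{eq: average BMO} can fail on critical balls $B(y,\rho(y))$ far from $x_0$ where $\rho(y)\ll\rho(x_0)$. For instance, with $\kappa=1$, $\alpha>0$ it would demand $\rho(x_0)^{\alpha}\lesssim\rho(y)^{\alpha}$.

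\textbf{How to fix it.} The argument needs a function of norm $O(1)$ that is $\approx\rho(x_0)^{\gamma}$ on $B(x_0,\rho(x_0)/2)$ and vanishes off the critical ball. Note that $\min\{|x-x_0|^{\gamma},\rho(x_0)^{\gamma}\}=\rho(x_0)^{\gamma}-g$ with $g:=(\rho(x_0)^{\gamma}-|x-x_0|^{\gamma})_+$, and take $g$ instead. Then:
\begin{itemize}
\item $g_B\ge(1-2^{-\gamma})\rho(x_0)^{\gamma}$.
\item The mean oscillation of $g$ is at most twice that of $h=|x-x_0|^{\gamma}$, and $\int_{B(y,t)}|h-h_{B(y,t)}|\lesssim t^{\alpha}w(B(y,t))$. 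Treat $|y-x_0|\le 2t$ via $w(B(y,t))\gtrsim t^{d\kappa}$, and $|y-x_0|>2t$ via $|\nabla h|\lesssim|y-x_0|^{\gamma-1}$ on $B(y,t)$ together with $\alpha<1$.
\item If $t\ge\rho(y)$ and $B(y,t)$ meets $B(x_0,\rho(x_0))$, then \eqref{eq: critical radius function} gives $t\gtrsim\rho(x_0)$, so $\int_{B(y,t)}g\lesssim t^{\gamma+d}\lesssim t^{\alpha}w(B(y,t))$.
\end{itemize}
The implicit constants depend only on $d,\kappa,\alpha,C_0,k_0$, hence are uniform in $x_0$. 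With this $g$, your identity $g_B\,T1=Tg-T(g-g_B)$ and the $T1$-free estimates from the first part give \eqref{eq: T1 power condition}. Your logarithmic test function $\log^+(\rho(x_0)/|x-x_0|)$ is fine as it stands.

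\textbf{A smaller correction in (ii)$\Rightarrow$(iii).} The weight $|x|^{d(\kappa-1)}$ belongs to $A_p$ only for $p>\kappa$, not for all $p>1$. You must therefore choose $\eta>s'$ large enough that $s/\eta'>\kappa$, which is the condition $s>\eta'\kappa$ used in the paper.
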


It is easy to see  from Remark~\ref{obs: Hormander to pointwise} and  Theorem~\ref{thm: T1 criterion} that for $s=\infty$ we get the following consequence.

\begin{cor}\label{cor: T1 criterion infty}
	Let $T$ be an exponential SCZO of $(\nu, \infty,\delta)$ type with parameters $c$ and $m$,  for some $0\leq \nu<d$ and $0<\delta \leq 1$. Let $0\leq\alpha<\delta$  and $1\leq\kappa < \frac{\delta-\alpha-\nu}{d}+1$. Then, the boundedness of $T$ from $\BMO_\rho^\alpha(w)$ into $\BMO_\rho^{\alpha+\nu}(w)$ is equivalent to either condition \ref{itm: T1 condition} or condition \ref{itm: T1 power weights} from Theorem~\ref{thm: T1 criterion}, whenever $w\in E_{\infty,c_1,c_2}^{\rho,m}\cap D_{\kappa,c_3}^{\rho,m}$ with $c_1, c_2, c_3\geq 0$ such that $(c_2+c_3)<c\left(1-\frac{d(\kappa-1)+\alpha+\nu}{\delta}\right)(4C_0)^{-m}$.
\end{cor}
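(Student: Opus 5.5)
The plan is to reduce the corollary to Theorem~\ref{thm: T1 criterion} by placing $T$ and the weight $w$ inside one fixed finite $s$. By Remark~\ref{obs: Hormander to pointwise}, an exponential SCZO of $(\nu,\infty,\delta)$ type has a kernel satisfying \eqref{eq: size Hormander fract} and \eqref{eq: smoothness Hormander fract} for every $1<s<\infty$, with the same $c$, $m$ and $\delta$.

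Given $w\in E_{\infty,c_1,c_2}^{\rho,m}\cap D_{\kappa,c_3}^{\rho,m}$, the definition of $E_{\infty,c_1,c_2}^{\rho,m}$ gives some $s_0>1$ with $w\in E_{s_0,c_1,c_2}^{\rho,m}$. We need a single $s$ with $s>\frac{d}{d-\nu}$ and $w\in E_{s,c_1,c_2}^{\rho,m}$. For this I will check that the classes are nested, $E_{s_0}\subset E_s$ for $s\ge s_0$. If $\eta>s_0'$ and $w\in H^{\rho,m}_{s_0/\eta',c_1}\cap RH^{\rho,m}_{\eta,c_2}$, then $\eta>s'$ as well, since $s'\le s_0'$. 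Also $s/\eta'\ge s_0/\eta'$, so the inclusion $H_{p}\subset H_{q}$ for $p\le q$ applies; this follows from H\"older with the same constants $c_1$ and $m$. Hence we may enlarge $s$ to exceed $\max\{s_0,\frac{d}{d-\nu}\}$, and the constants $c_1,c_2,c_3$ in the smallness hypothesis are unchanged.

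Next, the weak type hypothesis must match. The $(\nu,\infty,\delta)$ definition gives weak type $(1,\frac{d}{d-\nu})$, while Definition~\ref{def: s-delta} asks for weak type $(s',\frac{s'd}{d-s'\nu})$ with $s'>1$. When $\nu=0$ this follows from Remark~\ref{obs: expSCZO Hormander Lp}, which gives $L^p$ boundedness for all $1<p<\infty$. When $\nu>0$, I would obtain it from the pointwise size bound $|K(x,y)|\le C|x-y|^{\nu-d}$. This bound gives $|Tf|\lesssim I_\nu|f|$ for $f$ of compact support, away from the support, and Hardy--Littlewood--Sobolev then gives strong type $(s',\frac{s'd}{d-s'\nu})$. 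The main obstacle is justifying this on all of $L^{s'}$ when $x$ lies in the support. I expect to handle it by interpolating the weak $(1,\frac{d}{d-\nu})$ estimate with a second estimate, or by observing that the proof of the theorem only uses the kernel estimates and the action on $\BMO$ as specified in Section~\ref{sec: prelim}.

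With $s$ fixed in this way, $T$ is an exponential SCZO of $(\nu,s,\delta)$ type with parameters $c,m$, and $w$ lies in the class of item~\ref{itm: T1 weights} of Theorem~\ref{thm: T1 criterion}. The theorem then gives that \ref{itm: T1 condition} and \ref{itm: T1 power weights} imply boundedness on $\BMO_\rho^\alpha(w)$ for this $w$. Conversely, boundedness for all such $w$ includes the power weights $|x-x_0|^{d(\kappa-1)}$. These are in $A_1$-type classes with parameters $c_i=0$, so they lie in $E_{\infty,0,0}^{\rho,m}\cap D^{\rho,m}_{\kappa,0}$, which yields \ref{itm: T1 power weights} and hence \ref{itm: T1 condition}.
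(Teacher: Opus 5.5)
Your reduction is the paper's own: its entire justification is the appeal to Remark~\ref{obs: Hormander to pointwise} and Theorem~\ref{thm: T1 criterion}. Your nestedness argument is correct and supplies a detail the paper leaves implicit. By Jensen, $H^{\rho,m}_{p,c_1}\subset H^{\rho,m}_{q,c_1}$ for $1<p\le q$, since the exponential constant only improves to $c_1p/q$. Hence $E^{\rho,m}_{s_0,c_1,c_2}\subset E^{\rho,m}_{s,c_1,c_2}$ for $s\ge s_0$, and you may take $s>\frac{d}{d-\nu}$ without changing $c_1,c_2,c_3$. For $\nu=0$, Remark~\ref{obs: expSCZO Hormander Lp} settles the weak-type hypothesis, as you say.

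For $\nu>0$, the step you leave open is a genuine gap in using the theorem as a black box; the paper skips it too. Your first fallback cannot work. The bound $|Tf|\lesssim I_\nu|f|$ is only available off $\supp f$, so you have no second endpoint to interpolate with. Your second fallback points the right way but misdescribes the proof: it does not use only kernel estimates. The local pieces $Tf_1$, with $f_1=f\chi_{2B_\rho}$ or $(f-f_B)\chi_{2B}$, are controlled only through the weak type, via Lemma~\ref{lem: Kolmogorov gral}.

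The repair is that for $\nu>0$ this lemma applies directly to the given weak type $(1,\frac{d}{d-\nu})$ with $\gamma=1<\frac{d}{d-\nu}$. Using Lemma~\ref{lem: bound g function} with $j=1$,
\[
\int_B|Tf_1|\le C|B|^{\nu/d}\|f_1\|_{1}\le C|B|^{\nu/d}|2B|^{1/s}\|f_1\|_{s'}\lesssim \|f\|_{\BMO_\rho^\alpha(w)}\,w(B)\,|B|^{\frac{\alpha+\nu}{d}},
\]
which is exactly the bound the proof needs. Every other step uses only the kernel conditions, valid for each finite $s$, and the weight classes. The implication \ref{itm: T1 power weights}$\Rightarrow$\ref{itm: T1 condition} only reuses that first part. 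So the corollary follows from the \emph{proof} of the theorem, not from its statement.

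Your HLS route can also be rescued. Let $\tilde T$ be the kernel integral and $S=T-\tilde T$. Then $S$ is linear, of weak type $(1,q)$ with $q=\frac{d}{d-\nu}>1$, and satisfies $\supp Sf\subset\supp f$. Splitting a compactly supported $f$ over a grid of cubes $Q$ of side $\varepsilon$ gives $|\{|Sf|>\lambda\}|\lesssim\lambda^{-q}\sum_Q\|f\chi_Q\|_1^{q}\to0$. Hence $T=\tilde T$ on $L^1$, and Hardy--Littlewood--Sobolev applies.

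A minor slip: for $\kappa>1$, the weight $|x-x_0|^{d(\kappa-1)}$ is not of $A_1$ type. It lies in $A_{s/\eta'}\cap RH_\eta\cap D_\kappa$ for $s>\eta'\kappa$, and that is what places it in $E^{\rho,m}_{s,0,0}\cap D^{\rho,m}_{\kappa,0}$.
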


\begin{rem}\label{obs: reduce to beta=0}
	Clearly, if \eqref{eq: T1 power condition} holds for some $\alpha > 0$ or $\kappa > 1$, then \eqref{eq: T1 log condition} follows, yielding the boundedness of the operator on $\BMO_\rho(w)$ (i.e., $\alpha = 0$). However, the significance of \eqref{eq: T1 log condition} when $\alpha = 0$ and $\kappa = 1$ lies in ensuring $\BMO_\rho(w)$ boundedness even for operators that fail \eqref{eq: T1 power condition}.
\end{rem}
 \begin{rem}\label{obs: vector valued}
	The results above can be applied to linear operators related to the generalized Schr\"odinger semigroup. However, non-linear operators in this setting, such as maximal or Littlewood-Paley functions, can be viewed as linear operators in appropriate Banach spaces. By using the corresponding norms in Theorem~\ref{thm: T1 criterion} and Corollary~\ref{cor: T1 criterion infty}, we obtain valid vector-valued versions of these results.
\end{rem}

 The paper is organized as follows. In Section~\ref{sec: prelim} we collect some important definitions and intermediate results to fully understand the theorem stated above. In Section~\ref{sec: proof T1 criterion} we prove Theorem~\ref{thm: T1 criterion}, as well as some auxiliary results related to it. Finally, in Section~\ref{sec: examples} we establish the $T1$ condition for several operators  associated to the generalized Schr\"odinger operator $\mathcal L_\mu$, defined in terms of a certain measure $\mu$ studied in \cite{Shen99} and \cite{Bailey21}, and we also apply it to the study of some operators associated to a potential~$V$.
 
 Throughout this paper $C$ and $c$ will always denote positive constants that may change in each occurrence. Also by $A \lesssim B$ will indicate that there exists a constant $c>0$ such that $A \leq cB$ is satisfied and $A \sim B$ will denote that, at the same time,  $A \lesssim B$ and  $B \lesssim A$ hold.

\section{Preliminaries}\label{sec: prelim}
  
  \subsection{Weights associated to a critical radius function}
  
  When dealing with operators associated to a critical radius function, we may first mention the work of Bongioanni, Harboure and Salinas \cite{BHS11}, where the authors introduced the class $A^{\rho}_{p}$ of weights for $1\leq p<\infty$ to obtain the weighted $L^p$-boundedness of many operators in the Schr\"odinger setting.
  
  For $1<p<\infty$, a weight $w$ is said to be in $A^{\rho}_p$ if $w\in A^{p,\theta}_p$ for some $\theta\geq 0$. This means that there exist $C>0$ and some $\theta\geq 0$ such that
  \begin{align*}
  	\left(\int_B w\right)^{\frac{1}{p}}\left(\int_B w^{-\frac{1}{p-1}}\right)^{\frac{p-1}{p}} \leq C|B| \left(1+ \frac{r}{\rho(x)}\right)^\theta,
  \end{align*}
  for every ball $B=B(x,r)$ with $x \in \RR^d$ and $r>0$. 
  
  Similarly, when $p=1$, a weight $w$ belongs to $A^{\rho}_1$ if  $w\in A_1^{\rho,\theta}$ for some $\theta\geq0$; equivalently, if there exist $C>0$ and $\theta\geq 0$ such that
  \begin{equation*}
  	\frac{1}{|B|}\int_B w \leq C \left(1+\frac{r}{\rho(x)}\right)^\theta \inf_B w,
  \end{equation*}
  for every ball $B=B(x,r)$.

  Observe that when $B\in \mathcal{B}_\rho$, the family defined in \eqref{eq: B_rho}, the polynomial term can be neglected, and one has that $A_p^\rho$ resembles the Muckenhoupt $A_p$ class  for sub-critical balls. However, on super-critical balls, the $A_p^\rho$ weights can be larger than the $A_p$ weights (and indeed, $A_p\subsetneq A_p^\rho$ for every $1\leq p<\infty$, as shown in \cite[p.~564]{BHS11}). In view of this remark, it will also be useful to have at our disposal the classes of local weights introduced in \cite{BCH19}. We say that $w\in A^{\rho,\loc}_{p}$ if there exists a constant $C>0$ such that 
  \begin{align*}
  \left(\int_{B} w\right)^{\frac{1}{p}}\left(\int_{B} w^{-\frac{1}{p-1}}\right)^{\frac{p-1}{p}} \leq C|B|,
  \end{align*}
  for every ball $B \in \mathcal{B}_\rho$, and for $p=1$ if 
  \begin{equation*}
  	\int_B w \leq C|B|\inf_B w,
  \end{equation*}
  holds for every $B \in \mathcal{B}_\rho$.

As we have established in \cite[Proposition~3.1]{DLT25}, for every $1\leq p<\infty$ and $c>0$, 
\begin{equation}\label{eq: classes inclusions}
	A_p^\rho\subsetneq H_{p,c}^{\rho}\subseteq A^{\rho,\loc}_{p},
\end{equation}
where $H_{p,c}^{\rho}:=\bigcup_{m\geq 0} H_{p,c}^{\rho,m}$. That is, the class $H_{p,c}^{\rho}$ is strictly larger than the one given in \cite{BHS11}, but smaller than the local class of weights $A^{\rho,\loc}_{p}$. 
  
  We now recall the definitions of the doubling and reverse--H\"older weight classes from \cite{DLT25}.
  
  \begin{defn}
  	Let $\kappa \geq1$ and $c,m\geq 0$. We denote by  $D^{\rho,m}_{\kappa,c}$ the class of weights $w$ such that there exists a constant $C$ for which
  	\begin{align*}
  		w(B(x,R))\leq C \left(  \frac{R}{r}\right)^{d\kappa}\exp{\left(c\left(1+ \frac{R}{\rho(x)}\right)^m \right)}w(B(x,r)),
  	\end{align*}
  	for all  $x \in \RR^d$ and $r\leq R$.
  \end{defn}
  
  \begin{rem}\label{obs: HincludeinD} It is easy to see that $H^{\rho,m}_{p,c}\subseteq D^{\rho,m}_{p,cp}$ for any $1\leq p<\infty$ and $c,m\geq 0$.
  	\end{rem}  
  	
  \begin{rem}
  	The condition $w\in D_{\kappa,c}^{\rho,m}$ says, in particular, that $w$ is doubling on critical balls. Indeed, for some $C=C(d,\kappa,c,m)$, we have
  	\[w(B(x,2\rho(x)))\leq Cw(B(x,\rho(x))).\]
  	We also note that, for any $x\in \RR^d$, $r>0$ and $j\in \NN$, there exists $C>0$ such that
  	\begin{equation}\label{eq: doubling 2^jr}
  	w(B(x,2^jr))\leq C 2^{d\kappa j}\exp\left(c\left(1+\frac{2^{j}r}{\rho(x)}\right)^m\right)w(B(x,r)),
  	\end{equation}
  	which, in the case $r= \rho(x)$ gives
  	\[w(B(x,2^j\rho(x)))\leq C 2^{d\kappa j}\exp\left(c\left(1+2^{j}\right)^m\right)w(B(x,\rho(x))).\]
  \end{rem}
  	
  Reverse--H\"older classes adapted to this context are defined as follows. 
  \begin{defn}
  	Let $\eta >1$ and $c,m\geq 0$. We denote by $RH_{\eta,c}^{\rho,m}$ the set of weights $w$ satisfying
  	\begin{equation*}
  		\left(\frac{1}{|B|}\int_B w ^{\eta}\right)^{\frac{1}{\eta}}\leq C  \exp \left(c\left(1+ \frac{r}{\rho(x)}\right)^m\right)  \left(\frac{1}{|B|}\int_B w\right),
  	\end{equation*}
  	for every ball $B=B(x,r)$, and some $C>0$ independent of $B$.
  \end{defn}

 \subsection{Spaces of functions}
 
 In the introduction we have already defined the spaces $\BMO_\rho^\alpha(w)$.

The first property that we present below shows us that it is enough to verify condition \eqref{eq: average BMO} only on critical balls when the weight belongs to the class $A_p^{\rho,\loc}$. 

\begin{lem}[{\cite[Proposition~3.2]{BCH19}}]\label{lem: sufficient BMO cond critical}
	Let $ w \in A^{\rho,\loc}_p$ for some $1 \leq p  < \infty$ and $f \in L^1_{\loc}(\mathbb R^d)$. If
	\begin{equation*}
	A:=\sup_{x \in \RR^d}\frac{1}{w(B(x,\rho(x)))\rho(x)^\alpha}\int_{B(x,\rho(x))}|f| < \infty,
	\end{equation*}
	then there exists a constant $C$ such that 
	\begin{equation*}
	\sup_{x \in \RR^d,\ r \geq \rho(x)}\frac{1}{w(B(x,r))r^{ \alpha}}\int_{B(x,r)} |f| < CA.
	\end{equation*}
\end{lem}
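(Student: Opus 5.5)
The statement immediately above is Lemma~\ref{lem: sufficient BMO cond critical}, quoted from \cite[Proposition~3.2]{BCH19}. My plan is a covering argument: cover a super-critical ball $B(x,r)$ with $r\geq\rho(x)$ by critical balls, apply the hypothesis on each, and sum using the local $A_p$ property of $w$ together with bounded overlap.

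\textbf{Step 1: the covering.} Fix $B=B(x,r)$ with $r\geq \rho(x)$. I would use the standard covering from the Schr\"odinger literature. There is a sequence of points $\{x_j\}$ such that the critical balls $Q_j=B(x_j,\rho(x_j))$ cover $\RR^d$. Moreover, for some $\sigma>0$ depending on $C_0,k_0$, the dilated family $\{B(x_j,\sigma\rho(x_j))\}$ has bounded overlap. Let $J$ be the set of indices $j$ with $Q_j\cap B\neq\emptyset$.

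\textbf{Step 2: geometry of the pieces.} For $j\in J$, I want two facts:
\[
\rho(x_j)\lesssim r \qquad\text{and}\qquad Q_j\subset B(x,\beta r),
\]
with $\beta$ depending only on $C_0$ and $k_0$. Both follow from the upper bound in \eqref{eq: critical radius function}:
\[
\rho(y)\le C_0\rho(x)\left(1+\frac{|x-y|}{\rho(x)}\right)^{k_0/(k_0+1)}\lesssim r \quad\text{for } |x-y|\le 2r,
\]
using $\rho(x)\le r$. Applying the hypothesis on each $Q_j$ then gives
\[
\int_B|f|\le \sum_{j\in J}\int_{Q_j}|f|\le A\sum_{j\in J} w(Q_j)\,\rho(x_j)^{\alpha}\lesssim A\, r^{\alpha}\sum_{j\in J} w(Q_j).
\]

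\textbf{Step 3: bounded overlap for $w$.} It remains to show $\sum_{j\in J} w(Q_j)\lesssim w(B(x,\beta r))$. The local class $A^{\rho,\loc}_p$ gives doubling of $w$ on sub-critical balls. Hence
\[
w(Q_j)\lesssim w\big(B(x_j,\sigma\rho(x_j))\big),
\]
since a sub-critical ball can be enlarged by the fixed factor $1/\sigma$ through finitely many sub-critical doublings. Bounded overlap of the smaller balls then gives
\[
\sum_{j\in J} w\big(B(x_j,\sigma\rho(x_j))\big)\lesssim w\big(B(x,\beta r)\big).
\]
This yields the desired estimate, but on the enlarged ball $B(x,\beta r)$ rather than on $B$.

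\textbf{Main obstacle: removing the enlargement.} Passing from $w(B(x,\beta r))$ back to $w(B(x,r))$ is where I expect the difficulty, because $w$ need not be doubling on super-critical balls. My first attempt is to refine the covering so that only balls contained in $B$ are used. The part of $B$ within distance $\lesssim\rho$ of $\partial B$ can be covered by critical balls centred inside $B$. Since $r\geq\rho(x)$, each such ball is comparable to a critical ball $B(z,c\rho(z))$ lying inside $B$ with $|z-x|<r$. Local doubling transfers mass between each such ball and its inner counterpart. The inner balls still have bounded overlap and are contained in $B$, so their total weight is at most a constant times $w(B)$.

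Concretely, I would take $Q'_j=B(x'_j,\sigma\rho(x'_j))\subset B$, choosing $x'_j$ as a point of $Q_j\cap B$ moved inward by about $\rho(x_j)$, and compare $w(Q_j)\lesssim w(Q'_j)$ by local doubling. This concludes with a constant $C$ depending only on $C_0$, $k_0$, $d$, $p$ and the $A^{\rho,\loc}_p$ constant.
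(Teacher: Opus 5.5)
This lemma has no proof in the paper. It is quoted from \cite[Proposition~3.2]{BCH19} and used only to reduce \eqref{eq: average BMO} to critical balls, so there is no in-paper argument to compare with. Your covering argument is a correct proof, and you have identified the one real difficulty. For $w\in A^{\rho,\loc}_p$, $w(B(x,\beta r))$ is not controlled by $w(B(x,r))$ when $r\ge\rho(x)$; take $\rho\equiv 1$ and $w(y)=e^{|y|}$. So Step~3 is a dead end, and it is your refinement that proves the lemma:
\[
\int_B|f|\le A\sum_{j\in J}\rho(x_j)^{\alpha}w(Q_j)\lesssim A\,r^{\alpha}\sum_{j\in J}w(Q'_j)\lesssim A\,r^{\alpha}\,w(B).
\]

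Three routine points should be written out, since each needs slightly more than ``local doubling''.

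(i) Choose the displacement so that it cannot overshoot when $\rho(x_j)$ is comparable to $r$. For $y\in Q_j\cap B$, set $x'_j=y+s(x-y)$ with $s=\min\{1,\rho(x_j)/r\}$. Then $|x'_j-x_j|<2\rho(x_j)$, so $\rho(x'_j)\sim\rho(x_j)$. Also $r-|x'_j-x|\ge\min\{r,\rho(x_j)\}\gtrsim\rho(x_j)$, so $Q'_j\subset B$ once $\sigma$ is small. For the same reason, obtain $\rho(x_j)\lesssim r$ by passing through such a $y$ and using \eqref{eq: equiv critical radius}, since a priori you do not know that $|x-x_j|\le 2r$.

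(ii) The comparison $w(Q_j)\lesssim w(Q'_j)$ is not a concentric doubling, and a ball containing both $Q_j$ and $Q'_j$ need not be sub-critical. You need the inequality $w(Q)\le C(|Q|/|E|)^p w(E)$, for $E\subset Q$, to persist on balls $B(z,t)$ with $t\le\theta\rho(z)$ for a fixed $\theta$. This follows by chaining along the segment $[x'_j,x_j]$ with balls of radius $\eta\rho(x_j)$, where consecutive balls lie in a common sub-critical ball and the number of steps is bounded.

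(iii) Bounded overlap of the $Q'_j$ does not follow from bounded overlap of the shrunken concentric balls $B(x_j,\sigma\rho(x_j))$, because $Q'_j$ is not concentric with $Q_j$. Instead use $Q'_j\subset 3Q_j$ for $\sigma$ small, together with the full form of the standard covering lemma, $\sum_j\chi_{\theta Q_j}\le C\theta^{N_1}$ for $\theta\ge 1$.

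If you want to avoid (i) and (ii), cover $B$ by critical balls centred \emph{in} $B$. Since $\rho(x_j)\lesssim r$, you then have $|Q_j\cap B|\gtrsim|Q_j|$. Applying the local $A_p$ inequality on the critical ball $Q_j$ with $E=Q_j\cap B$ gives $w(Q_j)\lesssim w(Q_j\cap B)$ directly. The price is building a bounded-overlap covering adapted to $B$ instead of reusing the global one.
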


The following lemma gives us a property of norm equivalences for the space $\BMO_\rho^\alpha(w)$ for $A_p^{\rho,\loc}$ weights. As usual, the notation $p'$ stands for the conjugate exponent of $p$.  

\begin{lem}[{\cite[Lemma~3.3]{BCH19}}]\label{lem: lower bounds BMO norm}
	Let $0\leq \alpha < 1 $, $w \in \aloc$, $1<q\leq p'$ and $f \in \BMO_\rho^\alpha(w)$. Then,
	\begin{equation*}
	\frac{1}{|B|^{\alpha/d}}\left(\frac{1}{w(B)} \int_B |f|^q w^{1-q}\right)^{\frac{1}{q}}\leq C \|f\|_{\BMO_\rho^\alpha(w)},
	\end{equation*}
	for all  $B=B(x,r)$ with $r \geq \rho(x)$, and
	\begin{equation*}
	\frac{1}{|B|^{\alpha/d}}\left(\frac{1}{w(B)} \int_B |f-f_B|^q w^{1-q}\right)^{\frac{1}{q}}\leq C \|f\|_{\BMO_\rho^\alpha(w)},
	\end{equation*}
	for all $B=B(x,r)$  with $r\leq \rho(x)$. 
\end{lem}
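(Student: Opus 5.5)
The plan is to argue by duality in $L^q(w)$ and reduce everything to the defining inequalities \eqref{eq: average osc BMO}, \eqref{eq: average BMO} on balls where $w$ behaves like a Muckenhoupt weight. Write $h=|f-f_B|/w$ (or $h=|f|/w$ in the super-critical case), so that
\[
\Bigl(\frac{1}{w(B)}\int_B |f-f_B|^q w^{1-q}\Bigr)^{1/q}=w(B)^{-1/q}\,\|h\|_{L^q(B,w)}
=w(B)^{-1/q}\sup\Bigl\{\int_B|f-f_B|\,g\;:\; g\ge0,\ \int_B g^{q'}w\le 1\Bigr\}.
\]
Since $q\le p'$ means $q'\ge p$, and $\aloc\subseteq A^{\rho,\loc}_{q'}$ by Hölder, we may use $w\in A^{\rho,\loc}_{q'}$ throughout. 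In particular, for every $B'\in\mathcal B_\rho$,
\[
w(B')^{1/q'}\,\bigl(w^{1-q}(B')\bigr)^{1/q}\le C|B'|.
\]

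\textbf{Sub-critical balls} ($r\le\rho(x)$). Every ball contained in $B$, or in a fixed dilate of it, is sub-critical up to the constants in \eqref{eq: equiv critical radius}. On $B$ the weight $w$ is therefore a genuine $A_{q'}$ weight with uniform constants, and the local Hardy--Littlewood maximal operator $M_B$ (averages over balls contained in $B$) is bounded on $L^{q'}(B,w)$.

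Fix $g$ as above and perform a Calderón--Zygmund (Whitney-type) decomposition of $g$ relative to $B$ at heights $a^k\,g_B$, with $a$ large, producing nested families of balls $\{Q^k_j\}\subset B$. On the region where $g\lesssim g_B$ we bound trivially:
\[
g_B\int_B|f-f_B|\le g_B\,\|f\|\,|B|^{\alpha/d}w(B)\le \|f\|\,|B|^{\alpha/d}\,w(B)\Bigl(\frac{1}{w(B)}\int_B g^{q'}w\Bigr)^{1/q'},
\]
where the last step is Hölder together with the $A_{q'}$ inequality on $B$. On the pieces $Q^k_j\setminus\Omega_{k+1}$ we write $|f-f_B|\le|f-f_{Q^k_j}|+\sum|f_{Q^{i}}-f_{Q^{i-1}}|$ along the chain of ancestors. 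Each oscillation term is controlled by $\|f\|\,|Q|^{\alpha/d}w(Q)/|Q|\le \|f\|\,|B|^{\alpha/d}w(Q)/|Q|$. Summing, the whole expression is dominated by
\[
\|f\|\,|B|^{\alpha/d}\sum_k a^k\,w(\Omega_k)\lesssim \|f\|\,|B|^{\alpha/d}\int_B M_Bg\; w .
\]
Then Hölder gives $\int_B M_Bg\,w\le \|M_Bg\|_{L^{q'}(w)}\,w(B)^{1/q}$, and the boundedness of $M_B$ on $L^{q'}(B,w)$ finishes this case with the factor $w(B)^{1/q}$ cancelling.

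\textbf{Super-critical balls} ($r\ge\rho(x)$). Cover $B$ by critical balls $B_k=B(x_k,\rho(x_k))$ with $x_k\in B$ and bounded overlap, as is standard for critical radius functions. By \eqref{eq: critical radius function}, $\rho(x_k)\lesssim\rho(x)(1+r/\rho(x))^{k_0/(k_0+1)}\lesssim r$. Split
\[
\int_{B_k\cap B}|f|g\le\int_{B_k}|f-f_{B_k}|g+|f_{B_k}|\int_{B_k\cap B}g .
\]
The first term is handled by the sub-critical case. For the second, Hölder plus $A^{\rho,\loc}_{q'}$ on $B_k$ and \eqref{eq: average BMO} on the critical ball $B_k$ give
\[
|f_{B_k}|\,\|g\|_{L^{q'}(B_k,w)}\,w^{1-q}(B_k)^{1/q}\lesssim \|f\|\,\rho(x_k)^{\alpha}\,w(B_k)^{1/q}\,\|g\|_{L^{q'}(B_k,w)}.
\]
Summing over $k$ with Hölder and bounded overlap yields $\|f\|\,r^\alpha\,(\sum_k w(B_k))^{1/q}$.

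\textbf{Main obstacle.} The remaining difficulty is to replace $\sum_k w(B_k)$ by $w(B)$ without assuming global doubling. The $B_k$ stick out of $B$ only within a layer of width $\lesssim\rho$, so I would enlarge $B$ slightly to $\tilde B\supset\bigcup_k B_k$ with radius comparable to $r$. I would then compare $w(\tilde B)$ with $w(B)$ by doubling on critical balls: every boundary critical ball $B_k$ has a comparable critical ball inside $B$, on which local $A_p$ gives doubling. Reorganising the sum over these interior balls, with bounded overlap, gives $\sum_k w(B_k)\lesssim w(B)$.

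If this comparison turns out to be delicate, the fallback is to apply Lemma~\ref{lem: sufficient BMO cond critical} to the function $(|f|^q w^{1-q})^{1/q}$ restricted to critical balls. This reduces the super-critical case to critical balls, where the estimate above applies directly with $B_k=B$.
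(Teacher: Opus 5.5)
Your duality reduction, the bound on the part where $g\lesssim g_B$, the closing H\"older step with the local maximal operator, and the super-critical covering are all sound. For $\sum_k w(B_k)\lesssim w(B)$: since $\rho(x_k)\le 2C_0 r$, the set $B\cap B_k$ contains a ball $B_k'$ of radius $c\,\rho(x_k)$. The $A^{\rho,\loc}_p$ condition on the critical ball $B_k$ gives $w(B_k)\lesssim w(B_k')$, and bounded overlap finishes.

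\textbf{The gap} is in the sub-critical case, at the step where you sum the chain terms into $\|f\|\,|B|^{\alpha/d}\sum_k a^k w(\Omega_k)$. A Calder\'on--Zygmund decomposition of $g$ carries no information about $f$. Consecutive generations $Q^i\subset Q^{i-1}$ may have $|Q^i|/|Q^{i-1}|$ arbitrarily small, for instance a tiny spike of $g$ sitting on a background just above height $a^{i-1}g_B$. The natural estimate
\[
|f_{Q^i}-f_{Q^{i-1}}|\le \frac{1}{|Q^i|}\int_{Q^{i-1}}|f-f_{Q^{i-1}}|\le \|f\|_{\BMO_\rho^\alpha(w)}\,|Q^{i-1}|^{\alpha/d}\,\frac{w(Q^{i-1})}{|Q^i|}
\]
then has the wrong denominator. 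Routing the chain through the intermediate dyadic cubes instead costs about $\log(|Q^{i-1}|/|Q^i|)$ oscillation terms. Already for $w\equiv 1$ this bookkeeping is not dominated by $\sum_k a^k w(\Omega_k)$: take many small, well separated spikes of $g$ inside one cube, with $f$ having logarithmic singularities at them.

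This failure is structural, not technical. For $w\equiv1$ the inequality you want, $\int_B|f-f_B|\,g\lesssim\|f\|_{\BMO}\int_B M_Bg$, is the $L\log L$--$\exp L$ form of the John--Nirenberg inequality. So it cannot follow from a decomposition of $g$ plus the bare definition of $\BMO$. Some self-improvement of $f$ is needed, and that is the real content of the lemma.

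Your intermediate inequality is true, but you should obtain it by decomposing $f$ rather than $g$. One route is Lerner's local mean oscillation decomposition on a cube $Q_0\supset B$ of comparable size. It yields a sparse family $\mathcal S$ of dyadic subcubes of $Q_0$, with pairwise disjoint $E_Q\subset Q$ and $|E_Q|\ge\eta|Q|$, such that a.e.\ on $Q_0$
\[
|f-f_{Q_0}|\lesssim\sum_{Q\in\mathcal S}\Bigl(\frac{1}{|Q|}\int_Q|f-f_Q|\Bigr)\chi_Q\le \|f\|_{\BMO_\rho^\alpha(w)}\,|Q_0|^{\alpha/d}\sum_{Q\in\mathcal S}\frac{w(Q)}{|Q|}\,\chi_Q .
\]
Pair this with $g$ and use $w(Q)\lesssim w(E_Q)$, which follows from the local $A_p$ condition. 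This gives $\int_{Q_0}|f-f_{Q_0}|\,g\lesssim \|f\|_{\BMO_\rho^\alpha(w)}|Q_0|^{\alpha/d}\int_{Q_0}(Mg)\,w$, after which your H\"older step concludes.

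Alternatively, note that locally $M^{\#}f\le C\|f\|_{\BMO_\rho^\alpha(w)}|Q_0|^{\alpha/d}\,Mw$. Combine this with the local Fefferman--Stein inequality for the $A_\infty$ weight $w^{1-q}$, and with the boundedness of $M$ on $L^q(w^{1-q})$, which holds because $w\in A_{q'}$ locally.

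In either version you need the local $A_p$ condition on slightly enlarged balls $B(y,\beta\rho(y))$, $\beta>1$. This follows from the definition by chaining and local doubling, and it lets you work on $Q_0$ and pass from $f_{Q_0}$ to $f_B$.
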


By virtue of \eqref{eq: classes inclusions}, both lemmas above can be applied to weights in the $H_{p,c}^{\rho,m}$ classes for any $c,m\geq 0$ and the corresponding values of $p$. This fact will be useful in the proof of Theorem~\ref{thm: T1 criterion}.

\subsection{Schr\"odinger-Calder\'on-Zygmund operators of exponential decay} 

We will need to make some considerations about the meaning of $Tf$ being $T$ an exponential SCZO of $(\nu, s, \delta)$ type with parameters $c$ and $m$, for $0\leq \nu<d$, $1<s\leq \infty$ and $0<\delta\leq 1$,  and $f$ a function in $\BMO_\rho^\alpha(w)$.

Let $x_0 \in \RR^d$ and $R \geq \rho(x_0)$. We define 
\begin{equation}\label{eq: def Tf}
	Tf(x)=T(f\chi_{B(x_0,2R)})(x)+ \int_{B(x_0,2R)^c}K(x,y)f(y)dy, \qquad \quad x \in B(x_0,R).
\end{equation}
The first term on the right hand side makes sense if $w$ satisfies the hypothesis of Theorem~\ref{thm: T1 criterion}  since in that case $f\chi_{B(x_0,2R)}  \in L^{s'}(\RR^d) $ for $f \in \BMO_\rho^\alpha(w)$ as we can see in \eqref{eq: bound s' norm g}. 
The integral in the second term is absolutely convergent (see  estimate of $Tf_2$ in  page~\pageref{pag: proof Tf2}). This fact follows in the same manner as in \cite[p.~603]{BHQ19}, as does its well-definedness in the sense that equation \eqref{eq: def Tf} is independent of $R$. Moreover, we need  to verify that \eqref{eq: def Tf} is well-defined for $f = 1$. Indeed, $\chi_{B(x_0 ,2R)} \in L^{s'}(\mathbb{R}^d)$, which ensures the finiteness of the first term and,  for $x \in B(x_0 , R)$ 
\begin{align*}
\int_{B(x_0 ,2R)^c} |K(x, y)|  dy
&
\leq \sum_{j \geq 1} \left(\int_{2^j R \leq |x_0 - y| < 2^{j+1} R} |K(x, y)|^s  dy\right)^{1/s} |B(x_0, 2^{j+1} R)|^{1/s'} \\
&
\leq C \sum_{j \geq 1} (2^{j}R)^{\nu}\exp\left( -c \left( 1 + \frac{2^j R}{\rho(x)} \right)^m \right) \\
&
\leq C R^\nu \sum_{j \geq 1}  (2^\nu)^j\exp\left( -c^* \left( 1 + 2^j \right)^m \right) < \infty,
\end{align*}
where $c^*=c\, C_0^{-m}2^{-\frac{k_0m}{k_0+1}}$ by \eqref{eq: critical radius function}. Here, the constant $C > 0$ is independent of $R > 0$ and $x \in \mathbb{R}^d$.

The following lemma will be useful to show that exponential SCZO of $(\nu, s, \delta)$ type  actually exhibits a smoothness condition on their kernels with exponential decay. 

\begin{lem}\label{lem: mixed smooth condition} 
	Let $K$ be a kernel such that \eqref{eq: size Hormander fract} and \eqref{eq: smoothness Hormander fract} hold for some $0\leq \nu<d$, $1<s<\infty$, $0<\delta\leq 1$, and $c,m\geq 0$. Then, there exists a constant $C>0$ such that for every $\delta'\in (0,\delta)$ and $R>0$
	\begin{equation*}
		\left(\frac{1}{R^d} \int_{R<|x_0-y|\leq 2R} |K(x,y)-K(x_0,y)|^s \, dy\right)^{\frac1s}
		\leq 
		\frac{C}{R^{d-\nu}} \left(\frac{r}{R}\right)^{\delta'} \exp\left(-c\sigma \left(1+\frac{R}{{2C_0\rho(x_0)}}\right)^{m}\right),
	\end{equation*}
	 where $|x-x_0|<r\leq \rho(x_0)$, $r<R/2$ and $\sigma=1-\frac{\delta'}{\delta}$.
\end{lem}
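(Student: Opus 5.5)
The plan is to interpolate, in a geometric-mean fashion, between the two available bounds on the same annular $L^s$ average, so that part of the size decay is kept and only a fraction of the smoothness gain is given up. Fix $x,x_0,r,R$ with $|x-x_0|<r\leq\rho(x_0)$ and $r<R/2$, and write
\[
I:=\left(\frac{1}{R^d}\int_{R<|x_0-y|\leq 2R}|K(x,y)-K(x_0,y)|^s\,dy\right)^{\frac1s}.
\]
Since $\delta'/\delta+\sigma=1$, we have $I=I^{\delta'/\delta}I^{\sigma}$. The factor $I^{\delta'/\delta}$ will be bounded by \eqref{eq: smoothness Hormander fract}, giving $\bigl(CR^{\nu-d}(r/R)^{\delta}\bigr)^{\delta'/\delta}=C R^{\nu-d}(r/R)^{\delta'}$ up to the power on the factor $R^{\nu-d}$. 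The factor $I^{\sigma}$ will be bounded by the size condition. The two powers of $R^{\nu-d}$ recombine to exactly $R^{\nu-d}$.

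To bound $I$ by size, I apply Minkowski to get $I\leq I_1+I_2$, where $I_1$ and $I_2$ are the annular averages of $|K(x,\cdot)|$ and $|K(x_0,\cdot)|$ respectively. For $I_2$, I use \eqref{eq: size Hormander fract} with the point $x_0$ itself, which is allowed since $|x_0-x_0|=0<R/2$. This gives $I_2\leq CR^{\nu-d}\exp(-c(1+R/\rho(x_0))^m)$. For $I_1$, I need $|x-x_0|<R/2$, which holds because $|x-x_0|<r<R/2$. This gives $I_1\leq CR^{\nu-d}\exp(-c(1+R/\rho(x))^m)$.

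It remains to compare $\rho(x)$ with $\rho(x_0)$. Since $|x-x_0|<r\leq\rho(x_0)$, the upper bound in \eqref{eq: critical radius function} gives
\[
\rho(x)\leq C_0\rho(x_0)\left(1+\frac{|x-x_0|}{\rho(x_0)}\right)^{\frac{k_0}{k_0+1}}\leq C_0 2^{\frac{k_0}{k_0+1}}\rho(x_0)\leq 2C_0\rho(x_0).
\]
Hence $R/\rho(x)\geq R/(2C_0\rho(x_0))$. Because $2C_0\geq 1$, the same inequality bounds the $I_2$ term. Altogether,
\[
I\leq CR^{\nu-d}\exp\left(-c\left(1+\frac{R}{2C_0\rho(x_0)}\right)^m\right).
\]

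Raising this bound to the power $\sigma$ and multiplying by the smoothness bound raised to the power $\delta'/\delta$ yields the claimed estimate. The constant can be taken as $C^{\delta'/\delta}C^{\sigma}\leq \max(C,1)$, which is uniform in $\delta'$. There is no genuine obstacle in this argument. The only points needing care are checking that the hypotheses of both \eqref{eq: size Hormander fract} and \eqref{eq: smoothness Hormander fract} are met with the same $x_0$, $r$ and $R$, and the comparison $\rho(x)\leq 2C_0\rho(x_0)$, which is what produces the $2C_0$ in the exponent.
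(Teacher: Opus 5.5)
Your proposal is correct and is essentially the paper's proof: write the annular average as $A=A^{\sigma}A^{1-\sigma}$, bound $A^{\sigma}$ by the size condition \eqref{eq: size Hormander fract} at both $x$ and $x_0$ combined with $\rho(x)\le 2C_0\rho(x_0)$ (from \eqref{eq: equiv critical radius}), and bound $A^{1-\sigma}$ by \eqref{eq: smoothness Hormander fract}. The only cosmetic difference is that you split $|K(x,y)-K(x_0,y)|$ with Minkowski's inequality, whereas the paper uses the elementary bound $|a-b|^s\le 2^{s-1}(|a|^s+|b|^s)$.
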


\begin{proof}
	Let $\delta' \in (0,\delta)$. We define  $\sigma \in (0,1)$ such that $(1-\sigma)\delta=\delta'$. Let $x_0,x \in \RR^d$ and $r,R>0$ such that $|x-x_0|<r\leq \rho(x_0)$, $r<R/2$. 
	
	If we call
	\[A=\left(\frac{1}{R^d}\int_{R<|x_0-y|\leq 2R}|K(x,y)-K(x_0,y)|^s dy \right)^{\frac1s},\]
	by \eqref{eq: size Hormander fract} and \eqref{eq: equiv critical radius} we get
	\begin{align*}
		A^\sigma
		&\leq 
		\left(\frac{1}{R^d}\int_{R<|x_0-y|\leq 2R}|K(x,y)|^s+|K(x_0,y)|^s\, dy
		\right)^{\sigma/s}\\
		&\lesssim \frac{1}{R^{(d-\nu)\sigma}}\exp \left(-c\sigma \left(1+\frac{R}{\rho(x)}\right)^{m}\right)+\frac{1}{R^{(d-\nu)\sigma}}\exp \left(-c\sigma \left(1+\frac{R}{\rho(x_0)}\right)^{m}\right)\\
		&\lesssim \frac{1}{R^{(d-\nu)\sigma}}\exp \left(-c\sigma \left(1+\frac{R}{{2C_0\rho(x_0)}}\right)^{m}\right).
	\end{align*}
	On the other hand, by  \eqref{eq: smoothness Hormander fract}  we have
	\begin{align*}
		A^{1-\sigma}
		&\lesssim
		\frac{1}{R^{(d-\nu)(1-\sigma)}}\left(\frac{r}{R}\right)^{{\delta}(1-\sigma)}\sim 
		\frac{1}{R^{(d-\nu)(1-\sigma)}}\left(\frac{r}{R}\right)^{\delta'}.
	\end{align*}
	Since $A=A^\sigma A^{1-\sigma}$, the result follows.
\end{proof}

The following inequality is a generalization of the classical Kolmogorov inequality for sublinear operators of weak-type $(1,1)$ (see, for instance, \cite[Lemma~5.16]{D01}) to sublinear operators of weak-type $(p,q)$ for any $p,q\geq 1$, and the proof can be obtained in a similar way (see \cite[Theorem~3.3.1]{dGM81}). It will allow us to deduce sharper results when dealing with exponential SCZO of $(s,\delta)$ type. 
 
\begin{lem}\label{lem: Kolmogorov gral}
	Let $S$ be a sublinear operator of weak-type $(p,q)$ for some $p,q\geq 1$. Then, there exists $C>0$ such that for any measurable set $E$ with $0<|E|<\infty$ and $0<\gamma<q$, 
	\[\int_E |Sf(x)|^\gamma dx\leq C|E|^{1-\frac{\gamma}{q}}\|f\|_p^{\gamma},\]
for $f\in L^p(\RR^d)$.
\end{lem}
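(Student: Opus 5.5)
The statement to prove is Lemma~\ref{lem: Kolmogorov gral}, the Kolmogorov inequality for weak-type $(p,q)$ operators. The plan is to use the layer-cake formula for $\int_E |Sf|^\gamma$, split the range of heights at a threshold adapted to $|E|$, and then optimize that threshold.

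By homogeneity we may normalize $\|f\|_p=1$. The weak-type hypothesis then says
\[
|\{x:|Sf(x)|>\lambda\}|\leq C_0^q\lambda^{-q}
\]
for every $\lambda>0$. Since $S$ is sublinear, $S(tf)=|t|Sf$ in absolute value, so the normalization is harmless. If $\|f\|_p=0$, the weak bound forces $Sf=0$ a.e. and there is nothing to prove.

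Next write
\[
\int_E |Sf|^\gamma=\gamma\int_0^\infty \lambda^{\gamma-1}|\{x\in E:|Sf(x)|>\lambda\}|\,d\lambda .
\]
The measure of the level set inside $E$ is at most $\min\{|E|,\,C_0^q\lambda^{-q}\}$. Split the $\lambda$-integral at a threshold $\lambda_0>0$.

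On $(0,\lambda_0)$, use the bound $|E|$. This gives $|E|\lambda_0^\gamma$.

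On $(\lambda_0,\infty)$, use the bound $C_0^q\lambda^{-q}$. Since $\gamma<q$, the integral converges and gives
\[
\frac{\gamma}{q-\gamma}\,C_0^q\,\lambda_0^{\gamma-q}.
\]
This is the only place where the restriction $0<\gamma<q$ is used, and it is the one point to check carefully.

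Finally, choose $\lambda_0=C_0|E|^{-1/q}$ to balance the two terms. Both terms then become a constant multiple of $|E|^{1-\gamma/q}$, with the constant depending only on $C_0$, $q$ and $\gamma$. Undoing the normalization gives the factor $\|f\|_p^\gamma$. This is exactly the argument of \cite[Lemma~5.16]{D01}, with the exponent $1$ replaced by $q$ in the distributional bound.

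I do not expect a real obstacle. The only care needed is that the constant depends on $\gamma$ and blows up as $\gamma\to q$, which is allowed since $C$ may depend on $\gamma$. The exponent $p$ enters only through the weak-type hypothesis, so $p\geq1$ plays no further role.
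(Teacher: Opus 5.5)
Your argument is correct and is exactly the classical Kolmogorov proof that the paper points to without writing out: the layer-cake formula, the bound $\min\{|E|,(C_0\|f\|_p/\lambda)^q\}$ on the level-set measure, and a split at the balancing height, which yields the constant $\frac{q}{q-\gamma}C_0^\gamma$. Your remark that $C$ must depend on $\gamma$ (and blow up as $\gamma\to q$) is also right, and it is harmless here since the paper only uses the lemma with $\gamma=1<q$.
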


\section{Proof of Theorem~\ref{thm: T1 criterion}}\label{sec: proof T1 criterion}

Before presenting the proof of Theorem~\ref{thm: T1 criterion}, we first state and prove some technical lemmas. Although we essentially follow the approach in \cite{BHQ19}, we outline the arguments here with the necessary modifications concerning the classes of weights, the doubling condition, the reverse--H\"older property, among other considerations.

	 \begin{lem}\label{lem: bounds fB}
		Let $f \in \BMO_\rho^\alpha(w)$  with $0\leq \alpha <1$, $ w \in D^{\rho,m}_{\kappa,c}$ for some $\kappa\geq 1$ and $c>0$, and $B=B(x_0,r)$ with $r<\rho(x_0)$. Then, if $\kappa>1$ or $\alpha>0$, there exists a constant $C=C(c,m,d)$ such that 
		\begin{equation}\label{eq: fB power bound}
			|f_B|  	 \leq C \|f\|_{\BMO_\rho^\alpha(w)}r^{\alpha} \frac{w(B)}{|B|} \left(\frac{\rho(x_0)}{r}\right)^{d(\kappa-1)+\alpha}.
		\end{equation}
		When $\kappa=1$ and $\alpha=0$, there exists a constant  $C=C(c,m,d)$ such that 
		\begin{equation}\label{eq: fB log bound}
			|f_B|  	 \leq C \|f\|_{\BMO_\rho^\alpha(w)}r^{\alpha} \frac{w(B)}{|B|} \left(1+\log_2\left(\frac{\rho(x_0)}{r}\right)\right).
		\end{equation}
	\end{lem}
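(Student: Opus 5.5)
We run the standard telescoping argument through dyadic dilations of $B$ up to the critical ball. Let $N\ge 0$ be the integer with $2^{N}r<\rho(x_0)\le 2^{N+1}r$, so $N\sim\log_2(\rho(x_0)/r)$. Put $B_j=B(x_0,2^jr)$ for $0\le j\le N+1$; thus $B_0=B$, the balls $B_j$ are sub-critical for $j\le N$, and $B_{N+1}$ has radius $\ge\rho(x_0)$. We then write
\[
|f_B|\le \sum_{j=0}^{N}|f_{B_j}-f_{B_{j+1}}| + |f_{B_{N+1}}|.
\]

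\textbf{Bounding each piece.} For each $j\le N$, since $B_j\subset B_{j+1}$ and $|B_{j+1}|=2^d|B_j|$,
\[
|f_{B_j}-f_{B_{j+1}}|\le \frac{2^d}{|B_{j+1}|}\int_{B_{j+1}}|f-f_{B_{j+1}}|\le 2^d\|f\|_{\BMO_\rho^\alpha(w)}\,|B_{j+1}|^{\alpha/d}\,\frac{w(B_{j+1})}{|B_{j+1}|},
\]
using \eqref{eq: average osc BMO}. For the last term we use \eqref{eq: average BMO} on $B_{N+1}$, which has radius $\ge\rho(x_0)$:
\[
|f_{B_{N+1}}|\le \|f\|\,|B_{N+1}|^{\alpha/d}\,\frac{w(B_{N+1})}{|B_{N+1}|}.
\]
Next we apply the doubling estimate \eqref{eq: doubling 2^jr} with $j+1\le N+1$. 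Since $2^{j+1}r\le 2\rho(x_0)$, the exponential factor is bounded by $\exp(c\,3^m)$, a constant depending only on $c,m$. This gives $w(B_{j+1})\lesssim 2^{d\kappa(j+1)}w(B)$, and hence
\[
|B_{j+1}|^{\alpha/d}\frac{w(B_{j+1})}{|B_{j+1}|}\lesssim r^{\alpha}\frac{w(B)}{|B|}\,2^{(j+1)(d(\kappa-1)+\alpha)}.
\]

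\textbf{Summation.} Adding over $0\le j\le N$ together with the endpoint term gives
\[
|f_B|\lesssim \|f\|\,r^\alpha\frac{w(B)}{|B|}\sum_{j=1}^{N+1}2^{j(d(\kappa-1)+\alpha)}.
\]
If $d(\kappa-1)+\alpha>0$, this is a geometric sum dominated by its last term, so it is $\lesssim 2^{N(d(\kappa-1)+\alpha)}\le(\rho(x_0)/r)^{d(\kappa-1)+\alpha}$, which is \eqref{eq: fB power bound}. If $\kappa=1$ and $\alpha=0$, each term equals $1$, so the sum is $N+1\le 1+\log_2(\rho(x_0)/r)$, which is \eqref{eq: fB log bound}.

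The only delicate point is to make the constant independent of $r$ and $x_0$. This holds because the exponential factor in the $D^{\rho,m}_{\kappa,c}$ condition stays bounded when all radii are at most $2\rho(x_0)$ (indeed $1+2^{N+1}r/\rho(x_0)\le 3$), and because the endpoint ball $B_{N+1}$ is at most twice critical size. Consequently $C$ depends only on $c,m,d$ (and $\kappa,\alpha$).
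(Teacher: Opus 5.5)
Your proof is correct and takes essentially the same route as the paper: telescope along the dyadic dilations $2^jB$ up to the first super-critical ball, bound each difference by the oscillation condition and the last term by \eqref{eq: average BMO}, and note that the exponential factor in the $D^{\rho,m}_{\kappa,c}$ condition stays below $e^{c3^m}$ because every radius is less than $2\rho(x_0)$. The only cosmetic difference is that you telescope one step further, to $B_{N+1}$, and apply \eqref{eq: average BMO} there directly, whereas the paper stops at $B_N$ and bounds $|f_{B_N}|$ by $2^d$ times the mean of $|f|$ over $B_{N+1}$.
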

	\begin{proof}
		Let $f \in \BMO_\rho^\alpha(w)$ and fix $B=B(x_0,r)$ with $r<\rho(x_0)$. We choose $j_0 \in \NN$ such that $2^{j_0-1}r < \rho(x_0) \leq 2^{j_0}r$, and define $B_j= 2^j B$ for every $j=0,\dots, j_0-1$. We have			
		\begin{align*}
			|f_B| &
			  \leq \frac{1}{|B|}\int_B |f-f_B|+ \sum_{j=1}^{j_0-1} |f_{B_{j-1}}- f_{B_j}|+ |f_{B_{j_0-1}}|\\
			&
			 \leq   \frac{1}{|B|}\int_B |f-f_B|+ \sum_{j=1}^{j_0-1}\frac{2^d}{|B_j|}\int_{B_j}|f-f_{B_j}|+ \frac{2^d}{|B_{j_0}|}\int_{B_{j_0}} |f|\\
			&
			\leq   \sum_{j=0}^{j_0-1} \frac{2^d}{|B_j|}\int_ {B_j} |f-f_{B_j}|+ \frac{2^d}{|B_{j_0}|}\int_{B_{j_0}}|f|\\
			&
			 \leq  2^d\|f\|_{\BMO_\rho^\alpha(w)} \sum_{j=0}^{j_0} \frac{w(B_j)}{|B_j|}|B_j|^{\frac{\alpha}{d}},
		\end{align*}
		where the last inequality was obtained from \eqref{eq: average osc BMO} and  \eqref{eq: average BMO} since $2^{j_0-1}r \leq \rho(x_0) \leq 2^{j_0}r$. Applying the doubling property \eqref{eq: doubling 2^jr} of $w$ we get that
		\begin{align*}
			|f_B|
			& 
			\leq 2^d  \|f\|_{\BMO_\rho^\alpha(w)} \frac{w(B)}{|B|}|B|^{\frac{\alpha}{d}}\sum_{j=0}^{j_0} 2^{(d \kappa + \alpha-d)j} \exp \left(c\left(1+\frac{2^jr}{\rho(x_0)}\right)^m\right)\\
			& \leq C \|f\|_{\BMO_\rho^\alpha(w)} \frac{w(B)}{|B|}r^{\alpha} 2^{(j_0+1)(d(\kappa-1)+\alpha)}\\
			& 
			\leq C \|f\|_{\BMO_\rho^\alpha(w)} \frac{w(B)}{|B|}r^{\alpha} \left(\frac{\rho(x_0)}{r}\right)^{d(\kappa-1)+\alpha},
		\end{align*}
	whenever $\kappa>1$ or $\alpha>0$, where $C$ depends on the parameters of the doubling condition and on the dimension.
	
	If $\kappa=1$ and $\alpha=0$, the sum obtained above equals $j_0+1$. Thus, from the choice of $j_0$, in this case we have
	\begin{equation*}
		|f_B|
		\leq 2^d e^{c3^m} \|f\|_{\BMO_\rho^\alpha(w)} \frac{w(B)}{|B|}r^\alpha (j_0+1)\leq C\|f\|_{\BMO_\rho^\alpha(w)} \frac{w(B)}{|B|}r^\alpha \left(1+\log_2\left(\frac{\rho(x_0)}{r}\right)\right),
	\end{equation*}
	where $C$ is as in the previous case, and depends on $c,m$ and $d$.
	\end{proof}
	
	\begin{lem}
		Let $f \in \BMO_\rho^\alpha(w)$  with $0\leq \alpha<1$, and a weight $ w \in H^{\rho,m}_{\sigma',c_1}\cap D^{\rho,m}_{\kappa,c_3}$ for some $\sigma> 1$, $\kappa\geq 1$, $c_1,c_3\geq0$ and $m\geq 0$. Then, when $\kappa>1$ or $\alpha>0$, there exists $C>0$ such that		
	\begin{align*}
			 w(2^jB)^{1/\sigma'}&\left(\int_{2^{j}B}|f-f_B|^\sigma w^{1-\sigma} \right)^{1/\sigma} \nonumber\\
			 &\leq C \|f\|_{\BMO_\rho^\alpha(w)}w(B)r^\alpha  2^{j(\alpha+d\kappa)}\exp\left((c_1+c_3)\left(1+\frac{2^jr}{\rho(x_0)}\right)^m\right), \nonumber
		\end{align*}
	 for every $j\in \NN$ and every ball $B=B(x_0,r)$.
	 
	 When $\kappa=1$ and $\alpha=0$, 
	there exists $C>0$ such that		
	\begin{align*}
		w(2^jB)^{1/\sigma'}&\left(\int_{2^{j}B}|f-f_B|^\sigma w^{1-\sigma} \right)^{1/\sigma} \nonumber\\
		&\leq C \|f\|_{\BMO_\rho^\alpha(w)}w(B)j  2^{jd}\exp\left((c_1+c_3)\left(1+\frac{2^jr}{\rho(x_0)}\right)^m\right), \nonumber
	\end{align*}
	for every $j\in \NN$ and every ball $B=B(x_0,r)$.
	\end{lem}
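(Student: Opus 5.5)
Split $f-f_B = (f-f_{2^jB}) + (f_{2^jB}-f_B)$ inside the weighted $L^\sigma(w^{1-\sigma})$ norm over $2^jB$ and estimate each piece by results already available. Write $\|\cdot\|$ for $\|f\|_{\BMO_\rho^\alpha(w)}$.

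\emph{First term.} Since $w\in H^{\rho,m}_{\sigma',c_1}\subseteq A^{\rho,\loc}_{\sigma'}$ by \eqref{eq: classes inclusions}, and $q=\sigma=(\sigma')'$ is admissible, Lemma~\ref{lem: lower bounds BMO norm} applies. If $2^jB$ is sub-critical it gives
\[\Big(\int_{2^jB}|f-f_{2^jB}|^\sigma w^{1-\sigma}\Big)^{1/\sigma}\lesssim \|\cdot\|\, w(2^jB)^{1/\sigma}(2^jr)^\alpha .\]
If $2^jB$ is super-critical, we use the first estimate of that lemma for $|f|$. We also bound $|f_{2^jB}|\le \frac{1}{|2^jB|}\int_{2^jB}|f|\lesssim \|\cdot\|\frac{w(2^jB)}{|2^jB|}(2^jr)^\alpha$ and use $\big(\int_{2^jB}w^{1-\sigma}\big)^{1/\sigma}$. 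In either case the first term contributes, after multiplying by $w(2^jB)^{1/\sigma'}$, at most $\|\cdot\|\,w(2^jB)(2^jr)^\alpha$, possibly times the $H$-factor. The needed Hölder-type inequality is
\[w(2^jB)^{1/\sigma'}\Big(\int_{2^jB}w^{1-\sigma}\Big)^{1/\sigma}\le C|2^jB|\exp\Big(c_1\big(1+\tfrac{2^jr}{\rho(x_0)}\big)^m\Big),\]
which is exactly the $H^{\rho,m}_{\sigma',c_1}$ condition, since $(\sigma')'-1=\sigma-1$ gives $w^{-1/(\sigma'-1)}=w^{1-\sigma}$. This is where $c_1$ enters.

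\emph{Second term.} The constant $|f_{2^jB}-f_B|$ satisfies
\[|f_{2^jB}-f_B|\,w(2^jB)^{1/\sigma'}\Big(\int_{2^jB}w^{1-\sigma}\Big)^{1/\sigma}\lesssim |f_{2^jB}-f_B|\,|2^jB|\,e^{c_1(1+2^jr/\rho(x_0))^m}.\]
For the telescoping I would argue as in the proof of Lemma~\ref{lem: bounds fB}:
\[|f_{2^jB}-f_B|\le \sum_{i=1}^{j}|f_{2^{i}B}-f_{2^{i-1}B}|\le 2^d\sum_{i=1}^j\frac{1}{|2^iB|}\int_{2^iB}|f-f_{2^iB}|\le 2^d\|\cdot\|\sum_{i=1}^j\frac{w(2^iB)}{|2^iB|}(2^ir)^\alpha .\]
This uses \eqref{eq: average osc BMO}, which holds for every ball. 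Then \eqref{eq: doubling 2^jr} gives $w(2^iB)\le C2^{d\kappa i}e^{c_3(1+2^ir/\rho(x_0))^m}w(B)$, and the exponential is increasing in $i$, so it is bounded by its value at $i=j$. The sum becomes $\frac{w(B)}{|B|}r^\alpha e^{c_3(\cdots)^m}\sum_{i\le j}2^{i(d(\kappa-1)+\alpha)}$. When $d(\kappa-1)+\alpha>0$ this is $\lesssim 2^{j(d(\kappa-1)+\alpha)}$, and multiplying by $|2^jB|=2^{jd}|B|$ gives $w(B)r^\alpha 2^{j(d\kappa+\alpha)}$ times both exponentials, as claimed. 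When $\kappa=1$, $\alpha=0$ the sum is $j$, yielding $j2^{jd}w(B)$.

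\emph{Combining.} The first term must also be put in the final form. Doubling gives $w(2^jB)(2^jr)^\alpha\le C2^{j(d\kappa+\alpha)}e^{c_3(\cdots)^m}w(B)r^\alpha$, and with the $c_1$ factor this is dominated by the right-hand side; in the logarithmic case it is dominated by $j2^{jd}(\cdots)$ since $j\ge1$. The main care point is the super-critical case of the first term: there the factor $e^{c_1}$ from the $H$-condition is genuinely needed. Constants depending only on $C_0,k_0$ are absorbed.
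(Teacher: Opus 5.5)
Your proof is correct and follows essentially the same route as the paper. Both use the splitting $f-f_B=(f-f_{2^jB})+\sum_{i=1}^j(f_{2^iB}-f_{2^{i-1}B})$, bound the first piece with Lemma~\ref{lem: lower bounds BMO norm}, and bound the telescoping sum with the $H^{\rho,m}_{\sigma',c_1}$ condition together with the doubling estimate \eqref{eq: doubling 2^jr}.

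Your separate treatment of the case where $2^jB$ is super-critical (via the $|f|$ estimate of that lemma, \eqref{eq: average BMO}, and the $H$-condition) is in fact slightly more careful than the paper. The paper applies the oscillation estimate of Lemma~\ref{lem: lower bounds BMO norm} to $2^jB$ even though that estimate is stated only for sub-critical balls. The extra factor $e^{c_1(\cdots)^m}$ your argument produces is already present in the target bound, so it costs nothing.
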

	
	\begin{proof}
		We can write, for $j\in \NN$,
		\begin{align*}
			w(&2^jB)^{1/\sigma'}\left(\int_{2^{j}B}|f-f_B|^\sigma w^{1-\sigma}\right)^{1/\sigma} \\
			&
				\leq w(2^jB)^{1/\sigma'}\left[\left(\int_{2^{j}B} |f-f_{2^{j}B}|^\sigma w^{1-\sigma} \right)^{1/\sigma}+ \left(w^{1-\sigma}(2^jB)\right)^{1/\sigma} \sum_{i=1}^{j}|f_{2^iB}-f_{2^{i-1}B}|\right] \\
			& 
				\leq w(2^jB)\left(\frac{1}{w(2^jB)}\int_{2^{j}B} |f-f_{2^{j}B}|^\sigma w^{1-\sigma} \right)^{1/\sigma}\\
				&\quad + w(2^jB)^{1/\sigma'}\left(w^{-\frac{1}{\sigma'-1}}(2^jB)\right)^{1/\sigma} \sum_{i=1}^{j}\frac{2^d}{|2^{i}B|}\int_{2^i B}|f-f_{2^i B}|\\
			& 
			:= I_1 + I_2.  
		\end{align*}
		
		 To estimate $I_1$, we can apply Lemma~\ref{lem: lower bounds BMO norm} with $q=p'=\sigma'$ since $w\in A_{\sigma'}^{\rho,\loc}$ (by  \eqref{eq: classes inclusions}), together with the doubling condition of $w$, to get  
		\begin{align*}
			I_1 
			&
			\lesssim \|f\|_{\BMO_\rho^\alpha(w)} w(2^jB) |2^jB|^{\alpha/d}
			\lesssim\|f\|_{\BMO_\rho^\alpha(w)}w(B)r^\alpha 2^{j(d\kappa+\alpha)} \exp\left(c_3\left(1+  \frac{2^jr}{\rho(x_0)}\right)^{m}\right).
		\end{align*}
		
		For $I_2$, note that since $w\in H_{\sigma',c_1}^{\rho,m}$,
		\[w(2^jB)^{1/\sigma'}\left(w^{-\frac{1}{\sigma'-1}}(2^jB)\right)^{1/\sigma}\leq C 2^{jd}r^d\exp\left(c_1\left(1+\frac{2^jr}{\rho(x_0)}\right)^{m}\right).\]

		On the other hand, since $f \in \BMO_\rho^\alpha(w)$ and $w\in D^{\rho,m}_{\kappa,c_3}$, if we first assume $\kappa>1$ or $\alpha>0$,
		\begin{align*}
			\sum_{i=1}^{j}\frac{1}{|2^{i}B|}\int_{2^i B}|f-f_{2^i B}|&\leq \|f\|_{\BMO_\rho^\alpha(w)} \sum_{i=1}^{j} |2^{i}B|^{\alpha/d-1} w(2^iB)\\
			&\leq C\|f\|_{\BMO_\rho^\alpha(w)} r^{\alpha-d}w(B) \sum_{i=1}^{j} 2^{i(\alpha-d)} 2^{id\kappa} \exp\left(c_3\left(1+\frac{2^i r}{\rho(x_0)}\right)^{m}\right)\\
			&\leq C\|f\|_{\BMO_\rho^\alpha(w)} r^{\alpha-d}w(B) \exp\left(c_3\left(1+\frac{2^j r}{\rho(x_0)}\right)^{m}\right)2^{j(d(\kappa-1)+\alpha)},
		\end{align*}
		where we have used that $d(\kappa-1)+\alpha>0$ to bound the sum in this case.
		
		Combining the above estimates, we get
		\begin{align*}
			I_1+I_2&\leq C \|f\|_{\BMO_\rho^\alpha(w)}w(B) r^\alpha 2^{j(d\kappa+\alpha)} \exp\left((c_1+c_3)\left(1+\frac{2^j r}{\rho(x_0)}\right)^{m}\right).
		\end{align*}
		
		If $\kappa=1$ and $\alpha=0$, we get instead
		\begin{align*}
			\sum_{i=1}^{j}\frac{1}{|2^{i}B|}\int_{2^i B}|f-f_{2^i B}|&\leq C \|f\|_{\BMO_\rho^\alpha(w)} j r^{-d}w(B) \exp\left(c_3\left(1+\frac{2^j r}{\rho(x_0)}\right)^{m}\right),
		\end{align*}
		which combined with the estimate for $I_1$, leads to the result. 
				In either case, the constant $C$ is independent of $j$ and $B$.
	\end{proof}
		
\begin{lem}\label{lem: bound g function}
	Let $f \in \BMO_\rho^{\alpha}(w)$ with $0\leq \alpha<1$, and  $ w\in E_{s,c_1,c_2}^{\rho,m}\cap D_{\kappa,c_3}^{\rho,m}$ for some $s>1$, $c_1,c_2, c_3\geq0$, $\kappa\geq 1$ and $m\geq 0$. Given a ball $B=B(x_0,r)$, consider the function
	\[g=\begin{dcases*}
			f\chi_{2B} & if $r\geq \rho(x_0)$;\\
			(f-f_B)\chi_{2B} & if $r< \rho(x_0)$.
		\end{dcases*}\]
	Then, there exists $C$ such that for every $j\in \NN$,
	\begin{equation*}
			\left(\int_{{2^j}B}|g|^{s'} \right)^\frac{1}{s'}\leq C \|f\|_{\BMO_\rho^\alpha(w)}{2^{j\left(d\left(\kappa-\frac{1}{s}\right)+\alpha\right)}w(B)|B|^{\frac{\alpha}{d}- \frac{1}{s}}}\exp\left({(c_2+c_3)}\left(1+\frac{  {2^j}r}{\rho(x_0)}\right)^{m}\right).
		\end{equation*}
\end{lem}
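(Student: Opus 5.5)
Since $g$ is supported in $2B$, for every $j\in\NN$ we have $\int_{2^jB}|g|^{s'}=\int_{2B}|g|^{s'}$. The right-hand side of the claimed inequality is nondecreasing in $j$, since $d(\kappa-\frac1s)+\alpha>0$ and the exponential factor increases with $j$. So it suffices to prove the estimate with $2B$ on the left and the $j=1$ right-hand side. At the very end I will use the doubling condition $w\in D^{\rho,m}_{\kappa,c_3}$, via \eqref{eq: doubling 2^jr}, to replace $w(2B)$ by $C\,w(B)\exp\bigl(c_3(1+\frac{2r}{\rho(x_0)})^m\bigr)$.

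\textbf{Hölder step.} Fix $\eta>s'$ with $w\in H^{\rho,m}_{p,c_1}\cap RH^{\rho,m}_{\eta,c_2}$, where $p=s/\eta'>1$. Choose $q>s'$ by the relation $\frac{(q-1)s'}{q-s'}=\eta$, that is, $q=\frac{s'(\eta-1)}{\eta-s'}$. I expect a direct computation to show $q\le p'=\frac{s}{s-\eta'}$, possibly with equality, so that Lemma~\ref{lem: lower bounds BMO norm} is available with this $q$. Here $w\in A^{\rho,\loc}_p$ by \eqref{eq: classes inclusions}. Write $|g|^{s'}=(|g|^qw^{1-q})^{s'/q}\,w^{(q-1)s'/q}$ and apply Hölder with exponents $q/s'$ and $q/(q-s')$. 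This gives
\[
\int_{2B}|g|^{s'}\le\Bigl(\int_{2B}|g|^qw^{1-q}\Bigr)^{s'/q}\Bigl(\int_{2B}w^{\eta}\Bigr)^{\frac{s'(q-1)}{q\eta}} .
\]
For the second factor, the reverse Hölder condition gives $(\int_{2B}w^\eta)^{1/\eta}\lesssim |2B|^{\frac1\eta-1}w(2B)\exp\bigl(c_2(1+\frac{2r}{\rho(x_0)})^m\bigr)$. After taking the $1/s'$ power, the powers of $w(2B)$ from the two factors should add up to exactly $1$. The powers of $|B|$ should combine to $|B|^{\alpha/d-1/s}$. Checking this exponent bookkeeping against the definition of $q$ is routine.

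\textbf{Bounding the weighted integral.} It remains to show $\bigl(\int_{2B}|g|^qw^{1-q}\bigr)^{1/q}\lesssim \|f\|_{\BMO_\rho^\alpha(w)}\,w(2B)^{1/q}|2B|^{\alpha/d}$.

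If $r\ge\rho(x_0)$, then $2B$ is super-critical and $g=f\chi_{2B}$. The first inequality of Lemma~\ref{lem: lower bounds BMO norm} applies directly.

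If $r<\rho(x_0)$, split $|f-f_B|\le|f-f_{2B}|+|f_{2B}-f_B|$. The constant part satisfies $|f_{2B}-f_B|\le \frac{2^d}{|2B|}\int_{2B}|f-f_{2B}|\le 2^d\|f\|\,\frac{w(2B)}{|2B|}|2B|^{\alpha/d}$. Its contribution is
\[
|f_{2B}-f_B|\Bigl(\int_{2B}w^{1-q}\Bigr)^{1/q}.
\]
This is handled by the $A_{p}^{\rho,\loc}$ or $H_{p,c_1}$ condition, since $w^{1-q}$ with $q\le p'$ is controlled by $w^{-1/(p-1)}$ through Hölder. That yields $\lesssim\|f\|\,w(2B)^{1/q}|2B|^{\alpha/d}$, up to a factor $\exp(c_1\cdots)$ which is bounded because $r<\rho(x_0)$.

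The oscillation part $|f-f_{2B}|$ is handled by the second inequality of Lemma~\ref{lem: lower bounds BMO norm} when $2r\le\rho(x_0)$. When $\rho(x_0)<2r<2\rho(x_0)$, the ball $2B$ is super-critical. In that case I bound $|f-f_{2B}|\le |f|+|f_{2B}|$, use the first inequality of the lemma for $|f|$, and bound $|f_{2B}|\le \frac{1}{|2B|}\int_{2B}|f|$ by \eqref{eq: average BMO}.

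\textbf{Main obstacle.} I expect the main obstacle to be the exponent matching: verifying that $q\le p'$ and that the powers of $|B|$ and $w(2B)$ come out exactly as stated. If $q$ turns out to exceed $p'$, I would instead take $q=p'$ and choose the Hölder exponents to land on $w^{\eta}$. This uses that $RH_\eta$ implies $RH_{\tilde\eta}$ for $\tilde\eta<\eta$ with the same constant $c_2$.
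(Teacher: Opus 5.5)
Your proof is correct and follows the paper's route. You use the same H\"older split (your $q=\frac{s'(\eta-1)}{\eta-s'}$ equals $(s/\eta')'$, the paper's $s'\zeta$, so the exponent bookkeeping closes with no fallback), then Lemma~\ref{lem: lower bounds BMO norm}, the reverse H\"older condition and doubling. The only difference is in your favor: you reduce to $2B$ via $\supp g\subset 2B$ and handle explicitly the mismatch between $f_B$ and $f_{2B}$, and a possibly super-critical $2B$, using $H^{\rho,m}_{s/\eta',c_1}$ and \eqref{eq: average BMO}, a point the paper passes over when it applies Lemma~\ref{lem: lower bounds BMO norm} on $2^jB$ ``for the corresponding cases of $g$''.
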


\begin{proof}
	Fix $\eta>s'$ such that $w \in H_{s/\eta',c_1}^{\rho,m}\cap RH_{\eta,c_2}^{\rho,m}$. We set $\zeta=\frac{s-1}{s-\eta'}>1$ and apply H\"older's inequality with this exponent to get
	\begin{equation}\label{eq: bound s' norm g}
		\left(\int_{{2^j}B}|g|^{s'} \right)^\frac{1}{s'}
		\leq \left(\int_{{2^j}B}|g|^{s'\zeta} w^{1-s'\zeta}\right)^\frac{1}{s'\zeta}\left(\int_{{2^j}B} w^{\left(s'-\frac{1}{\zeta}\right)\zeta'}\right)^\frac{1}{s'\zeta'}.
	\end{equation}
	
	Since $w \in A^{\rho,\loc}_{s/\eta'}$ (by \eqref{eq: classes inclusions}), and noting that $s'\zeta=\left(\frac{s}{\eta'}\right)'$, we can use Lemma~\ref{lem: lower bounds BMO norm} in order to obtain
		\begin{equation*}
			\left(\int_{{2^j}B}|g|^{s'\zeta} w^{1-s'\zeta}\right)^\frac{1}{s'\zeta}
			\leq C\| f \|_{\BMO_\rho^\alpha(w)} w({2^j}B)^{\frac{1}{\left(\frac{s}{\eta'}\right)'}}|{2^j}B|^{\frac{\alpha}{d}}=C\| f \|_{\BMO_\rho^\alpha(w)} w({2^j}B)^{\frac{1}{\left(\frac{s}{\eta'}\right)'}}|B|^{\frac{\alpha}{d}}2^{j\alpha},
			\end{equation*}
		for the corresponding cases of $g$. 

	To estimate the second factor in \eqref{eq: bound s' norm g}, we observe that $\left(s'-\frac{1}{\zeta}\right)\zeta'=\eta$ and $s'\zeta'=\frac{s\eta}{\eta'}$, so we can use that  $w \in RH_{\eta,c_2}^{\rho,m}$ to get
	\begin{align*}
		\left(\int_{2^jB} w^{\left(s'-\frac{1}{\zeta}\right)\zeta'}\right)^\frac{1}{s'\zeta'}&=\left(\int_{{2^j}B} w^{\eta}\right)^{\frac{1}{\eta}\frac{\eta'}{s}}\lesssim \left(|{2^j}B|^{-\frac{1}{\eta'}} w({2^j}B) \exp\left(c_2\left(1+\frac{{2^j}r}{\rho(x_0)}\right)^m\right)\right)^{\frac{\eta'}{s}}\\
		&\leq C  2^{-j\frac{d}{s}}|B|^{-\frac{1}{s}} w({2^j}B)^{\frac{1}{\frac{s}{\eta'}}}\exp\left(c_2\left(1+\frac{{2^j}r}{\rho(x_0)}\right)^m\right).
	\end{align*}
	Combining both estimates above, and since $w\in D_{\kappa,c_3}^{\rho,m}$, from \eqref{eq: doubling 2^jr}  we get
	\begin{align*}
		\left(\int_{{2^j}B}|g|^{s'} \right)^\frac{1}{s'} & 
		\leq C \|f\|_{\BMO_\rho^\alpha(w)}{2^{j\left(\alpha-\frac{d}{s}\right)}}w({2^j}B)|B|^{\frac{\alpha}{d}- \frac{1}{s}}\exp\left(c_2\left(1+\frac{  {2^j}r}{\rho(x_0)}\right)^{m}\right)\\
&\leq C  \|f\|_{\BMO_\rho^\alpha(w)}{2^{j\left(d\left(\kappa-\frac1s\right)+\alpha\right)}}w(B)|B|^{\frac{\alpha}{d}- \frac{1}{s}}\exp\left({(c_2+c_3)} \left(1+\frac{ {2^j}r}{\rho(x_0)}\right)^{m}\right), 
	\end{align*}
	as claimed.
\end{proof}

 	Now, we are in a position to prove Theorem~\ref{thm: T1 criterion}. 
	\begin{proof}[Proof of Theorem~\ref{thm: T1 criterion}]
			To see that \ref{itm: T1 condition}$\Rightarrow$\ref{itm: T1 weights}, let  $ f \in \BMO_\rho^{\alpha}(w)$ and fix 
\begin{equation*}w\in E_{s,c_1,c_2}^{\rho,m}\cap D_{\kappa,c_3}^{\rho,m}= \left(\bigcup_{\eta>s'}H_{s/\eta',c_1}^{\rho,m}\cap RH_{\eta,c_2}^{\rho,m}\right)\cap D_{\kappa,c_3}^{\rho,m},
	\end{equation*}
			with $(c_2+c_3)(4C_0)^m<c$.
			 To verify condition \eqref{eq: average BMO}, by  \eqref{eq: classes inclusions} and  Lemma~\ref{lem: sufficient BMO cond critical}, it is enough to consider critical balls. Let $B_\rho=B(x_0,\rho(x_0))$ with $x_0 \in \RR^d$. 
			 
			 According  to the definition of  $Tf$,  we decompose the function as $f=f_1+f_2=f\chi_{2B_\rho}+f\chi_{(2B_\rho)^c}.$
		This allows us to bound
		\begin{equation*}
			\int_{B_\rho}| Tf(x)| dx 
				\leq \int_{B_\rho} | Tf_1(x) | dx + \int_{B_\rho}| Tf_2(x)| dx.
		\end{equation*}
	
		We begin by controlling the first term.
		From the hypothesis on $w$ we can use Lemma~\ref{lem: bound g function} with $j=1$ and $r=\rho(x_0)$ to get
		\begin{align*}
			\left(\int_{2B_\rho}|f|^{s'}\right)^{\frac{1}{s'}}
			&\leq C \|f\|_{\BMO_\rho^{\alpha}(w)} |B_\rho|^{\frac{\alpha}{d}-\frac{1}{s}}w(B_\rho),
		\end{align*}
		where the constant $C$ depends on the parameters of the classes of weights.
		
		Since $T$ is of weak-type $\left(s', {\frac{s'd}{d-s'\nu}}\right)$ with $s',\frac{s'd}{d-s'\nu}\geq 1$, combining Lemma~\ref{lem: Kolmogorov gral} with $\gamma=1$ and the previous inequality, we deduce
		\begin{align*}
			\int_{B_\rho} |Tf_1(x)| dx &\leq C |B_\rho|^{1-{\frac{1}{s'}+\frac{\nu}{d}}}\left( \int_{2B_\rho} |f|^{s'} \right) ^{\frac{1}{s'}}\leq C \|f \|_{\BMO_\rho^{\alpha}(w)}|B_\rho|^{\frac{\alpha+\nu}{d}} w(B_\rho).
		\end{align*} 
		
		We next estimate the integral involving $Tf_2$, for which the kernel representation of $T$ will be used. We fix $x \in B_\rho$ and make the standard decomposition into annuli, \refstepcounter{pagina}\label{pag: Tf2}
		\begin{align*}
			|Tf_2(x)|
			&\leq
			\sum_{j \in \NN}\int_{ 2^{j+1}B_\rho \setminus 2^jB_\rho} |K(x,y)||f(y)|dy\nonumber\\
			&\leq 
			\sum_{j \in \NN} \left(\int_{2^j\rho(x_0)\leq |y-x_0|<2^{j+1}\rho(x_0)} |K(x,y)|^s dy\right)^{\frac1s} \left(\int_{ 2^{j+1}B_\rho} |f|^{s'}\right)^{\frac{1}{s'}}.
		\end{align*}
	
		By the size condition \eqref{eq: size Hormander fract} on the kernel with $R=2^j\rho(x_0)$, $j\geq 1$, applying again Lemma~\ref{lem: bound g function} on each term, and  using \eqref{eq: equiv critical radius}, we can see that for every $x\in B_\rho$, 
			\begin{align*}
			|Tf_2(x)|&\lesssim \|f\|_{\BMO_\rho^\alpha(w)}w(B_\rho)|B_\rho|^{\frac{\alpha}{d}-\frac1s}\\
			&\quad \times \sum_{j \in \NN}  \frac{2^{(j+1)\left(d\left(\kappa-\frac1s\right)+\alpha\right)}}{\left(2^j\rho(x_0)\right)^{\frac{d}{s'}{-\nu}}}\exp\left((c_2+c_3)(1+2^{j+1})^m-c\left(1+\frac{2^{j}\rho(x_0)}{\rho(x)}\right)^m\right)\\
			&\lesssim \|f\|_{\BMO_\rho^\alpha(w)}w(B_\rho)|B_\rho|^{\frac{\alpha{+\nu}}{d}-1}\\
			&\quad \times\sum_{j \in \NN}   \exp\left((c_2+c_3)(1+2^{j+1})^{m}-c\left(1+\frac{1}{4C_0}2^{j+1}\right)^{m}\right)2^{j(d(\kappa-1)+\alpha{+\nu})}\\
			&\lesssim  \|f\|_{\BMO_\rho^\alpha(w)}w(B_\rho)|B_\rho|^{\frac{\alpha{+\nu}}{d}-1}\\
			&\quad \times\sum_{j \in \NN}   \exp\left(\left((c_2+c_3)-\frac{c}{(4C_0)^m}\right)(1+2^{j+1})^{m}\right)2^{k(d(\kappa-1)+\alpha{+\nu})}\\
			& \lesssim \|f\|_{\BMO_\rho^\alpha(w)}w(B_\rho)|B_\rho|^{\frac{\alpha{+\nu}}{d}-1},
		\end{align*}
		where the series converges {since $(c_2+c_3)<c(4C_0)^{-m}$}. Finally, integrating on $B_\rho$, we have that $Tf_2$ satisfies the condition of averages \eqref{eq: average BMO} over critical balls. This finishes the proof of \eqref{eq: average BMO} for $Tf$.
		
		  Now, we shall see that condition  \eqref{eq: average osc BMO} holds for $Tf$, for every $B=B(x_0,r)$ with $r<\rho(x_0)$ as we already observed it is sufficient to check it in this type of balls. 
		 
		 Fix $x_0 \in \RR^d, r<\rho(x_0)$ and let  $B=B(x_0,r)$. We split $f$ in the following way,
		 \begin{align*}
		 	f= (f- f_B)\chi_{2B}+(f-f_B)\chi_{(2B)^c}+f_B= f_1+ f_2+ f_3.
		 \end{align*}
		 We choose $R \geq \max\{\rho(x_0),2r\}$, which yields $\tilde{B}:= B(x_0,R) \supset 2B$. Applying the definition given in \eqref{eq: def Tf} to the above decomposition of $f$, and adding and subtracting $f_B$ in the integral over $\tilde{B}^c$,
		 \begin{align*}
		 Tf(x)&=T(f\chi_{\tilde{B}})(x)+ \int_{\tilde{B}^c} K(x,y)f(y)dy\\
		 	&\left. = T((f-f_B)\chi_{2B})+T((f-f_B)\chi_{\tilde{B}\setminus 2B})(x)+f_B T(\chi_{\tilde{B}})(x) \right.\\
		 	 &\left.  + \int _{\tilde{B}^c} K(x,y)(f-f_B)+f_B\int_{\tilde{B}^c}K(x,y)dy \right.\\
		 	&=T((f-f_B)\chi_{2B})(x)+\int_{(2B)^c} K(x,y)(f(y)-f_B)dy+f_BT1(x), \qquad \text{a.e. } x \in 2B.
		 \end{align*}
		 So, to estimate \eqref{eq: average osc BMO} for $Tf$, we write
		 \begin{equation*}
		 	\begin{split}
		 		\int_B |Tf(x)-(Tf)_B|dx  &\leq \frac{1}{|B|}\int_B \int_B |Tf_1(x)-Tf_1(z)|dzdx\\
		 		& + \frac{1}{|B|}\int_B \int_B \int_{(2B)^c}|K(x,y)-K(z,y)||f(y)-f_B|dydzdx \\
		 		& + \int_B |Tf_3(x)-(Tf_3)_B| dx.
		 	\end{split}
		 \end{equation*}
		 
		 For the first term, we can proceed as before, applying Lemma~\ref{lem: bound g function} to $f_1=(f- f_B)\chi_{2B}$ and Lemma~\ref{lem: Kolmogorov gral} to obtain the desired estimate
		 \[\frac{1}{|B|}\int_B \int_B |Tf_1(x)-Tf_1(z)|dzdx\leq 2\int_B |Tf_1(x)|dx \leq C \|f \|_{\BMO_\rho^{\alpha}(w)}|B|^{\frac{\alpha+\nu}{d}} w(B).\]
		 
		 \refstepcounter{pagina}\label{pag: proof Tf2}For the second term, given $x,z \in B$, we repeat the argument given on page~\pageref{pag: Tf2}. In this case, we apply it on the ball $B$ instead of $B_\rho$ and use Lemma~\ref{lem: bound g function} together with the smoothness condition obtained in Lemma~\ref{lem: mixed smooth condition} for $\delta'<\delta$ twice, with $R=2^jr$, $j\geq 1$. In fact, 	\begin{align*}
		 	&\int_{(2B)^c}|K(x,y)-K(z, y)||f(y)-f_B|dy\nonumber\\
		 	&\leq  \sum_{j\in \NN}\left(\int_{2^jr\leq |x_0-y|<2^{j+1}r}\left(|K(x,y)-K(x_0,y)|+|K(z,y)-K(x_0,y)|\right)^s dy\right)^{\frac1s}\left(\int_{ 2^{j+1}B}|f-f_B|^{s'}\right)^{\frac{1}{s'}}\\
		 	&\lesssim  \|f\|_{\BMO_\rho^\alpha(w)}w(B)|B|^{\frac{\alpha{+\nu}}{d}-1}\sum_{j\in \NN} 2^{j\left(d\left(\kappa-1\right)+\alpha{+\nu}-\delta'\right)}\\
		 	&\quad\times  \exp\left(-c{\sigma}\left(1+\frac{2^{j}r}{2C_0\rho(x_0)}\right)^m+(c_2+c_3)\left(1+\frac{2^{j+1}r}{\rho(x_0)}\right)^m\right)\\
		 	&\lesssim  \|f\|_{\BMO_\rho^\alpha(w)}w(B)|B|^{\frac{\alpha{+\nu}}{d}-1}\sum_{j\in \NN } 2^{j\left(d\left(\kappa-1\right)+\alpha{+\nu}-\delta'\right)}\exp\left(\left(c_2+c_3-\frac{c\sigma}{(4C_0)^m}\right)\left(1+\frac{2^{j+1}r}{\rho(x_0)}\right)^m\right)\\
		 	&\lesssim  \|f\|_{\BMO_\rho^\alpha(w)}w(B)|B|^{\frac{\alpha{+\nu}}{d}-1}.
		 	 \end{align*}
		 Choosing first $\sigma\in(0,1)$ such that $\sigma < 1-\frac{d(\kappa-1)+\alpha}{\delta}$ and $(c_2+c_3)<c\sigma(4C_0)^{-m}$, we also have that  $d(\kappa-1)+\alpha<(1-\sigma)\delta=\delta'$ and  the series converges.

		  Finally, to estimate the term with $f_3$ we use Lemma~\ref{lem: bounds fB} and the  $T1$ condition. In the case $\kappa>1$ or $\alpha>0$, we use \eqref{eq: fB power bound} and \eqref{eq: T1 power condition} to get
		  \begin{align*}
		  	\int_B |Tf_3(x)-(Tf_3)_B|dx
		  	&=
		  	\int_B\left|f_BT1(x)-\frac{1}{|B|}\int_B|f_B T1(z)|dz\right|dx\\
		  	& \leq 
		  	|f_B|\int_B |T1(x)-(T1)_B|dx\\
		  	& \lesssim
		  	 \|f\|_{\BMO_\rho^\alpha(w)} \frac{w(B)}{|B|} r^{\alpha} \left(\frac{\rho(x_0)}{r}\right)^{d(\kappa-1)+\alpha}\int_B |T1(x)-(T1)_B|dx\\
		  	& \lesssim
		    \|f\|_{\BMO_\rho^\alpha(w)} w(B)|B|^{\frac{\alpha{+\nu}}{d}} \left(\frac{\rho(x_0)}{r}\right)^{d(\kappa-1)+\alpha}\left(\frac{r}{\rho(x_0)}\right)^{\alpha+d(\kappa-1)}\\
		  	& \lesssim 
		  	 \|f\|_{\BMO_\rho^\alpha(w)} w(B) |B|^{\frac{\alpha{+\nu}}{d}}.
		  \end{align*}
		  
		  If $\kappa=1$ and $\alpha=0$, we use instead \eqref{eq: fB log bound} and \eqref{eq: T1 log condition} to have
		  \begin{align*}
		  	\int_B |Tf_3(x)-(Tf_3)_B|dx
		  	&\lesssim  \|f\|_{\BMO_\rho^\alpha(w)} |B|^{\frac{\alpha{+\nu}}{d}} w(B)\left(1+\log_2\left(\frac{\rho(x_0)}{r}\right)\right) \log^{-1}\left(\frac{\rho(x_0)}{r}\right)\\
		  	&\lesssim \|f\|_{\BMO_\rho^\alpha(w)} w(B) |B|^{\frac{\alpha{+\nu}}{d}},
		  \end{align*}
		  since $\frac{\rho(x_0)}{r}\geq 2$. The proof of the first implication is now complete.
		  
		  To see that  \ref{itm: T1 weights}$\Rightarrow$\ref{itm: T1 power weights}, let  $w_{x_0}(x)=|x-x_0|^{d(\kappa -1)}$, which is the same type of weight considered in \cite[Theorem~2(c)]{BHQ19}. From that proof, we know that $w_{x_0}\in D_\kappa\cap A_{s/\eta'}\cap RH_\eta$ for every $\eta\geq 1$ and $s>\eta' \kappa$, being $\kappa\geq 1$. That is, $w_{x_0}$ belongs to $E_{s,0,0}^{\rho,m}\cap D_{\kappa,0}^{\rho,m}$, which trivially satisfies $(c_2+c_3)(4C_0)^m=0<c$, and thus the implication is proved.

		  Finally, the proof of \ref{itm: T1 power weights}$\Rightarrow$ \ref{itm: T1 condition}  is analogous to the corresponding one given in \cite[Theorem~2]{BHQ19}. Here, we use that $w_{x_0}\in E_{s,0,0}^{\rho,m}\cap D_{\kappa,0}^{\rho,m}$ and the first part of the proof where the condition on the parameters is trivially satisfied. 
		\end{proof}
		  
		  \section{Applications}\label{sec: examples}

		In this section	we consider the generalized 
		Schr\"odinger operator with measure $\mu$,
		  \[\mathcal{L}_\mu=-\Delta +\mu, \]
		  where $\mu$ is a nonnegative Radon measure on $\RR ^d$ and  $d\geq 3$. This differential operator and some singular integrals associated with it were first investigated in detail by Z. Shen in \cite{Shen99}. Other operators related with $\mathcal{L}_\mu$ were later considered in \cite{Bailey21,  CT21, WuYan16}, and the authors in \cite{DLT25}. 
		  	
		  	Associated with $\mathcal{L}_\mu$, we will study several operators in order to establish weighted endpoint results by means of Theorem~\ref{thm: T1 criterion}. In each case, we will briefly recall their boundedness properties on weighted and unweighted $L^p$ spaces. In many cases, the results we shall obtain are not only new for the operators associated with $\mathcal{L}_\mu$, but also for $\mathcal{L}_V$ when $V$ is a potential.  
		  
		  In order to achieve positive results, we will deal with a measure $\mu$   that satisfies the following conditions, as considered in \cite{Shen99}: there exist constants   $\delta_\mu, C_\mu>0$ and $D_\mu \geq 1$  such that
		\begin{align}\label{eq: prop1 mu}
			\mu(B(x,r)) \leq C_\mu \left( \frac{r}{R}\right)^{d-2+\delta_\mu} \mu(B(x,R))
		\end{align}
		and 
		\begin{align}\label{eq: prop2 mu}
			\mu(B(x,2r))\leq D_\mu \left(\mu(B(x,r))+r ^{d-2}\right)
		\end{align}
		for all $x \in \RR^d$ and $0<r<R$. 
	
	When $d\mu(x)=V(x)dx$, with $V$ a nonnegative function in the class $RH_{q}$ for some $q>\frac d2$, the above conditions hold. In this case, it can be seen that the parameter $\delta_\mu=:\delta_V=2-\frac qd>0$ (see \cite[Lemma~1.2]{Shen95}).
		
	From  \eqref{eq: prop1 mu} it can be proved that (see \cite[Remark~0.13]{Shen99})
	\begin{equation}\label{eq: prop1 integral mu}
		\int_{B(x,R)}\frac{d\mu(y)}{|y-x|^{d-2}}\lesssim \frac{\mu(B(x,R))}{R^{d-2}}, 
	\end{equation} 
	and if $\delta_\mu>1$ then we also have 
	\begin{equation}\label{eq: prop2 integral mu}
		\int_{B(x,R)}\frac{d\mu(y)}{|y-x| ^{d-1}}\lesssim\frac{\mu(B(x,R))}{R^{d-1}}, 
	\end{equation}
	for all $x \in \RR^d$ and $R>0$. Both conditions were already known for $d\mu(x)=V(x)dx$ (see \cite[Lemma~2.6]{BMR21}). 
	
	For any nonnegative Radon measure $\mu$ on $\RR ^d$, $d\geq 3$, satisfying \eqref{eq: prop1 mu} and \eqref{eq: prop2 mu}, the function given by
	\begin{equation}\label{eq: rho_mu}
		\rho_\mu(x):=\sup \left\{ r>0: \frac{\mu(B(x,r))}{r^{d-2}}\leq 1\right\}, \quad x \in \RR^d,
	\end{equation}
	is a critical radius function.  
	
	A useful property that can be obtained immediately from the definition of $\rho_\mu$ is the following: for every $x \in \RR^d$
	\begin{equation} \label{eq: equiv 1}
		\frac{\mu(B(x,\rho_\mu(x)))}{\rho_\mu(x)^{d-2}}\sim 1.
	\end{equation}
	
	Since all the results on the previous sections were obtained in terms of the inequalities in \eqref{eq: critical radius function}, we may not need, in general, the definition \eqref{eq: rho_mu} of $\rho_\mu$. 
	
	In this setting, the natural metric is given by the Agmon distance, defined for any critical radius function $\rho$ as
	\begin{equation}\label{eq: Agmon def}
		d_\rho(x,y):= \inf_\gamma \int_{0}^{1} \rho(\gamma(t))^{-1}|\gamma'(t)|dt,
	\end{equation} 
	where the infimum is taken over every absolutely continuous function   ${\gamma:[0,1]\rightarrow \mathbb{R}^d}$ with  $\gamma(0)=x$ and $\gamma(1)=y$. When $\rho=\rho_\mu$, we will denote $d_\rho(x,y)=d_\mu(x,y)$ to emphasize the measure dependence. 
	
{In the following sections, for $\lambda\geq 0$ we will denote by $\Gamma_{\mu + \lambda}$ and $\Gamma_\lambda$ the fundamental solutions of $-\Delta+\mu+\lambda$ and $-\Delta+\lambda$, respectively.  
		In particular, $\Gamma_0$ will be the fundamental solution of $-\Delta$. Also, by $d_{\mu+\lambda}$ and $d_\lambda$ we mean the Agmon distances related to the critical radius functions $\rho_{\mu+\lambda}$ and $\rho_\lambda$ with respect to the measures $d\mu(x)+\lambda dx$ and $\lambda dx$, respectively. Actually, from the definition \eqref{eq: Agmon def}, we have that $d_\lambda(x,y)=\sqrt{\omega_d\lambda}|x-y|$, where $\omega_d$ is the volume of the unit ball in $\RR^d$ (see \cite[p.~28]{Bailey21}).}

	\subsection{Riesz transforms}
	
		Associated to $\mathcal{L}_\mu$ we define  the singular integral operators \linebreak ${\mathcal{R}_\mu=\nabla  (-\Delta + \mu)^{-\frac{1}{2}}}$ and  ${\mathcal{R}^*_\mu=(-\Delta + \mu)^{-\frac{1}{2}} \nabla  }$  as in \cite{Shen99}, which are the Riesz transforms in this setting.
	
	The Riesz transform $\mathcal{R}_\mu$ can be expressed, for $f\in C_c^\infty(\RR^d)$ and $x\notin \supp(f)$, as 
	\begin{align*}
		\mathcal{R}_\mu f(x)
		& = \frac{1}{\pi}\int_{0}^{\infty} \lambda ^{-\frac{1}{2}} \nabla (-\Delta + \mu +\lambda)^{-1}f(x) d\lambda\\
		& = \frac{1}{\pi} \int_{0}^{\infty} \lambda^{-\frac{1}{2}} \int_{\RR^d} \nabla_1  \Gamma_{\mu+ \lambda }(x,y)f(y)dy \,d\lambda
\\&  = \int_{\RR^d} K_\mu(x,y) f(y)dy,
	\end{align*}
	where $K_\mu $ is the singular kernel of $\mathcal{R}_\mu$ given by
	\begin{equation}\label{eq: Riesz kernel}
		K_\mu(x,y)= \frac{1}{\pi} \int_{0}^{\infty}\lambda^{-\frac{1}{2}}  \nabla _1 \Gamma_{\mu+ \lambda }(x,y)d\lambda. 
	\end{equation}
	The operator $\mathcal{R}_\mu^*$ also has an associated kernel, which is $K^*_\mu(x,y)=-K_\mu(y,x)$.

	 In \cite[Theorem~0.20]{Shen99} it was established that $\mathcal{R}_\mu$ and $\mathcal{R}_\mu^*$ are Calder\'on-Zygmund operators when $\delta_\mu>1$. So, in that case, $\mathcal{R}_\mu$ and $\mathcal{R}_\mu^*$ are bounded on $L^p(\RR^d)$, for every ${1<p< \infty}$.   Moreover, \cite[Proposition~5.3]{DLT25} assures that,  if $\delta_\mu>1$, both $\mathcal{R}_\mu$ and $\mathcal{R}_\mu^*$ are  exponential SCZO of $(0, \infty, \delta)$ type, with parameters $c=\frac{\epsilon}{2D_1}$ and $m=\frac{1}{k_0+1}$, where $\epsilon$ and $D_1$ depend on the Agmon distance $d_\mu$. Although the parameter $\delta$ was not previously specified, from Lemma~\ref{lem: conditions Riesz kernel delta_mu>1} it follows that $\delta=\min\{1,\delta_\mu-1\}$ for $\mathcal R_\mu$, and by \cite[Eq. (7.29)]{Shen99} there exists some $\delta\in (0,1)$ that gives the type for $\mathcal R_\mu^*$. This fact led us to recover the weighted $L^p$--boundedness result for $\mathcal R_\mu$ and $\mathcal R_\mu^*$ (see \cite[Theorem~5.4]{DLT25}) previously obtained in \cite[Theorem~4.1(i)]{Bailey21}, but gave us a better understanding of the kernel properties that not only these transforms have.

	When $0<\delta_\mu<1$, $\mathcal R_\mu$ and $\mathcal R_\mu^*$ behave differently on $L^p$ spaces, even in the potential case. It is well-known (see \cite{Shen95}) that, whenever $V\in RH_{q}$, $\frac d2<q<d$, $\mathcal{R}_V$ is bounded on $L^p(\RR^d)$ for every $1< p\leq p_0$ and that $\mathcal{R}_V^*$ is bounded on $L^p(\RR^d)$ for every $p_0'\leq p<\infty$, being $\frac{1}{p_0}=\frac1q-\frac1d$. This implies that they are not Calderón-Zygmund operators when $0<\delta_V<1$, where $\delta_V=2-\frac dq$.
	 Analogous results were obtained for the generalized Riesz transforms  $\mathcal{R}_\mu$ and $\mathcal{R}_\mu^*$ in \cite{Shen99} on the unweighted case, and in \cite{Bailey21} and \cite{DLT25} with weights. In fact, in \cite{Bailey21} it is proved that $\mathcal{R}_\mu$ is bounded on $L^p(w)$ for every $1<p<\eta$ and $w^{-1/(p-1)}\in S_{p'/\eta',c}^\mu$ for some $c>0$ and any $2<\eta<(2-\delta_\mu)'$, and $\mathcal{R}_\mu^*$ is bounded on $L^p(w)$ for every $\eta'<p<\infty$ and $w\in S_{p/\eta',c}^\mu$ (for the definitions of these classes, see the mentioned article). In  \cite{DLT25} we obtained by extrapolation a similar result but with weights in the $H_{p,c}^{\rho,m}$ classes, which are equivalent, in some sense, to the $S_{p,c}^\mu$ classes as stated in \cite[Proposition~3.2]{Bailey21}. We also proved (\cite[Proposition~5.5]{DLT25}) that $\mathcal{R}_\mu^*$ is an exponential SCZO of $(0,s, \delta)$ type, with parameters $c=\frac{\epsilon}{4D_1}$ and $m=\frac{1}{k_0+1}$, provided that $1<s<(2-\delta_\mu)'$ and for some $\delta\in (0,1)$ (see \cite[Eq. (7.29)]{Shen99}).

	First, note that, since $\mathcal{R}_\mu^*(1)=0$, condition \ref{itm: T1 condition} of Theorem~\ref{thm: T1 criterion} is trivially verified. Therefore, the following result holds for $\mathcal{R}_\mu^*$, taking into account \cite[Propositions~5.3 and 5.5]{DLT25}, Theorem~\ref{thm: T1 criterion} and Corollary~\ref{cor: T1 criterion infty}.
	
	\begin{thm}Let $m_0=\frac{1}{k_0+1}$, $\delta\in (0,1)$ as above,  $0\leq \alpha<\delta$ and $1\leq \kappa<\frac{\delta-\alpha}{d}+1$, and $w$ a weight.
		\begin{enumerate}
			\item If $\delta_\mu>1$, $\mathcal{R}_\mu^*$ is bounded on $\BMO_\rho^\alpha(w)$ if and only if $w\in E_{\infty,c_1,c_2}^{\rho,m_0}\cap D_{\kappa,c_3}^{\rho,m_0}$ with $c_1, c_2, c_3\geq 0$ such that ${(c_2+c_3)<\frac{\epsilon}{2D_1}\left(1-\frac{d(\kappa-1)+\alpha}{\delta}\right)(4C_0)^{-m_0}}$. 
			\item If $0<\delta_\mu<1$ and $1<s<\frac{2-\delta_\mu}{1-\delta_\mu}$, $\mathcal{R}_\mu^*$ is bounded on $\BMO_\rho^\alpha(w)$ if and only if $w\in E_{s,c_1,c_2}^{\rho,m_0}\cap D_{\kappa,c_3}^{\rho,m_0}$ with $c_1, c_2, c_3\geq 0$ such that ${(c_2+c_3)<\frac{\epsilon}{4D_1}\left(1-\frac{d(\kappa-1)+\alpha}{\delta}\right)(4C_0)^{-m_0}}$.
		\end{enumerate}
	\end{thm}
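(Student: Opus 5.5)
The theorem follows from Theorem~\ref{thm: T1 criterion} and Corollary~\ref{cor: T1 criterion infty} applied to $T=\mathcal R_\mu^*$ with $\nu=0$. Read as an ``if and only if'', the content is the equivalence between boundedness and condition \ref{itm: T1 condition}, for weights in the stated classes. The first step is to record the kernel type of $\mathcal R_\mu^*$ in each regime.

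For $\delta_\mu>1$, \cite[Proposition~5.3]{DLT25} shows that $\mathcal R_\mu^*$ is an exponential SCZO of $(0,\infty,\delta)$ type, with parameters $c=\frac{\epsilon}{2D_1}$ and $m=m_0=\frac{1}{k_0+1}$. Here $\delta\in(0,1)$ is given by \cite[Eq.~(7.29)]{Shen99}. For $0<\delta_\mu<1$ and $1<s<(2-\delta_\mu)'=\frac{2-\delta_\mu}{1-\delta_\mu}$, \cite[Proposition~5.5]{DLT25} gives type $(0,s,\delta)$ with $c=\frac{\epsilon}{4D_1}$ and the same $m_0$. In both cases $\nu=0$, so the admissibility condition $\frac{d}{d-\nu}<s$ reduces to $s>1$. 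The hypotheses $0\le\alpha<\delta$ and $1\le\kappa<\frac{\delta-\alpha}{d}+1$ are exactly those of Theorem~\ref{thm: T1 criterion} with $\nu=0$.

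Next I verify the $T1$ condition. I need $\mathcal R_\mu^*1=0$ in the sense of \eqref{eq: def Tf}. Formally $\mathcal R_\mu^*=\mathcal L_\mu^{-1/2}\nabla$ and $\nabla 1=0$, but $1\notin L^{s'}$, so this must be justified. I would do it by testing against functions $f=\varphi(\cdot/N)$ with $\varphi\in C_c^\infty$ and $\varphi\equiv1$ near $0$. On $B(x_0,R)$, definition \eqref{eq: def Tf} splits $\mathcal R_\mu^*1$ into $\mathcal R_\mu^*(\chi_{B(x_0,2R)})$ plus a tail. Then:
\begin{itemize}
\item $\mathcal R_\mu^*(\varphi(\cdot/N))=\mathcal L_\mu^{-1/2}(N^{-1}(\nabla\varphi)(\cdot/N))$, and this goes to $0$ locally, since the kernel of $\mathcal L_\mu^{-1/2}$ has exponential decay.
\item The difference between $\varphi(\cdot/N)$ and the truncated version is handled by the absolutely convergent tail integral computed after \eqref{eq: def Tf}.
\end{itemize}
This is the step I expect to need the most care, though it is standard (the paper already asserts $\mathcal R_\mu^*(1)=0$). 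Once it holds, both \eqref{eq: T1 power condition} and \eqref{eq: T1 log condition} hold trivially, since the left-hand side vanishes.

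Finally I apply the criterion. In case (i), Corollary~\ref{cor: T1 criterion infty} gives boundedness on $\BMO_\rho^\alpha(w)$ for every $w\in E_{\infty,c_1,c_2}^{\rho,m_0}\cap D_{\kappa,c_3}^{\rho,m_0}$ with
\[(c_2+c_3)<\frac{\epsilon}{2D_1}\Bigl(1-\frac{d(\kappa-1)+\alpha}{\delta}\Bigr)(4C_0)^{-m_0}.\]
In case (ii), Theorem~\ref{thm: T1 criterion}, implication \ref{itm: T1 condition}$\Rightarrow$\ref{itm: T1 weights}, gives the same conclusion with $c=\frac{\epsilon}{4D_1}$. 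The converse direction, from boundedness back to the $T1$ condition, is the implication \ref{itm: T1 power weights}$\Rightarrow$\ref{itm: T1 condition}, and it holds trivially here because $\mathcal R_\mu^*1=0$.

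I would note explicitly that ``if and only if'' must be read as the $T1$ equivalence of Theorem~\ref{thm: T1 criterion}. Boundedness on $\BMO_\rho^\alpha(w)$ does not by itself force $w$ into these weight classes.
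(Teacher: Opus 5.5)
Your proposal is correct and follows the paper's argument. The paper likewise combines the kernel classifications of \cite[Propositions~5.3 and 5.5]{DLT25} with the observation $\mathcal R_\mu^*(1)=0$, and then invokes Theorem~\ref{thm: T1 criterion} and Corollary~\ref{cor: T1 criterion infty}. The paper simply asserts $\mathcal R_\mu^*(1)=0$, whereas you sketch a justification through $\varphi(\cdot/N)$ and the exponential decay of the kernel of $\mathcal L_\mu^{-1/2}$. Your caveat on the ``if and only if'' is also well taken: what this argument actually establishes is boundedness for every weight in the stated classes (equivalently, the trivially satisfied $T1$ condition). It does not show that boundedness forces $w$ into those classes.
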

	
	The behavior of $\mathcal R_\mu$ when $0<\delta_\mu<1$
	says that we can only expect to obtain endpoint results on $\BMO_\rho^\alpha(w)$  when $\delta_\mu>1$. 	In fact, we obtain $\BMO_{\rho}^{\alpha}(w)$ boundedness for $R_{\mu}$ provided $1 < \delta_{\mu} < d$, as well as the analogue for the particular case $R_{V}$. This result characterizes weighted boundedness for weights featuring exponential growth (cf. \cite[Theorem~7]{BHQ19} for weights with polynomial growth).
	
	\begin{thm}\label{thm: BMO Riesz transform}
		Let $m_0=\frac{1}{k_0+1}$, $\delta=\min\{1,\delta_\mu-1\}$, $0\leq \alpha<\delta$ and $1\leq \kappa<\frac{\delta-\alpha}{d}+1$. Then, if   $1<\delta_\mu<d$, $\mathcal{R}_\mu$ is bounded on $\BMO_\rho^\alpha(w)$ if and only if $w\in E_{\infty,c_1,c_2}^{\rho,m_0}\cap D_{\kappa,c_3}^{\rho,m_0}$ with $c_1, c_2, c_3\geq 0$ such that $(c_2+c_3)<\frac{\epsilon}{4D_1}\left(1-\frac{d(\kappa-1)+\alpha}{\delta}\right)(4C_0)^{-m_0}$.
			\end{thm}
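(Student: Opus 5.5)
The plan is to apply Corollary~\ref{cor: T1 criterion infty} with $\nu=0$. Since $1<\delta_\mu<d$, \cite[Proposition~5.3]{DLT25} together with Lemma~\ref{lem: conditions Riesz kernel delta_mu>1} shows that $\mathcal R_\mu$ is an exponential SCZO of $(0,\infty,\delta)$ type with $\delta=\min\{1,\delta_\mu-1\}$, $m=m_0$ and $c=\frac{\epsilon}{2D_1}$. The constant $\frac{\epsilon}{4D_1}$ in the statement is smaller, so the hypothesis on $(c_2+c_3)$ falls within the range allowed by the corollary. What remains is to verify condition \ref{itm: T1 condition}, that is, the oscillation estimate for $\mathcal R_\mu 1$ over balls $B=B(x_0,r)$ with $r\le\frac12\rho(x_0)$. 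Unlike $\mathcal R_\mu^*1=0$, the function $\mathcal R_\mu1$ is not zero, so this is the real content of the proof. Since \eqref{eq: T1 power condition} implies \eqref{eq: T1 log condition} (Remark~\ref{obs: reduce to beta=0}), it suffices to prove the power bound
\[
\frac1{|B|}\int_B|\mathcal R_\mu1-(\mathcal R_\mu1)_B|\lesssim \left(\frac r{\rho(x_0)}\right)^{\delta}.
\]
This is stronger than needed, because $\alpha+d(\kappa-1)<\delta$.

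To obtain this bound, I would write $\mathcal R_\mu 1=\nabla\mathcal L_\mu^{-1/2}1$ through the representation $\frac1\pi\int_0^\infty\lambda^{-1/2}\nabla_x\int\Gamma_{\mu+\lambda}(x,y)\,dy\,d\lambda$, understood in the sense of \eqref{eq: def Tf}. The key identity is that $u_\lambda(x):=\int\Gamma_{\mu+\lambda}(x,y)dy$ solves $(-\Delta+\mu+\lambda)u_\lambda=1$. Hence $1-\lambda u_\lambda=\int\Gamma_{\mu+\lambda}(x,y)\,d\mu(y)$, which gives
\[
\nabla u_\lambda(x)=-\frac1\lambda\int\nabla_1\Gamma_{\mu+\lambda}(x,y)\,d\mu(y).
\]
I would then bound the oscillation over $B$ by $r\sup_{B}|\nabla \mathcal R_\mu1|$ if the second derivatives are controlled. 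More robustly, I would bound $|\nabla_x\Gamma_{\mu+\lambda}(x,y)-\nabla_z\Gamma_{\mu+\lambda}(z,y)|$ for $x,z\in B$ using the Hölder regularity of order $\min\{1,\delta_\mu-1\}$ of $\nabla\Gamma_{\mu+\lambda}$ from \cite{Shen99}, exactly as in the proof of Lemma~\ref{lem: conditions Riesz kernel delta_mu>1}.

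The estimate then splits the $\mu$-integral into the near part $|y-x_0|<4r$ and the far part, and the $\lambda$-integral into $\lambda<\rho(x_0)^{-2}$ and $\lambda\ge\rho(x_0)^{-2}$. For the near part I would use the size bound $|\nabla_1\Gamma_{\mu+\lambda}(x,y)|\lesssim |x-y|^{1-d}e^{-\epsilon d_{\mu+\lambda}(x,y)}$ together with \eqref{eq: prop2 integral mu}. Combined with \eqref{eq: prop1 mu}, this gives $\mu(B(x_0,4r))/r^{d-1}\lesssim r^{-1}(r/\rho(x_0))^{\delta_\mu}$. For the far part I would use the Hölder smoothness in $x$, which gains a factor $(r/|x_0-y|)^{\delta}$. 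A dyadic sum over annuli $2^k r\le|y-x_0|<2^{k+1}r$ then uses $\mu(B(x_0,2^kr))\lesssim (2^kr)^{d-2}(2^kr/\rho(x_0))^{\delta_\mu}$ while $2^kr\le\rho(x_0)$, and exponential decay beyond that scale via the Agmon distance $d_{\mu+\lambda}\gtrsim$ both $\sqrt\lambda|x-y|$ and $d_\mu$. The $\lambda$-integral with weight $\lambda^{-3/2}$ is then performed using the decay $e^{-c\sqrt\lambda|x-y|}$, which effectively cuts off at $\lambda\sim|x-y|^{-2}$. Each dyadic piece contributes roughly $(r/2^kr)^\delta(2^kr/\rho(x_0))^{\delta_\mu-1}$. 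Since $\delta\le\delta_\mu-1$, the sum is $\lesssim (r/\rho(x_0))^{\delta}$, up to a logarithm in the borderline case $\delta=\delta_\mu-1$, which is harmless because $\alpha+d(\kappa-1)<\delta$ strictly.

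The main obstacle is this last step: the exchange of the $\lambda$- and $\mu$-integrals and the control of the large-scale contributions. The individual terms $\lambda^{-1}\nabla u_\lambda$ are not absolutely integrable near $\lambda=0$ without first taking differences $x$ versus $z$. So I would always work with differences from the outset, and would use the hypothesis $\delta_\mu<d$ only to ensure the needed form of \eqref{eq: prop2 integral mu} and the kernel regularity. Once condition \ref{itm: T1 condition} holds, the corollary gives sufficiency. Necessity follows from the equivalence \ref{itm: T1 weights}$\Leftrightarrow$\ref{itm: T1 condition} within the stated class, read as in the statement of the corollary.
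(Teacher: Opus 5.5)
Your route differs from the paper's. The paper works with the principal value of $\mathcal R_\mu 1$ and splits at $|x-y|=4\rho(x_0)$. On the local part it subtracts the classical kernel $K_0$, using that $K_0$ integrates to zero on annuli, and controls $K_\mu-K_0$ through Lemma~\ref{lem: bounds diff K0}, whose bounds carry the factor $(|x-y|/\rho(x))^{\delta_\mu}$. The global part only needs size and smoothness of $K_\mu$. You instead move the integration to $d\mu$ via $\int\nabla_1\Gamma_{\mu+\lambda}(x,y)\,dy=-\lambda^{-1}\int\nabla_1\Gamma_{\mu+\lambda}(x,y)\,d\mu(y)$. This is the resolvent analogue of the device the paper uses for the heat-semigroup operators (Theorem~\ref{thm: BMO Laplace multipliers}). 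It is a sound idea, but as you set it up it covers only part of the $\lambda$-range.

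The gap is the regime $\lambda<\rho(x_0)^{-2}$.
\begin{itemize}
\item There your integrand carries the weight $\lambda^{-3/2}$, which is not integrable at $0$. Every bound you propose puts absolute values inside the $d\mu$-integral: the size bound on $B(x_0,4r)$ and the H\"older bound outside.
\item Such bounds, with or without differences in $x,z$, are $O(1)$ as $\lambda\to0^+$, not $O(\lambda)$. The factor $\lambda$ in $\int\nabla_1\Gamma_{\mu+\lambda}(x,\cdot)\,d\mu=-\lambda\nabla u_\lambda(x)$ is pure cancellation and is lost once absolute values are taken. So ``working with differences from the outset'' does not cure the divergence.
\item The decay $e^{-c\sqrt\lambda|x-y|}$ only cuts off \emph{large} $\lambda$. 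Your per-annulus contribution $(r/2^kr)^\delta(2^kr/\rho(x_0))^{\delta_\mu-1}$ silently uses $\int_{\rho(x_0)^{-2}}^\infty\lambda^{-3/2}e^{-c\sqrt\lambda t}\,d\lambda\lesssim\rho(x_0)$. It is therefore valid only for $\lambda\ge\rho(x_0)^{-2}$. In that range your argument does work: the near part gives $(r/\rho(x_0))^{\delta_\mu-1}$, and the far part gives $(r/\rho(x_0))^{\delta}$, up to a logarithm.
\end{itemize}

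For $\lambda<\rho(x_0)^{-2}$ you must return to the Lebesgue representation $\lambda^{-1/2}\int[\nabla_1\Gamma_{\mu+\lambda}(x,y)-\nabla_1\Gamma_{\mu+\lambda}(z,y)]\,dy$.
\begin{itemize}
\item The size bound on $B(x_0,4r)$ gives $O(r)$.
\item Outside $B(x_0,4r)$, use the H\"older bound for $\nabla_1\Gamma_{\mu+\lambda}(\cdot,y)$ together with the $\lambda$-uniform decay $e^{-\epsilon d_\mu(x,y)/2}$ from \eqref{eq: metric mu+lambda}. This gives $O(r^{\delta}\rho(x_0)^{1-\delta})$, up to a logarithm when $\delta=1$.
\item Since $\int_0^{\rho(x_0)^{-2}}\lambda^{-1/2}\,d\lambda\sim\rho(x_0)^{-1}$, this range contributes $(r/\rho(x_0))^{\delta}$.
\end{itemize}
This representation is in turn useless for $\lambda\gtrsim\rho(x_0)^{-2}$. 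Its near part contributes $\int_{\rho(x_0)^{-2}}^\infty\lambda^{-1/2}\min\{r,\lambda^{-1/2}\}\,d\lambda$, which is not small and in fact diverges logarithmically. The two representations must therefore be used on complementary $\lambda$-ranges, mirroring the split $t\lessgtr\rho(x_0)^2$ in the paper's heat-semigroup proofs.

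With that repair your argument would go through, provided you also:
\begin{itemize}
\item justify $1-\lambda u_\lambda=\int\Gamma_{\mu+\lambda}(\cdot,y)\,d\mu(y)$;
\item justify that your $\lambda$-formula for $\mathcal R_\mu1$ agrees with \eqref{eq: def Tf};
\item obtain the H\"older estimate for $\nabla_1\Gamma_{\mu+\lambda}$ in the first variable, with decay uniform in $\lambda$.
\end{itemize}

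Two smaller points. In your route \eqref{eq: prop2 integral mu} comes from $\delta_\mu>1$, not from $\delta_\mu<d$. Your reading of the ``only if'' part coincides with the paper's, which also only verifies condition~\ref{itm: T1 condition} and invokes Corollary~\ref{cor: T1 criterion infty}.
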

	
	The proof of the previous theorem will be given at the end of this section, as several preliminary lemmas will be required. The first one was already obtained by Shen within the proof of \cite[Theorem~7.18]{Shen99}.
	 
	 \begin{lem}[{\cite[(7.20) and (7.26)]{Shen99}}]\label{lem: conditions Riesz kernel delta_mu>1}
	 	Let $\delta_\mu>1$. Then there exist constants $C,\epsilon>0$ such that
	 	\begin{enumerate}
	 		\item  for every $x\neq y$,
	 		\begin{equation}\label{eq: size Riesz kernel delta_mu>1}
	 			|K_\mu(x,y)|\leq C 
	 			\frac{e^{-\epsilon d_\mu(x,y)}}{|x-y|^{d}}.
	 		\end{equation}
	 		\item   When  $|x-y|>2|x-z|$, 
	 			 		\begin{equation} \label{eq: smoothness Riesz kernel delta_mu>1}
	 			|K_\mu(x,y)-K_\mu(z,y)|\leq  \frac{C}{|x-y|^{d}}\left(\frac{|x-z|}{|x-y|}\right)^{{\delta_\mu-1}}.
	 		\end{equation}
	 	\end{enumerate}
	 \end{lem}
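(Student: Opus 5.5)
The plan is to work directly from the representation \eqref{eq: Riesz kernel}, $K_\mu(x,y)=\frac1\pi\int_0^\infty \lambda^{-1/2}\nabla_1\Gamma_{\mu+\lambda}(x,y)\,d\lambda$. I would first obtain pointwise bounds for $\nabla_1\Gamma_{\mu+\lambda}$ that are uniform in $\lambda$ and decay exponentially in the Agmon distance $d_{\mu+\lambda}$. Then I would integrate these bounds in $\lambda$. Two facts will be taken from \cite{Shen99}:
\begin{itemize}
\item[(a)] the fundamental solution estimate $\Gamma_{\mu+\lambda}(x,y)\le C e^{-\epsilon_0 d_{\mu+\lambda}(x,y)}|x-y|^{2-d}$;
\item[(b)] the comparison $d_{\mu+\lambda}(x,y)\gtrsim d_\mu(x,y)+d_\lambda(x,y)$, up to additive constants, where $d_\lambda(x,y)\sim\sqrt\lambda\,|x-y|$. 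This holds because $\rho_{\mu+\lambda}\lesssim\min\{\rho_\mu,\lambda^{-1/2}\}$.
\end{itemize}

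\emph{Size estimate.} Fix $x\neq y$ and set $r=|x-y|/4$. The function $u=\Gamma_{\mu+\lambda}(\cdot,y)$ solves $-\Delta u+(\mu+\lambda)u=0$ in $B(x,2r)$. On a smaller ball I write $u=h-\Gamma_0*(u\,(d\mu+\lambda\,dz))$ with $h$ harmonic. This gives
\[
|\nabla u(x)|\lesssim \frac{\sup_{B(x,2r)}|u|}{r}\Bigl(1+r\int_{B(x,r)}\frac{d\mu(z)}{|z-x|^{d-1}}+\lambda r^2\Bigr).
\]
The key input is that $\delta_\mu>1$, so \eqref{eq: prop2 integral mu} applies and the bracket is bounded by $1+\mu(B(x,r))r^{2-d}+\lambda r^2$. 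This bracket grows at most polynomially in $r/\rho_\mu(x)$ (by \eqref{eq: prop2 mu} iterated and \eqref{eq: equiv 1}) and in $\sqrt\lambda\, r$. Both polynomial factors are absorbed by the exponential factor from (a), after shrinking $\epsilon_0$ and using (b). The result is
\[
|\nabla_1\Gamma_{\mu+\lambda}(x,y)|\lesssim e^{-\epsilon_1(d_\mu(x,y)+\sqrt\lambda|x-y|)}\,|x-y|^{1-d}.
\]
Integrating, $\int_0^\infty\lambda^{-1/2}e^{-\epsilon_1\sqrt\lambda|x-y|}\,d\lambda\sim |x-y|^{-1}$, which gives \eqref{eq: size Riesz kernel delta_mu>1}.

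\emph{Smoothness estimate.} Now take $|x-y|>2|x-z|$ and the same $u$ on $B(x,|x-y|/2)$. I need a Hölder estimate of order $\delta_\mu-1$ for $\nabla u$; I may assume $\delta_\mu<2$, since the case $\delta_\mu\ge2$ reduces to a smaller exponent. The harmonic part is smooth. For the Newton potential of $u\,d\mu$, I split the measure into the part near $x,z$ (the ball $B(x,2|x-z|)$) and the far part. The near part is handled by the growth condition \eqref{eq: prop1 mu}, $\mu(B(x,t))\lesssim (t/r)^{d-2+\delta_\mu}\mu(B(x,r))$, integrated against $|x-\cdot|^{1-d}$. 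The far part uses the mean value inequality $|\nabla^2\Gamma_0|\lesssim|\cdot|^{-d}$ together with the same growth. Together these give
\[
|\nabla u(x)-\nabla u(z)|\lesssim \Bigl(\tfrac{|x-z|}{r}\Bigr)^{\delta_\mu-1}\frac{\sup|u|}{r}\Bigl(1+\tfrac{\mu(B(x,r))}{r^{d-2}}+\lambda r^2\Bigr).
\]
As before, the exponential decay from (a) absorbs the polynomial factors, and integrating in $\lambda$ yields \eqref{eq: smoothness Riesz kernel delta_mu>1}.

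I expect the main obstacle to be this Hölder estimate for the gradient of the potential part. Specifically, one must check that the dyadic sums over shells around $x$ converge exactly because $d-2+\delta_\mu>d-1$, i.e.\ $\delta_\mu>1$. One must also check that the factor $\mu(B(x,r))/r^{d-2}$, which is unbounded for super-critical $r$, costs only a polynomial in $r/\rho_\mu(x)$, so that it is dominated by $e^{-\epsilon d_\mu}$.
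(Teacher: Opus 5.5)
The paper gives no proof of its own here; it quotes (7.20) and (7.26) of Shen. Your route is essentially Shen's: $\lambda$-uniform pointwise bounds for $\nabla_1\Gamma_{\mu+\lambda}$ from the Newtonian-potential representation and \eqref{eq: prop1 mu} with $\delta_\mu>1$, then integration against $\lambda^{-1/2}$ using $d_{\mu+\lambda}\ge \frac12 d_\mu+c\sqrt{\lambda}\,|\cdot|$. Your local gradient and H\"older bounds are fine for $\delta_\mu<2$.

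\textbf{The gap is in the size estimate (i).} It sits at the sentence where the polynomial factors are absorbed ``by the exponential factor from (a)''. With $r=|x-y|/4$, estimate (a) only gives
\[
\sup_{B(x,2r)}|u|\lesssim |x-y|^{2-d}\sup_{w\in B(x,2r)}e^{-\epsilon_0 d_{\mu+\lambda}(w,y)},
\]
that is, decay in the Agmon distance from the points $w$, not from $x$. When $|x-y|\gg\rho_\mu(x)$ the ball $B(x,2r)$ is super-critical. On such balls the Agmon distance is not stable, and \eqref{eq: critical radius function} gives no inequality $d_\mu(w,y)\ge c\,d_\mu(x,y)-C$ for $|w-x|<|x-y|/2$.

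For example, let $\rho\equiv\varepsilon$ on $B(x,L)$ and $\rho(w)=\varepsilon\bigl(1+\frac{|w-x|-L}{\varepsilon}\bigr)^{k_0/(k_0+1)}$ outside; this is a critical radius function. Take $|x-y|=2L+2\varepsilon$ and $w\in\partial B(x,L)$ on the segment $[x,y]$. Then $d_\rho(x,y)\ge L/\varepsilon$, but $d_\rho(w,y)\lesssim (L/\varepsilon)^{1/(k_0+1)}$.

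So your argument only yields decay of the type $\exp\bigl(-c\,(|x-y|/\rho_\mu(x))^{1/(k_0+1)^2}\bigr)$. That is enough to kill polynomial factors, which is why (ii) survives: its right-hand side has no exponential, and a lower bound on $d_\mu(w,y)$ in terms of $|w-y|/\rho_\mu(y)$, uniform in $w\in B(x,2r)$, suffices there. It is not \eqref{eq: size Riesz kernel delta_mu>1}.

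\textbf{The missing step is to localize at the critical scale.} Take $r=\min\{|x-y|/4,\rho_\mu(x)/2\}$.
\begin{enumerate}
\item Since $B(x,2r)\subset B(x,\rho_\mu(x))$, \eqref{eq: equiv critical radius} gives $d_\mu(x,w)\le C$ there. With $|w-y|\ge|x-y|/2$ this yields $d_{\mu+\lambda}(w,y)\ge \frac12 d_\mu(x,y)+c\sqrt{\lambda}\,|x-y|-C$ for all $w\in B(x,2r)$.
\item On this ball $\mu(B(x,4r))/r^{d-2}\lesssim 1$ by \eqref{eq: prop1 mu}, \eqref{eq: prop2 mu} and \eqref{eq: equiv 1}. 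So your bracket is $\lesssim 1+\lambda|x-y|^2$.
\item The prefactor becomes $1/r\lesssim |x-y|^{-1}\bigl(1+|x-y|/\rho_\mu(x)\bigr)$.
\end{enumerate}
Both polynomial factors are absorbed by half of $e^{-\frac{\epsilon_0}{2}d_\mu(x,y)-c\sqrt{\lambda}|x-y|}$. This uses the lower bound $d_\mu(x,y)\ge D_1^{-1}\bigl(1+|x-y|/\rho_\mu(x)\bigr)^{1/(k_0+1)}-C$, quoted in the proof of Lemma~\ref{lem: bounds diff K0} (recall $d_\mu$ is symmetric). Integrating in $\lambda$ then gives \eqref{eq: size Riesz kernel delta_mu>1} with the true Agmon decay.

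\textbf{A smaller point on $\delta_\mu\ge2$.} Your ``reduction'' there only produces a smaller exponent than $\delta_\mu-1$, so it proves a weaker statement. This is unavoidable. If $\mu\neq0$, \eqref{eq: prop1 mu} forces $\delta_\mu\le 2$, since otherwise $\mu(B(x,r))=o(r^d)$ at every point and $\mu=0$. At $\delta_\mu=2$ the far-field sum in your H\"older estimate becomes logarithmic, because the Newtonian potential of a bounded density has only log-Lipschitz gradient. So (ii) should be read with $1<\delta_\mu<2$. Note also that $\delta_\mu\le 2$ is exactly what makes $\mu+\lambda\,dx$ satisfy \eqref{eq: prop1 mu} with constants independent of $\lambda$, which you need in order to use (a) uniformly in $\lambda$.
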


	 In the next result we will denote by $K_0$ the kernel of the classical Riesz transform $\mathcal{R}_0=\nabla(-\Delta)^{-\frac12}$.

	\begin{lem}\label{lem: bounds diff K0} Let $1<\delta_\mu<d$ and {$0<\delta\leq \min\{1,\delta_\mu-1\}$.} 	Then there exists a constant $C>0$ such that 
		\begin{enumerate}
			\item \label{itm: diff Riesz mu and classic} For every $ x\neq y$, 
				\begin{equation}\label{eq: comparison Riesz kernels}
				|K_\mu(x,y)- K_0(x,y)| \leq \frac{C}{|x-y|^d}\left(\frac{|x-y|}{\rho(x)}\right)^{\delta_\mu}.
			\end{equation}
			
			\item \label{itm: diff of diff Riesz mu and classic} When $|x-y|\geq 2|x-z|$,
			\begin{equation} \label{itm: diffs of Riesz kernels}
				\left|[K_\mu(x,y)-K_0(x,y)]-[K_\mu(z,y)-K_0(z,y)]\right| \leq C\frac{|x-z|^\delta}{|x-y|^{d+\delta}} \left(\frac{|x-y|}{\rho(x)}\right)^{\delta_\mu}.
			\end{equation}
			 		\end{enumerate}
	\end{lem}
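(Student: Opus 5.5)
We compare $K_\mu$ with the classical kernel $K_0$ through the resolvent representation \eqref{eq: Riesz kernel}. Since $K_0(x,y)=\frac1\pi\int_0^\infty\lambda^{-1/2}\nabla_1\Gamma_\lambda(x,y)\,d\lambda$, we write
\[
K_\mu(x,y)-K_0(x,y)=\frac1\pi\int_0^\infty\lambda^{-1/2}\nabla_1\bigl[\Gamma_{\mu+\lambda}(x,y)-\Gamma_\lambda(x,y)\bigr]\,d\lambda .
\]
We then split the $\lambda$-integral at $\lambda=|x-y|^{-2}$. For the difference of fundamental solutions we use the perturbation identity
\[
\Gamma_{\mu+\lambda}(x,y)-\Gamma_\lambda(x,y)=-\int_{\RR^d}\Gamma_\lambda(x,\xi)\,\Gamma_{\mu+\lambda}(\xi,y)\,d\mu(\xi),
\]
and differentiate in $x$. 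This requires the bounds $|\nabla\Gamma_\lambda(x,\xi)|\lesssim|x-\xi|^{1-d}e^{-c\sqrt\lambda|x-\xi|}$ and Shen's estimate $\Gamma_{\mu+\lambda}(\xi,y)\lesssim e^{-\epsilon d_{\mu+\lambda}(\xi,y)}|\xi-y|^{2-d}$.

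\textbf{Part \ref{itm: diff Riesz mu and classic}.} Assume first $|x-y|<\rho(x)$; otherwise \eqref{eq: comparison Riesz kernels} follows from \eqref{eq: size Riesz kernel delta_mu>1} and the size of $K_0$, because $(|x-y|/\rho(x))^{\delta_\mu}\ge1$. Set $R=|x-y|$. We split the $\xi$-integral into three regions: $|\xi-x|<R/2$, $|\xi-y|<R/2$, and the remainder. After integrating in $\lambda$, each region reduces to an integral of the form
\[
\int\frac{d\mu(\xi)}{|x-\xi|^{d-1}|\xi-y|^{d-2}}\quad\text{or}\quad\int\frac{d\mu(\xi)}{|x-\xi|^{d-1}|\xi-y|^{d-1}}.
\]
Here the factor $\lambda^{-1/2}$ together with the exponential decay produces one extra power of distance. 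In the near regions we apply \eqref{eq: prop1 integral mu} and \eqref{eq: prop2 integral mu}, the latter being available since $\delta_\mu>1$. In the far region we decompose dyadically and apply \eqref{eq: prop1 mu}, which gives $\mu(B(x,r))\lesssim (r/\rho(x))^{d-2+\delta_\mu}\rho(x)^{d-2}$ for $r\le\rho(x)$ by \eqref{eq: equiv 1}. The tail $r>\rho(x)$ is controlled by the exponential decay in $d_{\mu+\lambda}$. Each region then contributes at most $R^{-d}\,\mu(B(x,CR))R^{2-d}\lesssim R^{-d}(R/\rho(x))^{\delta_\mu}$. Since $d\ge3$ and $\delta_\mu<d$, the dyadic sums converge.

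\textbf{Part \ref{itm: diff of diff Riesz mu and classic}.} Let $|x-y|\ge2|x-z|$. If $|x-y|\ge\rho(x)$, the estimate follows from \eqref{eq: smoothness Riesz kernel delta_mu>1} combined with the classical smoothness of $K_0$, because $\delta\le\delta_\mu-1$ and the extra factor is $\ge1$. Otherwise, we interpolate between two bounds. The first is the trivial bound from part \ref{itm: diff Riesz mu and classic} applied at $x$ and at $z$, using $|z-y|\sim|x-y|$ and $\rho(z)\sim\rho(x)$ by \eqref{eq: equiv critical radius}. The second is a gradient bound: we show $|\nabla_x(K_\mu-K_0)(x,y)|\lesssim |x-y|^{-d-1}(|x-y|/\rho(x))^{\delta_\mu}$ for $|x-y|$ comparable to the distance, by repeating the perturbation argument with second derivatives of $\Gamma_\lambda$. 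Where second derivatives are not integrable against $\mu$, we instead use the Hölder regularity of order $\delta_\mu-1$ from \eqref{eq: smoothness Riesz kernel delta_mu>1}; this is why $\delta\le\delta_\mu-1$ is imposed. Combining the two bounds yields the factor $(|x-z|/|x-y|)^\delta$.

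\textbf{Main obstacle.} The hardest step is the near-diagonal region $\xi$ close to $x$ in part \ref{itm: diff of diff Riesz mu and classic}. There the derivative in $x$ falls on a singular kernel integrated against $\mu$, so the argument depends on \eqref{eq: prop2 integral mu} and on Hölder continuity with exponent strictly below $\delta_\mu-1$. If this proves problematic, we would first try to treat this region with Shen's local estimate (7.26) directly.
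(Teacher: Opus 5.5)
Your Part~(i) is correct, though by a different route from the paper. The paper quotes the pointwise estimate from the proof of Shen's Lemma~7.13 and then bounds $\int_{B(x,|x-y|/2)}|z-x|^{1-d}\,d\mu(z)$. You instead rederive that estimate from the perturbation identity. After integrating $\lambda^{-1/2}$ against the two decay factors you get $\max\{|x-\xi|,|\xi-y|\}^{-1}$, and your three-region analysis then works, provided the extra power sits on the larger distance, as in your displayed integrals.

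The gap is in Part~(ii), in the case $|x-y|<\rho(x)$. The claimed bound $|\nabla_x(K_\mu-K_0)(x,y)|\lesssim|x-y|^{-d-1}(|x-y|/\rho(x))^{\delta_\mu}$ is false in general when $1<\delta_\mu<2$. Near $\xi=x$ one has $|\nabla_1^2\Gamma_\lambda(x,\xi)|\sim|x-\xi|^{-d}$, which is not $\mu$-integrable. This is not an artifact of the method: $\nabla_x\Gamma_{\mu+\lambda}(\cdot,y)$, and hence $K_\mu(\cdot,y)$, is in general only of class $C^{\delta_\mu-1}$, so $K_\mu-K_0$ need not be Lipschitz in $x$. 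Your fallback, the H\"older estimate \eqref{eq: smoothness Riesz kernel delta_mu>1} (Shen's (7.26)), is a property of $K_\mu$ alone and carries no factor $(|x-y|/\rho(x))^{\delta_\mu}$. Interpolating it with Part~(i) only gives $|x-y|^{-d}(|x-y|/\rho(x))^{\theta\delta_\mu}(|x-z|/|x-y|)^{(1-\theta)(\delta_\mu-1)}$ with $\theta<1$. That is a strictly weaker power of $|x-y|/\rho(x)$ than the lemma asserts.

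What is missing is to stay inside the perturbation formula for the double difference, $-\frac1\pi\int_0^\infty\lambda^{-1/2}\int(\nabla_1\Gamma_\lambda(x,\xi)-\nabla_1\Gamma_\lambda(z,\xi))\Gamma_{\mu+\lambda}(\xi,y)\,d\mu(\xi)\,d\lambda$, and to split the $\mu$-integral around $x$ at the scale $|x-z|$, not only at $|x-y|$.

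On $\{|\xi-x|<\tfrac32|x-z|\}$, use no cancellation: bound $|\nabla_1\Gamma_\lambda(x,\xi)|$ and $|\nabla_1\Gamma_\lambda(z,\xi)|$ separately. Since $|\xi-y|\sim|x-y|$ there, \eqref{eq: prop2 integral mu}, \eqref{eq: prop1 mu} and \eqref{eq: equiv 1} give a contribution $\lesssim(1+\lambda^{1/2}|x-y|)^{-N}|x-y|^{2-d}\,\mu(B(x,C|x-z|))\,|x-z|^{1-d}\lesssim(1+\lambda^{1/2}|x-y|)^{-N}|x-y|^{2-d}|x-z|^{\delta_\mu-1}\rho(x)^{-\delta_\mu}$. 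This is exactly the required form with $\delta=\delta_\mu-1$, and hence holds for smaller $\delta$ because $|x-z|<|x-y|$.

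On $\{\tfrac32|x-z|\le|\xi-x|<\tfrac12|x-y|\}$, apply the mean value theorem with $|\nabla_1^2\Gamma_\lambda|\lesssim|x-\xi|^{-d}$ and use $|x-z|\le|x-z|^\delta|x-\xi|^{1-\delta}$. This reduces matters to $\int_{B(x,|x-y|)}|x-\xi|^{1-\delta-d}\,d\mu(\xi)\lesssim|x-y|^{-1-\delta}(|x-y|/\rho(x))^{\delta_\mu}$, a dyadic sum that converges precisely when $\delta<\delta_\mu-1$. This, not Shen's smoothness estimate, is where the restriction on $\delta$ enters.

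The regions $|\xi-x|\sim|x-y|$ and $|\xi-x|\ge2|x-y|$ are handled by the mean value theorem. For $|\xi-x|>\rho(x)$ you also need the exponential decay of $\Gamma_{\mu+\lambda}$ in $d_\mu$, using $\delta_\mu<d$ and \eqref{eq: prop2 mu}. In every region the factor $(1+\lambda^{1/2}|x-y|)^{-N}$ makes the $\lambda$-integral produce $|x-y|^{-1}$, which yields \eqref{itm: diffs of Riesz kernels}.
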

	
	{\begin{rem}
		We note that the restriction $\delta_\mu<d$ is asked only for the proof of Lemma~\ref{lem: bounds diff K0}\ref{itm: diff of diff Riesz mu and classic} and the proof of Theorem~\ref{thm: BMO Riesz transform}, independently. Although the properties \eqref{eq: prop1 mu} and \eqref{eq: prop2 mu} for $\mu$ do not seem to imply that $\delta_\mu$ is smaller than the dimension, we observe that this is automatically satisfied in the potential case. Indeed, if $V\in RH_{q}$ for some $q>d$, $\delta_V=2-d/q<2<d$ since $d\geq 3$ and $q>0$.  Therefore, we recover \cite[Lemma~4]{BHS09Riesz} when $d\mu(x)=V(x)dx$ with $V$ as before since, in that case, the smoothness condition for $K_V$ holds for any $0<\delta<1-d/q=\min\{1,\delta_V-1\}$.
	\end{rem}}
	
	\begin{proof} Regarding \ref{itm: diff Riesz mu and classic}, note that if $|x-y|> \rho(x)$, the result is true since both are Calderón-Zygmund kernels. If $|x-y|\leq \rho(x)$, we use the following estimate (see proof of \cite[Lemma~7.13]{Shen99}):
	\begin{equation*}
		|K_\mu(x,y)-K_0(x,y)| \lesssim   \frac{1}{|x-y|^{d-1}}\left(  \int_{B \left(x,|x-y|/2\right)} \frac{d\mu(z)}{|z-x|^{d-1}} + \frac{1}{|x-y|}  \left(\frac{|x-y|}{\rho(y)}\right)^{\delta_\mu} \right), \quad x\neq y.
	\end{equation*}
	By \eqref{eq: prop2 integral mu}, \eqref{eq: prop1 mu} and \eqref{eq: equiv 1}, we obtain that the integral given above can be estimated as follows
	\begin{equation*}
		\int_{B \left(x,|x-y|/2\right)} \frac{d\mu(z)}{|z-x|^{d-1}} 
		\lesssim  \frac{\mu(B(x,\rho(x)))}{|x-y|^{d-1}}\left( \frac{|x-y|}{2\rho(x)}\right)^{d-2+\delta_\mu} 
		\sim \frac{1}{|x-y|} \left( \frac{|x-y|}{\rho(x)}\right)^{\delta_\mu}. 
	\end{equation*}
	Then, we get 
	\begin{align*}
		|K_\mu(x,y)-K_0(x,y)|
		&
		\lesssim \frac{1}{|x-y|^{d}}  \left(\frac{|x-y|}{\rho(x)}\right)^{\delta_\mu}.
	\end{align*}
	
	To prove item \ref{itm: diff of diff Riesz mu and classic}, let $|x-y|\geq 2 |x-z|$. Note that when $|x-y|\geq \rho(x)$, both operators have kernels that satisfy a Calder\'on-Zygmund smoothness estimate on the first variable. More precisely,
	\begin{align*}
		|K_\mu(x,y)-K_\mu(z,y)|+|K_0(x,y)-K_0(z,y)|&\leq \frac{C}{|x-y|^{d}}\left(\frac{|x-z|}{|x-y|}\right)^{\delta_\mu-1}+\frac{C}{|x-y|^{d}}\frac{|x-z|}{|x-y|}\\
		&\leq C \frac{|x-z|^{\delta}}{|x-y|^{d+\delta}}\left(\frac{|x-y|}{\rho(x)}\right)^{\delta_\mu}
	\end{align*}
	for any $0<\delta\leq \min\{1, \delta_\mu-1\}$. 
	
	Now suppose $|x-y|< \rho(x)$. According to  \eqref{eq: Riesz kernel}, the difference of the kernels can be written as 
	\begin{equation*}
		K_\mu(x,y)-K_0(x,y)= \frac{1}{\pi} \int_{0}^{\infty}\lambda^{-\frac{1}{2}} ( \nabla _1 \Gamma_{\mu+ \lambda }(x,y)-\nabla _1 \Gamma_{\lambda}(x,y)) d\lambda.
	\end{equation*}	
	We will rewrite the integrand above, so we recall that $u(\cdot,y)=\Gamma_{\mu +\lambda}(\cdot,y)-\Gamma_{\lambda}(\cdot,y)$ (as a function of the first variable) satisfies the equation $(-\Delta + \lambda) u=- \mu \Gamma_{\mu +\lambda}$. Then, we get
		\begin{equation*}
		\Gamma_{\mu +\lambda}(x,y)- \Gamma_{\lambda}(x,y)= - \int_{\RR^d}\Gamma_{\lambda}(x,v)\Gamma_{\mu +\lambda}(v,y)d\mu(v).
	\end{equation*}
	Hence, 
	\begin{equation*}
		K_\mu(x,y)-K_0(x,y)=- \frac{1}{\pi} \int_{0}^{\infty}\lambda^{-\frac{1}{2}} \left(\int_{\RR^d}\nabla_1 \Gamma_{\lambda}(x,v)\Gamma_{\mu +\lambda}(v,y  )d\mu(v) d\lambda \right).
	\end{equation*}
	Consequently, the difference of interest is
	\begin{align}\label{eq: integral diff of diff Riesz mu and classic}
	\nonumber	[K_\mu(x,y)-K_0(x,y)]&-[K_\mu(z,y)-K_0(z,y)]\\
		&= - \frac{1}{\pi} \int_{0}^{\infty}\lambda^{-\frac{1}{2}} \int_{\RR^d}(\nabla_1 \Gamma_{\lambda}(x,v)- \nabla_1  \Gamma_{\lambda}(z,v))\Gamma_{\mu +\lambda}(v,y)d\mu(v) d\lambda.
	\end{align}
	We will deal first with the absolute value of the inner integral before performing the integration on $\lambda$. As in the proof of  \cite[Lemma~4]{BHS09Riesz}, we consider four regions covering $\RR^d$,
		\begin{align*}
		E_1&= \left\{ v\in \RR^d : |v-x|<\tfrac{3}{2}|x-z|\right\},\\
		E_2&= \left\{v\in \RR^d : \tfrac{3}{2}|x-z|\leq |v-x| < \tfrac{1}{2}|x-y|\right\},\\
		E_3&= \left\{ v\in \RR^d : \tfrac{1}{2}|x-y|\leq |v-x|<2|x-y|\right\},\\
		E_4&= \left\{ v\in \RR^d : |v-x|\geq 2|x-y|\right\},
	\end{align*}
	and we denote 
	\[I_j= \int_{E_j}|\nabla_1 \Gamma_{\lambda}(x,v)- \nabla_1  \Gamma_{\lambda}(z,v)||\Gamma_{\mu +\lambda}(v,y)|d\mu(v), \quad j=1,2,3,4.\]
	
Provided we can show that for some $N>0$ and any $k\geq N$,
	\begin{equation}\label{eq: bounds Ij}
	I_j\lesssim \frac{C_{k,d}|x-z|^\delta}{(1+\lambda^{1/2}|x-y|)^{k\epsilon_2}|x-y|^{d+\delta-1}}\left(\frac{|x-y|}{\rho(x)}\right)^{\delta_\mu}, \quad \forall \ 0<\delta<\min\{1,\delta_\mu-1\}, \; j=1,2,3,4,
	\end{equation}
	integrating in $\lambda$ the expression \eqref{eq: integral diff of diff Riesz mu and classic}, and 	choosing $k> \max\{N,\frac{1}{\epsilon_2}\}$ we obtain  \eqref{itm: diffs of Riesz kernels}. Therefore, let us prove \eqref{eq: bounds Ij}.
		
		 For $I_1$, we bound by the sum of the gradients and estimate each integral separately. That is
	\[\int_{|v-x|<\frac{3}{2}|x-z|}|\nabla _1\Gamma_{\lambda}(x,v)||\Gamma_{\mu +\lambda}(v,y)|d \mu(v)+\int_{|v-x|<\frac{3}{2}|x-z|}|\nabla _1\Gamma_{\lambda}(z,v)||\Gamma_{\mu +\lambda}(v,y)|d \mu(v):=I_{1,1}+I_{1,2}.\]
	
	On each one, we will use estimates for the gradient of the fundamental solution $\Gamma_{\lambda}$, for $\lambda\geq 0$. It is known, and can be found in \cite[Eq.~(4.4)]{Shen95}, that for any $k>0$, there exists $C_k>0$ such that
	\begin{equation}\label{eq: bound gradient1 Gamma_lambda}
		|\nabla _1\Gamma_{\lambda}(x,v)| \leq \frac{C_k}{(1+ \lambda^{1/2}|x-v|)^{k}}\frac{1}{|x-v|^{d-1}}, \quad x\neq v,
	\end{equation}
	and an analogous bound for $|\nabla _1\Gamma_{\lambda}(z,v)|$.
	
To estimate $\Gamma_{\mu +\lambda}(v,y)$, we use \cite[Eq.~(3.12)]{Shen99} along with the inequality (see \cite[Eq.~(21)]{Bailey21})
\begin{equation}\label{eq: metric mu+lambda} d _{\mu + \lambda}(v,y)\geq \frac{1}{2}d_\mu (v,y)+\frac{\sqrt{\omega_d}}{2}\sqrt{\lambda}|v-y|.
\end{equation}  Thus, for any $k>0$, there exist positive constants $C, \epsilon_1$, and $\epsilon_2$, independent of $v$ and $y$, such that the following holds

\begin{equation}\label{eq: bound Gamma_mu+lambda everywhere}
	0\leq \Gamma_{\mu + \lambda}(v,y) \leq \frac{C e^{- \epsilon_1 d_{\mu+\lambda}(v,y)}}{|v-y|^{d-2}}
		\leq
	\frac{C_{k,d}\, e^{-\epsilon_2 d_\mu(v,y)}}{|v-y|^{d-2}(1+ \lambda^{1/2}|v-y|)^{k\epsilon_2}}	
	, \quad v\neq y,
\end{equation}
	Since 	$|x-y|\geq 2 |x-z|$, for  $v \in E_1$
	we have $|v-y|\geq \frac{1}{4}|x-y|$. Then, 
	\begin{equation}\label{eq: bound Gamma_mu+lambda on E1}
		\Gamma_{\mu + \lambda}(v,y)\leq \frac{C_{k,d}}{|x-y|^{d-2}(1+ \lambda^{1/2}|x-y|)^{k\epsilon_2}}, \quad x\neq y.
	\end{equation}
	
	By \eqref{eq: bound gradient1 Gamma_lambda} and \eqref{eq: bound Gamma_mu+lambda on E1}, and applying the properties \eqref{eq: prop2 integral mu}, \eqref{eq: prop1 mu} and \eqref{eq: equiv 1} for the measure $\mu$, we obtain 
	\begin{align*}
		I_{1,1}
		&
		\leq 
		\frac{C_{k,d}}{(1+\lambda^{1/2}|x-y|)^{k\epsilon_2}|x-y|^{d-2}} \int_{B\left(x,\frac32|x-z|\right)} \frac{d\mu(v)}{|x-v|^{d-1}}\nonumber\\
		&\lesssim  
		\frac{C_{k,d}}{(1+\lambda^{1/2}|x-y|)^{k\epsilon_2}|x-y|^{d-2}} \frac{\mu\left(B(x,\frac32|x-z|)\right)}{|x-z|^{d-1}}\nonumber\\
		&\lesssim  
		\frac{C_{k,d}}{(1+\lambda^{1/2}|x-y|)^{k\epsilon_2}|x-y|^{d-2}} \frac{\mu(B(x,\rho(x)))}{\rho(x)^{d-2}}\frac{|x-z|^{\delta_\mu-1}}{\rho(x)^{\delta_\mu}}\nonumber\\
		&
		\lesssim
		\frac{C_{k,d}|x-z|^{\delta}}{(1+\lambda^{1/2}|x-y|)^{k\epsilon_2}|x-y|^{d-1+\delta}} \left(\frac{|x-y|}{\rho(x)}\right)^{\delta_\mu},
	\end{align*}
 for $\delta=\delta_\mu-1$. Since $|x-y|<\rho(x)$, the above inequality also holds for any $0<\delta<\delta_\mu-1$. 
 
 For $I_{1,2}$ we use again \eqref{eq: bound Gamma_mu+lambda on E1}, \eqref{eq: bound gradient1 Gamma_lambda} with $z$ instead of $x$, and take into account that $B(x,\frac32 |x-z|)\subseteq B(x,\frac52 |x-z|)$ 
so the argument follows in the same way. Combining both bounds, we have 
\begin{equation}\label{eq: estimate of I_1}
	I_1\lesssim \frac{C_{k,d}|x-z|^\delta}{(1+\lambda^{1/2}|x-y|)^{k\epsilon_2}|x-y|^{d-1+\delta}} \left(\frac{|x-y|}{\rho(x)}\right)^{\delta_\mu}, \quad {0<\delta\leq \delta_\mu-1.}
 \end{equation}
 
 Next, to take care of the integrals in the other three regions, we  apply the Mean Value Theorem and use an estimate for $\nabla_1^2\Gamma_{\lambda}$, which is also known (see \cite[Eq.~(4.4)]{Shen95}). Then we have that  
 for every $k>0$, there exists $C_k>0$ such that	
 \begin{align}\label{eq: difference gradients}
 	|\nabla_1 \Gamma_{\lambda}(x,v)- \nabla_1 \Gamma_{\lambda}(z,v)|&=\left|\nabla_1^2 \Gamma_{\lambda}(\xi,v)\right||x-z|\nonumber\\
 	&\leq
 	\frac{C_k|x-z|}{(1+ \lambda^{1/2}|\xi-v|)^k|\xi-v|^d}\leq
 	\frac{C_k|x-z|}{(1+ \lambda^{1/2}|x-v|)^k|x-v|^d},
 \end{align}
 where  $\xi=(1-\theta)x+\theta z$, for some $\theta\in (0,1)$.

Then, using the above inequality and the estimate \eqref{eq: bound Gamma_mu+lambda everywhere} for $\Gamma_{\mu + \lambda}$, for every $j=2,3,4$, we get
\begin{equation*}
	I_{j}
	\leq 
	\int_{E_j} \frac{C_{k,d}|x-z|}{(1+ \lambda^{1/2}|x-v|)^k|x-v|^d} \frac{e^{- \epsilon_2 d_\mu(v,y)}}{|v-y|^{d-2}(1+ \lambda^{1/2}|v-y|)^{k\epsilon_2}}d\mu(v).
\end{equation*}	

For $j=2$ we use that $|v-y|>\frac12 |x-y|$ for every $v\in E_2$ to get
	\begin{align*}
		I_2 & \lesssim  \frac{C_{k,d} |x-z|^\delta}{(1+ \lambda^{1/2}|x-y|)^{k\epsilon_2}|x-y|^{d-2}}\int_{\frac32|x-z|\leq |v-x|<\frac12|x-y|}\frac{|x-z|^{1-\delta}}{|x-v|^d}d\mu(v)\\
		&\lesssim  \frac{C_{k,d} |x-z|^\delta}{(1+ \lambda^{1/2}|x-y|)^{k\epsilon_2}|x-y|^{d-2}}\int_{B(x,|x-y|)}\frac{d\mu(v)}{|x-v|^{d-(1-\delta)}}.
		\end{align*}
	Decomposing the last integral into annulli, recalling that we are in the case $|x-y|<\rho(x)$ and using \eqref{eq: prop1 mu}, it becomes
			\begin{align*}
			\int_{B(x,|x-y|)}\frac{d\mu(v)}{|x-v|^{d-(1-\delta)}}
			&
			\leq 
			\sum_{j=0}^{\infty} \int_{|v-x|<2^{-j}|x-y|}\left(2^{-j-1}|x-y|\right)^{-d+(1-\delta)} d\mu(v)\\
			& 
			\lesssim
			\sum_{j=0}^{\infty} \frac{\mu(B(x,2^{-j}|x-y|))}{\left(2^{-j}|x-y|\right)^{d-(1-\delta)}}\\
			& 
			\lesssim
			\sum_{j=0}^{\infty} \frac{ \mu(B(x, \rho(x)))}{\left(2^{-j}|x-y|\right)^{d-(1-\delta)}}  \left(\frac{2^{-j}|x-y|}{\rho(x)}\right)^{d-2+ \delta_\mu}\\
			& 
			\lesssim  \frac{1}{|x-y|^{1+\delta}}\left(\frac{|x-y|}{\rho(x)}\right)^{\delta_\mu} \sum_{j=0}^{\infty} 2^{-j(\delta_\mu-(1+\delta))} \\
			&
			\lesssim 
			\frac{1}{|x-y|^{1+\delta}}\left(\frac{|x-y|}{\rho(x)}\right)^{\delta_\mu},
		\end{align*}
		provided that $\delta<\delta_\mu-1$. Therefore, 
		\begin{equation}\label{eq: estimate of I_2}
		I_2 \lesssim  \frac{C_{k,d} |x-z|^\delta}{(1+ \lambda^{1/2}|x-y|)^{k\epsilon_1}|x-y|^{d-1+\delta}}\left(\frac{|x-y|}{\rho(x)}\right)^{\delta_\mu}, \quad 0<\delta<\delta_\mu-1.
		\end{equation}
	
		For $v\in E_3$, $\frac12|x-y|\leq |v-x|\leq 2|x-y|$ which also yields $|v-y|<3|x-y|$. 
		Then, using again \eqref{eq: difference gradients}, \eqref{eq: bound Gamma_mu+lambda everywhere} and \eqref{eq: prop1 integral mu}, we have 
		\begin{align*}
			I_3&
			\lesssim \frac{C_{k,d} |x-z|}{(1+ \lambda^{1/2}|x-y|)^{k\epsilon_2}|x-y|^{d}} \frac{\mu(B(x,4|x-y|))}{|x-y|^{d-2}}. 
		\end{align*}		 
		Applying \eqref{eq: prop1 mu},  \eqref{eq: prop2 mu} and \eqref{eq: equiv 1}, we get the estimate \eqref{eq: bounds Ij} for $I_3$ with $\delta=1$ and, therefore, 
		\begin{equation}\label{eq: estimate of I_3}
			I_3\lesssim \frac{C_{k,d} |x-z|^{\delta}}{(1+ \lambda^{1/2}|x-y|)^{k\epsilon_2}|x-y|^{d+\delta}} \left(\frac{|x-y|}{\rho(x)}\right)^{\delta_\mu}, \quad 0<\delta\leq 1.
		\end{equation} 

		 Finally, to deal  with $E_4$ we use again \eqref{eq: difference gradients} and \eqref{eq: bound Gamma_mu+lambda everywhere}, and the fact that $\rho(x)\sim \rho(y)$ by \eqref{eq: equiv critical radius} (we are considering the case $|x-y|<\rho(x)$). We also note that for $v \in E_4$, $|v-x|\sim |v-y|$. 
		 
		 Since we are integrating in the global part, we will take advantage of the exponential decay by considering the lower bound for the Agmon distance $d_\mu$ given in \cite[Lemma~5.1]{DLT25}, i.e., for some $D_1,D_0>1$, there exists $C>0$ depending on them and $k_0$ such that 
		 \begin{equation*}
		 	 d_\mu(v,u)\geq D_1^{-1}\left(1+\frac{|v-u|}{\rho(u)}\right)^{\frac{1}{k_0+1}}+D_0^{-1}\left(1+\frac{\rho(u)}{|v-u|}\right)^{-1}-C, \quad v,u\in \RR^d.
		 \end{equation*}
		 This yields, for every $k>0$, there exists $C_k$ such that 
		 \begin{equation*}
		 	e^{-\epsilon_2d_\mu(v,u)}
\leq C_k \left(1+\frac{|v-u|}{\rho(u)}\right)^{\frac{-k\epsilon_2}{k_0+1}}.
		 \end{equation*}
		 Therefore, from all of the above considerations,
		 \begin{equation}\label{eq: integral over E_4}
		 	I_4\lesssim \frac{C_{k} |x-z| }{(1+ \lambda^{1/2}|x-y|)^{k\epsilon_2}}\int_{|v-x|\geq 2|x-y|}  \frac{\left(1+ \frac{|v-x|}{\rho(x)}\right)^{\frac{-k\epsilon_2}{k_0+1}}}{ |v-x|^{2d-2}}d\mu(v).
		 \end{equation}
		 We shall split the integral above inside the critical ball $B(x,\rho(x))$ and outside of it.		 
		 For the first part, let $j_0\in \NN$ such that $2^{j_0-1}|x-y|\leq \rho(x)< 2^{j_0}|x-y|$. Then, by splitting into dyadic annuli 
		 	\begin{align}\label{eq: E_4 intersec B_rho}
		 \int_{2|x-y|\leq |v-x|<\rho(x)}\frac{\left(1+ \frac{|v-x|}{\rho(x)}\right)^{\frac{-k\epsilon_2}{k_0+1}}}{ |v-x|^{2d-2}}d\mu(v)
		 &
		 \lesssim  \frac{1}{|x-y|^d}\left(\frac{|x-y|}{\rho(x)}\right)^{\delta_\mu}\;\sum_{j=1}^{j_0-1}2^{j(\delta_\mu-d)}\lesssim \frac{1}{|x-y|^d}\left(\frac{|x-y|}{\rho(x)}\right)^{\delta_\mu},
		 \end{align}
		 where we have used \eqref{eq: prop1 mu}, and 
		 that $\delta_\mu-d <0$.
		 
		When $|v-x|>\rho(x)$, by again splitting into dyadic annuli and choosing  $k>\max\left\{\frac{(k_0+1)(\log_2(D_\mu)-d)}{\epsilon_2},0\right\}$, we obtain
		\begin{align*}
		 	\int_{|v-x|>\rho(x)}\frac{\left(1+ \frac{|v-x|}{\rho(x)}\right)^{-\frac{k\epsilon_2}{k_0+1}}}{|v-x|^{2d-2}}d\mu(v)
		 	&\lesssim \rho(x)^{2-2d} \sum_{j \in \NN} 2^{-j\left(2d-2+\frac{k\epsilon_2}{k_0+1}\right)}\left(D_\mu^{j+1}2^{(d-2)(j+1)} \rho(x)^{d-2}\right)\\
		 	&\lesssim \rho(x)^{-d} \sum_{j \in \NN} \left(D_\mu 2^{-d-\frac{k\epsilon_2}{k_0+1}}\right)^j\lesssim \rho(x)^{-d} \lesssim \frac{1}{|x-y|^d} \left(\frac{|x-y|}{\rho(x)}\right)^{\delta_\mu},
		 \end{align*}
		 where we have used \eqref{eq: prop2 mu}, \eqref{eq: equiv 1}, and the fact that $\delta_\mu-d<0$ and $|x-y|<\rho(x)$. 
		
		Thus, from \eqref{eq: integral over E_4} and \eqref{eq: E_4 intersec B_rho} we obtain 
		 	\begin{equation}\label{eq: estimate of I_4}
		 I_4 
		 \leq
		 \frac{C_k |x-z|^{\delta} }{(1+ \lambda^{1/2}|x-y|)^{k\epsilon_1}}\frac{1}{|x-y|^{d+\delta}}\left(\frac{|x-y|}{\rho(x)}\right)^{\delta_\mu}, \quad 0<\delta\leq 1. 
		 \end{equation}
		 
		 Therefore, in \eqref{eq: estimate of I_1}, \eqref{eq: estimate of I_2}, \eqref{eq: estimate of I_3} and \eqref{eq: estimate of I_4} we have proved \eqref{eq: bounds Ij} for each $j=1,2,3,4$.
		 	\end{proof}

		\begin{proof}[Proof of Theorem~\ref{thm: BMO Riesz transform}]

			 By  Corollary~\ref{cor: T1 criterion infty} and Remark~\ref{obs: reduce to beta=0}, we need to verify  that for  $1<\delta_\mu<d$,  $x_0 \in \RR^d$ and $0<r\leq \frac{1}{2}\rho(x_0)$, 	 there exists a constant $C$, independent of  $B=B(x_0,r)$, such that 
			\begin{equation*}
				\frac{1}{|B|}\int_B \left|\mathcal R_\mu 1(y)-(\mathcal R_\mu 1)_B\right|dy \leq C \left(\frac{r}{\rho(x_0)}\right)^{\beta}, \quad 0<\beta<\min\{1,\delta_\mu-1\}.
			\end{equation*}
		
			The proof follows the general strategy of \cite[Proposition~4.12]{MSTZ14}, with several modifications. In particular, we provide a self-contained argument, as some of the estimates used there do not apply directly in our setting.
			
	Let  $y,z \in B$. By \eqref{eq: equiv critical radius} we have  $\rho(y)\sim \rho(x_0)\sim \rho(z)$. 	
	Since 
			\[\mathcal{R}_\mu 1(x)=  \lim_{\epsilon \rightarrow 0^+} \int_{|x-y|>\epsilon}K_\mu(x,y)dy \qquad a.e. \quad x \in \RR^d\]
			we have 
			\begin{align*}
			|\mathcal{R}_\mu 1(y)-\mathcal{R}_\mu 1(z)| 
			&
			\leq    \lim_{\epsilon \rightarrow 0^+} \left|\int_{\epsilon <|x-y|<4\rho(x_0)}K_\mu(y,x)dx - \int_{\epsilon <|z-x|<4\rho(x_0)}K_\mu(z,x)dx \right| \\
			&
			+\left| \int_{|x-y|>4\rho(x_0)}K_\mu(y,x)dx- \int_{|z-x|>4\rho(x_0)}K_\mu(z,x)dx\right|\\
			&
			=: \lim_{\epsilon \rightarrow 0^+} A_\epsilon +\tilde{A}.
			\end{align*}
			
			To analyze  $A_\epsilon$,  
			we consider  $0<\epsilon < 4\rho(x_0)-2r$. Since   $\int_{r_1<|x-y|<r_2} K_0(x,y)dy=0$ for every $0<r_1<r_2$, we can write
				\begin{align*}
			A_\epsilon 
			&
			= \left|\int_{\epsilon <|x-y|\leq 4\rho(x_0)}(K_\mu(y,x)-K_0(y,x))dx - \int_{\epsilon <|z-x|<4\rho(x_0)}(K_\mu(z,x)-K_0(z,x))dx \right|\\
			&
			\leq \int_{\RR^d}\left| (K_\mu(y,x)-K_0(y,x))(\chi_{\epsilon< |x-y|\leq 4\rho(x_0)}(x)-\chi_{\epsilon< |x-z|\leq 4\rho(x_0)}(x))\right|dx\\
			& + \int_{\RR^d}\left| [(K_\mu(y,x)-K_0(y,x))-(K_\mu(z,x)-K_0(z,x))]\chi_{\epsilon< |x-z|\leq 4\rho(x_0)}(x)\right|dx\\
			&
			:= 
			A_{\epsilon,1}+A_{\epsilon,2}.
			\end{align*}
			The term $A_{\epsilon,1}$ is nonzero 
			only in the following 4 cases:
				\begin{enumerate}
				\item $\epsilon < |x-y| \leq 4\rho(x_0)$ and $|x-z|\leq \epsilon$; 
				\item $\epsilon < |x-y| \leq 4\rho(x_0)$ and $|x-z|> 4\rho(x_0)$; 
				\item  $\epsilon < |x-z|\leq 4\rho(x_0)$ and $|x-y|\leq \epsilon$; 
				\item $\epsilon < |x-z|\leq 4\rho(x_0)$ and $|x-y|> 4\rho(x_0)$.
			\end{enumerate}
			
		In the first case we have $ \epsilon <|x-y|\leq |x-z|+|z-y|< \epsilon +2r$. Then, using the estimate \eqref{eq: comparison Riesz kernels}, integrating and using the Mean Value Theorem we get
			\begin{align}\label{eq: Aeps1 case 1}
		A_{\epsilon,1} 
		&
		\lesssim \frac{1}{{\rho(x_0)}^{\delta_\mu}} \int_{\epsilon}^{\epsilon +2r} t ^{\delta_\mu-1}dt
		\sim \frac{1}{{\rho(x_0)}^{\delta_\mu}}  \frac{1}{\delta_\mu} \left((\epsilon+2r)^{\delta_\mu}-\epsilon^{\delta_\mu}\right)
				\lesssim \frac{(4\rho(x_0))^{\delta_\mu-1}}{{\rho(x_0)}^{\delta_\mu}}2r
\sim\frac{r}{\rho(x_0)}. 
		\end{align}
	
		In the second case, observe that $4\rho(x_0)<|x-z|<|x-y|+2r$. Thus, $4\rho(x_0)-2r<|x-y|\leq 4\rho(x_0)$. Then, proceeding as in the previous case,
		\begin{equation*}
		A_{\epsilon,1} 
		\lesssim \int_{4\rho(x_0)-2r<|x-y|<4\rho(x_0)} \frac{1}{|x-y|^d}\left(\frac{|x-y|}{\rho(x)}\right)^{\delta_\mu}dx
		\lesssim \frac{r}{\rho(x_0)}.
		\end{equation*}
		
		With respect to the third and fourth cases, although the conditions seem to be symmetric, the kernels are still evaluated in $y$ instead of $z$.

	In the third case, without loss of generality, as we will take $\epsilon\to 0^+$, we may assume $\epsilon<4r$. Thus, by \eqref{eq: comparison Riesz kernels}, we deduce
		\begin{equation*}
			A_{\epsilon,1}\lesssim \frac{1}{\rho(x_0)^{\delta_\mu}} \int_{|x-y|\leq \epsilon}|x-y|^{\delta_\mu-d} dx\lesssim  \frac{1}{\rho(x_0)^{\delta_\mu}} \epsilon^{\delta_\mu}\lesssim \left(\frac{r}{\rho(x_0)}\right)^{\delta_\mu}\lesssim \frac{r}{\rho(x_0)},
		\end{equation*}
		where in the last inequality we have used that $r<\rho(x_0)$ and $\delta_\mu>1$.
		
		To estimate in the fourth case, note that $4\rho(x_0)<|x-y|<4\rho(x_0)+2r$, which gives, proceeding similarly to \eqref{eq: Aeps1 case 1}, that
			\begin{equation*}
			A_{\epsilon,1}\lesssim \frac{1}{{\rho(x_0)}^{\delta_\mu}}  \left((4\rho(x_0)+2r)^{\delta_\mu}-(4\rho(x_0))^{\delta_\mu}\right)\lesssim \frac{1}{{\rho(x_0)}^{\delta_\mu}} (5\rho(x_0))^{\delta_\mu-1} 2r \lesssim  \frac{r}{\rho(x_0)}.
		\end{equation*}
		
		In conclusion, since $r<\rho(x_0)$, 
		\begin{equation}\label{eq: Aeps1}
				A_{\epsilon,1}\leq C\left(\frac{r}{\rho(x_0)}\right)^{\delta}, \quad 0<\delta\leq 1.
			\end{equation}
	
		Now we consider $	A_{\epsilon,2} $, and we split it as follows, 
		\begin{align*}
		A_{\epsilon,2}&=\int_{\RR^d}\left| [(K_\mu(y,x)-K_0(y,x))-(K_\mu(z,x)-K_0(z,x))]\chi_{\epsilon< |x-z|\leq 4\rho(x_0)}(x)\right|dx\\ 
		&
		\leq \int_{|z-x|\geq 2|z-y|} \left| [(K_\mu(z,x)-K_0(z,x))-(K_\mu(y,x)-K_0(y,x))]\chi_{\epsilon< |x-z|\leq 4\rho(x_0)}(x)\right|dx\\
		&
		+ \int_{|z-x|< 2|z-y|} \left|[(K_\mu(z,x)-K_0(z,x))-(K_\mu(y,x)-K_0(y,x))]\right|dx\\
		&
		= A_{\epsilon,2,1}+ A_{\epsilon,2,2}.
		\end{align*}
		By \eqref{itm: diffs of Riesz kernels},  for any $0<\delta<\min\{1, \delta_\mu-1\}$,
		\begin{equation}\label{eq: A 2,1 epsilon}
		A_{\epsilon,2,1} \lesssim \frac{|y-z|^\delta}{\rho(x_0)^{\delta_\mu}}\int_{|x-z|<4\rho(x_0)}|x-z|^{\delta_\mu-d-\delta}dx \lesssim \frac{r^\delta}{\rho(x_0)^{\delta_\mu}} (4\rho(x_0))^{\delta_\mu-\delta}
		\lesssim \left(\frac{r}{\rho(x_0)}\right)^\delta.
		\end{equation}
		
		On the other hand, by \eqref{eq: comparison Riesz kernels} we get 
		\begin{align}\label{eq: A 2,2 epsilon}
		A_{\epsilon,2,2}
		& 
		\lesssim \int_{|x-z|< 2 |y-z|}  \frac{1}{|x-z|^d}\left(\frac{|x-z|}{\rho(z)}\right)^{\delta_\mu}dx+
		\int_{|x-y|< 3|y-z|} \frac{1}{|x-y|^d}\left(\frac{|x-y|}{\rho(y)}\right)^{\delta_\mu}dx\nonumber\\
		&  
		\lesssim
		\frac{1}{\rho(x_0)^{\delta_\mu}} |y-z|^{\delta_\mu}
		\lesssim 
			 \left(\frac{r}{\rho(x_0)}\right)^\delta \qquad ,
		\end{align}
		for any $\delta\leq  \delta_\mu$. Hence, combining the estimates obtained in \eqref{eq: Aeps1}, \eqref{eq: A 2,1 epsilon} and \eqref{eq: A 2,2 epsilon} we have that for all $\epsilon>0$ sufficiently small, and any $0<\delta<\min\{1, \delta_\mu-1\}$,
		 \begin{equation}\label{eq: final estimate Aeps}
		A_\epsilon\leq C \left(\frac{r}{\rho(x_0)}\right)^\delta.
		\end{equation}
		
		Let us now estimate $\tilde{A}$, by writing
		\begin{align*}
		\tilde{A}
		&
		= \left| \int_{|x-y|>4\rho(x_0)}K_\mu(y,x)dx- \int_{|z-x|>4\rho(x_0)}K_\mu(z,x)dx\right|   \nonumber \\
		&
		\leq
		\int_{|x-y|>4\rho(x_0)}|K_\mu(y,x)-K_\mu(z,x)|dx + \int_{\RR^d}|K_\mu(z,x)||\chi_{|x-z|>4\rho(x_0)}(x)-\chi_{|x-y|>4\rho(x_0)} (x)| dx\nonumber\\
		& 
		=: \tilde{A}_1+ \tilde{A}_2.
		\end{align*}
		In the integral of $\tilde{A}_1$, we have $|x-y|>4\rho(x_0)\geq 8r > 2|y-z|$. Therefore, from the smoothness of the Riesz kernel $K_\mu$ given in \eqref{eq: smoothness Riesz kernel delta_mu>1}, and recalling that $y,z\in B$ and $\delta_\mu>1$, we have 
		\begin{align*}
		\tilde{A}_1\leq C |y-z|^{\delta_\mu-1} \int_{|x-y|>4\rho(x_0)}\frac{1}{|x-y|^{d+\delta_\mu-1}}dx \leq C \left( \frac{r}{\rho(x_0)}\right)^{\delta_\mu-1}.
		\end{align*} 
		
		For $\tilde{A}_2$ we use the size condition \eqref{eq: size Riesz kernel delta_mu>1}
			 to have
			\[\tilde{A}_2\lesssim \int_{\RR^d} \frac{1}{|z-x|^d}\left|\chi_{|x-z|>4\rho(x_0)}(x)-\chi_{|x-y|>4\rho(x_0)} (x)\right| dx.\]
		When $|x-z|>4\rho(x_0)\geq |x-y|$, we get $4\rho(x_0)<|z-x|<4\rho(x_0)+2r$. Thus, 
		\begin{equation*}
			\tilde{A}_2\lesssim \int_{4\rho(x_0)<|z-x|<4\rho(x_0)+2r} \frac{1}{|z-x|^{d}} dx\sim \int_{4\rho(x_0)}^{4\rho(x_0)+2r} \frac1t dt\lesssim  \frac{2r}{4\rho(x_0)}\sim  \frac{r}{\rho(x_0)}.
		\end{equation*}
		
		In the other case, when $|x-y|>4\rho(x_0)\geq |x-z|$, we get $2\rho(x_0)\leq 4\rho(x_0)-2r<|z-x|\leq 4\rho(x_0)$. Proceeding as above, we get $\tilde{A}_2\leq C \frac{r}{\rho(x_0)}.$
		Hence 
		\begin{equation}\label{eq: final estimate Atilde}
		\tilde{A} \leq C \left(\frac{r}{\rho(x_0)}\right)^\delta, \quad 0<\delta<\min\{1,\delta_\mu-1\}.
		\end{equation}	
	
		Then, the estimates \eqref{eq: final estimate Aeps} and \eqref{eq: final estimate Atilde} imply \eqref{eq: T1 power condition} as we wanted to prove. 
	\end{proof}

\subsection{Laplace transform-type multipliers}

Given $\phi \in L^\infty(0, \infty)$, we define 
\[m_\phi(\lambda)=\lambda \int_0^\infty e^{-\lambda t}\phi(t)\, dt, \quad \lambda>0.\]
By means of the Spectral Theorem, we can define the Laplace transform-type multipliers $m_\phi(\mathcal{L}_\mu)$. If we consider the heat semigroup associated with $\mathcal{L}_\mu$,
\begin{equation}\label{eq: heat semigroup}\mathcal W_tf(x):=e^{-t\mathcal{L}_\mu}f(x)=\int_{\mathbb{R}^d} \mathcal W_t(x,y) f(y)dy, \quad f \in L^2(\mathbb{R}^d),\ x \in \mathbb{R}^d,\ t>0,
\end{equation}we can write
\[m_\phi(\mathcal{L}_\mu)f(x)= \int_{0}^{\infty} \phi(t) \mathcal{L}_\mu e^{-t\mathcal{L}_\mu}f(x)dt=-\int_{0}^{\infty}\phi(t) \partial_t \mathcal W_tf(x) dt, \qquad x \in \mathbb{R}^d.\]

Its kernel can be expressed as
\[\mathcal{M}_\phi(x,y)= -\int_{0}^{\infty} \phi(t) \partial_t \mathcal{W}_t(x,y) dt, \quad x,y\in \mathbb{R}^d.\]

From classical theory, it can be deduced that $m_\phi(\mathcal{L}_\mu)$ is bounded on $L^2(\mathbb{R}^d)$ for every $\phi \in L^\infty(0, \infty)$ (see \cite[Corollary~3,~p.~121]{SteinTopics}). In the case of the Schr\"odinger operator $\mathcal{L}_V$, with $V \in RH_q$, $q > d/2$, it is known that $m_\phi(\mathcal{L}_V)$ can be extended to a bounded operator on $L^p(\mathbb{R}^d)$ for all $1 < p < \infty$, and also from $L^1(\mathbb{R}^d)$ to $L^{1,\infty}(\mathbb{R}^d)$ (see \cite[Theorem~2]{BCFR}).

 In the particular case of $\phi(\lambda)=\lambda^{i\gamma}$, for $\lambda>0$ and $\gamma\in \mathbb R$, we get the imaginary powers $\mathcal{L}_V^{i\gamma}$.  
 In \cite{BCH13Lerner} it was proved that $\mathcal{L}_V^{i\gamma}$ is bounded on $L^p(w)$ for all $\gamma > 0$, $1 < p < \infty$, and $w \in A_p^\rho$.
 Recently, it was proved in \cite{DLT25} that the operator $m_\phi(\mathcal{L}_\mu)$ is an exponential SCZO of $(0, \infty, \delta)$ type, for $0 < \delta < \min\{1, \delta_\mu\}$, with parameters $c=c_0$ (the constant from Lemma~\ref{lem: heat semigroup estimates}\ref{itm: size W_t} given below) and $m=\frac{1}{k_0+1}$. Consequently, it is bounded  on $L^p(w)$ for all $1 < p < \infty$ and every $w \in H^{\rho,m^\ast}_{p,c^\ast}$, where $m^\ast = \frac{1}{(k_0+1)^2}$ and $c^\ast < c_0(2^{3k_0+7}C_0^{k_0+3})^{-m^*}$ (see \cite[Theorem~5.9]{DLT25}).
 The proofs of these results rely on estimates for the kernel $\mathcal{M}_\phi(x,y)$, which are derived from estimates for $\mathcal{W}_t(x,y)$ and $\partial_t\mathcal{W}_t(x,y)$. These estimates for the heat kernels are provided here, as they will be employed in subsequent examples.

 \begin{lem}[{\cite[Theorem~1.1, Lemma~3.7]{WuYan16}}]\label{lem: heat semigroup estimates} Let $x,y\in \mathbb{R}^d$ and $t>0$.
 	\begin{enumerate}\item \label{itm: size W_t} There exist positive constants $C, c$, and $c_0$ such that
 		\[0 \leq \mathcal{W}_t(x,y) \leq \frac{C}{t^{\frac d2}} \exp\left(-\frac{ |x-y|^2}{4t}\right)\exp\left(-c_0\left(1+\frac{\max\{|x-y|,\sqrt{t}\}}{\rho(x)}\right)^{\frac{1}{k_0+1}}\right).\]
 		\item \label{itm: smoothnes W_t} For every $0 <{\beta}<\min\left\{1, \delta_\mu\right\}$ and every $N>0$, there exist positive constants $c$ and $C_N$ such that, for all $|h|\leq \sqrt{t}$,
 		\[ | \mathcal{W}_t(x+h,y)-\mathcal{W}_t(x,y)| \leq  \frac{C_N}{t^{\frac d2}} \left(\frac{|h|}{\sqrt{t}}\right)^{{\beta}} \exp\left(-c \frac{|x-y|^2}{t}\right)\left(1+ \frac{\sqrt{t}}{\rho(x)}+ \frac{\sqrt{t}}{\rho(y)}\right)^{-N}.\]
 	\end{enumerate}\end{lem}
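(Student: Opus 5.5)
The estimates in Lemma~\ref{lem: heat semigroup estimates} are quoted from \cite{WuYan16}, so what follows is how I would reconstruct them using only \eqref{eq: prop1 mu}, \eqref{eq: prop2 mu}, \eqref{eq: equiv 1} and the critical radius function $\rho=\rho_\mu$. The basic tool is the Duhamel (perturbation) formula relating $\mathcal W_t$ to the Gaussian kernel $H_t(x,y)=(4\pi t)^{-d/2}e^{-|x-y|^2/4t}$:
\[
H_t(x,y)-\mathcal W_t(x,y)=\int_0^t\int_{\RR^d} H_{t-s}(x,v)\,\mathcal W_s(v,y)\,d\mu(v)\,ds .
\]

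For item~\ref{itm: size W_t}, the first step is $0\le \mathcal W_t\le H_t$, which follows from positivity of $\mu$ (Trotter--Kato or Feynman--Kac domination). Next I would prove polynomial decay,
\[
\mathcal W_t(x,y)\le C_N\, t^{-d/2}e^{-c|x-y|^2/t}\bigl(1+\sqrt t/\rho(x)\bigr)^{-N}.
\]
I would try to get this by an iteration in the style of Kurata/Shen: the Duhamel term together with the lower mass bound $\mu(B(x,\sqrt t))\gtrsim \sqrt t^{\,d-2}$ for $\sqrt t\ge\rho(x)$ (from \eqref{eq: prop2 mu} and \eqref{eq: equiv 1}) shows that a fixed fraction of mass is lost on each time scale of order $\rho(x)^2$. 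Iterating this with the semigroup property and \eqref{eq: critical radius function} gives the factor $(1+\sqrt t/\rho)^{-N}$.

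To upgrade to the exponential factor in $\max\{|x-y|,\sqrt t\}/\rho(x)$, I would split into two regimes. When $\sqrt t\gtrsim|x-y|$, I would iterate the previous bound via $\mathcal W_t=\mathcal W_{t/n}^n$ with $n\sim(\sqrt t/\rho)^{2m}$. When $|x-y|\gg\sqrt t$, I would use a Davies-type exponential weight $e^{\epsilon\phi}$, with $\phi$ a regularized Agmon distance $d_\mu$, together with the lower bound for $d_\mu$ from \cite[Lemma~5.1]{DLT25} already used above. That lower bound produces $\exp(-c(1+|x-y|/\rho(x))^{1/(k_0+1)})$.

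This exponential step is the main obstacle: I need to control $|\nabla\phi|\lesssim\rho^{-1}$ against the Fefferman--Phong type inequality $\int|\nabla u|^2+\int|u|^2d\mu\gtrsim\int|u|^2\rho^{-2}$ without losing the Gaussian factor. The fallback is to borrow the Agmon estimates for $\Gamma_{\mu+\lambda}$ in \eqref{eq: bound Gamma_mu+lambda everywhere} and pass to the heat kernel by an inverse Laplace/resolvent argument, accepting a smaller constant $c_0$.

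For item~\ref{itm: smoothnes W_t}, I would first reduce via the semigroup property, writing $\mathcal W_t(x+h,y)-\mathcal W_t(x,y)=\int[\mathcal W_{t/2}(x+h,v)-\mathcal W_{t/2}(x,v)]\mathcal W_{t/2}(v,y)\,dv$. The decay factors in $y$ then come from item~\ref{itm: size W_t}, so it suffices to get a Hölder bound with the factor in $\rho(x)$. For that bound I would differentiate through Duhamel. The free part gives $(|h|/\sqrt t)^1$. For the perturbation part I would split at $|x-v|\sim|h|$, exactly as in the regions $E_1,\dots,E_4$ in the proof of Lemma~\ref{lem: bounds diff K0}, and use \eqref{eq: prop1 mu} to sum dyadic annuli. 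This yields a loss $(|h|/\sqrt t)^{\beta}$ for every $\beta<\min\{1,\delta_\mu\}$, since the series $\sum 2^{-j(\delta_\mu-\beta)}$ must converge. Finally, the factor $(1+\sqrt t/\rho(x)+\sqrt t/\rho(y))^{-N}$ comes from the item~\ref{itm: size W_t} bounds on both halves together with \eqref{eq: critical radius function}.
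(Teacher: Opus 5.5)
The paper gives no proof of this lemma; it is quoted from Wu--Yan. Your sketch therefore has to stand on its own, and for item \ref{itm: size W_t} it does not yet.

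First, the bound cannot hold as literally written. Take $\mu=\lambda\,dx$, which satisfies \eqref{eq: prop1 mu} and \eqref{eq: prop2 mu} with $\delta_\mu=2$ and has $\rho_\mu\equiv(\omega_d\lambda)^{-1/2}$. Then $\mathcal W_t=e^{-\lambda t}H_t$, with $H_t$ the Gauss kernel in your notation. Fixing $t$ and letting $|x-y|\to\infty$ contradicts $\mathcal W_t(x,y)\le C\,t^{-d/2}e^{-|x-y|^2/(4t)}\exp\bigl(-c_0(1+|x-y|/\rho_\mu)^{1/(k_0+1)}\bigr)$. What is true, and all that the rest of the paper uses, is the version with $e^{-c|x-y|^2/t}$ for some $c<1/4$. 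This is clearly the intended reading, since the statement announces a constant $c$ that never appears. So the obstacle you name in the Davies step, not losing the Gaussian factor, cannot be overcome and need not be. Part of the Gaussian must be traded for the $\rho$-decay, exactly as in $\lambda t+|x-y|^2/(4t)\ge\sqrt\lambda\,|x-y|$.

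The real gap is the exponential step. Iterating $\mathcal W_t=\mathcal W_{t/n}^n$ with $n\sim(\sqrt t/\rho(x))^{2m}$ does not close, for two reasons. Iterating kernel bounds multiplies constants into $C^n$. Iterating your one-step mass loss $\mathcal W_\tau 1(z)\le\theta<1$ is only legitimate where $\rho(z)\lesssim\sqrt\tau$, yet the heat spreads to points where, by \eqref{eq: critical radius function}, $\rho$ can be as large as $\rho(x)(1+|z-x|/\rho(x))^{k_0/(k_0+1)}$. The one-step loss itself is fine, except that the lower bound $\mu(B(z,r))\ge r^{d-2}$ for $r>\rho_\mu(z)$ comes from \eqref{eq: rho_mu}, not from \eqref{eq: prop2 mu}.

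The fallback is not an argument yet either. Laplace inversion needs resolvent bounds for complex $\lambda$, while \eqref{eq: bound Gamma_mu+lambda everywhere} holds only for $\lambda\ge0$. With real $\lambda$ you would have to exploit positivity, e.g. by averaging $\mathcal W_t=\mathcal W_{t-s}\mathcal W_s$ over $s$.

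The missing idea is to localize the mass iteration; after that, neither Davies' method nor the Agmon distance is needed. Let $T=\sqrt t/\rho(x)\ge1$ and $B=B(x,A\sqrt t)$. On $B$ we have $\rho\le 2C_0\rho(x)(AT)^{k_0/(k_0+1)}$. Take $\tau$ to be the square of this bound; since $s\mapsto\mathcal W_s1$ is nonincreasing, $\mathcal W_\tau 1\le\theta$ on $B$. Using $\mathcal W\le H$, for $(j+1)\tau\le t$,
\[
\mathcal W_{(j+1)\tau}1(x)=\int_{\RR^d}\mathcal W_{j\tau}(x,z)\,\mathcal W_\tau 1(z)\,dz\le\theta\,\mathcal W_{j\tau}1(x)+\int_{\RR^d\setminus B}H_{j\tau}(x-z)\,dz\le\theta\,\mathcal W_{j\tau}1(x)+Ce^{-cA^2}.
\]
Hence $\mathcal W_t1(x)\lesssim\theta^n+e^{-cA^2}$ with $n\sim t/\tau\sim T^{2/(k_0+1)}A^{-2k_0/(k_0+1)}$. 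Choosing $A^2=T^{2/(2k_0+1)}$ gives $\mathcal W_t1(x)\lesssim e^{-cT^{2/(2k_0+1)}}$, and $\frac{2}{2k_0+1}>\frac{1}{k_0+1}$.

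Decay in $|x-y|$ then follows from
\[
\mathcal W_t(x,y)\le\min\{H_t(x-y),(2\pi t)^{-d/2}\mathcal W_{t/2}1(x)\}
\]
together with $\min_{T>0}\bigl(s^2T^{-2}+T^{2/(2k_0+1)}\bigr)\sim s^{1/(k_0+1)}$ for $s=|x-y|/\rho(x)\ge1$. This yields exactly the exponent $1/(k_0+1)$ in $\max\{|x-y|,\sqrt t\}/\rho(x)$, with a worse Gaussian constant. It also makes your intermediate polynomial-decay stage unnecessary.

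Your plan for item \ref{itm: smoothnes W_t} is the standard one: semigroup splitting plus Duhamel, with $\beta<\delta_\mu$ coming from integrability near the endpoint of the time integral. It works once item \ref{itm: size W_t} is available in this corrected form.
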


 \begin{lem}[{\cite[Lemma~3.8]{WuYan16}}] \label{lem: Q_t estimates} Let $x,y\in \mathbb{R}^d$ and $t>0$.
 	\begin{enumerate}
 		\item \label{item: t deriv parcial k_t} There exist positive constants $C$ and $c$ such that
 		\[\left|t\partial_t\mathcal{W}_t(x,y)\right| \leq \frac{C}{t^{\frac d2}} \exp\left(-\frac{|x-y|^2}{4t}\right)\exp\left(-c_0\left(1+\frac{\max\{|x-y|,\sqrt{t}/2\}}{\rho(x)}\right)^{\frac{1}{k_0+1}}\right),\]
 		where $c_0$ is the constant from Lemma~\ref{lem: heat semigroup estimates}\ref{itm: size W_t}.
 		\item \label{itm: t diff partial W_t} For every $0< {\beta}< \min\{1,\delta_\mu\}$ and $N>0$, there exist positive constants $c$ and $C_N$ such that, for all $|h|\leq \sqrt{t}$,
 		\[|t\partial_t \mathcal{W}_t(x+h,y)-t\partial_t \mathcal{W}_t(x,y)| \leq  \frac{C_N}{t^{\frac d2}} \left(\frac{|h|}{\sqrt{t}}\right)^{{\beta}} \exp\left(-c\frac{|x-y|^2}{t}\right)\left(1+ \frac{\sqrt{t}}{\rho(x)}+ \frac{\sqrt{t}}{\rho(y)}\right)^{-N}.\]
 		\item \label{itm: integral Q_t} For every $N>0$, there exists a constant $C_N$ such that
 		\[\left|\int_{\mathbb{R}^d} t\partial_t \mathcal{W}_t(x,y) dy\right|\leq C_N \left(\frac{\sqrt{t}}{\rho(x)}\right)^{\delta_\mu}\left(1+\frac{\sqrt{t}}{\rho(x)}\right)^{-N}.\]
 \end{enumerate}\end{lem}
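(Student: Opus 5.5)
We will derive all three estimates from Lemma~\ref{lem: heat semigroup estimates}, using the semigroup property and, for \ref{item: t deriv parcial k_t}, the analyticity of $\zeta\mapsto e^{-\zeta\mathcal L_\mu}$. Since $\mathcal L_\mu$ is nonnegative and self-adjoint, the semigroup extends holomorphically to the sector $|\arg\zeta|<\pi/2$.

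\textbf{Step 1: item \ref{item: t deriv parcial k_t}.} We first upgrade Lemma~\ref{lem: heat semigroup estimates}\ref{itm: size W_t} to complex times $\zeta$ with $|\zeta-t|\le t/2$. For such $\zeta$ we aim for a bound of the form
\[
|\mathcal W_\zeta(x,y)|\lesssim t^{-d/2}e^{-c|x-y|^2/t}\exp\left(-c_0\left(1+\tfrac{\max\{|x-y|,\sqrt t/2\}}{\rho(x)}\right)^{\frac{1}{k_0+1}}\right).
\]
We would get this with the Davies/Phragmén--Lindelöf interpolation. It combines the real-time Gaussian bound carrying the extra decay factor with the $L^2\to L^\infty$ analytic bounds. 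The value $\sqrt t/2$ appears because $\operatorname{Re}\zeta\ge t/2$ on this disc. Cauchy's formula then gives
\[
t\partial_t\mathcal W_t=\frac{t}{2\pi i}\oint_{|\zeta-t|=t/2}\frac{\mathcal W_\zeta}{(\zeta-t)^2}\,d\zeta,
\]
and the factor in front of the contour integral is $O(1)$, which yields \ref{item: t deriv parcial k_t}. The main obstacle is keeping the non-Gaussian decay factor through the complex-time interpolation with the same constant $c_0$. A fallback is to work with a Gaussian constant $1/4$ replaced by a smaller one, and to absorb the loss into that constant only.

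\textbf{Step 2: item \ref{itm: t diff partial W_t}.} We avoid complex times here. We write
\[
t\partial_t\mathcal W_t(x,y)=2\int\mathcal W_{t/2}(x,z)\,\big[s\partial_s\mathcal W_s(z,y)\big]_{s=t/2}\,dz .
\]
The difference in $x$ then falls only on $\mathcal W_{t/2}(x+h,z)-\mathcal W_{t/2}(x,z)$. For $|h|\le\sqrt t$ we also have $|h|\lesssim\sqrt{t/2}$, so Lemma~\ref{lem: heat semigroup estimates}\ref{itm: smoothnes W_t} applies. We combine it with the bound from Step 1 (or directly with Lemma~\ref{lem: heat semigroup estimates}\ref{itm: size W_t}) and convolve the two Gaussians. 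The factor $(1+\sqrt t/\rho(x)+\sqrt t/\rho(y))^{-N}$ comes from the first factor. For the dependence on $\rho(y)$, we use \eqref{eq: critical radius function} to trade $\rho(z)$ for $\rho(y)$, absorbing the resulting polynomial in $|z-y|/\sqrt t$ into the Gaussian.

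\textbf{Step 3: item \ref{itm: integral Q_t}.} Formally $\mathcal L_\mu 1=\mu$, so
\[
\int t\partial_t\mathcal W_t(x,y)\,dy=-t\int\mathcal W_t(x,z)\,d\mu(z).
\]
We would justify this by a Duhamel/weak-form argument, testing against truncations of $1$ and letting them increase. We then split $\mathbb R^d$ into $B(x,\sqrt t)$ and the annuli $2^{k+1}B\setminus 2^kB$, $B=B(x,\sqrt t)$, and use Lemma~\ref{lem: heat semigroup estimates}\ref{itm: size W_t}. If $\sqrt t\le\rho(x)$, property \eqref{eq: prop1 mu} together with \eqref{eq: equiv 1} gives
\[
\mu(B(x,r))\lesssim\rho(x)^{d-2}\left(\tfrac{r}{\rho(x)}\right)^{d-2+\delta_\mu}\quad\text{for } r\le\rho(x).
\]
Beyond $\rho(x)$ we use \eqref{eq: prop2 mu}, whose geometric growth is beaten by the Gaussian. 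Each term then contributes $t\cdot t^{-d/2}\,t^{(d-2)/2}(\sqrt t/\rho(x))^{\delta_\mu}$, which gives the claim with the factor $(1+\sqrt t/\rho(x))^{-N}$ trivially bounded. If $\sqrt t>\rho(x)$, the doubling \eqref{eq: prop2 mu} gives $\mu(B(x,2^k\sqrt t))\lesssim D_\mu^{j}\rho(x)^{d-2}$ with $j\approx\log_2(2^k\sqrt t/\rho(x))$, which is polynomial in $\sqrt t/\rho(x)$. The exponential factor $\exp\big(-c_0(\sqrt t/\rho(x))^{1/(k_0+1)}\big)$ then dominates any power, producing $(\sqrt t/\rho(x))^{\delta_\mu-N}$ for every $N$.
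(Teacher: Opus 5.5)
The paper gives no proof of this lemma; it is quoted from Wu--Yan. Your Steps 2 and 3 are the standard arguments and they work.

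For (iii), you combine the identity $\int_{\mathbb R^d}\partial_t\mathcal W_t(x,y)\,dy=-\int_{\mathbb R^d}\mathcal W_t(x,y)\,d\mu(y)$ with the bound \eqref{eq: integral exp heat bound dmu}. The paper records both right after the lemma, and they give the claim as you describe.

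For (ii), the factorization through $\mathcal W_{t/2}$ works. The derivative factor needs only a plain Gaussian bound, which comes from Davies' complex-time Gaussian bounds (self-adjoint semigroup, kernel as in Lemma~\ref{lem: heat semigroup estimates}\ref{itm: size W_t}) plus Cauchy's formula. So (ii) does not depend on the delicate part of Step 1. For $\sqrt{t/2}<|h|\le\sqrt t$, Lemma~\ref{lem: heat semigroup estimates}\ref{itm: smoothnes W_t} at time $t/2$ does not apply, so use size bounds there.

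The gap is Step 1, which carries all of (i). The complex-time bound with $\exp(-c_0(1+\max\{|x-y|,\sqrt t/2\}/\rho(x))^{m})$, $m=\frac1{k_0+1}$, is only announced, and your plan for the constant fails.

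\begin{itemize}
\item If you run Phragm\'en--Lindel\"of, you use that $\log|\zeta^{d/2}\mathcal W_\zeta(x,y)|$ is subharmonic and bounded above in a sector $0<\arg\zeta<\theta_1<\pi/2$. You compare it with majorants like $-c\,r^{m/2}\sin(\frac m2(\theta_1-\theta))/\sin(\frac m2\theta_1)$. This multiplies the non-Gaussian exponent by a harmonic-measure factor $<1$ at the points of your circle off the real axis.
\item This loss cannot be moved onto the Gaussian constant. At $x=y$ the Gaussian equals $1$ whatever its constant, while the loss sits in the $\sqrt t/\rho(x)$ term.
\item The correct fix: $c_0$ in Lemma~\ref{lem: heat semigroup estimates}\ref{itm: size W_t} is only an existence constant. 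That estimate stays true with the smaller constant the interpolation produces, so ``the same $c_0$'' holds once you use that value in both lemmas.
\item The $\sqrt t/2$ is not explained by $\mathrm{Re}\,\zeta\ge t/2$, which gives $\sqrt{t/2}$. It is what the factorization $t\partial_t\mathcal W_t=\frac43\,\mathcal W_{t/4}\circ[s\partial_s\mathcal W_s]_{s=3t/4}$ produces. That factorization gives the $\sqrt t$-part of the factor with the original $c_0$ and no complex times.
\item The factorization cannot give the $|x-y|$-part. The product $e^{-c|x-y|^2/t}e^{-c_0(\sqrt t/\rho(x))^{m}}$ is not $\lesssim e^{-c''|x-y|^2/t}e^{-c'(|x-y|/\rho(x))^{m}}$ for any $c',c''>0$. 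To see this, take $|x-y|/\sqrt t=(\sqrt t/\rho(x))^{\lambda}$ with $0<\lambda<\frac{m}{2-m}$ and let $\sqrt t/\rho(x)\to\infty$. So the interpolation has to be carried out in full.
\item Lowering the Gaussian constant below $1/4$ is forced, not a fallback. Already for the free kernel, $t\partial_tW_t=(\frac{|x-y|^2}{4t}-\frac d2)W_t$. So (i) must be read with $e^{-c|x-y|^2/t}$, which is what the constant $c$ in the statement refers to.
\end{itemize}
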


 To establish boundedness results on $\BMO_\rho^\alpha(w)$ for $m_\phi(\mathcal{L}_\mu)$, extending those already known for $m_\phi(\mathcal{L}_V)$ given in \cite[Theorem~1.3]{MSTZ14},  
	 we will give some estimates comparing the kernel $\mathcal{W}_t(x,y)$ with the classical heat semigroup kernel $\{W_t\}_{t>0} = \{e^{-t\Delta}\}_{t>0}$.
	 
	 We note first that we can express, in the weak sense, that
	 \begin{equation}\label{eq: integral W_t dmu }\int_{\mathbb{R}^d}\partial_t \mathcal{W}_t(x,y)dy= -e^{-t\mathcal{L}_\mu}\mathcal{L}_\mu 1(x)=-\int_{\mathbb{R}^d} \mathcal{W}_t(x,y)\mathcal{L}_\mu1(y) dy=-\int_{\mathbb{R}^d}\mathcal{W}_t(x,y)d\mu(y).
	 \end{equation}
	 We also have the following inequality given in \cite[Eq. (2.2)]{WuYan16}: for every $x \in \mathbb{R}^d$ and $t > 0$,
	 \begin{equation}\label{eq: integral exp heat bound dmu}
	 	\int_{\mathbb{R}^d}t^{-d/2}\exp \left(-\frac{|x-y|^2}{t}\right)d\mu(y) \leq\begin{cases*}\frac Ct \left(\frac{\sqrt{t}}{\rho(x)}\right)^{{\delta_\mu}} , & $t < \rho(x)^2$\\
	 		\frac Ct \left(\frac{\sqrt{t}}{\rho(x)}\right)^{\log_2(D_\mu)}, & $t \geq \rho(x)^2.
	 		$			\end{cases*}\end{equation}

\begin{prop}[{\cite[Lemma~3.6]{WuYan16}}]\label{prop: comparison heat kernels} Let $x,y, z\in \mathbb{R}^d$ and $t>0$.\begin{enumerate}
		\item \label{itm: diff heat kernels} There exist positive constants $C$ and $c$ such that
		\begin{equation*}|\mathcal{W}_t(x,y)-W_t(x,y)| \leq 
			\left\{
			\begin{array}{ll}
				\frac{C}{t^{d/2}}\left( \frac{\sqrt{t}}{\rho(x)} \right)^{\delta_\mu} \exp\left(-c\frac{|x-y|^2}{t}\right), & \quad \text{if } \sqrt{t} < \rho(x), \\
				\frac{C}{t^{d/2}}\left( \frac{\sqrt{t}}{\rho(y)} \right)^{\delta_\mu} \exp\left(-c\frac{|x-y|^2}{t}\right), & \quad \text{if } \sqrt{t} < \rho(y), \\
				\frac{C}{t^{d/2}}\exp\left(-\frac{|x-y|^2}{4t}\right), & \quad \text{otherwise}.\end{array}\right.\end{equation*}
		
		\item \label{itm: diff of diff heat kernels} For every $0<{\beta}<\min\left\{ 1, \delta_\mu\right\}$ and every positive constant $C$, there exist positive constants $C'$ and $c$ such that
		\begin{equation*}
			|(\mathcal{W}_t(x,y)-W_t(x,y))-(\mathcal{W}_t(z,y)-W_t(z,y))|\leq \frac{C'}{t^{\frac d2}} \left(\frac{|x-z|}{\rho(y)}\right)^{{\beta}}\exp\left(-c\frac{|y-z|^2}{t}\right),
			\end{equation*}
		whenever $|x-z|\leq |x-y|/4$ and $|x-z|\leq C \rho(x)$.
	\end{enumerate}
\end{prop}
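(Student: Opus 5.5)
The plan is a perturbation argument based on Duhamel's formula. We compare $\mathcal L_\mu$ with $-\Delta$ and regard $\mu$ as a perturbation. The starting point is the identity
\[
\mathcal W_t(x,y)-W_t(x,y)=-\int_0^t\int_{\RR^d} W_{t-s}(x,v)\,\mathcal W_s(v,y)\,d\mu(v)\,ds .
\]
It follows from $(\partial_s+\mathcal L_\mu)\mathcal W_s=0$ and $(\partial_s-\Delta)W_s=0$ by differentiating $s\mapsto \int W_{t-s}(x,v)\mathcal W_s(v,y)\,dv$. Since $\mathcal W_s\ge 0$ and $W_{t-s}\geq 0$, the difference is nonpositive. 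In any case it suffices to bound its absolute value using Lemma~\ref{lem: heat semigroup estimates}\ref{itm: size W_t} together with the Gaussian bound for $W_{t-s}$.

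\emph{Item (i).} The third case ("otherwise") is immediate: both kernels are bounded by Gaussians. For Lemma~\ref{lem: heat semigroup estimates}\ref{itm: size W_t} we only use the factor $t^{-d/2}e^{-|x-y|^2/4t}$, which is the classical bound. For the case $\sqrt t<\rho(x)$, we insert the Gaussian bounds into the Duhamel integral. We then use the semigroup-type inequality
\[
\exp\!\Big(-\tfrac{|x-v|^2}{c(t-s)}\Big)\exp\!\Big(-\tfrac{|v-y|^2}{cs}\Big)\le \exp\!\Big(-\tfrac{|x-y|^2}{c't}\Big)\Big[\exp\!\Big(-\tfrac{|x-v|^2}{c''(t-s)}\Big)+\exp\!\Big(-\tfrac{|v-y|^2}{c''s}\Big)\Big],
\]
which pulls out the global factor $\exp(-c|x-y|^2/t)$. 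We split $s\in(0,t/2)$ and $s\in(t/2,t)$. In each half, one of $(t-s)^{-d/2}$ or $s^{-d/2}$ is comparable to $t^{-d/2}$. The remaining integral has the form $\int (\tau)^{-d/2}e^{-|\cdot-v|^2/c\tau}\,d\mu(v)$, and we bound it with \eqref{eq: integral exp heat bound dmu} by $\tau^{-1}(\sqrt\tau/\rho)^{\delta_\mu}$, using that $\tau\le t<\rho(x)^2$. In the half where the $\mu$-integral is centered at $y$, we need $\rho(y)$. Since $\sqrt t<\rho(x)$, and the Gaussian factor lets us restrict to $|x-y|\lesssim\sqrt t$ (up to harmless polynomial losses), \eqref{eq: equiv critical radius} gives $\rho(y)\sim\rho(x)$. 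Finally, $\int_0^t \tau^{-1+\delta_\mu/2}d\tau\sim t^{\delta_\mu/2}$, which yields $t^{-d/2}(\sqrt t/\rho(x))^{\delta_\mu}e^{-c|x-y|^2/t}$. The case $\sqrt t<\rho(y)$ follows in the same way by exchanging the roles of $x$ and $y$; the symmetry of the kernel can also be used here.

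\emph{Item (ii).} The difference $(\mathcal W_t-W_t)(x,y)-(\mathcal W_t-W_t)(z,y)$ equals the Duhamel integral with $W_{t-s}(x,v)-W_{t-s}(z,v)$ in place of $W_{t-s}(x,v)$. We split the $s$-integral into the region $t-s\le |x-z|^2$ and its complement. In the region $t-s>|x-z|^2$, we use the classical smoothness bound
\[
|W_\tau(x,v)-W_\tau(z,v)|\lesssim (|x-z|/\sqrt\tau)\,\tau^{-d/2}e^{-c|x-v|^2/\tau},
\]
interpolated to a power $\beta<1$. In the region $t-s\le|x-z|^2$, we bound each term by its own size. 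In both regions, the $\mu$-integrals are handled by \eqref{eq: integral exp heat bound dmu}, together with \eqref{eq: prop1 mu} and \eqref{eq: equiv 1} at the scale $\rho(y)$. The hypotheses $|x-z|\le|x-y|/4$ and $|x-z|\le C\rho(x)$ make $|x-v|$ and $|z-v|$ comparable in the relevant region and make $\rho(x)$, $\rho(z)$ and $\rho(y)$ comparable, up to polynomial factors that the Gaussian absorbs. The loss $\beta<\min\{1,\delta_\mu\}$ comes from integrating $\tau^{-1-\beta/2+\delta_\mu/2}$ near $\tau=0$. This integral converges exactly when $\beta<\delta_\mu$.

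I expect the main obstacle to be item (ii) in the regime $\sqrt t\gtrsim\rho(y)$. There \eqref{eq: integral exp heat bound dmu} only gives the growing factor $(\sqrt t/\rho)^{\log_2 D_\mu}$. To handle it, I would first try to keep the exponential factor of $\mathcal W_s(v,y)$ from Lemma~\ref{lem: heat semigroup estimates}\ref{itm: size W_t}, which decays in $\sqrt s/\rho(y)$ and dominates any polynomial, and trade part of the Gaussian to obtain the stated form with $\exp(-c|y-z|^2/t)$. If that fails, the fallback is to use Lemma~\ref{lem: heat semigroup estimates}\ref{itm: smoothnes W_t} directly for $\mathcal W_t$, whose factor $(1+\sqrt t/\rho(y))^{-N}$ handles large $t$, combined with the classical bound for $W_t$. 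The bound $(|x-z|/\sqrt t)^\beta\le(|x-z|/\rho(y))^\beta$ holds once $\sqrt t\gtrsim\rho(y)$.
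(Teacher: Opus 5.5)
The paper does not prove this proposition; it quotes it from \cite[Lemma~3.6]{WuYan16}, so there is no in-paper argument to compare with. Your Duhamel route is the standard way such comparisons are obtained. It is the heat-kernel counterpart of the resolvent identity for $\Gamma_{\mu+\lambda}-\Gamma_\lambda$ that the paper uses in the proof of Lemma~\ref{lem: bounds diff K0}, and it rests on the same form-sense justification. Your handling of $\rho(x),\rho(y),\rho(z)$ through \eqref{eq: critical radius function}, with polynomial losses absorbed into the Gaussian, is also sound. One step, however, fails as written.

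\textbf{The Gaussian inequality.} The inequality you display, with a \emph{sum} of two exponentials on the right, is true but too weak for how you then use it. On $s<t/2$, the summand $s^{-d/2}e^{-|x-v|^2/c''(t-s)}$ has no decay in $|v-y|/\sqrt s$. Integrating it in $d\mu(v)$ gives roughly $s^{-d/2}t^{d/2-1}(\sqrt t/\rho(x))^{\delta_\mu}$, and $\int_0^{t/2}s^{-d/2}\,ds=\infty$ because $d\ge 3$; the same problem occurs on $s>t/2$. You need the product form instead. By Cauchy--Schwarz, $|x-y|^2\le t\bigl(\frac{|x-v|^2}{t-s}+\frac{|v-y|^2}{s}\bigr)$, which gives
\[
e^{-\frac{|x-v|^2}{c(t-s)}}\,e^{-\frac{|v-y|^2}{cs}}\le e^{-\frac{|x-y|^2}{2ct}}\,e^{-\frac{|x-v|^2}{2c(t-s)}}\,e^{-\frac{|v-y|^2}{2cs}}.
\]
On each half, keep the factor centred at the point whose time variable is singular. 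That is the $y$-factor with $s^{-d/2}$ on $s<t/2$, and the $x$-factor with $(t-s)^{-d/2}$ on $s>t/2$. With this fix, item (i) goes through as you describe. So does item (ii) in the regime $\sqrt t<\rho(y)$: when $|x-z|\gtrsim\sqrt t$, item (i) at $x$ and at $z$ already gives $(\sqrt t/\rho(y))^{\delta_\mu}\le(|x-z|/\rho(y))^{\beta}$.

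\textbf{Item (ii) for $\sqrt t\gtrsim\rho(y)$.} Your fallback is valid relative to this paper, where Lemma~\ref{lem: heat semigroup estimates}\ref{itm: smoothnes W_t} is available as a quoted result. You must add the case $|x-z|>\sqrt t$, where that lemma does not apply. There the size bounds suffice, since $(|x-z|/\rho(y))^\beta\ge(\sqrt t/\rho(y))^\beta\gtrsim1$. Be aware, though, that in \cite{WuYan16} the H\"older bound for $\mathcal W_t$ comes after this lemma. In the standard development it is typically deduced from the small-$t$ comparison, so relying on it risks circularity in a self-contained proof.

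Your first option avoids this and does work. By symmetry, $\mathcal W_s(v,y)=\mathcal W_s(y,v)$, so Lemma~\ref{lem: heat semigroup estimates}\ref{itm: size W_t} supplies the factor $\exp\bigl(-c_0(1+\sqrt s/\rho(y))^{m}\bigr)$. This factor beats the growth $(\sqrt s/\rho(y))^{\log_2 D_\mu}$ allowed by \eqref{eq: integral exp heat bound dmu}. On $s<t/2$, the $s$-integral of the $y$-centred $\mu$-mass is then bounded uniformly, and $(|x-z|/\sqrt{t-s})^{\beta}\lesssim(|x-z|/\rho(y))^{\beta}$. On $s>t/2$, the factor $\exp\bigl(-c(\sqrt t/\rho(y))^{m}\bigr)$ absorbs the polynomial loss coming from the $x$-centred $\mu$-integrals over $\tau=t-s<t/2$.
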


\begin{thm}\label{thm: BMO Laplace multipliers}Let $m=\frac{1}{k_0+1}$ and $\delta=\min\{1, \delta_\mu\}$. Then, for all $0 \leq \alpha < \delta$, the operator $m_\phi(\mathcal{L})$ is bounded on $\BMO_\rho^{\alpha}(w)$ provided that $w \in E_{\infty,c_1,c_2}^{\rho,m} \cap D_{\kappa,c_3}^{\rho,m}$ with $c_1, c_2, c_3 \geq 0$ such that $(c_2+c_3) < c_0 \left(1 - \frac{d(\kappa-1)+\alpha}{\delta}\right)(4C_0)^{-m}$ and $1 \leq \kappa < \frac{\delta - \alpha}{d} + 1$.
\end{thm}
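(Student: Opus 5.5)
The plan is to apply Corollary~\ref{cor: T1 criterion infty} with $\nu=0$. From \cite{DLT25}, $m_\phi(\mathcal L_\mu)$ is an exponential SCZO of $(0,\infty,\delta')$ type with parameters $c_0$ and $m=\frac1{k_0+1}$ for every $0<\delta'<\min\{1,\delta_\mu\}$. The given $\alpha,\kappa$ and $c_2+c_3$ satisfy strict inequalities involving $\delta=\min\{1,\delta_\mu\}$. So we first fix $\delta'<\delta$ close enough to $\delta$ that
\[
\alpha<\delta',\qquad \kappa<\frac{\delta'-\alpha}{d}+1,\qquad c_2+c_3<c_0\Bigl(1-\frac{d(\kappa-1)+\alpha}{\delta'}\Bigr)(4C_0)^{-m}.
\]
This is possible because each condition depends continuously on $\delta'$. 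With this choice it remains to check the $T1$ condition \eqref{eq: T1 power condition}. By Remark~\ref{obs: reduce to beta=0}, it suffices to prove that for $B=B(x_0,r)$ with $r\le\frac12\rho(x_0)$,
\[
\frac1{|B|}\int_B|m_\phi(\mathcal L_\mu)1-(m_\phi(\mathcal L_\mu)1)_B|\lesssim\Bigl(\frac{r}{\rho(x_0)}\Bigr)^{\beta}
\]
for some $\beta\ge\alpha+d(\kappa-1)$. We aim for any $\beta<\min\{1,\delta_\mu\}$, taking $\beta$ so that $\alpha+d(\kappa-1)<\beta$; the strict inequality $\kappa<\frac{\delta-\alpha}{d}+1$ allows this. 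Since $r/\rho(x_0)\le 1/2$, a larger exponent is harmless.

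Next we compute $m_\phi(\mathcal L_\mu)1$. Formally, $m_\phi(\mathcal L_\mu)1(x)=-\int_0^\infty\phi(t)\int\partial_t\mathcal W_t(x,y)\,dy\,dt$. By Lemma~\ref{lem: Q_t estimates}\ref{itm: integral Q_t}, the inner integral is bounded by $C_N t^{-1}(\sqrt t/\rho(x))^{\delta_\mu}(1+\sqrt t/\rho(x))^{-N}$, which is integrable in $t$. Hence the expression is absolutely convergent and agrees with the definition \eqref{eq: def Tf}; this identification follows by truncating in $t$ and using dominated convergence. Set $F_t(x):=\int\partial_t\mathcal W_t(x,y)\,dy$. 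For $y,z\in B$ we split
\[
|m_\phi1(y)-m_\phi1(z)|\le\|\phi\|_\infty\Bigl(\int_0^{r^2}+\int_{r^2}^{\rho(x_0)^2}+\int_{\rho(x_0)^2}^\infty\Bigr)|F_t(y)-F_t(z)|\,dt .
\]
For $t<r^2$ we bound each term separately by Lemma~\ref{lem: Q_t estimates}\ref{itm: integral Q_t}, using $\rho(y)\sim\rho(z)\sim\rho(x_0)$. This gives $\int_0^{r^2}t^{-1}(\sqrt t/\rho(x_0))^{\delta_\mu}dt\sim(r/\rho(x_0))^{\delta_\mu}$. For $t>\rho(x_0)^2$ we use the same bound with the $N$-decay, which gives a term controlled by a tail with exponent $\delta_\mu-N$; this alone does not yield smallness in $r$. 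Therefore for all $t\ge r^2$ we instead use smoothness.

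Since $|y-z|<2r\le 2\sqrt t$, Lemma~\ref{lem: Q_t estimates}\ref{itm: t diff partial W_t}, applied after splitting $h$ into two steps of size at most $\sqrt t$, gives
\[
|F_t(y)-F_t(z)|\le\int|\partial_t\mathcal W_t(y,u)-\partial_t\mathcal W_t(z,u)|\,du\lesssim t^{-1}\Bigl(\frac{r}{\sqrt t}\Bigr)^{\beta}\Bigl(1+\frac{\sqrt t}{\rho(x_0)}\Bigr)^{-N}.
\]
Integrating over $t\ge r^2$ gives $r^\beta\int_{r^2}^\infty t^{-1-\beta/2}(1+\sqrt t/\rho(x_0))^{-N}dt\lesssim 1$. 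This is \emph{not} small, so it is the main obstacle.

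To overcome it, for $r^2\le t\le\rho(x_0)^2$ we compare with the classical heat kernel, for which $\int\partial_tW_t(x,u)\,du=0$. Thus $F_t(y)-F_t(z)=\int[\partial_t(\mathcal W_t-W_t)(y,u)-\partial_t(\mathcal W_t-W_t)(z,u)]\,du$. Rather than differentiating Proposition~\ref{prop: comparison heat kernels}\ref{itm: diff of diff heat kernels} in $t$, we use \eqref{eq: integral W_t dmu }: $F_t(x)=-\int\mathcal W_t(x,v)\,d\mu(v)$. Then Lemma~\ref{lem: heat semigroup estimates}\ref{itm: smoothnes W_t} together with \eqref{eq: integral exp heat bound dmu} gives, for $t<\rho(x_0)^2$,
\[
|F_t(y)-F_t(z)|\lesssim\Bigl(\frac r{\sqrt t}\Bigr)^{\beta}\int t^{-d/2}e^{-c|y-v|^2/t}\,d\mu(v)\lesssim \frac1t\Bigl(\frac r{\sqrt t}\Bigr)^{\beta}\Bigl(\frac{\sqrt t}{\rho(x_0)}\Bigr)^{\delta_\mu}.
\]
Since $\delta_\mu>\beta$, integrating over $r^2<t<\rho(x_0)^2$ yields $\lesssim (r/\rho(x_0))^{\beta}$. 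For $t\ge\rho(x_0)^2$ the same route, using the $(1+\sqrt t/\rho)^{-N}$ factor from Lemma~\ref{lem: heat semigroup estimates}\ref{itm: smoothnes W_t} to absorb the $D_\mu$ growth in \eqref{eq: integral exp heat bound dmu}, gives $r^\beta\rho(x_0)^{-\beta}$. Here one must take care that the form of \eqref{eq: integral exp heat bound dmu} matches the Gaussian constant $c$; if needed, we rescale $t$. Summing the three ranges and averaging over $y,z\in B$ yields \eqref{eq: T1 power condition} with exponent $\beta$, and Corollary~\ref{cor: T1 criterion infty} concludes the proof.
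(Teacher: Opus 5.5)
Your proposal is correct and follows essentially the paper's argument. You reduce via Corollary~\ref{cor: T1 criterion infty} to the oscillation estimate for $m_\phi(\mathcal L_\mu)1$ and split the $t$-integral into three ranges. You bound small $t$ with Lemma~\ref{lem: Q_t estimates}\ref{itm: integral Q_t}, and handle $r^2\lesssim t\le\rho(x_0)^2$ through \eqref{eq: integral W_t dmu }, Lemma~\ref{lem: heat semigroup estimates}\ref{itm: smoothnes W_t} and \eqref{eq: integral exp heat bound dmu}.

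The differences from the paper are minor:
\begin{itemize}
\item You split at $r^2$ (with a midpoint step) rather than at $4r^2$.
\item For $t\ge\rho(x_0)^2$ you use the $d\mu$ representation. Your own direct bound from Lemma~\ref{lem: Q_t estimates}\ref{itm: t diff partial W_t} already gives $(r/\rho(x_0))^\beta$ on that range, and this is what the paper uses.
\item You explicitly choose $\delta'<\min\{1,\delta_\mu\}$ close to $\delta$, a point the paper glosses over; this is welcome extra care.
\end{itemize}
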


\begin{proof}
	By Corollary~\ref{cor: T1 criterion infty} and Remark~\ref{obs: reduce to beta=0} it suffices to show that for every ball $B=B(x_0,r)$ with $x_0 \in \mathbb{R}^d$ and $0 < r \leq \frac{1}{2} \rho(x_0)$, and for every $0 < \beta < \min\{1, \delta_\mu\}$, the following holds:
	\begin{equation}\label{eq: T1 cond Laplace multipliers}
		\frac{1}{|B|}\int_{B} |m_\phi(\mathcal{L}_\mu)1(y)-(m_\phi(\mathcal{L}_\mu)1)_B|dy \leq C\left(\frac{r}{\rho(x_0)}\right)^{\beta}.
	\end{equation}
	
	Consider $y,z \in B$. As $\phi\in L^\infty(0, \infty),$ 
		\begin{align*}
		|m_\phi(\mathcal{L}_\mu)1(y)-(m_\phi(\mathcal{L}_\mu)1)(z)|
		&
		\leq C\int_{0}^{\infty}  \left|\int_{\mathbb{R}^d} (\partial_t\mathcal{W}_t(y,x)-\partial_t\mathcal{W}_t(z,x))dx \right|dt.
	\end{align*}
	
	We split the integral over $t$ into three regions. From $0$ to $4r^2$, we control by the sum and we analyze only the first term, since the other is handled in an analogous manner, keeping in mind that since $y, z \in B$, then $\rho(y) \sim \rho(x_0) \sim \rho(z)$. Applying Lemma~\ref{lem: Q_t estimates}\ref{itm: integral Q_t},
	\begin{align}\label{eq: 1st integral multipliers}
		\int_{0}^{4r^2}\left| \int_{\mathbb{R}^d} \partial_t\mathcal{W}_t(y,x) dx\right| dt 
		& 
		\lesssim 
		\int_{0}^{4r^2}  \left(\frac{\sqrt{t}}{\rho(y)}\right)^{\delta_\mu} \frac{dt}{t}
		\lesssim
		\frac{1}{\rho(x_0)^{\delta_\mu}}
		\int_{0}^{4r^2} t^{\delta_\mu/2-1} dt 
		\lesssim \left(\frac{r}{\rho(x_0)}\right)^{\delta_\mu}.
	\end{align}
	
	Now, let us examine the integral over $t$ from $\rho(x_0)^2$ to $\infty$. Here, we apply Lemma~\ref{lem: Q_t estimates}\ref{itm: t diff partial W_t} since $|y-z| \leq 2r \leq \rho(x_0) \leq \sqrt{t}$. It follows that, for every $0 < \beta < \min\{1, \delta_\mu\}$,
	\begin{align}\label{eq: 2nd integral multipliers}
		\int_{\rho(x_0)^2}^{\infty}  \int_{\mathbb{R}^d} \left|\partial_t\mathcal{W}_t(y,x)-\partial_t\mathcal{W}_t(z,x)\right|dx dt 
		&\lesssim 
		\int_{\rho(x_0)^2}^{\infty}  \int_{\mathbb{R}^d} \left(\frac{|z-y|}{\sqrt{t}}\right)^{\beta} t^{-d/2-1}\exp \left(-c\frac{|x-y|^2}{t}\right)dx dt \nonumber\\
		&\lesssim (2r)^{\beta} 
			\int_{\rho(x_0)^2}^{\infty} t^{-\beta/2-1} dt\lesssim \left(\frac{r}{\rho(x_0)}\right)^{\beta},
	\end{align}
since  $\int_{\mathbb{R}^d} e^{-c\frac{|x-y|^2}{t}} dx=(c\pi t)^{d/2}$.
	
	Finally, for $4r^2 \leq t \leq \rho(x_0)^2$, by \eqref{eq: integral W_t dmu },  Lemma~\ref{lem: heat semigroup estimates}\ref{itm: smoothnes W_t}, the fact  that $|y-z| \leq 2r \leq \sqrt{t}$ in the integration domain and  $t \leq \rho(x_0)^2 < C \rho(y)^2$, together with inequality \eqref{eq: integral exp heat bound dmu}, we have for every \linebreak $0 < \beta < \min\{1, \delta_\mu\}$
		\begin{align}\label{eq: 3rd integral multipliers}
		\int_{4r^2}^{\rho(x_0)^2} 	\left|\int_{\mathbb{R}^d} \partial_t \mathcal{W}_t(y,x) -\partial_t \mathcal{W}_t(z,x) dx \right| dt&\leq  \int_{4r^2}^{\rho(x_0)^2}  \int_{\mathbb{R}^d}\left|\mathcal{W}_t(y,x)- \mathcal{W}_t(z,x))\right| d\mu(x)dt \nonumber \\
		&
		\lesssim 
		\int_{4r^2}^{\rho(x_0)^2} \left(\frac{|z-y|}{\sqrt{t}}\right)^{\beta}\int_{\mathbb{R}^d} t^{-d/2}\exp \left(-c\frac{|y-x|^2}{t}\right) d\mu(x) dt \nonumber \\
		& \lesssim r^{\beta}\rho(x_0)^{-\delta_\mu}\int_{0}^{\rho(x_0)^2} t^{-1 + \frac{\delta_\mu - \beta}{2}} dt \nonumber\\
		& \lesssim r^{\beta}\rho(x_0)^{-\delta_\mu}\rho(x_0)^{\delta_\mu-\beta}\sim \left(\frac{r}{\rho(x_0)}\right)^{\beta},
	\end{align}
	for all $0 < \beta < \min\{1, \delta_\mu\}$.
	
	Therefore, by combining estimates \eqref{eq: 1st integral multipliers}, \eqref{eq: 2nd integral multipliers}, and \eqref{eq: 3rd integral multipliers}, we obtain \eqref{eq: T1 cond Laplace multipliers} as desired. The boundedness property of $m_\phi(\mathcal{L}_\mu)$ follows from Corollary~\ref{cor: T1 criterion infty}.
\end{proof}

	\subsection{Maximal operators for the heat-diffusion semigroup  associated with $e^{-t\mathcal{L}_\mu}$.}
	
	Let $\{ \mathcal{W}_t \}_{t>0}$ be the heat semigroup associated with $\mathcal{L}_\mu$, given for $f\in L^2(\RR^d)$ by
	\[
	\mathcal{W}_t f(x) = e^{-t\mathcal{L}_\mu}f(x) = \int_{\mathbb{R}^n} \mathcal{W}_t(x,y) f(y)\, dy, \quad x \in \mathbb{R}^d,\ t > 0,
	\]
	where $\mathcal{W}_t(x,y)$ is the corresponding heat kernel, for $x,y\in \RR^d$ and $t>0$.
	
		We consider the maximal operator of the heat semigroup
	\begin{equation*}
		\mathcal{W}^{*}f(x)= \sup_{t>0} \left|e^{-t\mathcal{L}_\mu}\right|=\sup_{t>0}|\mathcal{W}_tf(x)|, \quad f\in L^2(\RR^d).
	\end{equation*}
	
We interpret the operator $\mathcal{W}^{*}$ in a vector-valued sense, as in \cite{ST05}, and take into account Remark~\ref{obs: vector valued} in the Banach space $ \mathbb{B}= L^\infty((0, \infty), dt)$. Then,  we can write
$\mathcal{W}^{*}f(x)=\|\boldsymbol{W}f(x)\|_{\mathbb{B}}$, where we denote $\boldsymbol{W}f(x)=\{\mathcal W_t f(x)\}_{t>0}$, for every $x\in \RR^d$ and $f\in L^2(\RR^d)$.

	 $\boldsymbol{W}$ is bounded from $L^2(\RR^d)$ into $L^2_{\mathbb{B}}(\RR^d)$
	and, from the kernel conditions to be proven below, it follows that it is a vector-valued Calderón–Zygmund operator; consequently, it is bounded from $L^p(\mathbb{R}^d)$ into $L_{\mathbb{B}}^p(\mathbb{R}^d)$ for all $1 < p < \infty$. 
	Furthermore, we shall prove that conditions \eqref{eq: size pointwise fract} and \eqref{eq: smoothness pointwise fract} are satisfied in $\mathbb{B}$, with parameters $c=c_0$ and $m=\frac{1}{k_0+1}$. This implies that $\mathcal{W}^*$ is bounded on $L^p(w)$ for $w\in H^{\rho, m^*}_{p,c^*}$, where $m^*=\frac{1}{(k_0+1)^2}$ and $c^*<c_0(2^{3k_0+7}C_0^{k_0+3})^{-m^*}$ (see \cite[Theorem~4.1 and Proposition~4.2]{DLT25}). 
	\begin{prop}\label{prop: class heat maximal}
			Let $x_0,x,y\in \RR^d$ and $t>0$. Then,
		\begin{enumerate}
			\item \label{itm: size heat kernel Banach} for every $x\neq y$,
			\[\|\mathcal{W}_t(x,y)\|_{\mathbb B} \lesssim \frac{1}{|x-y|^{d}} 
			\exp\left(-c_0\left(1+\frac{|x-y|}{\rho(x)}\right)^{\frac{1}{k_0+1}}\right);\]
			\item \label{itm: smoothness heat kernel Banach} for every $0<\delta<\min\{1,\delta_\mu\},$ there exists $C_\delta$ such that
			\[
			\|\mathcal{W}_t(x,y) - \mathcal{W}_t(x_0,y)\|_{\mathbb B} \leq 
			C_\delta \frac{|x-x_0|^\delta}{|x-y|^{d+\delta}},\] 
			provided that $|x-y|>2|x-x_0|$.	
		\end{enumerate}
	\end{prop}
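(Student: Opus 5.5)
Both estimates will come from Lemma~\ref{lem: heat semigroup estimates}, by taking the supremum over $t>0$ pointwise in $(x,y)$.

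\emph{Size.} For item \ref{itm: size heat kernel Banach}, I start from Lemma~\ref{lem: heat semigroup estimates}\ref{itm: size W_t}. Since $\max\{|x-y|,\sqrt t\}\geq |x-y|$, the exponential factor is at most $\exp\bigl(-c_0(1+|x-y|/\rho(x))^{\frac{1}{k_0+1}}\bigr)$ uniformly in $t$. This factor does not depend on $t$, so it remains to bound $\sup_{t>0} t^{-d/2}e^{-|x-y|^2/(4t)}$. With $u=|x-y|^2/t$ this quantity equals $|x-y|^{-d}\sup_u u^{d/2}e^{-u/4}\lesssim |x-y|^{-d}$, which gives item \ref{itm: size heat kernel Banach}.

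\emph{Smoothness.} For item \ref{itm: smoothness heat kernel Banach}, fix $0<\delta<\min\{1,\delta_\mu\}$, put $h=x-x_0$, and assume $|x-y|>2|h|$. I split into two ranges of $t$.

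\emph{Case $\sqrt t\geq |h|$.} Apply Lemma~\ref{lem: heat semigroup estimates}\ref{itm: smoothnes W_t} at the base point $x_0$, with $\beta=\delta$ and $N=0$ (the decaying factor is simply dropped). This gives
\[
|\mathcal W_t(x,y)-\mathcal W_t(x_0,y)|\lesssim t^{-d/2}\left(\frac{|h|}{\sqrt t}\right)^{\delta}e^{-c|x_0-y|^2/t}.
\]
Since $|x_0-y|\geq |x-y|-|h|>|x-y|/2$, we have $e^{-c|x_0-y|^2/t}\leq e^{-c'|x-y|^2/t}$. Taking the supremum over $t$ of $t^{-(d+\delta)/2}e^{-c'|x-y|^2/t}$ gives $\lesssim |x-y|^{-d-\delta}$, and hence the bound $|h|^{\delta}|x-y|^{-d-\delta}$.

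\emph{Case $\sqrt t<|h|$.} Here no smoothness is needed: I bound each term separately by the Gaussian size estimate of Lemma~\ref{lem: heat semigroup estimates}\ref{itm: size W_t}. Because $|x-y|$ and $|x_0-y|$ are both at least $|x-y|/2$, each term is
\[
\lesssim t^{-d/2}e^{-|x-y|^2/(16t)}\lesssim t^{-d/2}\left(\frac{t}{|x-y|^2}\right)^{(d+\delta)/2}=\frac{t^{\delta/2}}{|x-y|^{d+\delta}}\leq \frac{|h|^{\delta}}{|x-y|^{d+\delta}}.
\]
The last inequality uses $\sqrt t<|h|$. Combining the two cases and taking the supremum in $t$ gives item \ref{itm: smoothness heat kernel Banach}.

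\emph{Main obstacle.} The delicate point is that Lemma~\ref{lem: heat semigroup estimates}\ref{itm: smoothnes W_t} only holds for $|h|\leq\sqrt t$. This is why the case split at $\sqrt t=|h|$ is required, and why the regime $\sqrt t<|h|$ is handled by size alone, trading Gaussian decay for the power $(\sqrt t/|x-y|)^{\delta}$. The other thing to check is that the Gaussian centered at $x_0$ can be replaced by one centered at $x$, which follows from $|x-y|>2|h|$.
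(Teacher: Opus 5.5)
Your proof is correct and is essentially the paper's argument: the size bound trades the Gaussian $e^{-|x-y|^2/(4t)}$ for a power of $t/|x-y|^2$ (the paper splits at $t=|x-y|^2$, which is equivalent to your supremum in $u$), and the smoothness bound splits at $\sqrt t=|x-x_0|$, using Lemma~\ref{lem: heat semigroup estimates}\ref{itm: smoothnes W_t} for $\sqrt t\geq |x-x_0|$ and the Gaussian size estimate with exponent $\frac{d+\delta}{2}$ for $\sqrt t<|x-x_0|$. The only cosmetic difference is that you center the smoothness estimate at $x_0$ and then compare $|x_0-y|$ with $|x-y|$, whereas centering it at $x$ with $h=x_0-x$ gives the Gaussian in $|x-y|$ directly.
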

	\begin{proof}
	 By Lemma~\ref{lem: heat semigroup estimates}\ref{itm: size W_t}, we obtain  \ref{itm: size heat kernel Banach} when  $t>|x-y|^2$. For
	  $t\leq |x-y|^2$, observe  that  for each $N$, there exists $C_N$ such that
		\begin{equation}\label{eq: exp polynom}\exp\left(- c\frac{|x-y|^2}{t}\right)\leq C_N \left(\frac{t}{|x-y|^2}\right)^{N}. 
			\end{equation} Then, if $N=d/2$ is taken above, it follows that 
		\begin{align*}
			0 \leq \mathcal{W}_t(x,y) 
			&
			\lesssim \frac{1}{|x-y|^{d}}\exp\left(-c_0\left(1+\frac{|x-y|}{\rho(x)}\right)^{\frac{1}{k_0+1}}\right) .	 	
		\end{align*}
	
	For the smoothness condition \ref{itm: smoothness heat kernel Banach}, we consider   $|x-y|>2|x-x_0|$, that implies    $|x-y|\sim |x_0-y|. $
		
	For $0<\delta'<\min\left\{1, \delta_\mu\right\}$ and	 $|x-x_0|<\sqrt{t}$, by Lemma~\ref{lem: heat semigroup estimates}\ref{itm: diff heat kernels} and taking  $N=\frac{d}{2}+\frac{\delta'}{2}$ in \eqref{eq: exp polynom}
		\begin{align*}
			| \mathcal{W}_t(x,y)-\mathcal{W}_t(x_0,y)| 
			& \leq 
			C \left(\frac{|x-x_0|}{\sqrt{t}}\right)^{\delta'} t^{-d/2}\left(\frac{|x-y|^2}{4t}\right)^{-\frac{d}{2}-\frac{\delta'}{2}} \leq 
			C_{\delta'} \frac{1}{|x-y|^d}\left(\frac{|x-x_0|}{|x-y|}\right)^{\delta'}.
		\end{align*}
		For $|x-x_0|>\sqrt{t}$, we use Lemma~\ref{lem: heat semigroup estimates}\ref{itm: size W_t} and $N=\frac{d}{2}+\frac{\delta'}{2}$ in \eqref{eq: exp polynom} to obtain  
		\begin{align*}
			|\mathcal{W}_t(x,y)|
			& \leq C t^{-d/2} \exp\left(-\frac{|x-y|^2}{4t}\right)
			\leq C_{\delta'} \frac{1}{|x-y|^d}\left(\frac{|x-x_0|}{|x-y|}\right)^{\delta'},
		\end{align*}
	and the same estimate follows for $\mathcal{W}_t(x_0,y)$.
	This completes the proof.
	\end{proof}

	\begin{thm}\label{thm: BMO heat maximal}
		Let  $m=\frac{1}{k_0+1}$,  $c_0$ as in  Lemma~\ref{lem: heat semigroup estimates}\ref{itm: size W_t},   $\delta=\min\{1,\delta_\mu\}$, {with $\delta_\mu<d$} and $0 \leq \alpha < \delta$. Then $\mathcal{W}^*$ is bounded on $\BMO_\rho^\alpha(w)$, for  $w\in  {E_{\infty,c_1,c_2}^{\rho,m}}\cap D_{\kappa,c_3}^{\rho,m}$, where  $1 \leq \kappa < \frac{\delta - \alpha}{d}+1$ and $c_1,c_2,c_3\geq 0$ such that  $(c_2+c_3)<c_0\left(1-\frac{d(\kappa-1)+\alpha}{\delta}\right)(4C_0)^{-m}$. 
	\end{thm}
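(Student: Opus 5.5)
Since $\mathcal W^*f(x)=\|\boldsymbol W f(x)\|_{\mathbb B}$ with $\mathbb B=L^\infty((0,\infty),dt)$, I will apply the vector-valued version of Corollary~\ref{cor: T1 criterion infty} (Remark~\ref{obs: vector valued}) to the linear operator $\boldsymbol W$. The result then follows for $\mathcal W^*$ because $\big|\|a\|_{\mathbb B}-\|b\|_{\mathbb B}\big|\le\|a-b\|_{\mathbb B}$, so oscillations and averages of $\mathcal W^*f$ are controlled by those of $\boldsymbol Wf$. The hypotheses of the corollary are already available. $\boldsymbol W$ is of weak type $(1,1)$ into $L^1_{\mathbb B}$, being a vector-valued Calderón--Zygmund operator bounded $L^2\to L^2_{\mathbb B}$. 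Proposition~\ref{prop: class heat maximal} supplies \eqref{eq: size pointwise fract} and \eqref{eq: smoothness pointwise fract} in $\mathbb B$ with $\nu=0$, $c=c_0$, $m=\frac1{k_0+1}$ and any $\delta'<\min\{1,\delta_\mu\}$. Since $\alpha<\delta$ and the condition on $c_2+c_3$ is a strict inequality, I can choose $\delta'$ close enough to $\delta$ that all the parameter restrictions still hold. By Remark~\ref{obs: reduce to beta=0}, it then suffices to check \eqref{eq: T1 power condition} with some exponent $\beta>0$ in place of $\alpha+d(\kappa-1)$, that is,
\[
\frac1{|B|}\int_B\|\boldsymbol W1(y)-\boldsymbol W1(z)\|_{\mathbb B}\,dy\lesssim \Big(\frac{r}{\rho(x_0)}\Big)^{\beta},\qquad B=B(x_0,r),\ r\le\tfrac12\rho(x_0),
\]
uniformly in $z\in B$.

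To verify this, I fix $y,z\in B$, so that $\rho(y)\sim\rho(z)\sim\rho(x_0)$, and estimate $\sup_t|\mathcal W_t1(y)-\mathcal W_t1(z)|$ by splitting the range of $t$ as in the proof of Theorem~\ref{thm: BMO Laplace multipliers}.

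For $t\le 4r^2$ I use the classical kernel: $\int W_t(y,x)\,dx=1$ for every $y$, so
\[
|\mathcal W_t1(y)-\mathcal W_t1(z)|\le\int|\mathcal W_t(y,x)-W_t(y,x)|\,dx+\int|\mathcal W_t(z,x)-W_t(z,x)|\,dx.
\]
By Proposition~\ref{prop: comparison heat kernels}\ref{itm: diff heat kernels} in the case $\sqrt t<\rho(y)$, each term is $\lesssim(\sqrt t/\rho(x_0))^{\delta_\mu}\lesssim(r/\rho(x_0))^{\delta_\mu}$.

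For $4r^2\le t\le\rho(x_0)^2$ I write $\mathcal W_t1=1-\int_0^t\int\mathcal W_s(\cdot,x)\,d\mu(x)\,ds$, using \eqref{eq: integral W_t dmu } and $\mathcal W_01=1$. The difference is then bounded by
\[
\int_0^t\int|\mathcal W_s(y,x)-\mathcal W_s(z,x)|\,d\mu(x)\,ds .
\]
I split the $s$-integral at $s=|y-z|^2$. For $s\ge|y-z|^2$, Lemma~\ref{lem: heat semigroup estimates}\ref{itm: smoothnes W_t} together with \eqref{eq: integral exp heat bound dmu} gives the bound
\[
\int_{|y-z|^2}^{\rho(x_0)^2}(r/\sqrt s)^{\beta}\,s^{-1}(\sqrt s/\rho(x_0))^{\delta_\mu}\,ds\lesssim(r/\rho(x_0))^{\beta},\qquad \beta<\delta_\mu .
\]
For $s<|y-z|^2$, the size bound with \eqref{eq: integral exp heat bound dmu} gives $\lesssim(r/\rho(x_0))^{\delta_\mu}$.

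For $t\ge\rho(x_0)^2$ I use the same identity. The portion $s\le\rho(x_0)^2$ is handled exactly as in the previous case. On the portion $s\ge\rho(x_0)^2$, the smoothness estimate carries the factor $(1+\sqrt s/\rho)^{-N}$, which, combined with the polynomial growth $(\sqrt s/\rho)^{\log_2 D_\mu}$ from \eqref{eq: integral exp heat bound dmu}, makes $\int_{\rho^2}^\infty (r/\sqrt s)^\beta s^{-1}(\sqrt s/\rho)^{\log_2D_\mu-N}\,ds\lesssim(r/\rho(x_0))^\beta$ once $N$ is large.

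Taking the supremum over $t$ and averaging over $y\in B$ then yields the $T1$ condition with any $\beta<\min\{1,\delta_\mu\}$.

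The main obstacle is justifying the representation $\mathcal W_t1=1-\int_0^t\int\mathcal W_s\,d\mu\,ds$ and the use of the $\mu$-integrals near $s=0$, in place of the direct $t$-integrated argument available for multipliers. This is also where the hypothesis $\delta_\mu<d$ may be needed, to control $\int\mathcal W_s\,d\mu$ for small $s$. If the identity proves delicate, the fallback is to compare with $W_t$ directly via Proposition~\ref{prop: comparison heat kernels}\ref{itm: diff of diff heat kernels} for $t\ge 4r^2$, which gives a factor $(r/\rho(x_0))^{\beta}$ after integrating in $x$ against the Gaussian.
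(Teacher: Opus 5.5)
Your proposal is correct. For $t\le 4r^2$ it is the paper's argument; for $t\ge 4r^2$ you take a different route.

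\textbf{The paper's route.} It stays with $dx$-integrals of kernels. For $t>\rho(x_0)^2$ it integrates the H\"older bound of Lemma~\ref{lem: heat semigroup estimates}\ref{itm: smoothnes W_t} against $dx$. For $2r<\sqrt t<\rho(x_0)$ it subtracts the classical kernel $W_t$ and splits $x$-space into three regions:
\begin{enumerate}
\item away from the diagonal it uses the difference-of-differences bound of Proposition~\ref{prop: comparison heat kernels}\ref{itm: diff of diff heat kernels};
\item near the diagonal it uses Proposition~\ref{prop: comparison heat kernels}\ref{itm: diff heat kernels}, which is where $\delta_\mu<d$ is invoked;
\item beyond $a\rho(y)$ it uses the smoothness of both kernels.
\end{enumerate}

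\textbf{Your route.} You integrate $\int\partial_s\mathcal W_s(y,x)\,dx=-\int\mathcal W_s(y,x)\,d\mu(x)$ in $s$ and estimate against $d\mu$ through \eqref{eq: integral exp heat bound dmu}. Since $I-\mathcal W_t=m_\phi(\mathcal L_\mu)$ with $\phi=\chi_{(0,t)}$, this is essentially the proof of Theorem~\ref{thm: BMO Laplace multipliers}. That proof's bound is uniform over $\|\phi\|_\infty\le 1$, so the supremum in $t$ comes for free.

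\textbf{Comparison.} Your route is shorter and avoids the restricted difference-of-differences estimate. It also never uses $\delta_\mu<d$. Your guess that this hypothesis enters at small $s$ is unfounded: \eqref{eq: integral exp heat bound dmu} gives $s^{-1}(\sqrt s/\rho(x_0))^{\delta_\mu}$, which is integrable at $0$ for every $\delta_\mu>0$. The paper's route, for its part, does not need the weak-sense identity and works only with pointwise kernel comparisons.

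\textbf{Simplifications.} The step you flagged as the main obstacle goes away if you integrate the identity only over $[4r^2,t]$. You then absorb $\mathcal W_{4r^2}1(y)-\mathcal W_{4r^2}1(z)$ with your small-$t$ bound, which also makes the split at $s=|y-z|^2$ unnecessary. For $t\ge\rho(x_0)^2$, integrating the smoothness bound in $dx$, as the paper does, is simpler than tracking the factor $(1+\sqrt s/\rho)^{-N}$.

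\textbf{One correction.} Remark~\ref{obs: reduce to beta=0} allows an arbitrary exponent $\beta>0$ only when $\alpha=0$ and $\kappa=1$. In general you need $\beta\ge\alpha+d(\kappa-1)$. Your conclusion still covers this, since you obtain every $\beta<\min\{1,\delta_\mu\}$ and $\alpha+d(\kappa-1)<\delta'<\min\{1,\delta_\mu\}$.
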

	\begin{rem}
				The condition $\delta_\mu < d$ in Theorem~\ref{thm: BMO heat maximal} differs from the case $d\mu(x)=V(x)dx$. In the latter, $\delta_V = 2 - d/q$ (where $V \in RH_q$), and $\delta_V < d$ is always satisfied.
			\end{rem}
	\begin{proof} By  Corollary~\ref{cor: T1 criterion infty} and Remarks~\ref{obs: reduce to beta=0} and \ref{obs: vector valued} it is enough to prove that there exists a constant  $C$ such that 
		\begin{equation*}
			\frac{1}{|B|} \int_{B}\| \mathcal{W}_t1(y)- (\mathcal{W}_t1)_B\|_{\mathbb{B}}\ dy \leq C\left(\frac{r}{\rho(x_0)}\right)^{{\beta}},
		\end{equation*}
	for  $B=B(x_0,r)$ with $0<r\leq \frac{1}{2}\rho(x_0)$ and every  $0<\beta<\min\left\{1, \delta_\mu\right\}$.
				To this end, since  
			\[\| \mathcal{W}_t1(y)- (\mathcal{W}_t1)_B\|_{\mathbb{B}}\leq \frac{1}{|B|}\int_{B}\| \mathcal{W}_t1(y)- \mathcal{W}_t1(z)\|_{\mathbb{B}}dz,\]
	we shall estimate the integrand $\| \mathcal{W}_t1(y)- \mathcal{W}_t1(z)\|_{\mathbb{B}}$.
		
	Let $t>0$ and let $y, z \in B$. 		
	Given that $\rho(y) \sim \rho(x_0) \sim \rho(z)$, if $\sqrt{t} < 2r$, then $\sqrt{t} < \rho(x_0) \lesssim \rho(z)$ and $\sqrt{t} \lesssim \rho(y)$. Since $W_t1(x)\equiv 1$, by applying Proposition~\ref{prop: comparison heat kernels}\ref{itm: diff heat kernels}, and changing variables, it follows that   
		\begin{align*}
			|\mathcal{W}_t1(y)-\mathcal{W}_t1(z)|
			&  
				\lesssim \frac{1}{t^{d/2}} 
				\left( \frac{\sqrt{t}}{\rho(y)} \right)^{\delta_\mu} \int_{\mathbb{R}^d} \exp\left(-c\frac{|y-x|^2}{t}\right)dx+ \left( \frac{\sqrt{t}}{\rho(z)} \right)^{\delta_\mu}  \int_{\mathbb{R}^d} \exp\left(-c\frac{|z-x|^2}{t}\right)dx\\
			&  \lesssim 
				\left( \frac{\sqrt{t}}{\rho(x_0)} \right)^{\delta_\mu}
				\lesssim \left( \frac{r}{\rho(x_0)} \right)^{\delta_\mu}.
		\end{align*}
		
		For $\sqrt{t}>\rho(x_0)\geq 2r$, it follows that  $|y-z|<2r<\sqrt{t}$. Therefore, by Lemma~\ref{lem: heat semigroup estimates}\ref{itm: diff heat kernels}, for every $0<{\beta}<\min\{1,\delta_\mu\}$, we have
		\begin{align}\label{eq: 2nd estimate diff W_t1}
			|\mathcal{W}_t1(y)-\mathcal{W}_t1(z)| &\leq \int_{\mathbb{R}^d} |\mathcal{W}_t(y,x)- \mathcal{W}_t(z,x)|dx\lesssim  \left(\frac{|y-z|}{\sqrt{t}}\right)^{{\beta}}t^{-d/2}\int_{\mathbb{R}^d} \exp\left(-c\frac{|y-x|^2}{t}\right)dx\nonumber\\
			& \lesssim \left(\frac{r}{\sqrt{t}}\right)^{\beta}   \lesssim \left(\frac{r}{\rho(x_0)}\right)^{{\beta}}.
		\end{align}
	Finally, if $2r < \sqrt{t} < \rho(x_0)$
	 we can write		
		\begin{align}\label{eq: 3rd estimate diff W_t1}
			|\mathcal{W}_t1(y)-\mathcal{W}_t1(z)|
			&
			\leq \int_{4|y-z|<|x-y|\leq {a}\rho(y)} \left|(\mathcal{W}_t(y,x)- W_t (y,x))-(\mathcal{W}_t(z,x)- W_t (z,x))\right| dx\nonumber\\
			& 
			\quad + \int_{|x-y|\leq 4|y-z|} \left|(\mathcal{W}_t(y,x)- W_t (y,x))-(\mathcal{W}_t(z,x)- W_t (z,x))\right|dx\nonumber\\
			& 
			\quad  + \int_{|x-y|>{a}\rho(y)} \left|(\mathcal{W}_t(y,x)- W_t (y,x))-(\mathcal{W}_t(z,x)- W_t (z,x))\right|dx,
		\end{align}
		where $a=2^{k_0+1}C_0$. 
		
	For the first integral, we apply Proposition~\ref{prop: comparison heat kernels}\ref{itm: diff of diff heat kernels} taking into account that $|x-z| \sim |x-y|$ and $\rho(x) \sim \rho(y)$  in the integration region to have
		\begin{align}\label{eq: 1st integral diffs W_t}
			&\int_{4|y-z|<|x-y|<{a}\rho(y)}| (\mathcal{W}_t(y,x)- W_t (y,x))-(\mathcal{W}_t(z,x)- W_t (z,x))|dx
			\nonumber\\
			&{ \lesssim  \left(\frac{|y-z|}{\rho(y)}\right)^{{\beta}} t^{-d/2}\int_{4|y-z|<|x-y|<{a}\rho(y)}\exp\left(-c\frac{|y-x|^2}{t}\right)dx} \lesssim  \left(\frac{r}{\rho(x_0)}\right)^{{\beta}}. 
		\end{align}
		
	Regarding the second integral, since $x$ is close to both $y$ and $z$, by Proposition~\ref{prop: comparison heat kernels}\ref{itm: diff heat kernels} and  $\sqrt{t} < \rho(x_0) \sim \rho(y) \sim \rho(z)$ we have
		\begin{align}\label{eq: 2nd integral diffs W_t}
			&\int_{|x-y|<4|y-z|}
			|(\mathcal{W}_t(y,x)- W_t (y,x))-(\mathcal{W}_t(z,x)- W_t (z,x))|dx\nonumber\\
			& 
			\leq \int_{|x-y|<4|y-z|} |\mathcal{W}_t(y,x)- W_t (y,x)|dx+\int_{|x-z|<5|y-z|} |\mathcal{W}_t(z,x)- W_t (z,x)|dx\nonumber\\
			&
			\lesssim \frac{1}{t^{d/2}}\left( \frac{\sqrt{t}}{\rho(x_0)} \right)^{\delta_\mu} \left[ \int_{|x-y|<4|y-z|} \exp\left(-c\frac{|x-y|^2}{t}\right)dx+\int_{|x-z|<5|y-z|} \exp\left(-c\frac{|x-z|^2}{t}\right)dx \right]\nonumber\\
			&
			\lesssim \frac{1}{t^{d/2}} \left( \frac{\sqrt{t}}{\rho(x_0)} \right)^{\delta_\mu}  \int_0^{5|y-z|} e^{-c\frac{u^2}{t}} u^{d-1} du  
			\lesssim \left( \frac{\sqrt{t}}{\rho(x_0)} \right)^{\delta_\mu} \left( \frac{|y - z|}{\sqrt{t}} \right)^d
			\lesssim \left( \frac{r}{\rho(x_0)} \right)^{\delta_\mu},
		\end{align}
	where we have used that $|y-z|<2r<\sqrt{t}$ and $\delta_\mu<d$.
	
	Finally, for the third term in \eqref{eq: 3rd estimate diff W_t1}, we use Lemma~\ref{lem: heat semigroup estimates}\ref{itm: diff heat kernels} for $\mathcal{W}_t$, and the smoothness condition for $W_t$. Also, since $|y-z|<\rho(x_0)$ and by the choice of $a$, it follows that 
	   $|x-y|>2|y-z|$. Therefore, for every $0<{\beta}<\min\{1,\delta_\mu\}$, we have
	   
	   \begin{align}\label{eq: 3rd integral diffs W_t}
	   	\int_{|x-y|>{a}\rho(y)}&|(\mathcal{W}_t(y,x)- W_t(y,x))-(\mathcal{W}_t(z,x)- W_t (z,x))|dx \nonumber\\
		&
		\leq \int_{|x-y|>{a}\rho(y)}|\mathcal{W}_t(y,x)-\mathcal{W}_t(z,x)|dx+\int_{|x-y|>a\rho(y)}| W_t (y,x)- W_t(z,x)|dx\nonumber\\
		&
		\lesssim |y-z|^{{\beta}}\int_{|x-y|>{a}\rho(y)}\frac{1}{|x-y|^{d+{\beta}}}dx
		\lesssim r^{\beta}\int_{{a}\rho(y)}^{\infty}\frac{1}{s^{d+{\beta}}}s^{d-1}ds
		\lesssim \left(\frac{r}{\rho(x_0)}\right)^{{\beta}}.
		\end{align}
		
		By combining \eqref{eq: 1st integral diffs W_t}, \eqref{eq: 2nd integral diffs W_t}, and \eqref{eq: 3rd integral diffs W_t}
	we obtain
	\begin{equation}\label{eq: diff heat kernel Banach}
		\| \mathcal{W}_t1(y)- \mathcal{W}_t1(z)\|_{\mathbb{B}}\leq  C  \left(\frac{r}{\rho(x_0)}\right)^{{\beta}}, \quad 0<{\beta}<\min\{1,\delta_\mu\},\end{equation}
	and finish the proof.	\end{proof}

		\subsection{Maximal operators for the generalized Poisson operators $\mathcal{P}^\sigma_t$ .}
		
	For $0 < \sigma < 1$ and $f \in L^2(\mathbb{R}^d)$, the generalized Poisson operators $\mathcal{P}^\sigma_t$ are defined through the subordination formula
	\begin{equation}\label{eq: subordination}\mathcal{P}^\sigma_tf(x)= \frac{t^{2\sigma}}{4^\sigma \Gamma(\sigma)} \int_{0}^{\infty} e^{-\frac{t^2}{4r}}\mathcal{W}_rf(x)\frac{dr}{r^{1+\sigma}} \
	= \frac{1}{\Gamma(\sigma)} \int_{0}^{\infty} e^{-r} \mathcal{W}_{\frac{t^2}{4r}}f(x) \frac{dr}{r^{1-\sigma}},
	\end{equation}
	for $x \in \mathbb{R}^d$ and $t > 0$, where $\mathcal{W}_{\frac{t^2}{4r}} f$ is as in \eqref{eq: heat semigroup}.
	Thus,
	\begin{equation*}\mathcal{P}^{\sigma}_t f(x)= \int_{\mathbb{R}^d} \mathcal{P}^{\sigma}_t(x,y)f(y)dy, \quad x \in \mathbb{R}^d,
	\end{equation*}
	where the expression for the kernel is given by
	\begin{equation}\label{eq: Poisson sigma kernel}
		\mathcal{P}^\sigma_t(x,y)
		= \frac{1}{\Gamma(\sigma)}\int_{0}^{\infty} e^{-r}\mathcal{W}_{\frac{t^2}{4r}}(x,y) \frac{dr}{r^{1-\sigma}}, \quad x,y \in \mathbb{R}^d.\end{equation}

	We shall consider the maximal operator associated with $\mathcal{P}^{\sigma}_{t}$, for $f \in L^2(\mathbb{R}^d)$, as
	\begin{equation}\label{eq: Poisson maximal}
		\mathcal{P}^{\sigma,*}f(x):= \sup_{t>0}|\mathcal{P}^{\sigma}_{t}f(x)|=\|\mathcal{P}^{\sigma}_{t}f(x)\|_{\mathbb{B}},
	\end{equation}
	in the Banach space $\mathbb{B}= L^\infty((0, \infty),dt)$ as before.  
Moreover, the inequality $\mathcal{P}^{\sigma,*}f(x) \leq \mathcal{W}^{*}f(x)$ for all $x \in \mathbb{R}^d$ and $f \in L^2(\mathbb{R}^d)$  derived from \eqref{eq: subordination}  yields the boundedness from  $L^2(\mathbb{R}^d)$ into $L^2_\mathbb{B}(\mathbb{R}^d)$. Additionally, we have the  same result for $\mathcal{P}^{\sigma,*}$ as for the maximal operator $\mathcal{W}^{*}$ on weighted Lebesgue spaces, that is, $\mathcal{P}^{\sigma,*}$ is bounded on $L^p(w)$  for $w\in H^{\rho, m^*}_{p,c^*}$, where $m^*=\frac{1}{(k_0+1)^2}$ and $c^*<c_0(2^{3k_0+7}C_0^{k_0+3})^{-m^*}$.
	
	In  $\BMO_\rho^\alpha(\mathbb R^d)$ spaces, these maximal operators $\mathcal{P}^{\sigma,*}$ were studied in \cite{MSTZ14}  for the particular case where $d\mu(x)=V(x)dx$. To obtain a weighted version of these results, for a general measure $\mu$, we shall prove first that size and smoothness conditions  with norm  $\mathbb{B}$ are satisfied.
	
	\begin{prop}\label{prop: class Poisson maximal} Let $x_0, x,y \in \mathbb{R}^d$, $t>0$ and $0<\sigma<1$.
		\begin{enumerate}\item \label{itm: size P_tsigma}There exists a constant $C$ such that		\[\|\mathcal{P}_t^{{\sigma}}(x,y)\|_{\mathbb{B}} \leq C \frac{1}{|x-y|^{d}}\exp\left(-c_0\left(1+\frac{|x-y|}{\rho(x)}\right)^{\frac{1}{k_0+1}}\right),\]
			where $c_0$ is the constant from Lemma~\ref{lem: heat semigroup estimates}\ref{itm: size W_t}.
			
			\item \label{itm: smoothness P_tsigma}
			For every $0<\delta<\min\{1,\delta_\mu\}$, there exists a constant $C$ such that		\[\|\mathcal{P}_t^{{\sigma}}(x,y) - \mathcal{P}_t^{{\sigma}}(x_0,y)\|_{\mathbb{B}} \leq C\frac{|x-x_0|^\delta}{|x-y|^{d+\delta}},\]
			 whenever $|x-y|>2|x-x_0|$.	\end{enumerate}\end{prop}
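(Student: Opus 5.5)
The plan is to transfer the heat-kernel estimates of Proposition~\ref{prop: class heat maximal} to $\mathcal{P}^\sigma_t$ through the subordination formula \eqref{eq: Poisson sigma kernel}. Since the heat-kernel bounds are uniform in the time parameter, the $\mathbb{B}$-norm is simply a supremum over $t$. The inequality
\[
|\mathcal{P}^\sigma_t(x,y)|\le \frac{1}{\Gamma(\sigma)}\int_0^\infty e^{-r}\,\bigl|\mathcal{W}_{t^2/(4r)}(x,y)\bigr|\,\frac{dr}{r^{1-\sigma}}
\]
then reduces everything to bounding $\sup_{s>0}|\mathcal{W}_s(x,y)|$ and $\sup_{s>0}|\mathcal{W}_s(x,y)-\mathcal{W}_s(x_0,y)|$. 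The remaining factor is finite, because $\frac{1}{\Gamma(\sigma)}\int_0^\infty e^{-r}r^{\sigma-1}\,dr=1$.

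For \ref{itm: size P_tsigma}, I would use Proposition~\ref{prop: class heat maximal}\ref{itm: size heat kernel Banach}. It already gives, for every $s>0$,
\[
0\le \mathcal{W}_s(x,y)\lesssim |x-y|^{-d}\exp\Bigl(-c_0\bigl(1+|x-y|/\rho(x)\bigr)^{\frac{1}{k_0+1}}\Bigr).
\]
To recall where this comes from: for $s>|x-y|^2$ one uses the factor $\max\{|x-y|,\sqrt s\}\ge|x-y|$ in Lemma~\ref{lem: heat semigroup estimates}\ref{itm: size W_t}, together with $s^{-d/2}\le|x-y|^{-d}$. For $s\le|x-y|^2$ one uses \eqref{eq: exp polynom}. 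Substituting $s=t^2/(4r)$ and integrating against the probability measure $e^{-r}r^{\sigma-1}dr/\Gamma(\sigma)$ gives the bound with the same constant $c_0$, uniformly in $t$. Taking the supremum in $t$ finishes this part.

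For \ref{itm: smoothness P_tsigma}, I would fix $0<\delta<\min\{1,\delta_\mu\}$ and $|x-y|>2|x-x_0|$. Proposition~\ref{prop: class heat maximal}\ref{itm: smoothness heat kernel Banach} gives
\[
\sup_{s>0}|\mathcal{W}_s(x,y)-\mathcal{W}_s(x_0,y)|\le C_\delta\,\frac{|x-x_0|^\delta}{|x-y|^{d+\delta}}.
\]
The proof of that proposition splits into two cases. When $|x-x_0|<\sqrt s$, one uses the Hölder estimate of Lemma~\ref{lem: heat semigroup estimates}\ref{itm: smoothnes W_t} with \eqref{eq: exp polynom} at $N=\frac{d+\delta}{2}$. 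When $|x-x_0|\ge\sqrt s$, one bounds each term separately by size, again with \eqref{eq: exp polynom}, using $|x-y|\sim|x_0-y|$. Plugging this into the subordination formula for the difference $\mathcal{P}^\sigma_t(x,y)-\mathcal{P}^\sigma_t(x_0,y)$ and integrating in $r$ gives the claim uniformly in $t$.

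The only point needing care is measurability and the justification for moving the absolute value inside the $r$-integral, i.e.\ that $\mathcal{P}^\sigma_t(x,y)$ is a genuine absolutely convergent integral. This follows from the pointwise Gaussian bound and the integrability of $e^{-r}r^{\sigma-1}$. I expect no real obstacle here: the substantive work was already done for the heat semigroup, and the key observation is just that those bounds hold for every fixed $s$ and not only in supremum.
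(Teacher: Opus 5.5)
Your proposal is correct and follows the paper's argument. Both pass the $\mathbb{B}$-norm inside the subordination integral \eqref{eq: Poisson sigma kernel}, invoke Proposition~\ref{prop: class heat maximal} (whose bounds hold for each fixed time, hence uniformly in $t^2/(4r)$), and conclude with the normalization $\frac{1}{\Gamma(\sigma)}\int_0^\infty e^{-r}r^{\sigma-1}\,dr=1$; your recap of the heat-kernel estimates is not needed but is accurate, and your explicit $\sup_{s>0}$ formulation is a slightly cleaner way to write what the paper denotes by $\|\mathcal{W}_{t^2/(4r)}(x,y)\|_{\mathbb{B}}$ inside the $r$-integral.
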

	
\begin{proof}
	The proof of item~\ref{itm: size P_tsigma} is a consequence of definitions \eqref{eq: Poisson maximal} and \eqref{eq: Poisson sigma kernel}, Proposition~\ref{prop: class heat maximal} and the definition of the Gamma function $\Gamma(\sigma) = \int_{0}^{\infty} r^{\sigma-1}e^{-r} dr$.

	To prove the smoothness condition \ref{itm: smoothness P_tsigma}, we use Proposition~\ref{prop: class heat maximal}\ref{itm: smoothness heat kernel Banach} to obtain
	\begin{align*}
		\| \mathcal{P}^\sigma_t(x,y) - \mathcal{P}^\sigma_t(x_0,y) \|_{\mathbb{B}}
		&
		\leq
		\frac{1}{\Gamma(\sigma)}\int_{0}^{\infty} e^{-r} \left\| \mathcal{W}_{\frac{t^2}{4r}}(x,y)-\mathcal{W}_{\frac{t^2}{4r}}(x_0,y) \right\|_{\mathbb{B}} \frac{dr}{r^{1-\sigma}} 
		\lesssim \frac{|x-x_0|^\delta}{|x-y|^{d+\delta}},
	\end{align*}
	whenever $|x-y|>2|x-x_0|$ and $0<\delta<\min\{1,\delta_\mu\}$.
\end{proof}
	\begin{thm}
		{Let $0 < \sigma < 1$, $m = \frac{1}{k_0+1}$, $c_0$ be the constant from Lemma~\ref{lem: heat semigroup estimates}\ref{itm: size W_t}, and ${\delta = \min\{1, \delta_\mu\}}$.} Then, for every $0 \leq \alpha < \delta$, the operator $\mathcal{P}^{\sigma,*}$ is bounded on $\BMO_\rho^\alpha(w)$ provided that { $w \in E_{\infty,c_1,c_2}^{\rho,m} \cap D_{\kappa,c_3}^{\rho,m}$ with $c_1, c_2, c_3 \geq 0$ such that $1 \leq \kappa < \frac{\delta - \alpha}{d} + 1$ and ${(c_2+c_3)} < c_0 \left(1 - \frac{d(\kappa-1)+\alpha}{\delta}\right) (4C_0)^{-m}$.}
	\end{thm}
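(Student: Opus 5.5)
By Proposition~\ref{prop: class Poisson maximal}, the vector-valued operator $\boldsymbol{P}^\sigma f(x)=\{\mathcal P^\sigma_t f(x)\}_{t>0}$ is an exponential SCZO of $(0,\infty,\delta)$ type in $\mathbb B=L^\infty((0,\infty),dt)$, with parameters $c=c_0$ and $m=\frac{1}{k_0+1}$, for every $0<\delta<\min\{1,\delta_\mu\}$. It is also bounded from $L^2$ into $L^2_{\mathbb B}$. Corollary~\ref{cor: T1 criterion infty}, together with Remarks~\ref{obs: reduce to beta=0} and \ref{obs: vector valued}, therefore reduces the proof to the $T1$ condition: for every ball $B=B(x_0,r)$ with $0<r\le\frac12\rho(x_0)$ and every $0<\beta<\min\{1,\delta_\mu\}$,
\[
\frac{1}{|B|}\int_B \|\mathcal P^\sigma_t1(y)-(\mathcal P^\sigma_t1)_B\|_{\mathbb B}\,dy\le C\left(\frac{r}{\rho(x_0)}\right)^{\beta}.
\]
The endpoint value $\delta=\min\{1,\delta_\mu\}$ in the statement is handled as in Theorem~\ref{thm: BMO heat maximal}, since the constraints on $\alpha,\kappa$ and $c_2+c_3$ are strict and leave room to take $\delta$ slightly smaller.

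As in the proof of Theorem~\ref{thm: BMO heat maximal}, I will bound the inner average by $\frac1{|B|}\int_B\|\mathcal P^\sigma_t1(y)-\mathcal P^\sigma_t1(z)\|_{\mathbb B}\,dz$. The key step is the subordination formula \eqref{eq: subordination}, which gives, for each $t>0$ and $y,z\in B$,
\[
|\mathcal P^\sigma_t1(y)-\mathcal P^\sigma_t1(z)|\le \frac{1}{\Gamma(\sigma)}\int_0^\infty e^{-u}\,\big|\mathcal W_{t^2/(4u)}1(y)-\mathcal W_{t^2/(4u)}1(z)\big|\,\frac{du}{u^{1-\sigma}}.
\]
Here I use that $\mathcal W_s1$ is well defined through the absolutely convergent kernel integral, which the Gaussian bounds in Lemma~\ref{lem: heat semigroup estimates} guarantee, so Fubini applies.

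The estimate \eqref{eq: diff heat kernel Banach} obtained in the proof of Theorem~\ref{thm: BMO heat maximal} gives $\sup_{s>0}|\mathcal W_s1(y)-\mathcal W_s1(z)|\le C(r/\rho(x_0))^\beta$, uniformly in $y,z\in B$. This bound came from splitting into the three regimes $\sqrt s<2r$, $2r<\sqrt s<\rho(x_0)$ and $\sqrt s>\rho(x_0)$, and it is where $\delta_\mu<d$ was used. Inserting it into the display above and using $\frac1{\Gamma(\sigma)}\int_0^\infty e^{-u}u^{\sigma-1}\,du=1$ yields
\[
\|\mathcal P^\sigma_t1(y)-\mathcal P^\sigma_t1(z)\|_{\mathbb B}\le C\left(\frac{r}{\rho(x_0)}\right)^\beta,
\]
and averaging over $y,z\in B$ completes the argument.

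The main obstacle is the dependence on $\delta_\mu<d$. The statement does not assume this, yet \eqref{eq: diff heat kernel Banach} used it in \eqref{eq: 2nd integral diffs W_t}. I would first try to show that this restriction is harmless, by taking the bound $(\sqrt s/\rho)^{\delta_\mu}(|y-z|/\sqrt s)^{d}$ and noting that it is at most $(r/\rho(x_0))^{\min\{\delta_\mu,d\}}$. Since $\beta<1<d$, this is in any case at most $(r/\rho(x_0))^\beta$, so the exponent can be replaced by $\min\{\delta_\mu,d\}$. If that route fails, I would impose $\delta_\mu<d$ as in Theorem~\ref{thm: BMO heat maximal}; this is automatic in the potential case.
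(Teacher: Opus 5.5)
Your proposal is correct and is essentially the paper's own proof. You reduce via Corollary~\ref{cor: T1 criterion infty} and Remark~\ref{obs: vector valued} to the $T1$ condition, push the uniform heat estimate \eqref{eq: diff heat kernel Banach} through the subordination formula using $\frac{1}{\Gamma(\sigma)}\int_0^\infty e^{-u}u^{\sigma-1}\,du=1$, and then average over $y,z\in B$.

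Your two extra remarks are also sound, and the first closes a point the paper passes over silently. The paper uses \eqref{eq: diff heat kernel Banach} here, although it was derived under the assumption $\delta_\mu<d$. Your case split shows $(\sqrt t/\rho(x_0))^{\delta_\mu}(|y-z|/\sqrt t)^{d}\lesssim (r/\rho(x_0))^{\min\{\delta_\mu,d\}}\le (r/\rho(x_0))^{\beta}$. The last inequality holds because $\beta<\min\{1,\delta_\mu\}\le\min\{\delta_\mu,d\}$, so that hypothesis is not needed. Second, the strict inequalities on $\alpha$, $\kappa$ and $c_2+c_3$ indeed let you replace the endpoint $\delta=\min\{1,\delta_\mu\}$ by a slightly smaller admissible $\delta'$.
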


	\begin{proof}Fix $0 < \sigma < 1$. By Corollary~\ref{cor: T1 criterion infty} and Remark~\ref{obs: vector valued}, and in view of the kernel conditions given in Proposition~\ref{prop: class Poisson maximal}, it is sufficient to prove that there exists a constant $C$ such that
		\begin{equation}\frac{1}{|B|} \int_{B} \| \mathcal{P}^\sigma_t 1(y) - (\mathcal{P}^\sigma_t 1)_B \|_{\mathbb{B}} dy \leq C \left(\frac{r}{\rho(x_0)}\right)^{\beta},
		\end{equation}
		for every ball $B=B(x_0,r)$ with $x_0 \in \mathbb{R}^d$, $0 < r \leq \frac{1}{2}\rho(x_0)$, and $0 \leq \beta < \min\{1, \delta_\mu\}$.
		
	To show this, we shall estimate $\| \mathcal{P}_t^\sigma 1(y) - \mathcal{P}^\sigma_t 1(z) \|_{\mathbb{B}}$ for all $y,z \in B=B(x_0,r)$. 
		According to the kernel representation of the generalized Poisson operator \eqref{eq: Poisson sigma kernel}, and using the estimate \eqref{eq: diff heat kernel Banach}, it follows that for every $0 < \beta < \min\{1, \delta_\mu\}$:
		\begin{align*}
			\| \mathcal{P}_t^\sigma 1(y) - \mathcal{P}_t^\sigma 1(z) \|_{\mathbb{B}} 
			& = \left\| \int_{\mathbb{R}^d} \frac{1}{\Gamma(\sigma)} \int_{0}^{\infty} e^{-s} \left(\mathcal{W}_{\frac{t^2}{4s}}(y,x) - \mathcal{W}_{\frac{t^2}{4s}}(z,x)\right) \frac{ds}{s^{1-\sigma}} dx \right\|_\mathbb{B} \\
			& \leq \frac{1}{\Gamma(\sigma)} \int_{0}^\infty s^{\sigma-1} e^{-s} \| \mathcal{W}_t 1(y) - \mathcal{W}_t 1(z) \|_{\mathbb{B}} ds\lesssim \left(\frac{r}{\rho(x_0)}\right)^{\beta}.
		\end{align*}
		By averaging over $y,z \in B$, we obtain the desired result.
	\end{proof}

	\subsection{Littlewood-Paley function for the heat semigroup}
	
	The Littlewood--Paley $g$-function associated with the family $\{\mathcal{W}_t\}_{t>0}$ is defined by \[g_{\mathcal{W}}(f)(x) = \left( \int_0^\infty \left| t \partial_t \mathcal{W}_t f(x) \right|^2 \, \frac{dt}{t} \right)^{1/2} = \|t \partial_t\mathcal{W}_t f(x) \|_\mathbb{F},\]
	 where $\mathbb{F}=L^2\left((0, \infty), \frac{dt}{t}\right)$.

	 For $d\mu(x)=V(x)dx$  with $V \in RH_q$, the boundedness of $g_{\mathcal{W}}$ on $L^p(w)$  for weights $w \in A_p^\rho$ was established in \cite{BHS11} and unweighted estimates in the $\BMO_\rho^\alpha(\mathbb R^d)$ space can be found in \cite{MSTZ14}. To extend this to general measures, we adopt the vector-valued framework using the Banach space $\mathbb{F}$. Thus, since $\{t\partial_t \mathcal{W}_t\}_{t>0}$ is bounded from $L^2$ to $L^2_\mathbb{F}$ it is enough to verify that the kernel satisfies the size and smoothness conditions  in the $\mathbb{F}$ norm to obtain that it is a vector-valued Calderón--Zygmund operator and bounded from $L^p$ to $L^p_\mathbb{F}$ for all $1 < p < \infty$.
	 
	Moreover,  as a consequence of Proposition~\ref{prop: class LPg heat} we shall prove  that \eqref{eq: size pointwise fract} and \eqref{eq: smoothness pointwise fract} are satisfied in $\mathbb{F}$, with parameters $c=c_0$ and $m=\frac{1}{k_0+1}$. This implies that $g_{\mathcal W}$ is bounded on $L^p(w)$ for $w\in H^{\rho, m^*}_{p,c^*}$, where $m^*=\frac{1}{(k_0+1)^2}$ and $c^*<c_0(2^{3k_0+7}C_0^{k_0+3})^{-m^*}$ (see \cite[Theorem~4.1 and Proposition~4.2]{DLT25}).

	 \begin{prop}\label{prop: class LPg heat} 
	 	Let $x_0, x, y \in \mathbb{R}^d$ and $t > 0$.
	 	\begin{enumerate}\item \label{itm: size LPg heat} There exists a constant $C$ such that 	
	 		\[ \|t \partial_t\mathcal{W}_t(x,y)\|_{\mathbb{F}} \leq \frac{C}{|x-y|^{d}}\exp\left(-c_0\left(1+\frac{|x-y|}{\rho(x)}\right)^{\frac{1}{k_0+1}}\right),\]
	 		where $c_0$ is the constant from Lemma~\ref{lem: heat semigroup estimates}\ref{itm: size W_t}.	\item \label{itm: smoothness LPg heat} For every $0 < \delta < \min\{1,\delta_\mu\}$ there exists a constant $C$ such that
			\[
			\|t \partial_t\mathcal{W}_t(x,y) -t \partial_t \mathcal{W}_t(x_0,y)\|_{\mathbb{F}} \leq 
			C \frac{|x-x_0|^\delta}{|x-y|^{d+\delta}},
			\]
			whenever $|x-y| > 2|x-x_0|$.	
		\end{enumerate}
	\end{prop}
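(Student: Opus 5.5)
The argument runs parallel to that of Proposition~\ref{prop: class heat maximal}, now using the estimates for $t\partial_t\mathcal W_t$ in Lemma~\ref{lem: Q_t estimates} and replacing the $L^\infty$ norm by the $\mathbb F=L^2((0,\infty),dt/t)$ norm. Throughout write $u=|x-y|$.

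\emph{Size condition \ref{itm: size LPg heat}.} By Lemma~\ref{lem: Q_t estimates}\ref{item: t deriv parcial k_t}, and since $\max\{u,\sqrt t/2\}\ge u$,
\[
|t\partial_t\mathcal W_t(x,y)|\lesssim t^{-d/2}e^{-u^2/(4t)}\exp\Bigl(-c_0\bigl(1+\tfrac{u}{\rho(x)}\bigr)^{\frac1{k_0+1}}\Bigr).
\]
The exponential factor does not depend on $t$, so it comes out of the norm. It remains to compute
\[
\int_0^\infty t^{-d}e^{-u^2/(2t)}\,\frac{dt}{t}.
\]
The substitution $t=u^2 s$ turns this into $u^{-2d}\int_0^\infty s^{-d-1}e^{-1/(2s)}\,ds$, and the last integral is a finite constant because the integrand decays like $s^{-d-1}$ at infinity and is exponentially small at $0$. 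Taking square roots gives the bound $C u^{-d}$ times the exponential factor, which is the claim.

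\emph{Smoothness condition \ref{itm: smoothness LPg heat}.} Fix $|x-y|>2|x-x_0|$, so that $|x_0-y|\sim u$, and put $h=x_0-x$. Split the $t$-integral defining the $\mathbb F$ norm at $t=|h|^2$.

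For $t\ge |h|^2$, apply Lemma~\ref{lem: Q_t estimates}\ref{itm: t diff partial W_t} with $N=0$ (the factor with $N>0$ is at most $1$) and exponent $\delta$. This bounds the integrand by
\[
|h|^{2\delta}\,t^{-d-\delta}e^{-2cu^2/t}.
\]
Integrating over all $t>0$ against $dt/t$ and substituting $t=u^2s$ gives $|h|^{2\delta}u^{-2d-2\delta}$ times a finite constant.

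For $t<|h|^2$, bound the two terms separately using part \ref{item: t deriv parcial k_t} of the same lemma, dropping the $\rho$-factor. This leaves
\[
\int_0^{|h|^2}t^{-d}e^{-u^2/(2t)}\,\frac{dt}{t},
\]
plus the same expression with $|x_0-y|\sim u$ in place of $u$. On this range use \eqref{eq: exp polynom} with $N=d+\delta$, that is,
\[
e^{-u^2/(2t)}\lesssim \bigl(t/u^2\bigr)^{d+\delta}.
\]
The integral is then bounded by $u^{-2d-2\delta}\int_0^{|h|^2}t^{\delta-1}\,dt\sim |h|^{2\delta}u^{-2d-2\delta}$.

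Adding the two ranges and taking square roots yields $C|x-x_0|^\delta/|x-y|^{d+\delta}$ for every $0<\delta<\min\{1,\delta_\mu\}$.

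The only delicate point is making sure each $dt/t$ integral converges at both endpoints. Splitting at $|h|^2$ is what allows this: the power gain from the difference estimate controls large $t$, and the Gaussian factor controls small $t$. Both are routine once that split is made.
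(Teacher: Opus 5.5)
Your proposal is correct and follows the paper's proof essentially step for step. For the size estimate, both use Lemma~\ref{lem: Q_t estimates}\ref{item: t deriv parcial k_t} and scaling in $t$; for smoothness, both split the $dt/t$ integral at $t=|x-x_0|^2$, using Lemma~\ref{lem: Q_t estimates}\ref{itm: t diff partial W_t} for large $t$ and the size bound with a Gaussian-to-power trade-off \eqref{eq: exp polynom} for small $t$. The only cosmetic difference is that you integrate $t^{\delta-1}$ over $(0,|x-x_0|^2)$, whereas the paper extracts the factor $(|x-x_0|/|x-y|)^{2\delta}$ first and then integrates over all $t>0$.
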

	
	\begin{proof} The proof is analogous to \cite[Proposition~5.8]{DLT25}. For condition \ref{itm: size LPg heat} we use  Lemma~\ref{lem: Q_t estimates}\ref{item: t deriv parcial k_t}. Then,  there exist $C$ and $c$ such that
		\begin{align*}\|t\partial_t \mathcal W_t(x,y)\|_{\mathbb{F}}^2
		&\lesssim \exp\left(-2c_0\left(1+\frac{|x-y|}{\rho(x)}\right)^{\frac{1}{k_0+1}}\right)\int_0^{\infty} t^{-d-1} \exp\left(-2c\frac{ |x-y|^2}{t}\right)dt
		 \\&\lesssim \frac{1}{|x-y|^{2d}} \exp\left(-2c_0\left(1+\frac{|x-y|}{\rho(x)}\right)^{\frac{1}{k_0+1}}\right).\end{align*}
		 
		 For condition \ref{itm: smoothness LPg heat}, consider $|x-y| > 2|x-x_0|$. From Lemma~\ref{lem: Q_t estimates}\ref{itm: t diff partial W_t}  it follows, on one hand, that
		\begin{equation*}
			\int_{|x-x_0|^2}^\infty |t \partial_t\mathcal{W}_t(x,y)-t \partial_t\mathcal{W}_t(x_0,y)|^2 \frac{dt}{t}\lesssim\frac{1}{|x-y|^{2d}} \left(\frac{|x-x_0|}{|x-y|}\right)^{2\delta},
		\end{equation*}
		for every $0 < \delta < \min\{1,\delta_\mu\}$ and some constant $C$.
		
		On the other hand, keeping in mind that $|x_0-y| > \frac12|x-y|$, from Lemma~\ref{lem: Q_t estimates}\ref{item: t deriv parcial k_t} we have
		\begin{align*}
			\int_0^{|x-x_0|^2} |t \partial_t\mathcal{W}_t(x,y)-t \partial_t\mathcal{W}_t(x_0,y)|^2 \frac{dt}{t} 
			&\leq \int_0^{|x-x_0|^2} |t \partial_t\mathcal{W}_t(x,y)|^2 \frac{dt}{t}+ \int_0^{|x-x_0|^2} |t \partial_t\mathcal{W}_t(x_0,y)|^2 \frac{dt}{t}\\
			& \lesssim \left(\frac{|x-x_0|}{|x-y|}\right)^{2\delta} \int_0^\infty t^{-d-1} \exp\left(-c\frac{|x-y|^2}{t}\right) dt\\
			& \lesssim \frac{1}{|x-y|^{2d}} \left(\frac{|x-x_0|}{|x-y|}\right)^{2\delta},
		\end{align*}
		for every $0 < \delta < \min\{1,\delta_\mu\}$. 
	\end{proof}

	\begin{thm}Let $m=\frac{1}{k_0+1}$, $c_0$ be the constant from Lemma~\ref{lem: heat semigroup estimates}\ref{itm: size W_t}, and $\delta = \min\{1, \delta_\mu\}$. Then, for every $0 \leq \alpha < \delta$, the operator $g_\mathcal{W}$ is bounded on $\BMO_\rho^\alpha(w)$ provided that $w \in E_{\infty,c_1,c_2}^{\rho,m} \cap D_{\kappa,c_3}^{\rho,m}$ with $1 \leq \kappa < \frac{\delta - \alpha}{d}+1$ and $c_1, c_2, c_3 \geq 0$ such that ${(c_2+c_3)} < c_0 \left(1 - \frac{d(\kappa-1)+\alpha}{\delta}\right) (4C_0)^{-m}$.\end{thm}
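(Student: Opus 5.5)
The plan is to proceed exactly as for $\mathcal{W}^*$ and $\mathcal{P}^{\sigma,*}$. By Proposition~\ref{prop: class LPg heat}, the $\mathbb{F}$-valued operator $f\mapsto\{t\partial_t\mathcal{W}_tf\}_{t>0}$ is an exponential SCZO of $(0,\infty,\delta')$ type for every $\delta'<\min\{1,\delta_\mu\}$, with parameters $c=c_0$ and $m=\frac{1}{k_0+1}$. Its $L^2\to L^2_\mathbb{F}$ boundedness, together with the vector-valued Calderón--Zygmund theory, gives the weak type $(1,1)$. By Corollary~\ref{cor: T1 criterion infty} and Remarks~\ref{obs: reduce to beta=0} and \ref{obs: vector valued}, it therefore suffices to prove
\[
\frac{1}{|B|}\int_B\big\|t\partial_t\mathcal{W}_t1(y)-(t\partial_t\mathcal{W}_t1)_B\big\|_{\mathbb{F}}\,dy\le C\Big(\frac{r}{\rho(x_0)}\Big)^{\beta}
\]
for $B=B(x_0,r)$ with $r\le\frac12\rho(x_0)$ and some $0<\beta<\min\{1,\delta_\mu\}$. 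A small loss in $\delta$ is harmless, since the condition on $\alpha$ and $\kappa$ is strict. As before, I will bound $\|t\partial_t\mathcal{W}_t1(y)-t\partial_t\mathcal{W}_t1(z)\|_{\mathbb{F}}$ uniformly for $y,z\in B$, using $\rho(y)\sim\rho(x_0)\sim\rho(z)$.

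I split the $\mathbb{F}$-norm squared, $\int_0^\infty|\cdot|^2\frac{dt}{t}$, into the ranges $t<4r^2$, $4r^2\le t\le\rho(x_0)^2$ and $t>\rho(x_0)^2$.

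\emph{Small $t$.} Bound the difference by the sum and use Lemma~\ref{lem: Q_t estimates}\ref{itm: integral Q_t}, which gives $|t\partial_t\mathcal{W}_t1(y)|\lesssim(\sqrt t/\rho(x_0))^{\delta_\mu}$. Then $\int_0^{4r^2}(t/\rho(x_0)^2)^{\delta_\mu}\frac{dt}{t}\lesssim (r/\rho(x_0))^{2\delta_\mu}$.

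\emph{Large $t$.} Here $|y-z|\le 2r\le\sqrt t$, so Lemma~\ref{lem: Q_t estimates}\ref{itm: t diff partial W_t} applies. Integrating in $x$ gives $|t\partial_t\mathcal{W}_t1(y)-t\partial_t\mathcal{W}_t1(z)|\lesssim (r/\sqrt t)^{\beta}$, and then $\int_{\rho(x_0)^2}^\infty (r^2/t)^{\beta}\frac{dt}{t}\lesssim (r/\rho(x_0))^{2\beta}$.

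\emph{Intermediate $t$.} This is the main obstacle, because neither estimate above alone yields a square-integrable bound against $dt/t$. I will use \eqref{eq: integral W_t dmu } to write
\[
t\partial_t\mathcal{W}_t1(y)=-t\int\mathcal{W}_t(y,x)\,d\mu(x).
\]
Lemma~\ref{lem: heat semigroup estimates}\ref{itm: smoothnes W_t} with $|h|=|y-z|\le\sqrt t$, together with \eqref{eq: integral exp heat bound dmu} (valid since $t\lesssim\rho(y)^2$), then gives
\[
|t\partial_t\mathcal{W}_t1(y)-t\partial_t\mathcal{W}_t1(z)|\lesssim\Big(\frac{r}{\sqrt t}\Big)^{\beta}\Big(\frac{\sqrt t}{\rho(x_0)}\Big)^{\delta_\mu}.
\]
Squaring and integrating against $dt/t$ over $[4r^2,\rho(x_0)^2]$ gives $r^{2\beta}\rho(x_0)^{-2\delta_\mu}\int_0^{\rho(x_0)^2}t^{\delta_\mu-\beta-1}dt\lesssim (r/\rho(x_0))^{2\beta}$, which converges since $\beta<\delta_\mu$.

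Summing the three pieces and taking square roots yields the bound $(r/\rho(x_0))^{\beta}$. Averaging over $y,z\in B$ completes the verification of \eqref{eq: T1 power condition}, and Corollary~\ref{cor: T1 criterion infty} gives the claimed boundedness. The only care needed is to fix $\beta$ (and $\delta'$ in the kernel type) close enough to $\min\{1,\delta_\mu\}$ that the hypotheses on $\alpha$, $\kappa$ and $c_2+c_3$ still hold with $\delta'$ in place of $\delta$; this is possible because all those hypotheses are strict inequalities.
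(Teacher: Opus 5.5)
Your proposal is correct and follows essentially the same route as the paper. Both reduce, via Corollary~\ref{cor: T1 criterion infty} in its vector-valued form, to a uniform bound for $\|t\partial_t\mathcal{W}_t1(y)-t\partial_t\mathcal{W}_t1(z)\|_{\mathbb{F}}$ with $y,z\in B$. Both then split $t$ into $(0,4r^2]$, $(4r^2,\rho(x_0)^2]$ and $(\rho(x_0)^2,\infty)$, handled respectively by Lemma~\ref{lem: Q_t estimates}\ref{itm: integral Q_t}, by rewriting against $d\mu$ together with \eqref{eq: integral exp heat bound dmu}, and by Lemma~\ref{lem: Q_t estimates}\ref{itm: t diff partial W_t}.

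In the middle range you correctly invoke Lemma~\ref{lem: heat semigroup estimates}\ref{itm: smoothnes W_t}. The paper cites Lemma~\ref{lem: Q_t estimates}\ref{itm: t diff partial W_t} there, although after the rewriting the integrand is a difference of $\mathcal{W}_t$. Your remark about taking the kernel exponent $\delta'$ slightly below $\min\{1,\delta_\mu\}$ also makes explicit a point the paper leaves implicit.
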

	
	\begin{proof}By Corollary~\ref{cor: T1 criterion infty} and  Remarks~\ref{obs: reduce to beta=0} and  \ref{obs: vector valued}, we only need to prove  that there exists  $C$ such that
		\begin{equation}\label{eq: T1 cond LPg heat}\frac{1}{|B|} \int_{B} \| t \partial_t\mathcal{W}_t 1(y) - (t \partial_t\mathcal{W}_t 1)_B \|_\mathbb{F} dy \leq C \left(\frac{r}{\rho(x_0)}\right)^{\beta},
		\end{equation}
		for every ball $B=B(x_0,r)$ with $0 < r \leq \frac{1}{2}\rho(x_0)$ and $0 < \beta < \min\{1, \delta_\mu\}$.
		From Minkowski's inequality,  it will suffice to show that
		\begin{equation*}
			\| t \partial_t\mathcal{W}_t 1(y) - t \partial_t\mathcal{W}_t 1(z) \|_{\mathbb F} \leq C \left(\frac{r}{\rho(x_0)}\right)^{\beta},
		\end{equation*}
		for all $y,z \in B$ and every $0 \leq \beta < \min\{1, \delta_\mu\}$. Note that
		\begin{align*}
			\| t \partial_t\mathcal{W}_t 1(y) - t \partial_t\mathcal{W}_t 1(z)\|_\mathbb{F}^2 = \int_{0}^{\infty} \left| \int_{\mathbb{R}^d} (t \partial_t\mathcal{W}_t(y,x) - t \partial_t\mathcal{W}_t(z,x)) dx \right|^2 \frac{dt}{t}.
			\end{align*}
			We proceed as in the proof of Theorem~\ref{thm: BMO Laplace multipliers}, splitting the integral in the $t$ variable into three regions.
		
		For $0 < t \leq 4r^2$, using that $\rho(y) \sim \rho(x_0) \sim \rho(z)$ since $y,z \in B$, and applying Lemma~\ref{lem: Q_t estimates}\ref{itm: integral Q_t} twice, we have
		\begin{align*}
			\int_{0}^{4r^2} \left| \int_{\mathbb{R}^d} (t \partial_t\mathcal{W}_t(y,x) - t \partial_t\mathcal{W}_t(z,x)) dx \right|^2 \frac{dt}{t} 
			&\lesssim \int_{0}^{4r^2} \left(\frac{\sqrt{t}}{\rho(x_0)}\right)^{2\delta_\mu} \frac{dt}{t} \sim  \left(\frac{r}{\rho(x_0)}\right)^{2\delta_\mu}.
		\end{align*}
		
		Next, if $4r^2 < t \leq \rho(x_0)^2$, we use \eqref{eq: integral W_t dmu } to rewrite the integral in $x$. Since $|y-z| \leq \sqrt{t}$, we can apply Lemma~\ref{lem: Q_t estimates}\ref{itm: t diff partial W_t}. Next, we  use \eqref{eq: integral exp heat bound dmu} because $t < C\rho(y)^2$ in this region and thus, for every $0 < \beta < \min\{1, \delta_\mu\}$,
		\begin{align*}
			\int_{4r^2}^{\rho(x_0)^2} \Bigg| \int_{\mathbb{R}^d} (t \partial_t\mathcal{W}_t(y,x) &- t \partial_t\mathcal{W}_t(z,x)) dx \Bigg|^2 \frac{dt}{t} 
			= \int_{4r^2}^{\rho(x_0)^2} t \left| \int_{\mathbb{R}^d} (\mathcal{W}_t(y,x) - \mathcal{W}_t(z,x)) d\mu(x) \right|^2 dt \\
			&\lesssim |y-z|^{2\beta} \int_{4r^2}^{\rho(x_0)^2} t^{1-\beta} \left| \int_{\mathbb{R}^d} t^{-d/2} \exp\left(- c\frac{|y-x|^2}{t}\right) d\mu(x) \right|^2 dt \\
			&\lesssim r^{2\beta} \int_{0}^{\rho(x_0)^2} t^{-\beta-1} \left(\frac{\sqrt{t}}{\rho(y)}\right)^{2\delta_\mu} dt \sim \left(\frac{r}{\rho(x_0)}\right)^{2\beta}.
		\end{align*}
		
		Finally, to analyze the integral when $t > \rho(x_0)^2$, we see that $|y-z| < 2r \leq \rho(x_0) < \sqrt{t}$. Then, by Lemma~\ref{lem: Q_t estimates}\ref{itm: t diff partial W_t}
		\begin{align*}
			\int_{\rho(x_0)^2}^{\infty} \Bigg| \int_{\mathbb{R}^d} (t \partial_t\mathcal{W}_t(y,x)& - t \partial_t\mathcal{W}_t(z,x)) dx \Bigg|^2 \frac{dt}{t} \\
			&\lesssim \int_{\rho(x_0)^2}^{\infty} \left(\frac{|y-z|}{\sqrt{t}}\right)^{2\beta} \left( \int_{\mathbb{R}^d} t^{-d/2} \exp\left(-c\frac{|y-x|^2}{t}\right) dx \right)^2 \frac{dt}{t} \\
			&\lesssim r^{2\beta} \int_{\rho(x_0)^2}^{\infty} t^{-\beta-1} dt \sim \left(\frac{r}{\rho(x_0)}\right)^{2\beta},
		\end{align*}
		for every $0 < \beta < \min\{1, \delta_\mu\}$, which concludes the proof of \eqref{eq: T1 cond LPg heat}.
	\end{proof}

\subsection{Littlewood-Paley function for the Poisson semigroup}

The Littlewood-Paley $g$-function associated with the Poisson semigroup is defined  for $f\in L^2(\mathbb{R}^d)$ as 

\begin{equation*} 
	g_{\mathcal{P}}(f)(x)= \left( \int_{0}^{\infty}|t \partial_t\mathcal{P}_tf(x)|^2\frac{dt}{t}\right)^{1/2}=\|t \partial_t \mathcal{P}tf(x)\|_{\mathbb F}, \quad x\in \mathbb{R}^d,
\end{equation*}
where $\mathbb{F}=L^2\left((0, \infty), \frac{dt}{t}\right)$, and $\mathcal{P}_t$ is the classical Poisson semigroup  $\mathcal{P}^{1/2}_t$.

By the  subordination representation of the Poisson kernel for $\sigma = \tfrac{1}{2}$ given in \eqref{eq: Poisson sigma kernel}, the estimates for weighted  Lebesgue and BMO spaces are obtained in the same manner as those for $g_\mathcal{W}$. Specifically, we derive the same size and smoothness estimates corresponding to \eqref{eq: size pointwise fract} and \eqref{eq: smoothness pointwise fract} considering the norm $\|\cdot\|_\mathbb{F}$, with parameters $c=c_0$ and $m=\frac{1}{k_0+1}$. This implies that $g_\mathcal{P}$ is bounded on $L^p(w)$ for $w\in H^{\rho, m^*}_{p,c^*}$, where $m^*=\frac{1}{(k_0+1)^2}$ and $c^*<c_0(2^{3k_0+7}C_0^{k_0+3})^{-m^*}$
 (see \cite[Theorem~4.1 and Proposition~4.2]{DLT25}).
The proofs of the following results are omitted since they follow the same lines as the ones for $g_\mathcal{W}$.

 \begin{prop}\label{prop: class LPg Poisson}Let $x_0, x, y \in\mathbb{R}^d$ and $t>0$.
 	\begin{enumerate}
 		\item \label{itm: size LPg Poisson} There exists a constant $C$ such that
 		\[\|t \partial_t\mathcal{P}_t(x,y)\|_\mathbb{F} \leq C \frac{1}{|x-y|^{d}}\exp\left(-c_0\left(1+\frac{|x-y|}{\rho(x)}\right)^{\frac{1}{k_0+1}}\right),\]where $c_0$ is the constant from Lemma~\ref{lem: heat semigroup estimates}\ref{itm: size W_t}.
 		\item \label{itm: smoothness LPg Poisson} For every $0<\delta<\min\{1,\delta_\mu\}$, there exists a constant $C$ such that
 		\[\|t \partial_t\mathcal{P}_t(x,y) - t \partial_t\mathcal{P}t(x_0,y)\|_\mathbb{F} \leq C \frac{|x-x_0|^\delta}{|x-y|^{d+\delta}},\]whenever $|x-y|>2|x-x_0|$.
 			\end{enumerate}
 	\end{prop}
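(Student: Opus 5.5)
The plan is to reduce everything to the heat-kernel estimates already used for $g_{\mathcal W}$ via the subordination formula \eqref{eq: Poisson sigma kernel} with $\sigma=\tfrac12$. Writing $\mathcal P_t(x,y)=\frac{1}{\Gamma(1/2)}\int_0^\infty e^{-s}\mathcal W_{t^2/(4s)}(x,y)\,s^{-1/2}ds$, I differentiate under the integral. Since $t\partial_t \mathcal W_{t^2/(4s)}(x,y)=2\,(u\partial_u\mathcal W_u)(x,y)|_{u=t^2/(4s)}$, I get
\[
t\partial_t\mathcal P_t(x,y)=\frac{2}{\sqrt\pi}\int_0^\infty e^{-s}\,(u\partial_u\mathcal W_u)(x,y)\big|_{u=t^2/(4s)}\,\frac{ds}{\sqrt s}.
\]
Minkowski's integral inequality in $\mathbb F$ then gives
\[
\|t\partial_t\mathcal P_t(x,y)\|_{\mathbb F}\le \frac{2}{\sqrt\pi}\int_0^\infty e^{-s}s^{-1/2}\,\big\|(u\partial_u\mathcal W_u)(x,y)|_{u=t^2/(4s)}\big\|_{\mathbb F}\,ds .
\]
The key observation is that for fixed $s$, the map $t\mapsto t^2/(4s)$ sends $dt/t$ to $\tfrac12\,du/u$. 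Hence the inner $\mathbb F$-norm equals $2^{-1/2}\|u\partial_u\mathcal W_u(x,y)\|_{\mathbb F}$, which is independent of $s$.

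For item \ref{itm: size LPg Poisson}, I bound this inner norm by Proposition~\ref{prop: class LPg heat}\ref{itm: size LPg heat}. Since $\int_0^\infty e^{-s}s^{-1/2}ds=\Gamma(1/2)$ is finite, the size estimate follows with the same constant $c_0$ and exponent $\frac1{k_0+1}$.

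For item \ref{itm: smoothness LPg Poisson}, I apply the same argument to the difference $t\partial_t\mathcal P_t(x,y)-t\partial_t\mathcal P_t(x_0,y)$. After the change of variables, the inner norm is $2^{-1/2}\|u\partial_u\mathcal W_u(x,y)-u\partial_u\mathcal W_u(x_0,y)\|_{\mathbb F}$. Proposition~\ref{prop: class LPg heat}\ref{itm: smoothness LPg heat} bounds this by $C|x-x_0|^\delta|x-y|^{-d-\delta}$ whenever $|x-y|>2|x-x_0|$ and $0<\delta<\min\{1,\delta_\mu\}$. Integrating against $e^{-s}s^{-1/2}ds$ again yields a finite constant.

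The only delicate point is justifying the differentiation under the integral sign and the use of Minkowski's inequality for $\mathbb F$-valued integrals. The pointwise bounds of Lemma~\ref{lem: Q_t estimates}\ref{item: t deriv parcial k_t} give, for $x\ne y$, a majorant of the form $u^{-d/2}e^{-|x-y|^2/(4u)}\lesssim |x-y|^{-d}$ uniformly in $u$. Multiplied by $e^{-s}s^{-1/2}$, this is integrable in $s$ locally uniformly in $t$, which justifies dominated convergence. Minkowski's inequality holds for $\mathbb F=L^2((0,\infty),dt/t)$ because it is a Banach space and all integrands are jointly measurable.
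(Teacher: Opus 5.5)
Your proposal is correct and follows the route the paper indicates. The paper omits this proof, saying only that it follows from the subordination formula \eqref{eq: Poisson sigma kernel} with $\sigma=\tfrac12$ ``along the same lines'' as for $g_{\mathcal W}$. Your argument supplies exactly those details, in parallel with the paper's proof of Proposition~\ref{prop: class Poisson maximal}: Minkowski's inequality in $\mathbb F$ plus the dilation invariance of $dt/t$ gives $\|t\partial_t\mathcal P_t(x,y)-t\partial_t\mathcal P_t(x_0,y)\|_{\mathbb F}\le\sqrt2\,\|t\partial_t\mathcal W_t(x,y)-t\partial_t\mathcal W_t(x_0,y)\|_{\mathbb F}$, and likewise for the size bound, after which Proposition~\ref{prop: class LPg heat} applies directly.
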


\begin{thm}Let $m=\frac{1}{k_0+1}$, $c_0$ be the constant from Lemma~\ref{lem: heat semigroup estimates}\ref{itm: size W_t}, and $\delta = \min\{1, \delta_\mu\}$. Then, for every $0 \leq \alpha < \delta$, the operator $g_\mathcal{P}$ is bounded on $\BMO_\rho^\alpha(w)$ provided that $w \in E_{\infty,c_1,c_2}^{\rho,m} \cap D_{\kappa,c_3}^{\rho,m}$ with $1 \leq \kappa < \frac{\delta - \alpha}{d}+1$ and $c_1, c_2, c_3 \geq 0$ such that ${(c_2+c_3)} < c_0 \left(1 - \frac{d(\kappa-1)+\alpha}{\delta}\right) (4C_0)^{-m}$.\end{thm}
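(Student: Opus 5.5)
The plan is to apply Corollary~\ref{cor: T1 criterion infty} in its vector-valued form (Remark~\ref{obs: vector valued}) with the Banach space $\mathbb{F}=L^2((0,\infty),\frac{dt}{t})$. Proposition~\ref{prop: class LPg Poisson} shows that the kernel $t\partial_t\mathcal{P}_t(x,y)$ satisfies \eqref{eq: size pointwise fract} and \eqref{eq: smoothness pointwise fract} in $\mathbb{F}$, with $\nu=0$, $c=c_0$ and $m=\frac{1}{k_0+1}$. The weak type $(1,1)$ comes from the vector-valued Calder\'on--Zygmund theory, since $\{t\partial_t\mathcal{P}_t\}$ is bounded from $L^2$ into $L^2_\mathbb{F}$ (spectral theorem). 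By Remark~\ref{obs: reduce to beta=0}, it then suffices to prove that for every $B=B(x_0,r)$ with $0<r\le\frac12\rho(x_0)$ and every $0<\beta<\min\{1,\delta_\mu\}$,
\[
\|t\partial_t\mathcal{P}_t1(y)-t\partial_t\mathcal{P}_t1(z)\|_{\mathbb F}\le C\left(\frac{r}{\rho(x_0)}\right)^{\beta},\qquad y,z\in B.
\]
Averaging over $y,z\in B$ and using Minkowski's inequality then yields \eqref{eq: T1 power condition} with $\alpha+d(\kappa-1)$ replaced by $\beta$.

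To get this estimate I will transfer the bounds already proved for $g_\mathcal{W}$ through the subordination formula \eqref{eq: Poisson sigma kernel} with $\sigma=\frac12$. Write $\mathcal{P}_t=\frac{1}{\sqrt\pi}\int_0^\infty e^{-u}\mathcal{W}_{t^2/(4u)}\,u^{-1/2}\,du$. Differentiating gives $t\partial_t\mathcal{P}_t=\frac{2}{\sqrt\pi}\int_0^\infty e^{-u}\,(s\partial_s\mathcal{W}_s)|_{s=t^2/(4u)}\,u^{-1/2}\,du$. Put $F(s):=s\partial_s\mathcal{W}_s1(y)-s\partial_s\mathcal{W}_s1(z)$. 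Minkowski's integral inequality in $\mathbb F$ gives
\[
\|t\partial_t\mathcal P_t1(y)-t\partial_t\mathcal P_t1(z)\|_{\mathbb F}\lesssim\int_0^\infty e^{-u}u^{-1/2}\,\|F(t^2/(4u))\|_{L^2(dt/t)}\,du .
\]
The change of variables $s=t^2/(4u)$ satisfies $\frac{dt}{t}=\frac{1}{2}\frac{ds}{s}$. Hence $\|F(t^2/(4u))\|_{L^2(dt/t)}=2^{-1/2}\|F\|_{\mathbb F}$, independently of $u$.

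The inner norm $\|F\|_{\mathbb F}$ is exactly the quantity bounded in the proof of the theorem for $g_\mathcal{W}$: the three regions $t\le4r^2$, $4r^2<t\le\rho(x_0)^2$ and $t>\rho(x_0)^2$ give $\|F\|_{\mathbb F}\lesssim(r/\rho(x_0))^\beta$. Since $\int_0^\infty e^{-u}u^{-1/2}\,du=\Gamma(\frac12)<\infty$, the displayed estimate follows.

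The main obstacle is justifying the interchange of $t\partial_t$, the $u$-integral and the $x$-integral defining $T1$, because $1\notin L^2$. I would handle it via the definition \eqref{eq: def Tf}: split $1=\chi_{B(x_0,2R)}+\chi_{B(x_0,2R)^c}$ and note that the difference $F$ only involves absolutely convergent integrals, by the kernel bounds of Lemma~\ref{lem: Q_t estimates}. Those bounds are uniform in $s$, which makes the Fubini arguments legitimate.
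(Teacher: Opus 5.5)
Your proposal is correct and follows the paper's route. The paper omits this proof, saying it goes through the subordination formula with $\sigma=\frac12$ along the same lines as $g_\mathcal{W}$, and your observation that $t\mapsto t^2/(4u)$ preserves $\frac{dt}{t}$ up to the factor $\frac12$ is exactly what makes the $g_\mathcal{W}$ estimate transfer immediately; this mirrors the paper's argument for $\mathcal{P}^{\sigma,*}$, where the $L^\infty$ norm is trivially dilation invariant.

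One detail is left implicit, both in your proposal and in the paper. Proposition~\ref{prop: class LPg Poisson} gives smoothness only for exponents strictly below $\min\{1,\delta_\mu\}$. So you apply Corollary~\ref{cor: T1 criterion infty} with some $\delta'<\delta$, chosen close enough to $\delta$ that the strict conditions on $\alpha$, $\kappa$ and $c_2+c_3$ still hold.
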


\subsection{Fractional integral operators}

Given $\gamma>0$, the negative powers of $\mathcal{L}$  admit the following integral representation  for $f\in L^2(\mathbb{R}^d)$ and $x\in\mathbb{R}^d$:\[\mathcal{L}_\mu^{-\gamma} f(x) = \frac{1}{\Gamma(\gamma)} \int_0^\infty e^{-t\mathcal{L}_\mu}f(x) \frac{dt}{t^{1-\gamma}}= \int_{\mathbb{R}^d} {\mathcal{J}_\gamma}(x,y) f(y)dy,\] 
where
\begin{equation}\label{eq: Kgamma kernel}{\mathcal{J}_\gamma}(x,y) = \frac{1}{\Gamma(\gamma)} \int_0^\infty \mathcal{W}_t(x,y) \frac{dt}{t^{1-\gamma}}, \qquad x,y\in\mathbb{R}^d.
\end{equation}

The case of negative powers for the operator $\mathcal{L}_V$ was first analyzed in \cite{BHS11}, where weighted Lebesgue estimates for $A_p^{\rho}$ were obtained, alongside the corresponding weak-type $(1, \frac{d}{d-2\gamma})$ results for $w\in A_1^\rho$. Subsequently, the authors in \cite{BCH13Extrapol} established bounds from weighted Lebesgue spaces into $\BMO_\rho(w)$ spaces for $w \in A_p^\rho$. On the other hand, \cite{MSTZ14} obtained estimates for $\mathcal{L}_V^{-\gamma}$ between unweighted $\BMO_\rho^\alpha(\mathbb R^d)$ spaces using a $T1$-type theorem, while \cite{BHS08} provided $\BMO_\rho^\alpha(w)$ estimates using techniques adapted to the operator.

The boundedness for $\mathcal{L}_\mu^{-\gamma}$ in both $L^p(w)$ and $\BMO_\rho^\alpha(w)$ spaces, where $w$ belongs to the $H_{p,c}^{\rho,m}$ weight class is given below.

\begin{prop}\label{prop: class negative powers}Let $0<\gamma<d/2$. Then, $\mathcal{L}_\mu^{-\gamma}$ is an exponential SCZO operator of $(2\gamma,\infty,\delta)$ type for all $0<\delta<\min\{1,\delta_\mu\}$, with parameters $c=c_0$ (from Lemma~\ref{lem: heat semigroup estimates}\ref{itm: size W_t}) and $m=\frac{1}{k_0+1}$.
\end{prop}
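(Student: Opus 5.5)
To prove Proposition~\ref{prop: class negative powers} we have to check three things for $\mathcal{L}_\mu^{-\gamma}$ with $\nu=2\gamma$: the weak type $(1,\frac{d}{d-2\gamma})$, the size estimate \eqref{eq: size pointwise fract}, and the smoothness estimate \eqref{eq: smoothness pointwise fract}. All three will be read off from the subordination formula \eqref{eq: Kgamma kernel} and the heat kernel bounds of Lemma~\ref{lem: heat semigroup estimates}.

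\emph{Weak type.} By Lemma~\ref{lem: heat semigroup estimates}\ref{itm: size W_t} we have $0\le \mathcal{W}_t(x,y)\le C\,t^{-d/2}e^{-|x-y|^2/(4t)}$. Inserting this into \eqref{eq: Kgamma kernel} gives $0\le \mathcal{J}_\gamma(x,y)\le C|x-y|^{2\gamma-d}$. Hence $|\mathcal{L}_\mu^{-\gamma}f|\le C\, I_{2\gamma}(|f|)$, where $I_{2\gamma}$ is the classical Riesz potential. The Hardy--Littlewood--Sobolev theorem then yields the weak type $(1,\frac{d}{d-2\gamma})$; this uses $0<2\gamma<d$.

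\emph{Size.} Put $x\neq y$ and $R=|x-y|$, and split the $t$-integral at $t=R^2$.
\begin{itemize}
\item For $t\le R^2$ we have $\max\{R,\sqrt t\}=R$, so the exponential factor in Lemma~\ref{lem: heat semigroup estimates}\ref{itm: size W_t} is exactly $\exp(-c_0(1+R/\rho(x))^{m})$. It pulls out of the integral, and $\int_0^{R^2}t^{-d/2}e^{-R^2/(4t)}t^{\gamma-1}\,dt\lesssim R^{2\gamma-d}$.
\item For $t>R^2$ the factor is $\exp(-c_0(1+\sqrt t/\rho(x))^m)$, which is at most $\exp(-c_0(1+R/\rho(x))^m)$ because it decreases in $t$. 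We bound it by that value and use $\int_{R^2}^\infty t^{\gamma-1-d/2}\,dt\lesssim R^{2\gamma-d}$, which converges since $\gamma<d/2$.
\end{itemize}
Together these give \eqref{eq: size pointwise fract} with $c=c_0$ and $m=\frac1{k_0+1}$.

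\emph{Smoothness.} Let $|x-y|>2|x-x_0|$ and set $h=x_0-x$, so that $|x_0-y|\sim|x-y|$. Split the $t$-integral at $t=|h|^2$.
\begin{itemize}
\item For $t\ge|h|^2$, Lemma~\ref{lem: heat semigroup estimates}\ref{itm: smoothnes W_t}, with the factor $(1+\cdots)^{-N}$ dropped, gives the integrand bound $(|h|/\sqrt t)^{\delta}\,t^{-d/2}e^{-c|x-y|^2/t}\,t^{\gamma-1}$ for any $\delta<\min\{1,\delta_\mu\}$. Integrating over all $t>0$ yields $|h|^\delta|x-y|^{2\gamma-d-\delta}$. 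The $t\to\infty$ end converges because $d+\delta-2\gamma>0$, and the exponential controls $t\to0$.
\item For $t<|h|^2$ we do not use cancellation. We bound each of $\mathcal{W}_t(x,y)$ and $\mathcal{W}_t(x_0,y)$ by $t^{-d/2}e^{-c|x-y|^2/t}$, using $|x_0-y|\ge|x-y|/2$. We then apply \eqref{eq: exp polynom} with $N=\frac{d+\delta}{2}$, which gives the integrand bound $|x-y|^{-d-\delta}\,t^{\delta/2+\gamma-1}$. Integrating up to $|h|^2$ gives $|h|^{\delta+2\gamma}|x-y|^{-d-\delta}\le |h|^\delta|x-y|^{2\gamma-d-\delta}$, since $|h|<|x-y|$.
\end{itemize}
This is \eqref{eq: smoothness pointwise fract} with $\nu=2\gamma$.

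The main point requiring care is the size estimate for $t>|x-y|^2$. The exponential decay there is only available in the variable $\sqrt t$, and we need the monotonicity in $t$ to recover decay in $|x-y|/\rho(x)$ without losing the factor $|x-y|^{2\gamma-d}$. This works because the remaining $t$-integral converges precisely when $\gamma<d/2$. Apart from this, the argument only requires checking that the exponents of $t$ are integrable at each endpoint.
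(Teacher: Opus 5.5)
Your proposal is correct and follows essentially the same route as the paper: the size bound and weak type come from Lemma~\ref{lem: heat semigroup estimates}\ref{itm: size W_t} and Hardy--Littlewood--Sobolev, and the smoothness comes from splitting the $t$-integral at $|x-x_0|^2$. There you use the size bound plus \eqref{eq: exp polynom} for small $t$, and Lemma~\ref{lem: heat semigroup estimates}\ref{itm: smoothnes W_t} for large $t$. The differences are only cosmetic: the paper does not split the size integral, since $\max\{|x-y|,\sqrt t\}\ge |x-y|$ for every $t$, and for small $t$ it keeps half of the Gaussian and integrates over all $t$, rather than converting it fully to a power and integrating up to $|x-x_0|^2$.
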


\begin{proof}

	 From Lemma~\ref{lem: heat semigroup estimates}\ref{itm: size W_t} it follows that
	 	\begin{align}\label{eq: size negative powers}
	 	|{\mathcal{J}_\gamma}(x,y)|
	 	&\leq \frac{1}{\Gamma(\gamma)} \exp\left(-c_0\left(1+\frac{|x-y|}{\rho(x)}\right)^{\frac{1}{k_0+1}}\right) \int_0^\infty t^{-\frac d2+\gamma-1} \exp\left(-c\frac{|x-y|^2}{t}\right) dt \nonumber \\ 
	 	& \lesssim \frac{1}{|x-y|^{d-2\gamma}} \exp\left(-c_0\left(1+\frac{|x-y|}{\rho(x)}\right)^{\frac{1}{k_0+1}}\right),
	 \end{align}
	 and  \eqref{eq: size pointwise fract} is verified. 
	
	For the smoothness condition  \eqref{eq: smoothness pointwise fract}, let $|x-y|>2|x-x_0|$. Taking into account  
	\begin{align*}
		|{\mathcal{J}_\gamma}(x, y)-{\mathcal{J}_\gamma}(x_0, y)|
		&= \frac{1}{\Gamma(\gamma)} \int_0^\infty |\mathcal{W}_t(x, y)-\mathcal{W}_t(x_0, y)|  \frac{dt}{t^{1 - \gamma}},
	\end{align*}
	we consider two cases.
	
	If $t\leq|x-x_0|^2$,  we apply Lemma~\ref{lem: heat semigroup estimates}\ref{itm: size W_t} and estimate \eqref{eq: exp polynom} with $N=\delta/2$ , $0<~\delta<~\min\{1,\delta_\mu\}$,  
\begin{align*}
	|\mathcal{W}_t(x,y)|
	&
	\lesssim t^{-d/2} \exp\left(-c\frac{|x-y|^2}{t}\right) \left(\frac{|x-x_0|}{\sqrt{t}}\right)^{\delta}\\
	&
	\lesssim t^{-d/2}\exp\left(-c\frac{|x-y|^2}{2t}\right)  \left(\frac{|x-y|}{\sqrt{t}}\right)^{-\delta}\left(\frac{|x-x_0|}{\sqrt{t}}\right)^{\delta}\\ 
	&
	\lesssim 
	  t^{-d/2} \left(\frac{|x-x_0|}{|x-y|}\right)^{\delta} \exp\left(-c\frac{|x-y|^2}{2t}\right), 
\end{align*}
and  we obtain the same bound for $|\mathcal{W}_t(x_0,y)|$ since $|x_0-y|>\frac{1}{2}|x-y|$.
	
	For $t>|x-x_0|^2$, by Lemma~\ref{lem: heat semigroup estimates}\ref{itm: smoothnes W_t} and \eqref{eq: exp polynom} for $N=\delta/2$, we have for 
 $0<\delta<\min\{1,\delta_\mu\}$
\begin{align*}
	| \mathcal{W}_t(x,y)- \mathcal{W}_t(x_0,y)|
	&\lesssim t^{-d/2} \left(\frac{|x-x_0|}{\sqrt{t}}\right)^{\delta} \exp\left(-c\frac{|x-y|^2}{t}\right)\\
	&\lesssim \left(\frac{|x-x_0|}{|x-y|}\right)^{\delta} t^{-d/2}\exp\left(-c\frac{|x-y|^2}{t}\right).
\end{align*} 

Then, integrating in $t$ and making the change of variable $u=\frac{|x-y|^2}{2t}$,
\begin{align*}
	\int_{0}^\infty |\mathcal{W}_t(x, y)-\mathcal{W}_t(x_0, y)|  \frac{dt}{t^{1 - \gamma}}
	&
	\lesssim\frac{1}{|x - y|^{d - 2\gamma}} \left(\frac{|x-x_0|}{|x-y|}\right)^{\delta}, \quad 0<\delta<\min\{1,\delta_\mu\}.
\end{align*}

	Finally, we prove that $\mathcal{L}_\mu^{-\gamma}$ is bounded from $L^1(\mathbb{R}^d)$ into $L^{\frac{d}{d-2\gamma},\infty}(\mathbb{R}^d)$. This follows from \eqref{eq: size negative powers} which implies $|{\mathcal{J}_\gamma}(x,y)| \leq C|x-y|^{-(d-2\gamma)}$. Thus, $|\mathcal{L}_\mu^{-\gamma}f(x)| \le C I_{2\gamma}(|f|)(x)$, where $I_{2\gamma}$ is the classical fractional integral. By the Hardy--Littlewood--Sobolev Theorem, this operator is bounded from $L^1(\mathbb R^d)$ to $L^{\frac{d}{d-2\gamma},\infty}(\mathbb R^d)$, completing the proof.
\end{proof}

As a consequence of Proposition~\ref{prop: class negative powers} we have that, for $0<\gamma<d/2$, $\mathcal{L}_\mu^{-\gamma}$ is bounded from $L^p(w^p)$ into $L^q(w^q)$ for all $1<p<\frac{d}{2\gamma}$ and $\frac{1}{q}=\frac{1}{p}-\frac{2\gamma}{d}$, and any weight $w$ such that $w^{-p'}\in H_{1+\frac{p'}{q},c}^{\rho,m^*}$ with $m^*=\frac{1}{(k_0+1)^2}$ and $c<c_0\frac{d}{d-2\gamma}(2^{3k_0+7}C_0^{k_0+3})^{-m^*}$.  The proof of this result is omitted, as this follows the same lines as the singular case given in  \cite[Theorem~4.1, Proposition~4.2]{DLT25}.

\begin{thm}\label{thm: BMO negative powers}
	Let $0<\gamma<d/2$, $\delta = \min\{1,\delta_\mu\}$, $m=\frac{1}{k_0+1}$, and $c_0$ be the constant from Lemma~\ref{lem: heat semigroup estimates}\ref{itm: size W_t}. Then, for all $0 \leq \alpha+2\gamma< \delta$, the operator $\mathcal{L}_\mu^{-\gamma}$ is bounded from $\BMO_\rho^\alpha(w)$ into $\BMO_\rho^{\alpha+2\gamma}(w)$ provided that $w\in E_{\infty,c_1,c_2}^{\rho,m}\cap D_{\kappa,c_3}^{\rho,m}$ with $1 \leq \kappa < \frac{\delta - 2\gamma-\alpha}{d}+1$ and $c_1,c_2,c_3\geq 0$ such that ${(c_2+c_3)}<c_0\left(1-\frac{d(\kappa-1)+\alpha+2\gamma}{\delta}\right)(4C_0)^{-m}$.
\end{thm}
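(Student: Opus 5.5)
By Proposition~\ref{prop: class negative powers}, $\mathcal{L}_\mu^{-\gamma}$ is an exponential SCZO of $(2\gamma,\infty,\delta')$ type for every $0<\delta'<\min\{1,\delta_\mu\}$, with parameters $c=c_0$ and $m=\frac{1}{k_0+1}$. The plan is to apply Corollary~\ref{cor: T1 criterion infty} with $\nu=2\gamma$. We take $\delta'<\delta$ close enough to $\delta$ that the strict inequalities $\alpha+2\gamma<\delta'$, $\kappa<\frac{\delta'-2\gamma-\alpha}{d}+1$ and the constraint on $c_2+c_3$ still hold with $\delta'$ in place of $\delta$; this is possible by openness. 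By Remark~\ref{obs: reduce to beta=0}, it then suffices to verify the power condition \eqref{eq: T1 power condition} with some exponent $\beta>0$. Concretely, for $B=B(x_0,r)$ with $r\le\frac12\rho(x_0)$ and $y,z\in B$, we aim to prove
\[
|\mathcal{L}_\mu^{-\gamma}1(y)-\mathcal{L}_\mu^{-\gamma}1(z)|\lesssim r^{2\gamma}\left(\frac{r}{\rho(x_0)}\right)^{\beta},\qquad 0<\beta<\min\{1,\delta_\mu\}-2\gamma,
\]
(or some positive $\beta$). Averaging over $y,z\in B$ then gives $\frac{1}{|B|^{1+2\gamma/d}}\int_B|T1-(T1)_B|\lesssim (r/\rho(x_0))^\beta$, which is the required condition.

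We write
\[
\mathcal{L}_\mu^{-\gamma}1(y)-\mathcal{L}_\mu^{-\gamma}1(z)=\frac{1}{\Gamma(\gamma)}\int_0^\infty\bigl(\mathcal W_t1(y)-\mathcal W_t1(z)\bigr)t^{\gamma-1}\,dt
\]
and reuse the three-region estimates for $|\mathcal W_t1(y)-\mathcal W_t1(z)|$ from the proof of Theorem~\ref{thm: BMO heat maximal}, keeping track of the dependence on $t$ rather than taking a supremum.

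For $\sqrt t<2r$, Proposition~\ref{prop: comparison heat kernels}\ref{itm: diff heat kernels} together with $W_t1\equiv1$ gives the bound $(\sqrt t/\rho(x_0))^{\delta_\mu}$. Integrating against $t^{\gamma-1}$ over $(0,4r^2)$ yields $r^{2\gamma}(r/\rho(x_0))^{\delta_\mu}$.

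For $\sqrt t>\rho(x_0)$, Lemma~\ref{lem: heat semigroup estimates}\ref{itm: smoothnes W_t} gives $(r/\sqrt t)^{\beta'}$. We cannot use this bound directly, because $\int_{\rho^2}^\infty t^{\gamma-1-\beta'/2}\,dt$ converges only if $\beta'>2\gamma$. Since $2\gamma<\delta\le\min\{1,\delta_\mu\}$, we may choose $\beta'\in(2\gamma,\min\{1,\delta_\mu\})$. This gives $r^{\beta'}\rho(x_0)^{2\gamma-\beta'}=r^{2\gamma}(r/\rho(x_0))^{\beta'-2\gamma}$.

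For $4r^2\le t\le\rho(x_0)^2$ we have two options. The first is \eqref{eq: integral W_t dmu }-style reasoning: $\partial_t\mathcal W_t1=-\int\mathcal W_t(\cdot,x)\,d\mu(x)$, and then Lemma~\ref{lem: heat semigroup estimates}\ref{itm: smoothnes W_t} combined with \eqref{eq: integral exp heat bound dmu} bounds $|\partial_t(\mathcal W_t1(y)-\mathcal W_t1(z))|\lesssim (r/\sqrt t)^{\beta'}t^{-1}(\sqrt t/\rho)^{\delta_\mu}$. The second option, which is simpler, is the splitting \eqref{eq: 3rd estimate diff W_t1}. Its estimates \eqref{eq: 1st integral diffs W_t}, \eqref{eq: 2nd integral diffs W_t} and \eqref{eq: 3rd integral diffs W_t} give a bound of $(r/\rho(x_0))^{\beta'}$ (using $\delta_\mu<d$; if that is not assumed, the first option replaces it). Integrating $t^{\gamma-1}$ up to $\rho(x_0)^2$ then produces $\rho^{2\gamma}(r/\rho)^{\beta'}=r^{2\gamma}(r/\rho)^{\beta'-2\gamma}$. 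Combining the three regions gives the claim with $\beta=\min\{\delta_\mu,\beta'-2\gamma\}>0$.

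The main obstacle is the middle range $4r^2\le t\le\rho(x_0)^2$, because the factor $t^{\gamma}$ loses $(\rho/r)^{2\gamma}$. The estimate in that region must therefore carry a power $\beta'>2\gamma$, which is exactly where the hypothesis $\alpha+2\gamma<\delta$ enters. Using the region-$1$ bound $(\sqrt t/\rho)^{\delta_\mu}$ for $y$ and $z$ separately when $t$ is near $r^2$, and interpolating with the smoothness bound, gives the needed control. The constraint $\kappa<\frac{\delta-2\gamma-\alpha}{d}+1$ and the condition on $c_2+c_3$ are then exactly the hypotheses of Corollary~\ref{cor: T1 criterion infty} with $\nu=2\gamma$.
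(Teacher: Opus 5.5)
Your proposal is correct and is essentially the paper's proof. The paper also reduces to the $T1$ condition via Corollary~\ref{cor: T1 criterion infty}, bounds $\int_0^\infty|\mathcal W_t1(y)-\mathcal W_t1(z)|\,t^{\gamma-1}\,dt$ with the uniform estimate \eqref{eq: diff heat kernel Banach} for $t\le\rho(x_0)^2$ and with $(r/\sqrt t)^{\tilde\beta}$, $2\gamma<\tilde\beta<\min\{1,\delta_\mu\}$, for $t>\rho(x_0)^2$, and then sets $\beta=\tilde\beta-2\gamma$, exactly as you do.

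Your extra care goes slightly beyond the written proof without changing the route. Choosing $\delta'<\min\{1,\delta_\mu\}$ by openness is left implicit in the paper. The use of $\delta_\mu<d$ in \eqref{eq: 2nd integral diffs W_t} is harmless, since without it that term is still $\lesssim (r/\rho(x_0))^{\min\{\delta_\mu,d\}}$.

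One small correction: what you need is the exponent $\alpha+d(\kappa-1)$, not merely some $\beta>0$. Your free choice of $\beta'$ close to $\min\{1,\delta_\mu\}$ does supply it.
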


\begin{proof} By Corollary~\ref{cor: T1 criterion infty} and Remark~\ref{obs: reduce to beta=0}, it suffices to show that there exists a constant $C$ such that
	\begin{equation}\label{eq: T1 cond negative powers}\frac{1}{|B|^{1+{2\gamma}/d}} \int_{B} \left| \mathcal{L}_\mu^{-\gamma}1(y) - \left(\mathcal{L}_\mu^{-\gamma}1\right)_B \right| dx \leq C \left( \frac{r}{\rho(x_0)} \right)^{{\beta}},
	\end{equation}
	for every ball $B=B(x_0,r)$ with $0<r\leq \frac{1}{2}\rho(x_0)$ and $0<{\beta}<\min\left\{1,\delta_\mu\right\}-{2\gamma}$. 
	
	Given $y,z \in B$, we note that 
	\begin{align*}
		|\mathcal{L}_\mu^{-\gamma}1(y) - \mathcal{L}_\mu^{-\gamma}1(z)|
		&\leq \int_0^\infty \left|\mathcal{W}_t1(y)-\mathcal{W}_t1(z)\right|t^{\gamma-1} dt.
	\end{align*}
	
	Then, for $0<t\leq \rho(x_0)^2$, we use \eqref{eq: diff heat kernel Banach} to obtain
	\begin{equation*}
		\int_0^{ \rho(x_0)^2} \left|\mathcal{W}_t1(y)-\mathcal{W}_t1(z)\right|t^{\gamma-1} dt \lesssim \left(\frac{r}{\rho(x_0)}\right)^{\tilde{\beta}}\int_0^{\rho(x_0)^2} t^{\gamma-1} dt\sim  \left(\frac{r}{\rho(x_0)}\right)^{\tilde{\beta}} \rho(x_0)^{2\gamma},
	\end{equation*}
	provided that $0<\tilde{\beta}<\min\left\{1,\delta_\mu\right\}$. 
	
	When $t>\rho(x_0)^2$, we use \eqref{eq: 2nd estimate diff W_t1}, which yields
	\begin{align*}
		\int_{\rho(x_0)^2}^\infty \left|\mathcal{W}_t1(y)-\mathcal{W}_t1(z)\right|t^{\gamma-1} dt 
		&\lesssim \int_{\rho(x_0)^2}^\infty\left(\frac{r}{\sqrt{t}}\right)^{\tilde{\beta}}  t^{\gamma-1} dt
	\sim \left(\frac{r}{\rho(x_0)}\right)^{\tilde{\beta}}\rho(x_0)^{2\gamma},
	\end{align*}
	for all ${2\gamma}<\tilde{\beta}<\min\left\{1,\delta_\mu\right\}$.
	
	Combining both estimates and setting ${\beta}=\tilde{\beta}-{2\gamma}$, it follows that \eqref{eq: T1 cond negative powers} holds for every \linebreak $0<{\beta}+2\gamma<\min\left\{1,\delta_\mu\right\}$.
	\end{proof}
	
	\subsection{Operators associated to a potential}
	In the particular case where  $d\mu(x)= V(x) \,dx$, with $V$ being a non-negative function in the classical reverse--H\"older class $RH_q$ for $q>\frac{d}{2}$, we can study the operators $T_\gamma= \mathcal{L}_V^{-\gamma} V^{\gamma}$ for $0<\gamma<d/2$. These are defined for $f \in L^2(\mathbb{R}^d)$ by
	\begin{align*}T_\gamma f(x) = \int_{\mathbb{R}^d} {\mathcal{J}_\gamma}(x,y)V^\gamma(y) f(y) dy,
	\end{align*}
	where $\mathcal{J}_\gamma(x,y)$ is the kernel of the operator $\mathcal{L}_V^{-\gamma}$ defined in \eqref{eq: Kgamma kernel}.
	
In the cases $\gamma=1/2$ and $\gamma=1$, $L^p(\mathbb{R}^d)$ boundedness was established in \cite{Shen95} for $1\leq p\leq 2q$ and $1\leq p\leq q$, respectively. Subsequently, \cite{GLP08} provided pointwise size and smoothness kernel estimates featuring polynomial decay associated with the critical radius function $\rho$.
	 From these estimates, \cite{BCH13Extrapol} proved the boundedness of the operator on $L^p(w)$ for weights in the $A_p^{\rho}$ class. The general case $0<\gamma<d/2$ was studied in \cite{BHQ19}, where the authors obtained  the boundedness of $T_\gamma$ on $L^p(w)$ for $(q/\gamma)'<p<\infty$ and $w\in A_{p/(q/\gamma)'}^\rho$. They provided the estimates \eqref{eq: smoothness Hormander fract} for the kernel $\mathcal{J}_\gamma V^\gamma$ with $s=q/\gamma>1$ and $\delta=\min\{1,2-d/q\}$.	 
	 Consequently, by proving the size condition \eqref{eq: size Hormander fract} for the kernel ${\mathcal{J}_\gamma} V^\gamma$ we obtain Proposition~\ref{prop: class op with potential}, which is stated below. Then, following the same lines as the singular case given in  \cite[Theorem~4.1, Proposition~4.2]{DLT25}  we obtain  that  $T_\gamma$ is bounded on $L^p(w)$ for all $(q/\gamma)'<p<\infty$ and $w \in H^{\rho,m^*}_{p/(q/\gamma)',c^*}$, with $m^*=\frac{1}{(k_0+1)^2}$ and $c^*<c_02^{-(m+1)}(2^{2k_0+6}C_0^{k_0+3})^{-{m^*}}$. This result extends the one given in \cite{BHQ19}.

	 	 \begin{prop}\label{prop: class op with potential} Let $V\in RH_q$, $q>\frac d2$, $V\not\equiv 0$ and $0<\gamma<\frac d2$. Then, $T_\gamma$ is an exponential SCZO of $(0,\frac{q}{\gamma},\delta)$ type, with $\delta=\min\left\{1,2-\frac dq\right\}$, constants $m=\frac{1}{k_0+1}$, and $c=c_02^{-(m+1)}$.
	 \end{prop}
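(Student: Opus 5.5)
We have to check that $T_\gamma$ satisfies the three requirements in Definition~\ref{def: s-delta} with $\nu=0$, $s=q/\gamma$, and $\delta=\min\{1,2-d/q\}$. The first requirement is the weak type $(s',s')$. For this we use the results recalled just above: \cite{BHQ19} shows that $T_\gamma$ is bounded on $L^p(\mathbb R^d)$ for every $(q/\gamma)'<p<\infty$. Since $q>d/2>\gamma$, we have $s>1$. At the endpoint $p=s'$ we should either use the unweighted endpoint estimate available in \cite{BHQ19}, or note that weak type $(p_0,p_0)$ for some $p_0$ near $s'$ is enough for the later arguments. Whichever route is taken, this point needs to be stated explicitly. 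The smoothness condition \eqref{eq: smoothness Hormander fract} for the kernel $\mathcal J_\gamma(x,y)V^\gamma(y)$, with the stated $s$ and $\delta$, is proved in \cite{BHQ19} and is quoted as such. What actually has to be proved is the size condition \eqref{eq: size Hormander fract} with exponential decay.

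For the size condition, fix $|x-x_0|<R/2$ and the annulus $A_R=\{R<|x_0-y|\le 2R\}$. On $A_R$ we have $|x-y|\sim R$; more precisely $R/2<|x-y|<5R/2$. The estimate \eqref{eq: size negative powers}, which comes from Lemma~\ref{lem: heat semigroup estimates}\ref{itm: size W_t}, gives
\[
|\mathcal J_\gamma(x,y)|\lesssim R^{2\gamma-d}\exp\Big(-c_0\big(1+\tfrac{R}{2\rho(x)}\big)^{m}\Big),
\]
where we used that $(1+t/2)\le 1+t$, so the decay at distance $R/2$ is available. Since $2^{-m}(1+t)^m\le (1+t/2)^m$, this exponential is bounded by $\exp\big(-c_02^{-m}(1+R/\rho(x))^m\big)$. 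Pulling this bound out of the $L^s$ average leaves
\[
\Big(\frac1{R^d}\int_{A_R}V^{q}\Big)^{\gamma/q}\le \Big(\frac1{R^d}\int_{B(x_0,2R)}V^q\Big)^{\gamma/q}\lesssim \Big(\frac1{R^d}\int_{B(x_0,2R)}V\Big)^{\gamma},
\]
where the last step is the reverse H\"older property of $V$.

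The main difficulty is controlling $R^{-d}\int_{B(x_0,2R)}V=R^{-2}\,\mu(B(x_0,2R))/R^{d-2}$. This quantity grows polynomially in $R/\rho$ when $R>\rho$, and the growth has to be absorbed by part of the exponential. We would write $B(x_0,2R)\subset B(x,3R)$. For $3R\le\rho(x)$, \eqref{eq: prop1 mu} and \eqref{eq: equiv 1} give $\mu(B(x,3R))/R^{d-2}\lesssim (R/\rho(x))^{\delta_V}\le C$. For $3R>\rho(x)$, we iterate the doubling \eqref{eq: prop2 mu} starting from the critical ball and obtain $\mu(B(x,3R))/R^{d-2}\lesssim (1+R/\rho(x))^{N}$ with $N=\log_2 D_\mu$. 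In both cases,
\[
\Big(\frac1{R^d}\int_{A_R}|\mathcal J_\gamma V^\gamma|^s\Big)^{1/s}\lesssim R^{2\gamma-d}R^{-2\gamma}\Big(1+\frac{R}{\rho(x)}\Big)^{N\gamma}\exp\Big(-c_02^{-m}\big(1+\tfrac{R}{\rho(x)}\big)^m\Big).
\]
The leftover polynomial is then absorbed by half of the exponential, using $(1+t)^{N\gamma}e^{-a(1+t)^m/2}\le C$. This leaves $R^{-d}\exp\big(-c_02^{-(m+1)}(1+R/\rho(x))^m\big)$, which is \eqref{eq: size Hormander fract} with $\nu=0$ and $c=c_02^{-(m+1)}$, as claimed.
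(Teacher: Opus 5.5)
Your argument is correct and is essentially the paper's proof. It uses the pointwise bound \eqref{eq: size negative powers} on the annulus (losing the factor $2^{-m}$ because $|x-y|>R/2$), the reverse H\"older inequality for $V$ to reduce to $\bigl(R^{-d}\int_{B(x,3R)}V\bigr)^{\gamma}$, a polynomial growth bound $(1+R/\rho(x))^{\gamma\log_2 D_V}$ for $R^{2\gamma}$ times this quantity, and absorption into half of the exponential, which yields $c=c_02^{-(m+1)}$. The only difference is that the paper imports that polynomial growth from \cite[Lemma~5.2]{DLT25} rather than iterating \eqref{eq: prop2 mu} as you do; like you, it takes the weak type and the smoothness condition from \cite{BHQ19} without further comment, so the endpoint weak-type point you flag is not resolved there either.
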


\begin{proof}
	As we saw before, it is enough to prove \eqref{eq: size Hormander fract}. For that, by \eqref{eq: size negative powers}, and using that $V\in RH_q$ it follows that
	 	\begin{align*}
	 		&\left(\frac{1}{R^d}\int_{R<|x_0-y|\leq 2R}|{\mathcal{J}_\gamma}(x,y)V^\gamma(y)|^{q/\gamma} dy \right)^{\gamma/q}\\ 
	 		& \lesssim \frac{1}{R^{d-2\gamma}} \exp\left(-c_02^{-m}\left(1+\frac{R}{\rho(x)}\right)^m\right)\left(\frac{1}{R^d}\int_{B(x,3R)} V(y) dy \right)^{\gamma}\\
	 		&\lesssim \frac{1}{R^{d-2\gamma}} 		\exp\left(-c_02^{-m}\left(1+\frac{R}{\rho(x)}\right)^m\right) R^{-2\gamma}\left(1+\frac{R}{\rho(x)}\right)^{\log_2(D_V)\gamma}\\
	 		& \lesssim \frac{1}{R^{d}} \exp\left(-c_02^{-m-1}\left(1+\frac{R}{\rho(x)}\right)^m\right),
	 	\end{align*}
	 where we also used \cite[Lemma~5.2]{DLT25} with  $d\mu(x)=V(x)dx$ and $D_V$ the doubling constant of $V(x) dx$. 
		 \end{proof}
	 
	 Additionally, in \cite{BHQ19} a $T1$-type condition was derived, enabling the boundedness of $T_\gamma$ on weighted $\BMO_\rho^\alpha(w)$ spaces for certain weights in $A_p^\rho$ . Specifically, the authors proved condition \eqref{eq: T1 power condition} with $0<\alpha+d(\kappa-1)<\tilde{\delta}$, where  $\tilde{\delta} = \min\{1, 2-d/q, \gamma (2 - d/q) \}$. 
		As a consequence of this fact, Proposition~\ref{prop: class op with potential} and Theorem~\ref{thm: T1 criterion} we obtain the following result, which broadens the classes of weights given in \cite{BHQ19} for $\BMO_\rho^\alpha(w)$.
		
	\begin{thm}Let $V\in RH_q$, $q>\frac d2$, $V\not\equiv 0$ and $0<\gamma<\frac d2$. For $0 \leq \alpha < \tilde{\delta}$, $m=\frac{1}{k_0+1}$ and $c=c_02^{-(m+1)}$,  $T_\gamma$ is bounded on $\BMO_\rho^\alpha(w)$ whenever $w \in E_{\frac{q}{\gamma},c_1,c_2}^{\rho,m} \cap D_{\kappa,c_3}^{\rho,m}$, with $1 \leq \kappa < \frac{\tilde{\delta} - \alpha}{d}+1$ and $(c_2+c_3)<c\left(1-\frac{d(\kappa-1)+\alpha}{\tilde{\delta}}\right)(4C_0)^{-m}$.
	\end{thm}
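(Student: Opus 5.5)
This result should follow directly from Theorem~\ref{thm: T1 criterion}, applied with $\nu=0$ and $s=q/\gamma$. The plan has three steps: check the hypotheses on $T_\gamma$, verify the $T1$ condition, and then compare the admissible range of parameters.

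First, Proposition~\ref{prop: class op with potential} shows that $T_\gamma$ is an exponential SCZO of $(0,\frac q\gamma,\delta)$ type. Here $\delta=\min\{1,2-\frac dq\}$, $m=\frac1{k_0+1}$ and $c=c_02^{-(m+1)}$. Since $\tilde\delta\le\delta$, Remark~\ref{obs: smoothness for smaller delta} lets us regard $T_\gamma$ as being of $(0,\frac q\gamma,\tilde\delta)$ type. We keep the same $c$ and $m$, because \eqref{eq: size Hormander fract} does not involve $\delta$. Also $s=q/\gamma>1=\frac{d}{d-0}$, as the theorem requires. The parameter restrictions in Theorem~\ref{thm: T1 criterion} with $\delta$ replaced by $\tilde\delta$ then read as follows. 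We need $0\le\alpha<\tilde\delta$ and $1\le\kappa<\frac{\tilde\delta-\alpha}{d}+1$. The weights must lie in $E^{\rho,m}_{q/\gamma,c_1,c_2}\cap D^{\rho,m}_{\kappa,c_3}$ with $(c_2+c_3)<c\bigl(1-\frac{d(\kappa-1)+\alpha}{\tilde\delta}\bigr)(4C_0)^{-m}$. These are exactly the hypotheses of the present statement.

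Second, we verify condition \ref{itm: T1 condition}. We invoke the result of \cite{BHQ19}: for every ball $B=B(x_0,r)$ with $r\le\frac12\rho(x_0)$,
\[
\frac1{|B|}\int_B|T_\gamma1-(T_\gamma1)_B|\le C\Bigl(\frac r{\rho(x_0)}\Bigr)^{\beta}
\]
for every $0<\beta<\tilde\delta$. The meaning of $T_\gamma1$ here must agree with \eqref{eq: def Tf}, which is the same definition used in \cite{BHQ19}. We then split into two cases. If $\alpha>0$ or $\kappa>1$, choose $\beta=\alpha+d(\kappa-1)$, which lies in $(0,\tilde\delta)$ by the hypothesis on $\kappa$; this gives \eqref{eq: T1 power condition}. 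If $\alpha=0$ and $\kappa=1$, take any small $\beta>0$. Since $\rho(x_0)/r\ge2$, we have $(r/\rho(x_0))^\beta\lesssim\log^{-1}(\rho(x_0)/r)$, so \eqref{eq: T1 log condition} holds, as observed in Remark~\ref{obs: reduce to beta=0}.

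Third, the implication \ref{itm: T1 condition}$\Rightarrow$\ref{itm: T1 weights} of Theorem~\ref{thm: T1 criterion} yields boundedness of $T_\gamma$ on $\BMO_\rho^\alpha(w)$ for all the weights described.

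The main point needing care is the smoothness exponent. The $T1$ estimate from \cite{BHQ19} holds only below $\tilde\delta$, whereas the kernel smoothness holds up to $\delta$. Replacing $\delta$ by $\tilde\delta$ throughout, via Remark~\ref{obs: smoothness for smaller delta}, keeps the conditions on $\kappa$ and on $c_2+c_3$ consistent with Theorem~\ref{thm: T1 criterion}.
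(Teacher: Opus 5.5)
Your proposal is correct and follows the paper's own argument. The paper obtains the statement by combining the $T1$ estimate of \cite{BHQ19} (valid for exponents $0<\alpha+d(\kappa-1)<\tilde\delta$), Proposition~\ref{prop: class op with potential}, and the implication \ref{itm: T1 condition}$\Rightarrow$\ref{itm: T1 weights} of Theorem~\ref{thm: T1 criterion}. Your explicit use of Remark~\ref{obs: smoothness for smaller delta} to replace $\delta$ by $\tilde\delta$, and of Remark~\ref{obs: reduce to beta=0} for the case $\alpha=0$, $\kappa=1$, simply spells out what the paper leaves implicit.
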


\subsection{The operator \texorpdfstring{$\mathcal{L}_\mu^{-\gamma}\nabla$}{Lμ-γ∇}}

Note that $\mathcal{L}_\mu^{-\gamma}\nabla$ can be considered to have fractional order $\nu=2\gamma - 1$, since $\mathcal{L}_\mu^{-\gamma}$ is of order $2\gamma$. 
 Thus, $\nu>0$ provided that $1/2 < \gamma \leq 1$. When $\gamma=1$, 
 \[\mathcal{L}_\mu^{-1}\nabla f(x)=\int_{\mathbb{R}^d} \Gamma_\mu(x,y)\nabla f(y)dy= \int_{\mathbb{R}^d} \mathcal{K}_{1}(x,y) f(y)dy,\]
 for $f\in L^2(\mathbb{R}^d)$ and ${\mathcal{K}_1}(x,y)=-\nabla_2\Gamma_\mu(x,y),$
 where $\nabla_2$ denotes the gradient with respect to the second variable.
 But, by the symmetry of $\Gamma_\mu$ (see \cite[Theorem~2.18]{Shen99}), we have $\nabla_2\Gamma_\mu(x,y)=\nabla_1\Gamma_\mu(y,x)$ and applying \cite[Theorem~4.2]{Bailey21} 
 we have 
  that  there exist positive constants $C$ and $\epsilon$ such that
 \begin{equation}\label{eq: size K1}|{\mathcal{K}_1}(x,y)|=|\nabla_1\Gamma_\mu(y,x)|  \leq \frac{C e^{-\epsilon d_{\mu}(y,x)}}{|y-x|^{d-2}} \left( \int_{B(y,\frac{|y-x|}{2})} \frac{d\mu(z)}{|z-y|^{d-1}} + \frac{1}{|y-x|} \right),
 \end{equation}for all $x,y \in \mathbb{R}^d$ with $x \neq y$. 
 Moreover, if $\delta_\mu>1$, then
 \begin{equation*}
 	|{\mathcal{K}_1}(x,y)| \leq \frac{C e^{- \epsilon d_{\mu}(y,x)}}{|y-x|^{d-1}}.
 \end{equation*}
 
  For $1/2 < \gamma < 1$, we use Balakrishnan's formula (see \cite[Eq.~(3.53), p.~286]{Kato66}) to express the operator as
  \[
  \mathcal{L}_\mu^{-\gamma} = \frac{\sin(\pi\gamma)}{\pi} \int_0^\infty \lambda^{-\gamma} \mathcal{L}_{\mu+\lambda}^{-1} \, d\lambda,
  \]
  and then 
  \begin{equation*}\mathcal{L}_\mu^{-\gamma}\nabla f(x)
   = \int_{\mathbb{R}^d} {\mathcal{K}_\gamma}(x,y) f(y) \, dy,
\end{equation*}
where
\begin{equation}\label{eq: L-gamma nabla kernel}
	{\mathcal{K}_\gamma}(x,y) = -\frac{\sin(\pi\gamma)}{\pi} \int_0^\infty \lambda^{-\gamma} \nabla_1 \Gamma_{\mu + \lambda}(y,x)d\lambda.
	\end{equation}

	Based on these estimates, we can establish the following classification for $\mathcal{L}_\mu^{-\gamma}\nabla$. 
	
	\begin{prop}\label{prop: class L-gamma nabla} Let $m_0=\frac{1}{k_0+1}$ and   $1/2 < \gamma \leq 1$.
		\begin{enumerate}
			\item \label{itm: class L-gamma nabla deltamu>1}
	If  $\delta_\mu > 1$, $\mathcal{L}_\mu^{-\gamma}\nabla$ is an exponential SCZO of $(2\gamma-1, \infty, \delta)$ type, with parameters $c = \frac{\epsilon}{2D_1}$ and $m_0$.
	\item \label{itm: class L-gamma nabla deltamu<1}
If $0<\delta_\mu<1$,  $\mathcal{L}_\mu^{-\gamma}\nabla$ is an exponential SCZO of $(2\gamma-1,s,\delta)$ type for   ${\frac{d}{d-(2\gamma-1)}}<s<\frac{2-\delta_\mu}{1-\delta_\mu}$ and $\delta \in (0,1)$, with parameters $c={\frac{\epsilon}{4D_1}}$ and $m_0$.
\end{enumerate}
Here, $\delta\in (0,1)$ is as in \cite[Eq. (7.29)]{Shen99} and $D_1$ is the constant given by \cite[Lemma~2.3]{Bailey21}. 
	\end{prop}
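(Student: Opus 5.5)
To prove Proposition~\ref{prop: class L-gamma nabla}, I check the three ingredients of the definition of an exponential SCZO of $(\nu,s,\delta)$ type with $\nu=2\gamma-1$: the weak type estimate, the size condition and the smoothness condition. For $\gamma=1$ the kernel is $\mathcal K_1(x,y)=-\nabla_1\Gamma_\mu(y,x)$, which is exactly the kernel $K^*_\mu$ of $\mathcal R^*_\mu$ up to the factor $\lambda^{-1/2}$ integration. For $1/2<\gamma<1$ the kernel is the $\lambda$-integral \eqref{eq: L-gamma nabla kernel}. So the plan is to reduce everything to pointwise (or $L^s$-averaged) bounds for $\nabla_1\Gamma_{\mu+\lambda}(y,x)$ that are uniform in the form $e^{-\epsilon d_{\mu+\lambda}(y,x)}$, and then integrate in $\lambda$. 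This mirrors what was done in \cite[Propositions~5.3 and 5.5]{DLT25} for $\mathcal R_\mu^*$.

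\textbf{Size condition.} I apply \eqref{eq: size K1} with $\mu+\lambda$ in place of $\mu$ (the constants are uniform, as in \cite{Shen99}). I then use \eqref{eq: metric mu+lambda} to split
\[
e^{-\epsilon d_{\mu+\lambda}}\le e^{-\frac\epsilon2 d_\mu(y,x)}\,e^{-c\sqrt\lambda|x-y|}.
\]
The estimate splits into two cases.

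\emph{Case $\delta_\mu>1$.} Here the bracket in \eqref{eq: size K1} is $\lesssim|x-y|^{-1}$ by \eqref{eq: prop2 integral mu}, and the factor $\mu(B)/R^{d-1}\lesssim R^{-1}$ holds on subcritical scales. Integrating $\lambda^{-\gamma}e^{-c\sqrt\lambda|x-y|}$ over $\lambda\in(0,\infty)$ gives $|x-y|^{2\gamma-2}$, so
\[
|\mathcal K_\gamma(x,y)|\lesssim |x-y|^{-(d-2\gamma+1)}e^{-\frac\epsilon2 d_\mu(x,y)}.
\]
The lower bound for $d_\mu$ from \cite[Lemma~5.1]{DLT25}, with $D_1$, converts the exponential into $\exp(-\frac{\epsilon}{2D_1}(1+|x-y|/\rho(x))^{m_0})$. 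This requires switching $\rho(y)$ to $\rho(x)$, which is harmless through \eqref{eq: critical radius function} at the cost of constants absorbed in $D_1$.

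\emph{Case $0<\delta_\mu<1$.} The integral term $\int_{B(y,|x-y|/2)}|z-y|^{1-d}d\mu(z)$ is only in $L^s$ for $s<(2-\delta_\mu)'=\frac{2-\delta_\mu}{1-\delta_\mu}$, following Shen's argument \cite[proof of Theorem~0.17]{Shen99}. So I average in the $y$ variable over the annulus $R<|x_0-y|\le2R$. Using H\"older, I put half of the exponential decay on the $\mu$-term to absorb the polynomial growth $\mu(B(x,R))/R^{d-2}\lesssim(1+R/\rho)^{\log_2 D_\mu}$. This yields the constant $\frac{\epsilon}{4D_1}$ and the Hörmander size condition \eqref{eq: size Hormander fract}.

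\textbf{Smoothness condition.} This is the main obstacle, because it involves regularity of $\nabla_1\Gamma_{\mu+\lambda}(y,\cdot)$ in the \emph{second} variable of $\mathcal K_\gamma$, i.e.\ the pole variable. By symmetry of $\Gamma_{\mu+\lambda}$ this is Hölder regularity of $\nabla_2\Gamma_{\mu+\lambda}(\cdot,y)$ in the pole. I would invoke \cite[Eq.~(7.29)]{Shen99}, which gives
\[
|\nabla\Gamma_{\mu+\lambda}(y,x)-\nabla\Gamma_{\mu+\lambda}(y,x_0)|\lesssim\frac{|x-x_0|^\delta}{|x-y|^{d-1+\delta}}\,(1+\sqrt\lambda|x-y|)^{-k}
\]
pointwise when $\delta_\mu>1$, and in $L^s$-average over annuli when $\delta_\mu<1$. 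I expect the $\lambda$-uniformity (decay in $\sqrt\lambda|x-y|$) to follow by applying Shen's estimate to the measure $\mu+\lambda\,dx$, which satisfies \eqref{eq: prop1 mu}--\eqref{eq: prop2 mu} with uniform constants. Integrating $\lambda^{-\gamma}$ against this bound gives $|x-x_0|^\delta|x-y|^{-(d-\nu)-\delta}$, which is \eqref{eq: smoothness pointwise fract} (resp.\ \eqref{eq: smoothness Hormander fract} with $r\le\rho(x_0)$).

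\textbf{Weak type estimate.} The bound $|\mathcal K_\gamma(x,y)|\lesssim|x-y|^{-(d-\nu)}$ gives domination by $I_\nu(|f|)$ when $\delta_\mu>1$, hence weak type $(1,\frac d{d-\nu})$ by Hardy--Littlewood--Sobolev. When $\delta_\mu<1$, I would instead use $L^{s'}\to L^{s'd/(d-s'\nu)}$ boundedness. This follows by writing $\mathcal L_\mu^{-\gamma}\nabla=\mathcal L_\mu^{-(\gamma-1/2)}\mathcal R_\mu^*$, composing the $L^{s'}$-boundedness of $\mathcal R_\mu^*$ (valid for $s'>(2-\delta_\mu)$-range) with the HLS bound for $\mathcal L_\mu^{-(\gamma-1/2)}$ from Proposition~\ref{prop: class negative powers}. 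The condition $s>\frac d{d-\nu}$ guarantees the target exponent is finite.
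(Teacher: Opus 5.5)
Your treatment of the size and smoothness conditions is essentially the paper's. You apply Bailey's gradient bound \eqref{eq: size K1} to $\mu+\lambda$ and split $d_{\mu+\lambda}$ via \eqref{eq: metric mu+lambda}. You integrate $\lambda^{-\gamma}e^{-c\sqrt{\lambda}|x-y|}$ to get $|x-y|^{2\gamma-2}$, taking $\lambda=0$ when $\gamma=1$. Smoothness comes from Shen's H\"older estimate, and the weak type in case (i) from domination by $I_{2\gamma-1}$.

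The genuinely different step is the weak type in case (ii).
\begin{itemize}
\item The paper bounds $|\mathcal{L}_\mu^{-\gamma}\nabla f(x)|$ directly. It applies H\"older's inequality on the annuli $2^j\rho(x)<|x-y|\le 2^{j+1}\rho(x)$, using the $L^s$ size bound and its exponential decay, and then invokes the weak type of the fractional maximal operator. This uses only kernel information, but as written it only covers $|x-y|>2\rho(x)$.
\item You factor $\mathcal{L}_\mu^{-\gamma}\nabla=\mathcal{L}_\mu^{-(\gamma-1/2)}\mathcal{R}^*_\mu$. You then compose the $L^{s'}$-boundedness of $\mathcal{R}^*_\mu$ with the Hardy--Littlewood--Sobolev bound for $\mathcal{L}_\mu^{-(\gamma-1/2)}$ from Proposition~\ref{prop: class negative powers}. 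The first is available since $s<\frac{2-\delta_\mu}{1-\delta_\mu}$: use \cite[Proposition~5.5]{DLT25} with some $\tilde s\in\bigl(s,\frac{2-\delta_\mu}{1-\delta_\mu}\bigr)$ and Remark~\ref{obs: expSCZO Hormander Lp}.
\item Your route treats all of $\RR^d$ at once and even gives strong type $\bigl(s',\frac{s'd}{d-s'\nu}\bigr)$. The cost is that you must justify the operator identity (functional calculus on a dense class) and import the $L^p$ theory of $\mathcal{R}^*_\mu$.
\end{itemize}

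Three details should be fixed.
\begin{enumerate}
\item No passage from $\rho(y)$ to $\rho(x)$ is needed. Since $d_\mu$ is symmetric, \cite[Lemma~5.1]{DLT25} can be applied at $x$ directly. Doing it through \eqref{eq: critical radius function} would degrade the exponent $m_0$ to $m_0^2$, not just a constant.
\item In the smoothness step the relevant variable is $x$, the \emph{first} variable of $\mathcal{K}_\gamma$. The map $x\mapsto\nabla_1\Gamma_{\mu+\lambda}(y,x)$ solves $(-\Delta+\mu+\lambda)u=0$ away from $y$, so what is used is the interior H\"older estimate for such solutions, not regularity in the pole. The $\lambda$-decay is then obtained most cleanly as in the paper: bound the oscillation by $\bigl(\frac{|x-x_0|}{|x-y|}\bigr)^\delta\sup_{B(x,|x-y|/2)}|\nabla_1\Gamma_{\mu+\lambda}(y,\cdot)|$ and insert \eqref{eq: size gradient fundamental solution}.
\item On super-critical scales you must keep part of the $\mu$-exponential decay to absorb the growth of $\mu(B(x,R))/R^{d-2}$. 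This is needed in two places.
\begin{itemize}
\item In case (i), for the bracket in \eqref{eq: size K1}: you only control it on sub-critical scales, whereas the paper uses Bailey's bound $e^{-\epsilon d_\mu}|x-y|^{1-d}$ directly.
\item In case (ii), for the averaged bound in \eqref{eq: smoothness Hormander fract} when $R\gg\rho(x_0)$. The paper retains the factor $e^{-\epsilon_0(1+R/\rho(x))^{1/(k_0+1)^2}}$ for exactly this purpose.
\end{itemize}
\end{enumerate}
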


	\begin{proof}First, note that from \eqref{eq: size K1} and \eqref{eq: metric mu+lambda} we obtain, for every $\lambda\geq 0$ and $\delta_\mu>1$, that
		\begin{align}\label{eq: size gradient fundamental solution}|\nabla_1\Gamma_{\mu+\lambda}(y,x)|
			 &\lesssim \frac{e^{-{\frac{\epsilon}{2}}\sqrt{\omega_d}\sqrt{\lambda}|x-y|}}{|y-x|^{d-1}} \exp\left(-{\frac{\epsilon}{2D_1}}\left(1+\frac{|y-x|}{\rho(x)}\right)^{\frac{1}{k_0+1}}\right),
			 \end{align}
			 where $\rho(x)$ can be replaced by $\rho(y)$ due to the symmetry of the distance $d_\mu$. It follows immediately that for $\gamma=1$ (and thus $\nu=2\gamma-1=1$) 
			 \begin{equation*}|{\mathcal{K}_1}(x,y)|=|\nabla_1\Gamma_{\mu}(y,x)| \lesssim \frac{1}{|y-x|^{d-1}} \exp\left(-{\frac{\epsilon}{2D_1}}\left(1+\frac{|y-x|}{\rho(x)}\right)^{\frac{1}{k_0+1}}\right).
			 \end{equation*}
			 
			 In the case $1/2<\gamma<1$, we integrate \eqref{eq: size gradient fundamental solution} with respect to $\lambda$ using the change of variables $u={\frac{\epsilon}{2}}\sqrt{\omega_d}\sqrt{\lambda}|x-y|$, which yields
			 \begin{align*}|{\mathcal{K}_\gamma}(x,y)|
			& \lesssim \frac{1}{|x-y|^{d-1}} \exp\left(-{\frac{\epsilon}{2D_1}}\left(1+\frac{|y-x|}{\rho(x)}\right)^{\frac{1}{k_0+1}}\right)\int_{0}^{\infty}\lambda^{-\gamma} e^{-{\frac{\epsilon}{2}}\sqrt{\omega_d}\sqrt{\lambda}|x-y|}d\lambda\\
			& \lesssim \frac{1}{|x-y|^{d-(2\gamma-1)}} \exp\left(-{\frac{\epsilon}{2D_1}}\left(1+\frac{|y-x|}{\rho(x)}\right)^{\frac{1}{k_0+1}}\right),\end{align*}
			where we have used the fact that the integral converges since $\gamma<1$.
			
			To prove the corresponding smoothness condition, we consider $|x-x_0|<|y-x|/8$. Applying the estimate given in \cite[p.~563]{Shen99} 
			and using 
			\eqref{eq: size gradient fundamental solution}, we have\begin{align*}| \nabla_1 \Gamma_{\mu+ \lambda}(y,x) - \nabla_1 \Gamma_{\mu+ \lambda }(y,x_0)|
			& \lesssim \left(\frac{|x-x_0|}{|x-y|}\right)^{\delta}\sup_{z\in B(x,|x-y|/2)} |\nabla_1 \Gamma_{\mu+\lambda}(y,z)|\\
			&\lesssim \left(\frac{|x-x_0|}{|x-y|}\right)^{\delta} \frac{e^{-{\frac{\epsilon}{4}}\sqrt{\omega_d}\sqrt{\lambda}|x-y|}}{|x-y|^{d-1}}.
			\end{align*}For $\gamma=1$, we set $\lambda=0$ in the inequality above, leading to\[|{\mathcal{K}_1}(x,y)-{\mathcal{K}_1}(x_0,y)|\lesssim \frac{1}{|y-x|^{d-1}} \left(\frac{|x-x_0|}{|x-y|}\right)^{\delta},\]
			whenever $|x-y|>8|x-x_0|$. For the remaining cases, we integrate as before to obtain
			\begin{align*}|{\mathcal{K}_\gamma}(x,y)- {\mathcal{K}_\gamma}(x_0,y)|& \lesssim \int_0^\infty \lambda^{-\gamma}| \nabla_1 \Gamma_{\mu+ \lambda} (y,x) - \nabla_1 \Gamma_{\mu+ \lambda }(y,x_0)|  d\lambda\\
			& \lesssim \frac{1}{|x-y|^{d-1}}\left(\frac{|x-x_0|}{|x-y|}\right)^{\delta} \int_0^\infty \lambda^{-\gamma} e^{-{\frac{\epsilon}{4}}\sqrt{\omega_d}\sqrt{\lambda}|x-y|}  d\lambda\\
			& \lesssim \frac{1}{|y-x|^{d-(2\gamma-1)}} \left(\frac{|x-x_0|}{|x-y|}\right)^{\delta},
			\end{align*}
			provided $|x-y|>8|x-x_0|$.
			
			Finally, to show that $\mathcal{L}_\mu^{-\gamma}\nabla$ is bounded from $L^1(\mathbb{R}^d)$ into $L^{\frac{d}{d-(2\gamma-1)},\infty}(\mathbb{R}^d)$, we proceed as in the case of fractional integrals. Given the pointwise estimates obtained above, for $f\in L^1(\mathbb{R}^d)$ and any $1/2<\gamma\leq 1$, we write
			\begin{align*}| \mathcal{L}_\mu^{-\gamma}\nabla f(x)|
			&\leq \int_{\mathbb{R}^d}|{\mathcal{K}_\gamma}(x,y)||f(y)|  dy\\
			&\lesssim \int_{\mathbb{R}^d}\frac{1}{|x-y|^{d-(2\gamma-1)}} \exp\left(-{\frac{\epsilon}{2D_1}}\left(1+\frac{|y-x|}{\rho(x)}\right)^{\frac{1}{k_0+1}}\right)|f(y)|  dy\\
			&\lesssim \int_{\mathbb{R}^d} \frac{|f(y)|}{|x-y|^{d-(2\gamma-1)}} dy\\
			& \lesssim I_{2\gamma-1}(|f|)(x),
			\end{align*}where $I_{2\gamma-1}$ is the classical fractional integral of order $2\gamma-1$. Since the latter is of weak type $\left(1, \tfrac{d}{d-(2\gamma-1)}\right)$, it follows that $\mathcal{L}_\mu^{-\gamma}\nabla$ is also of this type.
			
			To prove \ref{itm: class L-gamma nabla deltamu<1} we need to check that the kernel $\mathcal{K}_\gamma$ satisfies the integral size condition \eqref{eq: size Hormander fract}. By \eqref{eq: size K1} and \eqref{eq: metric mu+lambda}, for any $\lambda\geq 0$ and $0<\delta_\mu<1$  we have
			\begin{align*}|\nabla_1\Gamma_{\mu+\lambda}(y,x)|
				&\lesssim \frac{e^{-\frac{\epsilon}{4}\sqrt{\omega_d}\sqrt{\lambda}|x-y|}}{|x-y|^{d-2}}\exp\left(-{\frac{\epsilon}{2D_1}}\left(1+\frac{|y-x|}{\rho(x)}\right)^{\frac{1}{k_0+1}}\right)\left(\int_{B \left(y,\frac{|x-y|}{2}\right)}\frac{d\mu(z)}{|z-y|^{d-1}}+ \frac{1}{|x-y|}\right).
				\end{align*}
				
				This immediately yields the bound for $\gamma=1$. For $\frac12<\gamma<1$, integrating against $\lambda^{-\gamma} d\lambda$ gives 
				\begin{align*}|\mathcal{K}_\gamma(x,y)|
					 &\lesssim \frac{\exp\left(-{\frac{\epsilon}{2D_1}}\left(1+\frac{|y-x|}{\rho(x)}\right)^{\frac{1}{k_0+1}}\right)}{|x-y|^{d-(2\gamma-1)-1}}\left(\int_{B \left(y,\frac{|x-y|}{2}\right)}\frac{d\mu(z)}{|z-y|^{d-1}}+ \frac{1}{|x-y|}\right).
					 \end{align*}
					 Then, for each $\frac12<\gamma\leq 1$, $|x-x_0|<R/2$, and $1<s<\frac{2-\delta_\mu}{1-\delta_\mu}$, proceeding as in the Riesz transform $\mathcal{R}^*_\mu$ (see \cite[Proposition~5.5]{DLT25}) we have
					  \begin{equation*}\left(\frac{1}{R^d}\int_{R<|x_0-y|\leq 2R}|\mathcal{K}_\gamma(x,y)|^s dy \right)^{1/s}\lesssim \frac{1}{R^{d-(2\gamma-1)}}\exp\left(-{\frac{\epsilon}{4D_1}}\left(1+\frac{R}{\rho(x)}\right)^{\frac{1}{k_0+1}}\right).
					  \end{equation*}
					 For  the smoothness condition we consider  $|x-x_0|<r<\rho(x_0)$ and $0<r<R/16$. Then proceeding as in the proof of \cite[Theorem~7.18]{Shen99} we have an estimate for $|\nabla_1\Gamma_{\mu+\lambda}(y,x)-\nabla_1\Gamma_{\mu+\lambda}(y,x_0)|$ (see \cite[p.~24]{DLT25} for completeness) and consequently that 
					 \begin{align*}|\mathcal{K}_\gamma(x,y)- \mathcal{K}_\gamma(x_0,y)|
					 	 &\lesssim\left(\frac{|x-x_0|}{|x-y|}\right)^{\delta} \frac{e^{-\epsilon_0\left(1+\frac{R}{\rho(x)}\right)^{\frac{1}{(k_0+1)^2}}}}{|x-y|^{d-(2\gamma-1)-1}}\left(\int_{B(y,|y-x|)}\frac{d\mu(z)}{|z-y|^{d-1}}+ \frac{1}{|x-y|}\right),
					 	 \end{align*}
					 	 	for some positive $\epsilon_0$, which implies condition \eqref{eq: smoothness Hormander fract} with $\nu=2\gamma-1$ for $1<s<\frac{2-\delta_\mu}{1-\delta_\mu}$.
					 	 	
					 	 	Finally, as a consequence of the estimates  given above, for  $f\in L^{s'}(\mathbb{R}^d)$ and $0<\delta_\mu<1$,
					 	 	\begin{align*}| \mathcal{L}_\mu^{-\gamma}\nabla f(x)|
				&\lesssim \sum_{j=1}^\infty \left(\int_{2^j\rho(x)<|y-x|\leq 2^{j+1}\rho(x)}|\mathcal{K}_\gamma(x,y)|^s dy\right)^{1/s}\left(\int_{B(x, 2^{j+1}\rho(x))}|f(y)|^{s'}\right)^{1/s'}
				\\
				&
				\lesssim\sum_{j =1}^\infty e^{-{\frac{\epsilon}{4D_1}}2^{j\frac{1}{k_0+1}}}|B(x,2^{j+1}\rho(x))|^{\frac{2\gamma-1}{d}}\left(\fint_{B(x, 2^{j+1}\rho(x))}|f(y)|^{s'}\right)^{1/s'}.
				\end{align*}
			 Since the classical fractional maximal operator is of weak type $(1, \frac{d}{d-(2\gamma-1)})$, we conclude that $\mathcal{L}_\mu^{-\gamma}\nabla$ is of weak type $\left(s',\frac{ds'}{d-(2\gamma-1)s'}\right)$, provided $(2\gamma-1)s'<d$.
						\end{proof}

		In the potential case $d\mu(x)=V(x)dx$ with $V\in RH_q$ and $\frac d2<q<d$, a sharper estimate for $\mathcal{L}_V^{-\gamma}\nabla$ can be obtained via \cite[Proposition~5.5]{DLT25}. 
		Indeed, the optimal endpoint for the Riesz transforms, $p_0 =\left(\frac1q-\frac1d\right)^{-1}$, automatically satisfies $p_0>\frac{d}{d-(2\gamma-1)}$ for any $\frac12<\gamma\leq 1$. This leads to the following classification.
		
		\begin{prop}\label{prop: class L-gamma nabla potential} Let $V\in RH_q$ with $\frac d2<q<d$, $V\not\equiv 0$, and $\frac12<\gamma\leq 1$. Then, $\mathcal{L}_V^{-\gamma}\nabla$ is an exponential SCZO of $(2\gamma-1,p_0,\delta)$ type, with parameters $c={\frac{\epsilon}{4D_1}}$ and $m=\frac{1}{k_0+1}$.
		\end{prop}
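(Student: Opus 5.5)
The plan is to follow the proof of Proposition~\ref{prop: class L-gamma nabla}\ref{itm: class L-gamma nabla deltamu<1}, but to exploit the fact that for $d\mu=V\,dx$ with $V\in RH_q$ the local singularity of $\nabla_1\Gamma_{V+\lambda}$ is governed by a classical fractional integral of $V$. This allows the Hölder exponent $s$ to be pushed up to $p_0=\frac{qd}{d-q}$. In the setting of Proposition~\ref{prop: class L-gamma nabla}\ref{itm: class L-gamma nabla deltamu<1} we have $\delta_V=2-\frac dq$, so the range there was $s<\frac{2-\delta_V}{1-\delta_V}=\frac{d}{d-q}$, which is strictly smaller than $p_0$. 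The admissibility requirements of Definition~\ref{def: s-delta} hold automatically for $\nu=2\gamma-1\in(0,1]$.
\begin{itemize}
\item Since $q<d$ we have $p_0>d\geq\frac{d}{d-\nu}$.
\item Since $p_0>d$ we have $p_0'<\frac{d}{d-1}$, hence $\nu p_0'<d$.
\end{itemize}

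\textbf{Size condition.} Start from \eqref{eq: size K1} and \eqref{eq: metric mu+lambda}, integrated against $\lambda^{-\gamma}\,d\lambda$ exactly as in the previous proof. This gives
\[
|\mathcal K_\gamma(x,y)|\lesssim \frac{e^{-\frac{\epsilon}{2D_1}(1+|x-y|/\rho(x))^{m}}}{|x-y|^{d-\nu-1}}\Big(I_1(V\chi_{B(y,|x-y|/2)})(y)+|x-y|^{-1}\Big),
\]
where $I_1$ is the classical fractional integral of order one. For $|x-x_0|<R/2$ and $R<|x_0-y|\le 2R$, we bound the first term by $I_1(V\chi_{B(x_0,4R)})(y)$. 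By Hardy--Littlewood--Sobolev, $I_1\colon L^q\to L^{p_0}$ because $\frac1{p_0}=\frac1q-\frac1d$. Hence
\[
\Big(\frac1{R^d}\int |I_1(V\chi_{B(x_0,4R)})|^{p_0}\Big)^{1/p_0}\lesssim R\Big(\frac1{R^d}\int_{B(x_0,4R)}V^q\Big)^{1/q}\lesssim R\,\frac{1}{R^d}\int_{B(x_0,4R)}V ,
\]
using $V\in RH_q$ in the last step. By \cite[Lemma~5.2]{DLT25} the average of $V$ is at most $R^{-2}(1+R/\rho(x))^{\log_2 D_V}$. The $1/|x-y|$ term is trivial. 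The polynomial factor is absorbed by halving the exponential constant, which yields \eqref{eq: size Hormander fract} with $s=p_0$ and $c=\frac{\epsilon}{4D_1}$. This is the same computation as for $\mathcal R_V^*$ in \cite[Proposition~5.5]{DLT25}.

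\textbf{Smoothness condition.} Use the pointwise difference estimate for $\nabla_1\Gamma_{V+\lambda}(y,\cdot)$ quoted at the end of the previous proof (from \cite[Theorem~7.18]{Shen99}; see \cite[p.~24]{DLT25}). It has the same structure, namely a factor $(|x-x_0|/|x-y|)^{\delta}$ times $|x-y|^{-(d-\nu-1)}$ times the same bracket. The $L^{p_0}$ average of the bracket is handled by HLS and $RH_q$ exactly as above. One may discard the exponential and keep only the bound $(1+R/\rho)^{\log_2 D_V}\lesssim 1$ when $R\lesssim\rho(x_0)$. For $R$ large, interpolate as in Lemma~\ref{lem: mixed smooth condition}, or alternatively keep the $e^{-\epsilon_0(\cdot)}$ factor to kill the polynomial. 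This gives \eqref{eq: smoothness Hormander fract} with $s=p_0$.

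\textbf{Weak type, and the expected obstacle.} It remains to show that the operator is of weak type $\big(p_0',\frac{p_0'd}{d-p_0'\nu}\big)$. Decompose $\int|\mathcal K_\gamma(x,y)||f(y)|\,dy$ over the dyadic annuli $2^j<|x-y|\le2^{j+1}$, $j\in\mathbb Z$, and apply Hölder with the size estimate just proved. Each annulus then contributes at most
\[
2^{j\nu}e^{-c(1+2^j/\rho(x))^m}\Big(\frac{1}{2^{jd}}\int_{B(x,2^{j+1})}|f|^{p_0'}\Big)^{1/p_0'}.
\]
Summing over $j$ should give a pointwise bound by the fractional maximal operator $M_{\nu p_0'}(|f|^{p_0'})^{1/p_0'}$, up to constants. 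Since $\nu p_0'<d$, this operator is of weak type $(p_0',\frac{p_0'd}{d-p_0'\nu})$.

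I expect this summation to be the main obstacle. The sum over $j\to-\infty$ is not geometric when controlled by the maximal function at each scale separately. What I would try first is the standard splitting at a threshold radius $\tau$:
\begin{itemize}
\item the part with $2^j\le\tau$ is bounded by $\tau^{\nu}Mf$-type quantities;
\item the part with $2^j>\tau$ is bounded by $\tau^{\nu-d/p_0'}\|f\|_{p_0'}$;
\item then optimize in $\tau$ (Hedberg's trick).
\end{itemize}
If this becomes delicate, a fallback is to use only $\nu>0$ to sum the small scales against $\sup_j$ of the averages, and obtain the weak-type bound directly through Hedberg's inequality.
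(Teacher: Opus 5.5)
Your proposal is correct. For the kernel estimates it follows the route the paper intends. The paper gives no separate proof of this statement. It points to \cite[Proposition~5.5]{DLT25}, the $\mathcal R_V^*$ computation, and checks $p_0>\frac{d}{d-(2\gamma-1)}$. That computation is exactly your Hardy--Littlewood--Sobolev plus $RH_q$ plus \cite[Lemma~5.2]{DLT25} argument on the bracket $I_1(V\chi_B)$, which reaches $s=p_0$ instead of $s<\frac{d}{d-q}=(2-\delta_V)'$.

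Your treatment of the weak-type requirement differs from the paper's. The paper reuses the proof of Proposition~\ref{prop: class L-gamma nabla}\ref{itm: class L-gamma nabla deltamu<1}. There it sums only over the annuli $2^j\rho(x)<|x-y|\le 2^{j+1}\rho(x)$, $j\ge1$, where the exponential factor allows domination by $M_{\nu s'}(|f|^{s'})^{1/s'}$. The local region $|x-y|\le 2\rho(x)$ is not addressed in that display. Your Hedberg splitting over all $j\in\mathbb Z$ handles every scale uniformly and does not use the exponential decay, so it is the more complete argument.

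The step you flag as the main obstacle actually closes at once:
\begin{itemize}
\item For $2^j\le\tau$, each term is $\lesssim 2^{j\nu}M(|f|^{p_0'})(x)^{1/p_0'}$, which is geometric because $\nu>0$.
\item For $2^j>\tau$, each term is $\le 2^{j(\nu-d/p_0')}\|f\|_{p_0'}$, which is geometric because $\nu p_0'<d$.
\item Optimizing in $\tau$ gives $|Tf|\lesssim M(|f|^{p_0'})^{(1-\nu p_0'/d)/p_0'}\|f\|_{p_0'}^{\nu p_0'/d}$. The weak $(1,1)$ bound for $M$ then yields weak type $\big(p_0',\frac{p_0'd}{d-\nu p_0'}\big)$ exactly.
\end{itemize}

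You should drop the intermediate claim that the full sum is pointwise dominated by $M_{\nu p_0'}(|f|^{p_0'})^{1/p_0'}$. That is false in general, since a fractional-integral-type sum is not controlled by the fractional maximal function, and you do not need it.

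One small slip: $p_0>d$ comes from $q>\frac d2$. The condition $q<d$ only makes $p_0$ finite.
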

		As a consequence, and following the same lines as in the singular case given in \cite[Theorem~4.1, Proposition~4.2]{DLT25} we establish the weighted boundedness results for $\mathcal{L}_\mu^{-\gamma}\nabla$.
		
		\begin{thm}\label{thm: BMO L-gamma nabla}Let $\frac{1}{2}<\gamma\leq1$, $m=\frac{1}{k_0+1}$, $m^*=\frac{1}{(k_0+1)^2}$, $1<p<\frac{d}{2\gamma-1}$, and $\frac{1}{q}=\frac{1}{p}-\frac{2\gamma-1}{d}$. The operator $\mathcal{L}_\mu^{-\gamma}\nabla$ is bounded from $L^p(w^p)$ to $L^q(w^q)$ for any weight $w$ such that $w^{-p'}\in H_{1+\frac{p'}{q},c}^{\rho,m^*}$ where the constant $c$ satisfies:
			\begin{enumerate}
				\item $c < \frac{\epsilon}{2D_1}\frac{d}{d-(2\gamma-1)}(2^{3k_0+7}C_0^{k_0+3})^{-m^*}$ when $\delta_\mu>1$;
				\item $c < \frac{\epsilon}{4D_1}\frac{ds'}{ d-(2\gamma-1)s'}(2^{2k_0+6}C_0^{k_0+3})^{-m^*}$ when $0<\delta_\mu<1$.
				\end{enumerate}
				\end{thm}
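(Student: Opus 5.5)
The plan follows the singular-case argument of \cite[Theorem~4.1, Proposition~4.2]{DLT25}, now in the fractional setting: we pass from $(L^p(w^p),L^q(w^q))$ bounds for the operator $\mathcal{L}_\mu^{-\gamma}\nabla$ to an extrapolation-type result for exponential SCZOs of $(\nu,s,\delta)$ type with $\nu=2\gamma-1$. By Proposition~\ref{prop: class L-gamma nabla} we already know the class: it is of $(2\gamma-1,\infty,\delta)$ type when $\delta_\mu>1$, and of $(2\gamma-1,s,\delta)$ type when $0<\delta_\mu<1$. In both cases the exponential parameters are $m=\frac{1}{k_0+1}$ and $c=\frac{\epsilon}{2D_1}$ or $\frac{\epsilon}{4D_1}$, respectively.

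First we dominate the operator pointwise by localized pieces. We split $\mathbb{R}^d$ into critical balls via a covering $\{B(x_k,\rho(x_k))\}$ with bounded overlap, using the standard covering lemma for critical radius functions. For $x\in B_k$ we write $f=f\chi_{2^{j+1}B_k}$ on dyadic annuli and use the size condition \eqref{eq: size Hormander fract}. This gives
\[
|\mathcal{L}_\mu^{-\gamma}\nabla f(x)|\lesssim |T(f\chi_{2B_k})(x)| + \sum_{j\ge1}e^{-c'2^{jm}}\,|2^{j}B_k|^{\frac{\nu}{d}}\Big(\fint_{2^{j+1}B_k}|f|^{s'}\Big)^{1/s'}.
\]
This mirrors the bound at the end of the proof of Proposition~\ref{prop: class L-gamma nabla}, and when $s=\infty$ it is simply $I_\nu$ localized. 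The local term is handled as a fractional integral (or fractional maximal function at level $s'$) restricted to critical balls. On those balls $w^{-p'}\in H^{\rho,m^*}_{1+p'/q,c}\subseteq A^{\rho,\loc}_{1+p'/q}$ by \eqref{eq: classes inclusions}, so the local $A_{p,q}$ theory of Muckenhoupt--Wheeden for $I_\nu$ applies. This uses the equivalence $w^{-p'}\in A_{1+p'/q}\iff w\in A_{p,q}$, here at the local scale.

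The global tail sum is the main obstacle. Each term we bound by Hölder with the weight:
\[
\Big(\int_{2^{j}B_k}|f|^{s'}\Big)^{1/s'}\le \|f w\chi_{2^jB_k}\|_p \cdot\Big(\int_{2^jB_k} w^{-\cdots}\Big)^{\cdots}.
\]
The weight factor grows like $\exp\big(c(1+2^{j})^{m^*}\cdot\text{const}\big)$, coming from the $H^{\rho,m^*}$ condition. Since $2^{j}\rho(x_k)$-balls are not centred at points with comparable $\rho$, we need \eqref{eq: critical radius function} to compare radii; this is exactly the source of the loss $m\mapsto m^*=m/(k_0+1)$ and of the numerical factor $(2^{3k_0+7}C_0^{k_0+3})^{-m^*}$ (respectively $2^{2k_0+6}$ in the $s$ case). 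We then check that this growth is beaten by the kernel decay $e^{-c'2^{jm}}$; the factor $\frac{d}{d-\nu}$ (respectively $\frac{ds'}{d-\nu s'}$) enters because the $L^q$ norm of the tail is raised to the power $q$, and $q/p$ is comparable to that ratio in the relevant normalization. This produces the stated thresholds on $c$.

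Finally, we sum over $k$ using bounded overlap and the inequality $q\ge p$ (via $\sum a_k^{q/p}\le(\sum a_k)^{q/p}$). This gives $\|\mathcal{L}_\mu^{-\gamma}\nabla f\|_{L^q(w^q)}\lesssim\|f\|_{L^p(w^p)}$. In the case $0<\delta_\mu<1$ the same scheme runs with $s'$ in place of $1$; the requirement $(2\gamma-1)s'<d$ and the weak type $(s',\frac{ds'}{d-\nu s'})$ proved in Proposition~\ref{prop: class L-gamma nabla}(ii) replace Hardy--Littlewood--Sobolev. The delicate bookkeeping is the exponent-matching in the tail estimate.
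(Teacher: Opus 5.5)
\paragraph{Case (ii) has a genuine gap.} When $0<\delta_\mu<1$, $\mathcal{L}_\mu^{-\gamma}\nabla$ is only of $(\nu,s,\delta)$ type with $s<\infty$ and $\nu=2\gamma-1$. So the local piece $T(f\chi_{2B_k})$ is not pointwise dominated by $I_\nu(|f|)$, nor by a fractional maximal function at level $s'$: the kernel is controlled only through $L^s$-averages. The weak type $\bigl(s',\frac{ds'}{d-\nu s'}\bigr)$ of Proposition~\ref{prop: class L-gamma nabla} gives no weighted strong-type bound. Hence neither Muckenhoupt--Wheeden nor ``weak type in place of Hardy--Littlewood--Sobolev'' handles this piece.

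The tail fails as well. H\"older from $\bigl(\int_{2^{j+1}B_k}|f|^{s'}\bigr)^{1/s'}$ to $\|fw\chi_{2^{j+1}B_k}\|_{p}$ requires $p>s'$. It produces $\bigl(\int_{2^{j+1}B_k}w^{-\tau}\bigr)^{1/\tau}$ with $\tau=\frac{s'p}{p-s'}>p'$, which the hypothesis $w^{-p'}\in H^{\rho,m^*}_{1+p'/q,c}$ does not control. What does control it is the $s'$-adapted condition $w^{-\tau}\in H^{\rho,m^*}_{1+\tau/q,c}$, the exponential analogue of $w^{s'}\in A_{p/s',q/s'}$. For $B=B(x,r)$ it gives $\bigl(\int_Bw^q\bigr)^{1/q}\bigl(\int_Bw^{-\tau}\bigr)^{1/\tau}\lesssim|B|^{\frac{1}{s'}-\frac{\nu}{d}}\exp\bigl(c\,\frac{d-\nu s'}{ds'}(1+\frac{r}{\rho(x)})^{m^*}\bigr)$. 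This is exactly where the factor $\frac{ds'}{d-\nu s'}$ in (ii) comes from.

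In fact (ii) cannot be proved in the generality stated. Take $w\equiv1$, $\gamma=1$, and $d\mu=|x|^{-\alpha}dx$ with $1<\alpha<2$; this measure satisfies \eqref{eq: prop1 mu}--\eqref{eq: prop2 mu} with some $\delta_\mu\in(0,1)$ and never with $\delta_\mu\geq 1$. Let $h\geq0$ be a radial bump. Then $u=\mathcal{L}_\mu^{-1}h$ is positive and continuous. Integrating $(r^{d-1}u')'=r^{d-1}(r^{-\alpha}u-h)$ gives $|\nabla u(x)|\sim u(0)|x|^{1-\alpha}$ near $0$. Since $\int(\mathcal{L}_\mu^{-1}\nabla f)_1h=-\int f\,\partial_1u$, boundedness from $L^p$ to $L^q$ would force $\partial_1u\in L^{p'}$. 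This is false for $1<p\le\frac{d}{d-\alpha+1}$.

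So for $0<\delta_\mu<1$ you must restrict to $s'<p<\frac{d}{\nu}$ with an $s'$-adapted weight class. You must also supply a genuine weighted local theory for fractional operators with H\"ormander-type kernels, for instance a sharp-maximal estimate on sub-critical balls. The proposal contains neither.

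\paragraph{Case (i) is sound in outline.} The local piece is dominated by $I_\nu(|f|\chi_{2B_k})$. Muckenhoupt--Wheeden then applies once $w|_{2B_k}$ is extended to a global $A_{p,q}$ weight with uniform constant. This is a standard local extension lemma; it needs the condition on all balls of radius $\lesssim\rho$, which the $H$-condition provides.

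Your account of the thresholds, however, is not correct. In your decomposition the $H$-condition is applied on $B(x_k,2^{j+1}\rho(x_k))$, which is centred at $x_k$, so no comparison of radii enters. Since $\frac{1}{p'}+\frac{1}{q}=\frac{d-\nu}{d}$, on that ball it gives $\bigl(\int w^q\bigr)^{1/q}\bigl(\int w^{-p'}\bigr)^{1/p'}\lesssim|2^{j+1}B_k|^{1-\frac{\nu}{d}}\exp\bigl(c\,\frac{d-\nu}{d}(1+2^{j+1})^{m^*}\bigr)$. This identity, not raising to the power $q$, is the origin of $\frac{d}{d-\nu}$.

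Because $m^*<m$, this growth is beaten by $e^{-c'2^{jm}}$ for every $c\geq0$. The same holds for the overlap factor $\sum_k\chi_{2^jB_k}\lesssim 2^{jN}$. So your argument, written out, proves (i) with no smallness of $c$ at all, which implies the statement. But it does not produce the stated thresholds or the loss $m\mapsto m^*$. Those come from the DLT25 argument the paper invokes, not from your decomposition.
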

				Finally, since $\mathcal{L}_\mu^{-\gamma}\nabla(1)=0$, we have the next result. 
				
				\begin{thm}Let $0<\delta_\mu<1 $, $\frac{1}{2}<\gamma\leq 1$ and $m=\frac{1}{k_0+1}$. For any $0 \leq \alpha+(2\gamma-1) <\delta$, the operator $\mathcal{L}_\mu^{-\gamma}\nabla $ is bounded from $\BMO_\rho^\alpha(w)$ to $\BMO_\rho^{\alpha+2\gamma-1}(w)$ whenever $w\in E_{s,c_1,c_2}^{\rho,m}\cap D_{\kappa,c_3}^{\rho,m}$ with $1 \leq \kappa <1+ \frac{\delta - 2\gamma-1-\alpha}{d}$ and $c_1,c_2,c_3\geq 0$ such that ${(c_2+c_3)}<c_0\left(1-\frac{d(\kappa-1)+\alpha+2\gamma-1}{\delta}\right)(4C_0)^{-m}$.
				\end{thm}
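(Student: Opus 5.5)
The plan is to deduce the theorem from Theorem~\ref{thm: T1 criterion}, or rather its version for the relevant type. We combine the classification of Proposition~\ref{prop: class L-gamma nabla}\ref{itm: class L-gamma nabla deltamu<1} with the triviality of the $T1$ condition. The first step is to fix $s$ with $\frac{d}{d-(2\gamma-1)}<s<\frac{2-\delta_\mu}{1-\delta_\mu}$. By Proposition~\ref{prop: class L-gamma nabla}, $T=\mathcal{L}_\mu^{-\gamma}\nabla$ is then an exponential SCZO of $(\nu,s,\delta)$ type with $\nu=2\gamma-1\in(0,1]$, $\delta\in(0,1)$ as in \cite[Eq.~(7.29)]{Shen99}, and parameters $c=\frac{\epsilon}{4D_1}$ and $m=\frac{1}{k_0+1}$. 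The hypothesis $0\le\alpha+2\gamma-1<\delta$ gives $0\le\alpha<\delta$. The restriction on $\kappa$ is meant to be $1\le\kappa<\frac{\delta-\alpha-\nu}{d}+1$, which is exactly the admissible range in Theorem~\ref{thm: T1 criterion}. The constant $c_0$ in the statement has to be read as the kernel decay parameter $c$ of the classification, so the smallness condition on $c_2+c_3$ is the one in item~\ref{itm: T1 weights}.

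The second step is to verify condition~\ref{itm: T1 condition}. Since $\nabla 1=0$, we expect $T1=0$, so the left-hand sides of \eqref{eq: T1 power condition} and \eqref{eq: T1 log condition} vanish identically. To make this rigorous in the sense of \eqref{eq: def Tf}, I would fix $x_0$ and $R\ge\rho(x_0)$ and write $T1=T(\chi_{B(x_0,2R)})+\int_{B(x_0,2R)^c}\mathcal K_\gamma(\cdot,y)\,dy$ on $B(x_0,R)$. I would then approximate $1$ by cutoffs $\phi_N\in C_c^\infty$ equal to $1$ on $B(x_0,N)$, for which $T\phi_N=\mathcal{L}_\mu^{-\gamma}(\nabla\phi_N)$ with $\nabla\phi_N$ supported in $\{N\le|y-x_0|\le 2N\}$ and of size $N^{-1}$. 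The integral size bound of the kernel (the $L^s$ estimate with exponential decay from the proof of Proposition~\ref{prop: class L-gamma nabla}) shows that $T\phi_N\to0$ locally uniformly on $B(x_0,R)$ as $N\to\infty$. The same bound shows that $T\phi_N\to T1$, so $T1=0$. This is the same justification the paper implicitly uses for $\mathcal{R}_\mu^*1=0$.

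Third, with~\ref{itm: T1 condition} established, Theorem~\ref{thm: T1 criterion} (\ref{itm: T1 condition}$\Rightarrow$\ref{itm: T1 weights}) gives boundedness of $T$ from $\BMO_\rho^\alpha(w)$ into $\BMO_\rho^{\alpha+2\gamma-1}(w)$. This holds for all $w\in E_{s,c_1,c_2}^{\rho,m}\cap D_{\kappa,c_3}^{\rho,m}$ with $(c_2+c_3)<c\bigl(1-\frac{d(\kappa-1)+\alpha+\nu}{\delta}\bigr)(4C_0)^{-m}$. The main obstacle is the second step: the meaning of $T1$ for this non-integrable fractional kernel, and the interchange of $\nabla$ with the definition \eqref{eq: def Tf}. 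I would handle it through the exponential decay of the $L^s$ kernel averages, with the rest being bookkeeping of the parameters.
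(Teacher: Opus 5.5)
Your proposal is correct and is essentially the paper's argument. The paper simply notes $\mathcal{L}_\mu^{-\gamma}\nabla(1)=0$ and combines item (ii) of Proposition~\ref{prop: class L-gamma nabla} with the implication (i)$\Rightarrow$(ii) of Theorem~\ref{thm: T1 criterion}; your readings of the $\kappa$-range as $\kappa<1+\frac{\delta-\alpha-(2\gamma-1)}{d}$ and of the decay constant as $c=\frac{\epsilon}{4D_1}$ are exactly what that argument delivers. One small correction to your added justification of $T1=0$: the convergence $\mathcal{L}_\mu^{-\gamma}(\nabla\phi_N)\to0$ comes from the pointwise bound \eqref{eq: size negative powers} on the kernel $\mathcal{J}_\gamma$ of $\mathcal{L}_\mu^{-\gamma}$, whose exponential factor beats the growth $N^{2\gamma-1}$, whereas the $L^s$ bound on $\mathcal{K}_\gamma$ only gives $T\phi_N\to T1$.
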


%\section*{Declarations}
%
%\subsection*{Conflicts of interest} 
%The authors declare that they have no competing interests.
%
\subsection*{Funding} This work was partially supported by Universidad Nacional del Litoral [Grant CAI+D 2024 50320260100272LI].

\bibliographystyle{acm}%estilo de la biblio
%\bibliography{referencias}

%------------------Referencias incluidas desde el bbl

\end{document}